\numberwithin{equation}{section}
\numberwithin{figure}{section}
\theoremstyle{remark}
\newtheorem*{acknowledgement*}{\protect\acknowledgementname}
\theoremstyle{plain}
\newtheorem{thm}{\protect\theoremname}[section]
\theoremstyle{definition}
\newtheorem{problem}[thm]{\protect\problemname}
\theoremstyle{definition}
\newtheorem{defn}[thm]{\protect\definitionname}
\theoremstyle{definition}
\newtheorem{xca}[thm]{\protect\exercisename}
\theoremstyle{remark}
\newtheorem{rem}[thm]{\protect\remarkname}
\theoremstyle{plain}
\newtheorem{lem}[thm]{\protect\lemmaname}
\theoremstyle{definition}
\newtheorem{example}[thm]{\protect\examplename}
\theoremstyle{plain}
\newtheorem{cor}[thm]{\protect\corollaryname}
\theoremstyle{plain}
\newtheorem{prop}[thm]{\protect\propositionname}
\theoremstyle{plain}
\newtheorem{fact}[thm]{\protect\factname}
\theoremstyle{definition}
\newtheorem*{example*}{\protect\examplename}
\theoremstyle{remark}
\newtheorem*{claim*}{\protect\claimname}
\theoremstyle{plain}
\newtheorem{conjecture}[thm]{\protect\conjecturename}
\theoremstyle{remark}
\newtheorem*{notation*}{\protect\notationname}
\providecommand{\acknowledgementname}{Acknowledgement}
\providecommand{\claimname}{Claim}
\providecommand{\conjecturename}{Conjecture}
\providecommand{\corollaryname}{Corollary}
\providecommand{\definitionname}{Definition}
\providecommand{\examplename}{Example}
\providecommand{\exercisename}{Exercise}
\providecommand{\factname}{Fact}
\providecommand{\lemmaname}{Lemma}
\providecommand{\notationname}{Notation}
\providecommand{\problemname}{Problem}
\providecommand{\propositionname}{Proposition}
\providecommand{\remarkname}{Remark}
\providecommand{\theoremname}{Theorem}
\begin{document}
\title{Word maps and random words}
\author[E.\ Breuillard]{Emmanuel Breuillard} \address{Emmanuel Breuillard\hfill\break 	Mathematical Institute \hfill\break University of Oxford \hfill\break Oxford OX1 3LB, United Kingdom} \email{breuillard@maths.ox.ac.uk}

\author[I.\ Glazer]{Itay Glazer} \address{Itay Glazer \hfill\break  Department of Mathematics \hfill\break Technion \textendash{} Israel Institute of Technology  \hfill\break  Haifa, Israel} \email{itayglazer@gmail.com}
\begin{abstract}
We discuss some recent results by a number of authors regarding word
maps on algebraic groups and finite simple groups, their mixing properties
and the geometry of their fibers, emphasizing the role played by equidistribution
results in finite fields via recent advances on character bounds and
non-abelian arithmetic combinatorics. In particular, we discuss character varieties of random groups. In the last section, we give
a new proof of a recent theorem of Hrushovski about the geometric
irreducibility of the generic fibers of convolutions of dominant morphisms
to simply connected algebraic groups.

These notes stem out of lectures given by the authors in Oxford, and
by the first author in ICTS Bangalore, in spring 2024. 
\end{abstract}

\maketitle
\pagenumbering{arabic}

\global\long\def\N{\mathbb{N}}%
\global\long\def\R{\mathbb{\mathbb{R}}}%
\global\long\def\Z{\mathbb{Z}}%
\global\long\def\val{\mathbb{\mathrm{val}}}%
\global\long\def\Qp{\mathbb{Q}_{p}}%
\global\long\def\Zp{\mathbb{\mathbb{Z}}_{p}}%
\global\long\def\ac{\mathbb{\mathrm{ac}}}%
\global\long\def\C{\mathbb{\mathbb{C}}}%
\global\long\def\Q{\mathbb{\mathbb{Q}}}%
\global\long\def\supp{\mathbb{\mathrm{supp}}}%
\global\long\def\VF{\mathbb{\mathrm{VF}}}%
\global\long\def\RF{\mathbb{\mathrm{RF}}}%
\global\long\def\VG{\mathbb{\mathrm{VG}}}%
\global\long\def\spec{\operatorname{Spec}}%
\global\long\def\Ldp{\mathbb{\mathcal{L}_{\mathrm{DP}}}}%
\global\long\def\sgn{\mathrm{sgn}}%
\global\long\def\id{\mathrm{Id}}%
\global\long\def\Sym{\mathrm{Sym}}%
\global\long\def\Vol{\mathrm{Vol}}%
\global\long\def\cyc{\mathrm{cyc}}%
\global\long\def\U{\mathrm{U}}%
\global\long\def\SU{\mathrm{SU}}%
\global\long\def\Wg{\mathrm{Wg}}%
\global\long\def\E{\mathbb{E}}%
\global\long\def\Irr{\mathrm{Irr}}%
\global\long\def\P{\mathbb{P}}%
\global\long\def\bh{\mathbf{h}}%
\global\long\def\Span{\operatorname{Span}}%
\global\long\def\pr{\operatorname{pr}}%
\global\long\def\sgn{\operatorname{sgn}}%
\global\long\def\tr{\operatorname{tr}}%
\global\long\def\lct{\operatorname{lct}}%
\global\long\def\sG{\mathsf{G}}%
\global\long\def\sW{\mathsf{W}}%
\global\long\def\sX{\mathsf{X}}%
\global\long\def\sY{\mathsf{Y}}%
\global\long\def\sZ{\mathsf{Z}}%
\global\long\def\sH{\mathsf{H}}%
\global\long\def\sV{\mathsf{V}}%
\global\long\def\sT{\mathsf{T}}%
\global\long\def\v{\mathsf{v}}%
\global\long\def\d{\mathsf{d}}%
\global\long\def\GG{\underline{G}}%
\global\long\def\PP{\underline{P}}%
\global\long\def\g{\mathfrak{g}}%
\global\long\def\l{\mathfrak{l}}%
\global\long\def\p{\mathfrak{p}}%
\global\long\def\n{\mathfrak{n}}%
\global\long\def\m{\mathfrak{m}}%
\global\long\def\t{\mathfrak{t}}%
\global\long\def\q{\mathfrak{q}}%
\global\long\def\QQ{\underline{Q}}%
\global\long\def\LL{\underline{L}}%
\global\long\def\NN{\underline{N}}%
\global\long\def\SL{\mathrm{SL}}%
\global\long\def\Hom{\operatorname{Hom}}%

\raggedbottom

\section{Introduction}

A group $G$ together with a word $w$ in $r$ letters (and their
inverses) give rise to the associated \emph{word map} $w_{G}:G^{r}\to G$
where an $r$-tuple of elements in $G$ is sent to the value of the
word evaluated at the tuple. When $G$ is a field $(K,+)$, this is
nothing else than a linear form with integer coefficients in $r$
variables. For non-abelian $G$, word maps are more subtle objects
and a lot of effort has been devoted in the last decades to unravel
some of their properties (see e.g.~the surveys \cite{Seg09,Sha13,GKP18},
and the references therein). Here is a sample of questions that arise
naturally in this context: is $w_{G}$ surjective? if not, can every
element of $G$ be written as a product of a small number of $w_{G}$
values? If $G$ is finite, how close to uniformly distributed is $w_{G}(g_{1},\ldots,g_{r})$
where the $g_{i}$ are chosen independently at random in $G$? What
is the size or the dimension of a fiber $w_{G}^{-1}(g)$? These questions
can be asked for finite groups, and in particular large finite simple
groups, for compact Lie groups, or also for algebraic groups in arbitrary
characteristic. Surveying the large body of works around these questions
is out of the scope of this article. Rather, we propose here to present
a brief introduction to these topics and the diverse methods they
bring about through the lens of the following three concrete results: 
\begin{enumerate}
\item The proof by Larsen, Shalev and Tiep \cite{LST19} that every word
map $w_{G}$ on a large finite simple group has an $L^{\infty}$-mixing
time which is bounded by a number $t_{\infty}(w)$ depending only
on $w$. 
\item The proof by Becker, Breuillard and Varj\'{u} \cite{BBV} of a dimension
formula for the fibers above the identity element of generic word
maps, 
\item A new analytic proof of a result of Hrushovski \cite{Hru24}, showing
that the convolution of two dominant maps to a simply connected algebraic
group has a geometrically irreducible generic fiber. 
\end{enumerate}
Given a map between two algebraic varieties, the Lang-Weil estimates,
which we recall in Section \ref{sec4}, provide a dictionary between
algebro-geometric notions (dominance, flatness, geometrically irreducible
generic fibers, etc.) and analytic counting estimates in finite fields
(size of the image, size of a fiber, approximate uniformity and boundedness
of the pushforward of the uniform measure, etc.). They will be essential
to the proofs. In fact, on the analytic side, a key role is played
by \emph{equidistribution} in finite fields. The large rank case in
$(1)$ combines recent advances by Larsen and Tiep \cite{LT24} on
sharp character bounds with an argument going back to \cite{LaS12}
proving an upper bound on the size of fibers of the word map. The
small rank case can be proved using the Lang-Weil estimates and using
$(3)$. We will give a proof of $(3)$ that makes use of harmonic
analysis on finite quasi-simple groups, as well as on bounds \cite{Kow07}
on exponential sums associated to arbitrary functions on an algebraic
variety that generalize Deligne's celebrated exponential sums estimates
\cite{Del74}, while Hrushovski's argument was purely model-theoretic.
Regarding $(2)$, essential to the proofs is the fact that random
walks equidistribute very rapidly in the finite simple groups whose
associated Cayley graphs are expander graphs. This expander property
has been established in many instances in the last decade or so, such
as in Bourgain\textendash Gamburd\textendash Sarnak \cite{BGS10},
often relying on methods from arithmetic combinatorics \cite{BG08a,BGT11b,Bre15,PS16}.

This article, which is based on lectures given in Bangalore and in
Oxford in 2024, is mostly expository and we have put the emphasis
on explaining some of the key ideas, sometimes working only on special
illustrative cases, rather than presenting complete proofs. In this
spirit, we have included various ``exercises'' along the way. It is
organized as follows. In Section \ref{sec2} we give further introductory
remarks, recall some landmark results regarding word maps, set up
our notation, and state $(1)$. In Section \ref{sec3} we sketch a
proof of the upper bound for the size of fibers of $w_{G}$ following
an argument of Larsen and Shalev \cite{LaS12}, and a proof of the
high rank case of $(1)$. In Section \ref{sec4} we discuss the Lang-Weil
estimates and make explicit the dictionary mentioned above. This is
then utilized to prove the low rank case of $(1)$. In Section \ref{sec5}
we gather general facts about representation and character varieties
of finitely presented groups and discuss Gromov's random group model.
In Section \ref{sec6} we discuss the expander property for finite
simple groups, and in Section \ref{sec7} we sketch the proof of $(2)$
and discuss the role of Chebotarev's density theorem. In the final
section \ref{sec8}, we prove $(3)$ and state further applications
to algebro-geometric properties of word maps.

\subsection{Conventions}
\begin{itemize}
\item We write $\overline{K}$ for the algebraic closure of a field $K$,
and $\underline{G}$ for an algebraic $K$-group. 
\item Given a field extension $K\leq K'$, and a finite type $K$-scheme
$X$, we denote by $X_{K'}$ the base change of $X$ with respect
to $\spec(K')\rightarrow\spec(K)$. Moreover, if $\varphi:X\rightarrow Y$
is a morphism of $K$-schemes, we denote by $\varphi_{K'}:X_{K'}\rightarrow Y_{K'}$
the corresponding base change to $K'$. 
\item We write $\mathbb{A}_{K}^{m}$ for the $m$-dimensional affine space,
as a $K$-scheme. 
\item Given an algebraic $K$-group $\underline{G}$ and a subset $S$ in
$\underline{G}(K)$, we write $\overline{\langle S\rangle}^{Z}$ for
the Zariski closure of the subgroup generated by $S$, which is an
algebraic subgroup $\underline{H}\leq\underline{G}$. 
\item Let $\mathcal{D}$ be a fixed set (possibly empty) of parameters (i.e.~the
given data). 
\begin{itemize}
\item Given functions $f,g:S\rightarrow\R$, possibly depending on $\mathcal{D}$,
we write $f(s)\gg_{\mathcal{D}}g(s)$ (and also $g=O_{\mathcal{D}}(f)$)
if $f(s)\geq C\cdot g(s)$ for some positive constant $C$ depending
on $\mathcal{D}$.
\item We write $O_{\mathcal{D}}(1)$ to indicate a constant depending only
on the data $\mathcal{D}$. In particular, by $O(1)$ we mean an absolute
constant. 
\end{itemize}
\item We write $\N$ for the set $\left\{ 1,2,\dots\right\} $, and $\Z_{\geq0}$
for $\N\cup\{0\}$. 
\end{itemize}
\begin{acknowledgement*}
We thank Udi Hrushovski for the discussion around Section $\mathsection$\ref{sec8}.
I.G.~was supported by ISF grant 3422/24. 
\end{acknowledgement*}
\setcounter{tocdepth}{1} 

\tableofcontents{}

\section{\label{sec2}Word maps on finite simple groups: probabilistic results }

\subsection{Motivation}

In 1770, Lagrange proved the famous four squares theorem; every natural
number can be represented as the sum of at most four square integers.
Later that year, Waring considered the following generalization, which
was confirmed by Hilbert 139 years later \cite{Hil909}: 
\begin{problem}[Waring's problem, 1770]
\label{prob:Warings problem}Can one find for every $k\in\N$, a
number $t(k)\in\N$, such that every $n\in\Z_{\geq0}$ can be represented
as $n=\sum_{i=1}^{t(k)}x_{i}^{k}$ of $t(k)$ for $x_{1},\dots,x_{t(k)}\in\Z_{\geq0}$?
\end{problem}

We now formulate Waring's problem in a slightly different language,
using the following definition. 
\begin{defn}[\cite{GH19,GH21}]
\label{def:abstract convolution}Let $\varphi:X\rightarrow G$, $\psi:Y\rightarrow G$
be maps from sets $X,Y$ to a (semi-)group $(G,\cdot_{G})$. Define
the \textit{convolution} $\varphi*\psi:X\times Y\rightarrow G$ by
\[
\varphi*\psi(x,y)=\varphi(x)\cdot_{G}\psi(y).
\]
Furthermore, we denote by $\varphi^{*t}:X^{t}\rightarrow G$ the \emph{$t$-th
convolution power}. 
\end{defn}

In this language, Waring's problem can be stated as follows; let $\varphi_{k}:\Z_{\geq0}\rightarrow(\Z_{\geq0},+)$
be the map $\varphi_{k}(x)=x^{k}$. Can one find $t(k)\in\N$ such
that $\varphi_{k}^{*t(k)}:\Z_{\geq0}^{t(k)}\rightarrow(\Z_{\geq0},+)$
is surjective?

More generally, analyzing the surjectivity of $\varphi^{*t}:X^{t}\rightarrow G$,
varying over different semi-groups $G$ and different maps $\varphi:X\rightarrow G$,
gives rise to a family of problems called \emph{Waring-type problems.}
In addition, instead of just asking whether $\varphi^{*t}$ is surjective,
one can ask in how many ways one can write $g\in G$ as $g=\varphi^{*t}(x_{1},\dots,x_{t})$?
or in other words, estimate the size of the fiber $(\varphi^{*t})^{-1}(g)$.
Such problems are called \emph{probabilistic Waring-type problems. }

In the next lectures we will focus on the special case when $\varphi$
is a word map and $G$ is a simple group (finite or algebraic). This
setting was extensively studied by Larsen, Liebeck, Shalev, Tiep,
and many others. We refer to an excellent survey of Shalev \cite{Sha13}.

\subsection{\label{subsec:Waring-type-problems}Waring type problems in the setting
of word maps}
\begin{defn}
Let $w(x_{1},...,x_{r})$ be a word in a free group $F_{r}$ (e.g.~$w=[x,y]$
or $w=x^{\ell}$). For any group $G$, we define a \emph{word map
\[
w_{G}:G^{r}\rightarrow G,\text{\,\,\,\,\,by\,\,\,\,}(g_{1},...,g_{r})\mapsto w(g_{1},...,g_{r}),\,\,\,\text{for }g_{1},\dots,g_{r}\in G.
\]
}
\end{defn}

Note that $w_{G}*w_{G}=(w*w)_{G}$, where $w*w$ denotes concatenation
of words with different letters. For example, $w_{\mathrm{com}}:=[x,y]=xyx^{-1}y^{-1}$
induces the commutator map $w_{\mathrm{com},G}:G^{2}\rightarrow G$,
and $w_{\mathrm{com}}*w_{\mathrm{com}}:=[x,y]\cdot[z,w]$.

Let us mention the state of the art for the Waring problem for word
maps on simple algebraic groups and finite simple groups. 
\begin{thm}[Borel \cite{Bor83}, and also Larsen \cite{Lar04}]
\label{thm:Borel}Let\textup{ $K$}\textup{\emph{ be an algebraically
closed field and let}}\emph{ $1\neq w\in F_{r}$. }hen for every connected
semisimple algebraic $K$-group, the map $w_{\underline{G}}:\underline{G}^{r}\rightarrow\underline{G}$
is dominant.

In particular, $\left(w*w\right)_{\underline{G}(K)}:\underline{G}(K)^{2r}\rightarrow\underline{G}(K)$
is surjective. 
\end{thm}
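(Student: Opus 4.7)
The plan is to establish dominance of $w_{\GG}$ by exhibiting a single point $g\in\GG(K)^r$ at which the differential $dw_{\GG}|_g$ is surjective; by generic smoothness this implies that $w_{\GG}$ is dominant. First I would reduce to the case where $\GG$ is simple: a connected semisimple $K$-group is an almost direct product $\GG=\GG_1\cdots\GG_k$ of simple factors, and the word map decomposes as a product along these factors, so dominance for each $\GG_i$ gives dominance for $\GG$.

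The key idea is to linearise $w_{\GG}$ near the identity. By Magnus's theorem on the residual nilpotence of $F_r$, the nontrivial word $w$ lies in $\gamma_n(F_r)\setminus \gamma_{n+1}(F_r)$ for some $n\ge 1$, and its image $\ell$ in $\gamma_n/\gamma_{n+1}$ is a nonzero homogeneous Lie polynomial of degree $n$ in the free Lie algebra $L_r$ on $r$ generators. In characteristic zero, the Baker--Campbell--Hausdorff formula gives, for $g_i=\exp(\epsilon X_i)$ with $X_i\in\g$ and $\epsilon$ a formal parameter,
\[
w(g_1,\dots,g_r)=\exp\!\bigl(\epsilon^{n}\ell(X_1,\dots,X_r)+O(\epsilon^{n+1})\bigr),
\]
so dominance of $w_{\GG}$ near $(e,\dots,e)$ reduces to dominance of the polynomial map $\ell:\g^r\to\g$.

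The core technical step is then the Lie-algebra lemma: for $\g$ simple and $\ell\in L_r$ nonzero, the map $\ell:\g^r\to\g$ is dominant. The image of $\ell$ is $\mathrm{Ad}(\GG)$-equivariant, so its Zariski closure is a union of adjoint orbits, and its linear span is an $\mathrm{ad}(\g)$-invariant subspace, hence an ideal. One verifies that $\ell$ does not vanish identically on $\g^r$ (for instance by plugging in elements of an $\mathfrak{sl}_2$-triple attached to a root, where Lie-polynomial relations can be controlled), so the ideal is nonzero, and simplicity of $\g$ forces it to equal $\g$. A further analysis of $\mathrm{Ad}$-invariant subvarieties of $\g$ (cut out by the invariant polynomials, and for classical $\g$ reducible by the Jacobson--Morozov trick to a direct computation on $\mathfrak{sl}_2^r$) upgrades ``spanning'' to ``Zariski dense''. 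Generic smoothness then supplies a point where $d\ell$ is surjective, and via the BCH expansion this transports to a point $g\in\GG^r$ at which $dw_{\GG}|_g$ is surjective, yielding dominance.

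The ``in particular'' clause is immediate: dominance means the image of $w_{\GG}$ contains a dense Zariski open $U\subseteq\GG$, and irreducibility forces $U\cap gU^{-1}\neq\emptyset$ for every $g\in\GG(K)$, whence $g\in U\cdot U\subseteq (w*w)_{\GG(K)}(\GG(K)^{2r})$. The main obstacle is positive characteristic: BCH is unavailable, and the Lie algebra $\g$ of a simple algebraic group may fail to be simple in small characteristics (for example, type $A_{p-1}$ in characteristic $p$ has a nontrivial centre in $\g$). To handle this I would either spread the situation out to $\spec(\Z)$ and use openness of the dominance locus on the base together with the established characteristic-zero case, leaving finitely many bad primes to be treated by hand, or replace the BCH reduction by a direct algebraic computation of $dw_{\GG}$ using Fox derivatives on the group scheme, arguing directly from the structure theory of $\GG$ rather than from $\g$.
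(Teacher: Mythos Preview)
The paper does not give its own proof of this theorem; it is stated with citations to Borel \cite{Bor83} and Larsen \cite{Lar04}. So there is no paper-proof to compare against, and I will assess your argument on its own terms.

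There is a genuine gap in the Lie-algebra reduction. You take $\ell$ to be the image of $w$ in $\gamma_n(F_r)/\gamma_{n+1}(F_r)$, which is a nonzero element of the \emph{free} Lie algebra. But you then need $\ell$ to be nonzero \emph{as a map $\g^r\to\g$}, and this can fail: every finite-dimensional simple Lie algebra satisfies nontrivial Lie polynomial identities, so for suitable $w$ the leading term $\ell$ vanishes identically on $\g$. Your proposed verification by plugging in an $\mathfrak{sl}_2$-triple does not avoid this, since $\mathfrak{sl}_2$ itself satisfies such identities. One can try to rescue the approach by replacing $\ell$ with the first BCH coefficient $P_m$ that does \emph{not} vanish on $\g$ (such $m$ exists because $\underline{G}(\C)$ contains free subgroups, so $w_{\underline{G}}$ is nonconstant), but then you still need to prove that a nonvanishing Lie polynomial map on a simple Lie algebra is dominant. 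That is precisely the Lie-algebra analogue of Borel's theorem, which the paper records (item (4) right after the statement of Theorem~\ref{thm:Borel}) as a \emph{later} result \cite{BGKP12}, proved under the extra hypothesis of nonvanishing on $\mathfrak{sl}_2$. So your route is not a shortcut: it reduces Borel's theorem to a result that is both chronologically later and comes with a hypothesis you have not verified. The additional step of upgrading ``linear span equals $\g$'' to ``Zariski dense in $\g$'' is also not as soft as you suggest; the nilpotent cone is a proper $\mathrm{Ad}$-invariant cone that already spans $\g$, so spanning alone does not suffice.

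For comparison, Borel's original argument stays on the group and avoids the Lie-algebra detour entirely: one reduces to simple $\underline{G}$, then to $\SL_2$ (via the root $\SL_2$'s), and argues directly with the trace function $\tr\circ w_{\SL_2}$ on $\SL_2^r$, showing it is nonconstant (using, ultimately, that $\SL_2$ contains free subgroups). Your ``in particular'' paragraph and the spreading-out strategy for positive characteristic are both fine, but they sit on top of a characteristic-zero argument that, as written, does not close.
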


\begin{thm}[Larsen\textendash Shalev\textendash Tiep, \cite{LST11}]
For every $1\neq w\in F_{r}$, there exists $N(w)\in\N$ such that
for every finite simple group $G$, with $\left|G\right|>N(w)$, $(w*w)_{G}:G^{2r}\rightarrow G$
is surjective. 
\end{thm}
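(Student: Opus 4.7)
The plan is to show that for $|G|$ large depending on $w$, the probability distribution $\mu_{w*w}$ on $G$ induced by the word map $(w*w)_G$ from the uniform measure on $G^{2r}$ is strictly positive everywhere, which is equivalent to surjectivity. Set $\mu_w(h) := |w_G^{-1}(h)|/|G|^r$. Since $w*w$ uses disjoint sets of letters, $\mu_{w*w} = \mu_w * \mu_w$, the convolution on $G$. As $\mu_w$ is a class function, expanding it in the orthonormal basis of irreducible characters of $G$ and using the standard convolution identity $\chi_i * \chi_j = \delta_{ij}(|G|/\chi_i(1))\chi_i$, one obtains
\[
\mu_{w*w}(g) = \frac{1}{|G|} + \frac{1}{|G|} \sum_{1 \neq \chi \in \Irr(G)} \frac{\overline{\xi_w(\chi)}^{\,2}\, \chi(g)}{\chi(1)},
\]
where $\xi_w(\chi) := \E_{\mathbf{g} \in G^r}[\chi(w(\mathbf{g}))]$ is the average of $\chi$ over word values. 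Surjectivity follows once
\[
\sum_{1 \neq \chi \in \Irr(G)} \frac{|\xi_w(\chi)|^2 \,|\chi(g)|}{\chi(1)} < 1
\]
holds uniformly in $g$, provided $|G| > N(w)$.

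The heart of the argument is a character estimate of the form $|\xi_w(\chi)| \leq \chi(1)^{-\epsilon(w)}$ valid for some $\epsilon(w) > 0$ independent of $G$, for every non-trivial $\chi$ and every finite simple group $G$. Granting this and using $|\chi(g)| \leq \chi(1)$, each summand is majorized by $\chi(1)^{-2\epsilon(w)}$, reducing the task to showing that the Witten zeta sum $\sum_{1 \neq \chi} \chi(1)^{-2\epsilon(w)}$ tends to zero as $|G| \to \infty$. This is a classical result of Liebeck and Shalev: for finite simple groups of Lie type of bounded rank it holds for any positive exponent, while for unbounded rank (and for alternating groups) one must take the exponent above an explicit threshold. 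In either case, choosing $|G| > N(w)$ suffices to close the inequality.

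The principal obstacle is establishing the character bound $|\xi_w(\chi)| \leq \chi(1)^{-\epsilon(w)}$. Combining the Larsen--Shalev fiber upper bound outlined in Section \ref{sec3}, which gives $\|\mu_w\|_\infty \leq |G|^{-\delta(w)}$, with Cauchy--Schwarz yields only $|\xi_w(\chi)|^2 \leq |G| \cdot \|\mu_w\|_\infty \leq |G|^{1-\delta(w)}$, a bound independent of $\chi(1)$ and therefore too weak for our purposes. Upgrading to a $\chi(1)$-dependent estimate requires sharp pointwise character bounds of the form $|\chi(h)| \leq \chi(1)^{1-\sigma(h)}$ on non-central conjugacy classes (due to Gluck, Liebeck--Shalev, and refined by Bezrukavnikov--Liebeck--Shalev--Tiep and Larsen--Tiep), combined with the observation that the bulk of the mass of $\mu_w$ sits on elements $h$ where $\sigma(h)$ is bounded below by a constant depending only on $w$. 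Inserting these pointwise bounds into a dyadic decomposition over conjugacy classes, and interpolating against the fiber estimate, produces the required power saving in $\chi(1)$ and completes the proof.
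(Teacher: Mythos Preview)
Your approach has a genuine gap: the character estimate you invoke, $|\xi_w(\chi)|\le \chi(1)^{-\epsilon(w)}$, is false in general. Take $w=x^{2}$. Then $\xi_w(\chi)=|G|^{-1}\sum_{g}\chi(g^{2})$ is the Frobenius--Schur indicator, equal to $\pm 1$ whenever $\chi$ is real-valued. Finite simple groups of Lie type have real characters of arbitrarily large degree (e.g.\ the Steinberg character), so no bound of the form $\chi(1)^{-\epsilon}$ can hold. The correct estimate, proved in \cite{LST19} and recorded in this paper as Theorem~\ref{thm:bounds on Fourier coefficients}, is only $|\xi_w(\chi)|\le \chi(1)^{1-\epsilon(w)}$; inserting that into your sum yields $\sum_{\chi\neq 1}\chi(1)^{2-2\epsilon(w)}$, which diverges badly. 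The sketch in your last paragraph (fiber bounds, pointwise character bounds, dyadic decomposition) is exactly the machinery behind the $\chi(1)^{1-\epsilon(w)}$ bound and cannot be pushed further.

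More conceptually, if your argument worked it would give $\|\tau_{w,G}^{*2}-\mu_G\|_\infty\to 0$, i.e.\ $t_\infty(w)\le 2$ for every non-trivial $w$. But Example~\ref{exa:bounds on epsilon of power word} shows $t_\infty(x^{\ell})\ge \ell$. Surjectivity of $(w*w)_G$ is therefore genuinely weaker than what your method attempts, and cannot be reached by bounding the Fourier tail of $\tau_{w,G}^{*2}$ termwise against the Witten zeta function.

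The paper does not prove this theorem; it only cites \cite{LST11}. The argument there proceeds differently: using Borel's dominance theorem (Theorem~\ref{thm:Borel}) together with Lang--Weil one shows that, for $|G|$ large, the word image $w_G(G^r)$ contains conjugacy classes $C$ of elements with small centralizer (regular semisimple elements, say), and then one applies the Frobenius formula for products of conjugacy classes together with sharp character bounds \emph{at those specific elements} to conclude $C\cdot C=G$. The Fourier analysis is carried out on the indicator of a carefully chosen large class inside the word image, not on the full word measure $\tau_{w,G}$; this is what allows two factors to suffice.
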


Certain words are in fact surjective over all finite simple groups,
so that no convolutions are needed. A notable example is the following
theorem, which answered a conjecture by Ore from the fifties. 
\begin{thm}[The Ore conjecture 1951, \cite{LOST10}]
If $G$ is a finite non-abelian simple group, then $w_{\mathrm{com},G}:G^{2}\rightarrow G$
is surjective.
\end{thm}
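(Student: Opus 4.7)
The plan is to use Frobenius's formula as the main analytic tool. Recall that for a finite group $G$ and $g \in G$, the number $N(g)$ of pairs $(x,y) \in G \times G$ satisfying $[x,y]=g$ is
\[
N(g) = |G| \sum_{\chi \in \Irr(G)} \frac{\chi(g)}{\chi(1)},
\]
so the theorem reduces to showing that this character sum never vanishes. Since the trivial character contributes $1$, it is enough to establish
\[
\Bigl| \sum_{1 \neq \chi \in \Irr(G)} \frac{\chi(g)}{\chi(1)} \Bigr| < 1 \quad \text{for every } g \in G.
\]

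To carry this out I would appeal to the classification of finite simple groups and treat the families separately. For alternating groups $A_n$ with $n \geq 5$, the statement admits a fairly direct combinatorial treatment via cycle-structure manipulations, going back essentially to Ore himself and completed by Bertram. The $26$ sporadic groups together with the Tits group are finitely many, and can be disposed of by inspection of their (known) character tables. The heart of the matter, and the part that requires genuinely new ideas, is the family of groups of Lie type.

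For groups of Lie type of large rank, the plan is to combine character bounds of the shape $|\chi(g)| \leq C\chi(1)^{1-\delta(g)}$, where $\delta(g) > 0$ depends on the centralizer $C_G(g)$, with estimates on the Liebeck--Shalev zeta function $\zeta_G(s) := \sum_{\chi \in \Irr(G)} \chi(1)^{-s}$, which is known to tend to $0$ as $|G| \to \infty$ for fixed $s > 0$ once the Lie rank is sufficiently large. Feeding one into the other yields
\[
\sum_{\chi \neq 1} \frac{|\chi(g)|}{\chi(1)} \leq C\,\zeta_G(\delta),
\]
and the right-hand side is strictly less than $1$ in the high-rank regime, settling that case.

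The main obstacle is the regime where both mechanisms simultaneously degrade: groups of bounded Lie rank (small classical and exceptional groups) paired with ``bad'' elements $g$ whose centralizer $|C_G(g)|$ is unusually large, so that the generic character bounds give nothing and the zeta-function estimate is not yet effective. Here the strategy splits in two. First, refine the analysis by using Deligne--Lusztig theory and explicit control of unipotent and Lusztig series characters to obtain sharper element-wise bounds on the dominant contributions to the sum. Second, the finitely many remaining small groups and the finitely many ``shapes'' of centralizers uncovered by the asymptotic arguments are reduced to an explicit, computer-assisted verification on character tables. This last reduction — pinning down precisely which cases the asymptotic machinery misses and then executing the finite check — is where the real difficulty lies, and constitutes the bulk of the Liebeck--O'Brien--Shalev--Tiep argument.
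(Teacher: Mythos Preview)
The paper does not prove this theorem. It is stated as a cited result from \cite{LOST10} (Liebeck--O'Brien--Shalev--Tiep) and no proof is given in the text; the theorem functions purely as background in the survey of Waring-type results. So there is no ``paper's own proof'' to compare your proposal against.

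That said, your outline is a fair high-level summary of the actual strategy in \cite{LOST10}: Frobenius's character formula reduces surjectivity to a non-vanishing statement, and one attacks the resulting sum via character bounds and zeta-function estimates, with the alternating and sporadic cases handled separately and the hard work concentrated in bounded-rank Lie-type groups over small fields, ultimately requiring Deligne--Lusztig theory and extensive computation. As a sketch of the literature this is accurate; as a self-contained proof it is of course only a roadmap, since the substance lies precisely in the refined character estimates and the reduction to a finite check, neither of which you carry out.
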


Here are a few more examples in other settings: 
\begin{enumerate}
\item \textbf{Compact simple Lie groups} \cite{HLS15}: Let $1\neq w\in F_{r}$
and let $G$ be a compact connected simple Lie group of high rank
$\mathrm{rk}(G)\gg_{w}1$. Then $(w*w)_{G}:G^{2r}\rightarrow G$ is
surjective. 
\item \textbf{Compact $p$-adic groups} \cite{AGKS13}: Let $1\neq w\in F_{r}$.
Then for every $n\geq2$ and every $p\gg_{n}1$, the map $(w^{*3})_{\mathrm{SL}_{n}(\Zp)}$
is surjective. More generally, one can take $G=\underline{G}(\Zp)$
for $\underline{G}$ simply connected, simple algebraic $\Q$-group.
\item \textbf{Arithmetic groups }\cite{AM19}: Let $1\neq w\in F_{r}$.
Then $w_{\mathrm{SL}_{n}(\Z)}^{*87}$ is surjective for $n\gg_{w}1$.
\item \textbf{Simple Lie algebras:} In \cite{BGKP12}, an analogue of Borel's
theorem (Theorem \ref{thm:Borel}) was shown for Lie algebra word
maps on semisimple Lie algebras $\mathfrak{g}$ under the additional
assumption that the word map is not identically zero on $\mathfrak{sl}_{2}$. 
\end{enumerate}

\subsection{Probabilistic Waring type problems for word maps}

In $\mathsection$\ref{subsec:Waring-type-problems} we saw that word
maps $w_{G}:G^{r}\rightarrow G$ in various settings, become surjective
after taking very few self-convolutions. In other words, any $g\in G$
can be written as $w_{G}^{*t}(g_{1},\dots,g_{rt})=g$ for some $g_{1},\dots,g_{rt}\in G$.
We now consider the case when $G$ is finite, and discuss \emph{in
how many ways} one can write $w_{G}^{*t}(g_{1},\dots,g_{rt})=g$,
or in other words: can one estimate the size of the fiber $(w_{G}^{*t})^{-1}(g)$?
This boils down to analyzing the random walk induced by $w$ on $G$.
We start by introducing basic notions and results from the theory
of random walks on finite groups. 

\subsubsection{\label{subsec:Random-walk-on finite groups}Random walk on finite
groups}

Let $G$ be a finite group, and denote by $\mu_{G}$ the uniform probability
measure, which can be identified with the constant function $\frac{1}{\left|G\right|}$
on $G$. Let $\mu$ be a probability measure on $G$. It will later
be convenient to write $\mu=f_{\mu}\mu_{G}$, where $f_{\mu}$ is
the density of $\mu$ with respect to $\mu_{G}$. If $G$ is finite
this simply means $f_{\mu}(g)=\left|G\right|\mu(g)$. 

The measure $\mu$ induces a random walk on $G$ as follows. In the
first step, we choose a random element $h_{1}\in G$, distributed
according to $\mu$. In the second step, choose a random element $h_{2}\in G$,
distributed according to $\mu$, independently of step 1, and move
to $h_{1}\cdot h_{2}$. Continuing this way, choosing $h_{1},\dots,h_{t}$
independently at random, the probability to reach $g\in G$ after
$t$ steps is given by
\[
\mu^{*t}(g)=\mu*\dots*\mu(g):=\sum_{h_{1},...,h_{t}\in G\text{ s.t. }h_{1}\cdot...\cdot h_{t}=g}\mu(h_{1})\cdot...\cdot\mu(h_{t}),
\]
and moreover, 
\[
f_{\mu^{*t}}(g)=f_{\mu}*...*f_{\mu}(g)=\frac{1}{\left|G\right|^{t-1}}\sum_{h_{1},...,h_{t}\in G\text{ s.t. }h_{1}\cdot...\cdot h_{t}=g}f_{\mu}(h_{1})\cdot...\cdot f_{\mu}(h_{t}).
\]

Denote by $\Irr(G)$ the set of irreducible characters of $G$. Recall
that $\Irr(G)$ is an orthonormal basis for the space of conjugate
invariant functions $\C[G]^{G}$, with respect to the inner product
$\langle f_{1},f_{2}\rangle=\frac{1}{\left|G\right|}\sum_{g\in G}f_{1}(g)\overline{f_{2}}(g)$.
If $\mu$ is a conjugate invariant measure, we can write: 
\[
f_{\mu}(g)=\sum_{\rho\in\mathrm{Irr}(G)}a_{\mu,\rho}\rho(g),
\]
where $a_{\mu,\rho}:=\sum_{g\in G}\overline{\rho(g)}\mu(g)=\langle f_{\mu},\rho\rangle$
is the Fourier coefficient of $\mu$ at $\rho$. 
\begin{xca}
\label{exer:convolution of characters}For every $\rho_{1},\rho_{2}\in\Irr(G),$
we have $\rho_{1}*\rho_{2}=\frac{\delta_{\rho_{1},\rho_{2}}}{\rho_{1}(1)}\rho_{1}$. 
\end{xca}

By Exercise \ref{exer:convolution of characters}, we get: 
\[
f_{\mu}^{*t}(g)=\sum_{\rho\in\mathrm{Irr}(G)}\frac{a_{\mu,\rho}^{t}}{\rho(1)^{t-1}}\rho(g)=1+\sum_{1\neq\rho\in\mathrm{Irr}(G)}\frac{a_{\mu,\rho}^{t}}{\rho(1)^{t-1}}\rho(g).
\]

\begin{defn}
~\label{def:L^q norms}Let $1\leq q\leq\infty$. 
\begin{enumerate}
\item For every $f:G\rightarrow\C$, we set $\left\Vert f\right\Vert _{q}:=\left(\frac{1}{\left|G\right|}\sum_{g\in G}\left|f(g)\right|^{q}\right)^{\frac{1}{q}}$.
\item Given a signed measure $\mu$ on $G$, we set $\left\Vert \mu\right\Vert _{q}:=\left\Vert f_{\mu}\right\Vert _{q}$.
In particular, in this notation we have $\left\Vert \mu_{G}\right\Vert _{q}=1$
for every $1\leq q\leq\infty$. 
\end{enumerate}
\end{defn}

\begin{rem}
\label{rem:Jensen and Young's inequality}The following two inequalities
will be useful.
\begin{enumerate}
\item \textbf{Jensen's inequality}: for every $f:G\rightarrow\C$ and $1\leq q\leq q'\leq\infty$,
we have $\left\Vert f\right\Vert _{q}\leq\left\Vert f\right\Vert _{q'}$.
\item \textbf{Young's convolution inequality}: given $f,h:G\rightarrow\C$,
and given $1\leq q,s,r\leq\infty$ with $\frac{1}{q}+\frac{1}{s}=1+\frac{1}{r}$,
we have $\left\Vert f*h\right\Vert _{r}\leq\left\Vert f\right\Vert _{q}\left\Vert h\right\Vert _{s}$.
\end{enumerate}
\end{rem}

\begin{lem}
\label{lem:Mixing of random walks}Let $G$ be a finite group, and
let $\mu$ be a conjugate invariant probability measure. Suppose that
$1\in\mathrm{supp}\mu\nsubseteq N$ for every proper normal subgroup
$N\vartriangleleft G$. Then there exists $0<\alpha<1$ such that
for every $t\in\mathbb{N}$ and every $q\geq1$, 
\[
\left\Vert \mu^{*t}-\mu_{G}\right\Vert _{q}\leq\left\Vert \mu^{*t}-\mu_{G}\right\Vert _{\infty}\leq\left|G\right|\cdot\alpha^{t}.
\]
\end{lem}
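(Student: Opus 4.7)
The first inequality is immediate from Jensen's inequality (Remark \ref{rem:Jensen and Young's inequality}), since $\|\mu^{*t}-\mu_G\|_q = \|f_{\mu^{*t}}-1\|_q \leq \|f_{\mu^{*t}}-1\|_\infty$ for every $q\geq 1$. The content of the lemma is the second inequality, which I would attack through the Fourier/character expansion that the excerpt has already set up.

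The plan is to use the formula
\[
f_{\mu^{*t}}(g)-1 \;=\; \sum_{1\neq \rho\in\Irr(G)} \frac{a_{\mu,\rho}^{\,t}}{\rho(1)^{t-1}}\rho(g)
\]
derived just before the lemma, together with the trivial bound $|\rho(g)|\leq \rho(1)$. This gives
\[
\|\mu^{*t}-\mu_G\|_\infty \;\leq\; \sum_{1\neq \rho} \left(\frac{|a_{\mu,\rho}|}{\rho(1)}\right)^{\!t}\rho(1)^2.
\]
Since $\sum_{\rho\in\Irr(G)}\rho(1)^2=|G|$, the lemma will follow from the single key claim: if $\rho$ is a nontrivial irreducible character, then $|a_{\mu,\rho}|<\rho(1)$. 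Taking $\alpha$ to be the maximum of $|a_{\mu,\rho}|/\rho(1)$ over the finitely many nontrivial $\rho\in\Irr(G)$, one obtains a uniform $\alpha<1$ and the bound $\|\mu^{*t}-\mu_G\|_\infty\leq |G|\alpha^t$, as required.

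The main step, and the only nontrivial one, is the strict inequality $|a_{\mu,\rho}|<\rho(1)$. Write
\[
\frac{a_{\mu,\rho}}{\rho(1)} \;=\; \sum_{g\in G}\mu(g)\,\frac{\overline{\rho(g)}}{\rho(1)},
\]
so $a_{\mu,\rho}/\rho(1)$ is a convex combination of points $\overline{\rho(g)}/\rho(1)$ lying in the closed unit disk $\{z\in\C:|z|\leq 1\}$ (this is where $|\rho(g)|\leq \rho(1)$ is used again). Equality $|a_{\mu,\rho}/\rho(1)|=1$ would force all points $\overline{\rho(g)}/\rho(1)$ with $g\in\supp\mu$ to coincide at a single point on the unit circle, by strict convexity of the disk. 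Because $1\in\supp\mu$ contributes the value $1$, the common value must be $1$, so $\rho(g)=\rho(1)$ for every $g\in\supp\mu$, i.e.\ $\supp\mu\subseteq \ker\rho$. But $\ker\rho$ is a normal subgroup of $G$, and it is proper because $\rho$ is nontrivial; this contradicts the hypothesis that $\supp\mu$ is not contained in any proper normal subgroup. Hence $|a_{\mu,\rho}|<\rho(1)$ for every $1\neq\rho\in\Irr(G)$, completing the proof.

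The main (and essentially only) obstacle is this convexity-plus-normal-subgroup argument; the rest is bookkeeping with the character expansion and the Plancherel identity $\sum_\rho\rho(1)^2=|G|$. The conjugate-invariance of $\mu$ is used only to ensure $f_\mu\in\C[G]^G$ so that the character expansion is valid.
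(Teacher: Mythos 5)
Your proof is correct. The overall skeleton is the same as the paper's: reduce everything to the strict inequality $\left|a_{\mu,\rho}\right|<\rho(1)$ for $1\neq\rho\in\Irr(G)$, set $\alpha$ to be the maximum of $\left|a_{\mu,\rho}\right|/\rho(1)$, and conclude via $\sum_{\rho}\rho(1)^{2}=\left|G\right|$. Where you genuinely diverge is in proving that key strict inequality. The paper argues dynamically: using $1\in\supp\mu$ it shows the supports $\supp(\mu^{*t})$ stabilize to a set $S$ with $S\cdot S=S$, which (by conjugate invariance and the hypothesis on normal subgroups) must be all of $G$; then $f_{\mu^{*t_{0}}}\geq\delta>0$ everywhere yields $\left|a_{\mu^{*t_{0}},\rho}\right|\leq\rho(1)(1-\delta)$ and hence $\left|a_{\mu,\rho}\right|<\rho(1)$. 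You instead argue statically, via the equality case of the triangle inequality: $a_{\mu,\rho}/\rho(1)$ is a convex combination of points $\overline{\rho(g)}/\rho(1)$ in the closed unit disk, so modulus $1$ forces all points with $g\in\supp\mu$ to coincide at a unimodular value, which must be $1$ because $1\in\supp\mu$; then $\supp\mu\subseteq\ker\pi_{\rho}$, a proper normal subgroup, contradicting the hypothesis. Your route is shorter and avoids the support-growth argument entirely (using conjugate invariance only for the validity of the character expansion, as you note), while the paper's route has the side benefit of producing an explicit quantitative mechanism ($\left|a_{\mu,\rho}\right|/\rho(1)\leq(1-\delta)^{1/t_{0}}$ in terms of the stabilization time $t_{0}$ and the minimum $\delta$ of the stabilized density), which is closer in spirit to the mixing-time estimates pursued later in the paper. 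Both correctly use each half of the hypothesis $1\in\supp\mu\nsubseteq N$.
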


\begin{proof}
First note it is enough to show that $\left|a_{\mu,\rho}\right|<\rho(1)$
for all $1\neq\rho\in\Irr(G)$. Indeed, we then take $\alpha:=\underset{1\neq\rho\in\Irr(G)}{\max}\frac{\left|a_{\mu,\rho}\right|}{\rho(1)}$,
so that
\[
\left\Vert \mu^{*t}-\mu_{G}\right\Vert _{\infty}=\left\Vert \sum_{1\neq\rho\in\Irr(G)}\frac{a_{\mu,\rho}^{t}}{\rho(1)^{t-1}}\rho(g)\right\Vert _{\infty}\leq\sum_{1\neq\rho\in\Irr(G)}\frac{\left|a_{\mu,\rho}^{t}\right|}{\rho(1)^{t}}\rho(1)^{2}\leq\alpha^{t}\sum_{1\neq\rho\in\Irr(G)}\rho(1)^{2}\leq\left|G\right|\cdot\alpha^{t}.
\]
Since $\mathrm{Supp}(\mu^{*t})\subseteq\mathrm{Supp}(\mu^{*t+1})$,
there exists $t_{0}\in\N$ such that $\mathrm{S:=Supp}(\mu^{*t_{0}})=\mathrm{Supp}(\mu^{*(t_{0}+1)})$,
so $S\cdot S=S$ and $S^{-1}\subseteq S^{\left|G\right|-1}=S$ and
$S$ is a normal subgroup, hence $S=G$. In particular,\textbf{ }there
exists $\delta>0$ such that $f_{\mu^{*t_{0}}}(g)>\delta$ for every
$g\in G$. Finally, 
\begin{align*}
\frac{\left|a_{\mu,\rho}^{t_{0}}\right|}{\rho(1)^{t_{0}-1}}=\left|a_{\mu^{*t_{0}},\rho}\right| & =\left|\frac{1}{\left|G\right|}\sum_{g\in G}\overline{\rho(g)}\cdot\delta+\frac{1}{\left|G\right|}\sum_{g\in G}\overline{\rho(g)}(f_{\mu^{*t_{0}}}(g)-\delta)\right|\\
 & \leq0+\rho(1)\frac{1}{\left|G\right|}\sum_{g\in G}(f_{\mu^{*t_{0}}}(g)-\delta)\leq\rho(1)(1-\delta).\qedhere
\end{align*}
\end{proof}
\begin{defn}
\label{def:mixing time}The minimal $t\in\mathbb{N}$ such that $\left\Vert \mu^{*t}-\mu_{G}\right\Vert _{q}<\frac{1}{2}$
is called the $L^{q}$\textit{-mixing time, }and denoted $t_{q}(\mu)$,
or $t_{q}$ if $\mu$ is clear from the context. 
\end{defn}

\begin{rem}
\label{rem:1/2 is arbitrary}~
\begin{enumerate}
\item The choice of $\frac{1}{2}$ in Definition \ref{def:mixing time}
is for definiteness; we could have taken any other numerical value
smaller than $1$. 
\item Lemma \ref{lem:Mixing of random walks} holds more generally for aperiodic
(not necessarily conjugate invariant) measures. This follows e.g.~from
the It\^{o}\textendash Kawada equidistribution theorem \cite{IK40}
(see also \cite[Theorem 4.6.3]{App14}).
\item It follows from Remark \ref{rem:Jensen and Young's inequality}(1)
that $t_{q}(\mu)\leq t_{q'}(\mu)$ if $1\leq q\leq q'$.
\end{enumerate}
\end{rem}

\begin{xca}
\textit{We have $\left\Vert \mu^{*t_{q}l}-\mu_{G}\right\Vert _{q}<2^{-l}$
for any $l\in\mathbb{N}$ }(see e.g.~\cite[Lemma 4.18]{LeP17}). 
\end{xca}

\begin{example}[Bayer\textendash Diaconis, \cite{BD92}]
Shuffling a deck of $52$ playing cards can be seen as applying a
random permutation of $52$ elements, i.e.~a probability measure
on $S_{52}$. The randomness comes from the non-deterministic nature
of shuffling performed by human beings. Repeating the same method
of shuffling several times can be seen as applying a random walk on
the symmetric group $S_{52}$, of the same type presented in $\mathsection$\ref{subsec:Random-walk-on finite groups}.
Bayer and Diaconis studied a common shuffling method called ``riffle
shuffle'', which is used for example in many Casinos, and showed
that it takes $7$ to $8$ riffle shuffles to mix a deck of $52$
cards. More precisely, if $\mu$ is a probability measure on $S_{52}$,
corresponding to a random riffle shuffle of $52$ cards (based on
the Gilbert\textendash Shannon\textendash Reeds model), then the $L^{1}$-mixing
time of $\mu$, according to Definition \ref{def:mixing time}, is
$8$ ( $\left\Vert \mu^{*7}-\mu_{S_{52}}\right\Vert _{1}\sim0.67$
and $\left\Vert \mu^{*8}-\mu_{S_{52}}\right\Vert _{1}\sim0.33$).
Here is a \href{https://www.youtube.com/watch?v=AxJubaijQbI}{Numberphile video}
about this theorem. 
\end{example}

\subsubsection{Probabilistic Waring problem: finite simple groups}
\begin{defn}
Let $w\in F_{r}$ be a word and $G$ be a group. We set $\tau_{w,G}:=(w_{G})_{*}(\mu_{G}^{r})$
to be the corresponding \emph{word measure.} Note that\emph{ 
\[
\tau_{w,G}(g)=\frac{\left|w^{-1}(g)\right|}{\left|G\right|^{r}}.
\]
} 
\end{defn}

\begin{xca}
Show that $\tau_{w_{1},G}*\tau_{w_{2},G}=\tau_{w_{1}*w_{2},G}$ and
that $\tau_{w,G}^{*t}(g)=\frac{\left|(w^{*t})^{-1}(g)\right|}{\left|G\right|^{rt}}$. 
\end{xca}

We are interested in the family of random walks $\left\{ \tau_{w,G}\right\} _{G\text{ f.s.g}}$
on the family of finite simple groups. We say that the family $\left\{ \tau_{w,G}\right\} _{G\text{ f.s.g}}$
has a \emph{uniform $L^{q}$-mixing time of }$t_{q}(w)$, if $t_{q}(w)$
is the minimal $t\in\N$ such that: 
\begin{equation}
\underset{\left|G\right|\rightarrow\infty\,\,G\text{ f.s.g}}{\lim}\left\Vert \tau_{w,G}^{*t}-\mu_{G}\right\Vert _{q}=0.\label{eq:uniform mixing time}
\end{equation}
Since the family of finite simple groups is infinite, it is a priori
not clear that $t_{q}(w)$ exists. A deep result of Larsen, Shalev
and Tiep (Theorem \ref{thm:LST2} below) shows that this is indeed
the case, in the strongest sense of $q=\infty$. Moreover, in the
case that $q=1$, it turns out that $t_{1}(w)\leq2$ for every non-trivial
word $w$. 
\begin{thm}[{\cite[Theorem 1]{LST19}}]
\label{thm:LST1}Let $1\neq w\in F_{r}$ be a word. Then the family
$\left\{ \tau_{w,G}\right\} _{G\text{ f.s.g}}$ has a uniform $L^{1}$-mixing
time of $t_{1}(w)\leq2$. 
\end{thm}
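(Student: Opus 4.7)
The plan is to pass from $L^{1}$ to $L^{2}$ and then exploit Fourier analysis on class functions. Since $w_{G}$ is equivariant for the diagonal conjugation action on $G^{r}$, the word measure $\tau_{w,G}$ is conjugation invariant, and its density admits the Fourier expansion $f_{\tau_{w,G}} = 1 + \sum_{1 \neq \rho \in \Irr(G)} a_{w,\rho}\,\rho$, where
\[
a_{w,\rho} = \frac{1}{|G|^{r}} \sum_{g_{1}, \ldots, g_{r} \in G} \overline{\rho(w(g_{1}, \ldots, g_{r}))}.
\]
Jensen's inequality (Remark \ref{rem:Jensen and Young's inequality}(1)) gives $\|\tau_{w,G}^{*2} - \mu_{G}\|_{1} \le \|\tau_{w,G}^{*2} - \mu_{G}\|_{2}$, and combining Exercise \ref{exer:convolution of characters} with Parseval's identity yields the key formula
\[
\|\tau_{w,G}^{*2} - \mu_{G}\|_{2}^{2} = \sum_{1 \neq \rho \in \Irr(G)} \frac{|a_{w,\rho}|^{4}}{\rho(1)^{2}}.
\]
It therefore suffices to show that this sum tends to $0$ as $|G| \to \infty$ along non-abelian finite simple groups.

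To control the sum, two inputs are needed. The first is a uniform \emph{character bound on word Fourier coefficients}: for a fixed non-trivial $w \in F_{r}$, one wants $\alpha = \alpha(w) > \tfrac{1}{2}$ and $N(w)$ such that for every finite simple $G$ with $|G| > N(w)$ and every $1 \neq \rho \in \Irr(G)$,
\[
|a_{w,\rho}| \le \rho(1)^{1 - \alpha}.
\]
This is the analytic heart of \cite{LST19}. One writes $a_{w,\rho}$ as an average of $\rho(1) \cdot \overline{\rho(w(g_{1}, \ldots, g_{r}))}/\rho(1)$ over tuples $(g_{1}, \ldots, g_{r})$, and plugs in sharp character ratio estimates of the form $|\rho(g)|/\rho(1) \le \rho(1)^{-\sigma}$ à la Larsen--Tiep \cite{LT24} valid outside a ``character-bad'' locus, together with the fiber estimates for word maps going back to \cite{LaS12} (sketched in Section \ref{sec3}) which ensure that $w_{G}(g_{1}, \ldots, g_{r})$ lands outside this bad locus with sufficiently high probability. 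The argument splits into a large-rank regime, handled directly by the Larsen--Tiep bounds, and a bounded-rank regime, handled via Lang--Weil counting (Section \ref{sec4}).

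The second input is a uniform \emph{Witten zeta bound}: by a theorem of Liebeck--Shalev, for every $s > 0$ there is $N(s)$ such that $\sum_{1 \neq \rho \in \Irr(G)} \rho(1)^{-s} \le 1$ for every finite simple $G$ with $|G| > N(s)$, and moreover the minimal non-trivial character degree $m(G) := \min_{1 \neq \rho}\rho(1)$ tends to infinity with $|G|$. Granted the character bound, choose any $s > 0$ with $s < 4\alpha(w) - 2$; then
\[
\sum_{1 \neq \rho} \frac{|a_{w,\rho}|^{4}}{\rho(1)^{2}} \;\le\; \sum_{1 \neq \rho} \rho(1)^{2 - 4\alpha(w)} \;\le\; m(G)^{\,2 - 4\alpha(w) + s} \sum_{1 \neq \rho} \rho(1)^{-s},
\]
and the right-hand side tends to $0$ as $|G| \to \infty$ because the exponent $2 - 4\alpha(w) + s$ is strictly negative while the Witten zeta tail is uniformly bounded.

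The main obstacle is evidently the character bound $|a_{w,\rho}| \le \rho(1)^{1 - \alpha(w)}$ with $\alpha(w) > \tfrac{1}{2}$: the trivial bound is only $|a_{w,\rho}| \le \rho(1)$, and extracting any quantitative power saving uniformly over all finite simple groups $G$ is precisely what demands the Larsen--Tiep character estimates combined with the fiber estimates of \cite{LaS12}. Once this bound is in hand, the conclusion $t_{1}(w) \le 2$ is a formal consequence of Jensen, Parseval, and Liebeck--Shalev as above.
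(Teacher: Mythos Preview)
Your plan has a genuine gap: the Fourier bound $|a_{w,\rho}|\le\rho(1)^{1-\alpha(w)}$ with $\alpha(w)>\tfrac12$ is \emph{not} what \cite{LST19} proves, and in fact it is false for long power words. What is established (Theorem~\ref{thm:bounds on Fourier coefficients}) is only $\epsilon(w)>0$, with $\epsilon(w)$ allowed to be as small as $c/\ell(w)^{4}$. Your argument, if it worked, would actually yield the stronger conclusion $t_{2}(w)\le 2$, because you bound $\|\tau_{w,G}^{*2}-\mu_G\|_2$ directly. But $t_{2}(w)\le 2$ forces $t_{\infty}(w)\le 4$ via Young's inequality:
\[
\|\tau_{w,G}^{*4}-\mu_G\|_{\infty}=\|(\tau_{w,G}^{*2}-\mu_G)^{*2}\|_{\infty}\le\|\tau_{w,G}^{*2}-\mu_G\|_{2}^{2}\to 0.
\]
This contradicts Example~\ref{exa:bounds on epsilon of power word}, which shows $t_{\infty}(x^{\ell})\ge\ell$ for the power word. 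So for $\ell\ge 5$ one has $t_{2}(x^{\ell})>2$, and your $L^{2}$ route cannot close.

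The paper does not give a full proof of Theorem~\ref{thm:LST1}, but it indicates the correct mechanism in bounded rank: one shows that $(w_{1}*w_{2})_{\underline{G}}$ has geometrically irreducible \emph{generic} fibre (Theorem~\ref{thm:-convolution of two word maps is generically absolutely irreducible}), and this is equivalent via Lang--Weil to $\|\tau_{w,G}^{*2}-\mu_G\|_{1}\to 0$ (see the discussion at the start of Section~\ref{sec8}). The point is that $L^{1}$-mixing corresponds to a condition on the generic fibre only, whereas $L^{2}$ (and a fortiori $L^{\infty}$) mixing requires control of \emph{all} fibres---and that is precisely what fails for $x^{\ell}$ at the identity fibre. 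So the step that needs replacing is the passage from $L^{1}$ to $L^{2}$: the $L^{1}$ statement must be attacked directly, and the known proofs (both the geometric one for bounded rank and the probabilistic one in \cite{LST19} for large rank and alternating groups) do not go through a uniform $L^{2}$ bound after two convolutions.
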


We do not prove Theorem \ref{thm:LST1} in these notes. However, the
case of bounded rank groups of Lie type follows from a geometric statement
(Theorem \ref{thm:-convolution of two word maps is generically absolutely irreducible})
that the convolution of any two non-trivial word maps has geometrically
irreducible generic fiber. A generalization of this theorem is given
in Section \ref{sec8}, where also the connection to $L^{1}$-mixing
time is discussed in details. 

We now state the $L^{\infty}$-result. 
\begin{thm}[\cite{LaS12,LST19}]
\label{thm:LST2}Let $1\neq w\in F_{r}$. 
\begin{enumerate}
\item There exists $\epsilon(w)>0$ such that for every finite simple group
$G$ with$\left|G\right|\gg_{w}1$, and every $g\in G$, one has $\tau_{w,G}(g)<\left|G\right|^{-\epsilon(w)}$
(\cite{LaS12}).
\item There exists $t_{\infty}(w)\in\N$ such that the family $\left\{ \tau_{w,G}\right\} _{G\text{ f.s.g}}$
has a uniform $L^{\infty}$-mixing time of $t_{\infty}(w)$ (\cite{LST19}). 
\end{enumerate}
The exponents $\epsilon(w)^{-1}$ and $t_{\infty}(w)$ are both bounded
from above by $C\cdot\ell(w)^{4}$, for a large absolute constant
$C$, where $\ell(w)$ denotes the length of $w$. 
\end{thm}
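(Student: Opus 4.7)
The plan is to prove part (1) via a character-theoretic argument on $G$, and then to feed it into a Fourier expansion for part (2), splitting the latter into a high-rank and a bounded-rank case.

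\textbf{Part (1): uniform fiber bound.} Expand the word-measure density in the orthonormal basis of irreducible characters:
\[
f_{\tau_{w,G}}(g)\;=\;1+\sum_{1\neq\chi\in\Irr(G)}a_{w,\chi}\,\chi(g),\qquad a_{w,\chi}\;=\;\E_{g_{1},\dots,g_{r}\sim\mu_{G}}\,\overline{\chi}\bigl(w(g_{1},\dots,g_{r})\bigr).
\]
Following Larsen--Shalev \cite{LaS12}, the first step is the Fourier-coefficient estimate $|a_{w,\chi}|\leq\chi(1)^{1-c(w)}$ for some $c(w)>0$ depending only on $\ell(w)$: writing $w$ as a reduced word and integrating coordinate-by-coordinate via Schur orthogonality expresses $a_{w,\chi}$ as a normalized matrix trace whose operator-norm bound yields the claim. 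Combined with the classical character-ratio bound $|\chi(g)|\leq\chi(1)^{1-\delta}$ for $g\notin Z(G)$ and the lower bound $\chi(1)\geq|G|^{\alpha}$ on non-trivial character degrees in finite simple groups (together with the Plancherel identity $\sum_{\chi}\chi(1)^{2}=|G|$), this yields $|f_{\tau_{w,G}}(g)-1|\leq|G|^{1-\epsilon(w)}$, i.e.\ $\tau_{w,G}(g)\leq|G|^{-\epsilon(w)}$. This is precisely the argument laid out in Section~\ref{sec3}.

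\textbf{Part (2): high-rank case.} By Exercise \ref{exer:convolution of characters},
\[
\|\tau_{w,G}^{*t}-\mu_{G}\|_{\infty}\;\leq\;\sum_{1\neq\chi\in\Irr(G)}\Bigl(\tfrac{|a_{w,\chi}|}{\chi(1)}\Bigr)^{t}\chi(1)^{2}.
\]
Feeding in $|a_{w,\chi}|\leq\chi(1)^{1-c(w)}$ from part (1) turns the right-hand side into $\sum_{\chi\neq1}\chi(1)^{2-tc(w)}$. Choosing $t\geq t_{\infty}(w)$ so that $tc(w)-2$ exceeds the convergence threshold for the Witten-zeta sum $\sum_{\chi\neq1}\chi(1)^{-s}$ (which tends to $0$ as $|G|\to\infty$ through finite simple groups of unbounded Lie rank by Liebeck--Shalev, and is made effectively uniform by the sharp character bounds of Larsen--Tiep \cite{LT24}) renders the right-hand side $o(1)$.

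\textbf{Part (2): bounded-rank case.} For $G=\underline{G}(\mathbb{F}_{q})$ of bounded Lie rank with $q\to\infty$, only finitely many isogeny classes arise. The Lang--Weil dictionary of Section \ref{sec4} converts $\|\tau_{w,G}^{*t}-\mu_{G}\|_{\infty}\to 0$ into the algebro-geometric statement that every fiber of $w_{\underline{G}}^{*t}:\underline{G}^{rt}\to\underline{G}$ has size $|G|^{rt-1}(1+o(1))$, which follows from geometric irreducibility of the generic fiber of the expected dimension $(rt-1)\dim\underline{G}$. Borel's theorem (Theorem \ref{thm:Borel}) supplies dominance, and statement (3) of the introduction (proved in Section \ref{sec8}) upgrades this to geometric irreducibility of the generic fiber after a bounded number of convolutions, yielding $t_{\infty}(w)$ in this regime. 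The main obstacle throughout is matching the coefficient savings $c(w)$ in $|a_{w,\chi}|\leq\chi(1)^{1-c(w)}$ against the Witten-zeta convergence threshold, uniformly in the type and rank of $G$; tracking how $c(w)$ degrades with $\ell(w)$ through the iterative Schur-orthogonality argument, and how that threshold depends on the Lie rank, is exactly what produces the claimed $O(\ell(w)^{4})$ scaling of both $t_{\infty}(w)$ and $\epsilon(w)^{-1}$.
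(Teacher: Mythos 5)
Your overall architecture for part (2) --- a Fourier expansion played against the Witten zeta function in high rank, and Lang--Weil plus geometric irreducibility in bounded rank --- matches the paper, but the proposal has two genuine gaps.

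First, part (1) and the Fourier-coefficient bound. There is no ``coordinate-by-coordinate Schur orthogonality'' derivation of $|a_{w,\chi}|\le\chi(1)^{1-c(w)}$ for a general word: such an integration works for the commutator word (Theorem \ref{thm:Frobenius}) and a few special words, but for arbitrary $w$ this bound is the hard Theorem \ref{thm:bounds on Fourier coefficients}, and its proof \emph{uses} the fiber-type estimate you are trying to deduce from it. Concretely, one splits $\E(\rho(w(\underline{g})))$ according to whether $w(\underline{g})$ has large centralizer: on the rare event $T_{w,\delta,G}$ one bounds $|\rho|$ trivially by $\rho(1)$ and invokes the probabilistic Theorem \ref{thm:Larsen Shalev}, proved in high rank by the character-free trajectory count of $\mathsection$\ref{subsec:Proof-of-probabilistic for high rank}; on the complement one applies the Larsen--Tiep bound (\ref{eq:best character estimates}). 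Your logic is inverted, and moreover the ``classical character-ratio bound $|\chi(g)|\le\chi(1)^{1-\delta}$ for $g\notin Z(G)$'' with uniform $\delta>0$ is false --- elements with very large centralizers can have $|\chi(g)|$ as large as $\chi(1)^{1-o(1)}$, which is exactly why Theorem \ref{thm:Larsen Shalev} is needed. Part (1) itself is proved not via characters at all but directly from Theorem \ref{thm:Larsen Shalev} together with $\tau_{w,G}(g)\le|g^{G}|^{-1}$ (Remark \ref{rem:Note-that-Theorem}); this probabilistic centralizer estimate and its trajectory-counting proof are the missing ingredient in your write-up, and they are also the source of the $\ell(w)^{2}$ (hence $\ell(w)^{4}$) dependence you are asked to track.

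Second, the bounded-rank case. Geometric irreducibility of the \emph{generic} fiber of $w_{\underline{G}}^{*t}$ only controls fibers over a Zariski-open set, hence gives $L^{1}$-mixing, not the $L^{\infty}$ statement ``every fiber has size $|G|^{rt-1}(1+o(1))$'' that you assert. To control all fibers one needs the (FGI) property: the paper first makes $w_{\underline{G}}^{*t}$ flat for $t\ge\dim\underline{G}$ (Theorem \ref{thm:convolutions makes flat}), then convolves once more with a dominant map to upgrade generic irreducibility to irreducibility of every fiber (Proposition \ref{prop:gen irred convoluted with flat}), yielding Theorem \ref{thm:Low rank geometric statement}; only then does Proposition \ref{prop: flatness and counting points} give the uniform $L^{\infty}$ bound. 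Two minor points: the Liebeck--Shalev convergence $\zeta_{G}(s)\to1$ for $s>1$ holds over all finite simple groups, not only those of unbounded rank, and it is independent of the Larsen--Tiep character bounds; and the alternating groups are covered by neither regime of your argument (the paper omits them too, but says so explicitly).
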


\begin{example}
\label{exa:bounds on epsilon of power word}The following example
shows that there is no uniform upper bound for $t_{\infty}(w)$ which
is independent of $w$. Let $w_{(\ell)}=x^{\ell}$ be the power word,
and let $G=\mathrm{SL}_{n}(\mathbb{F}_{p})$. For simplicity, choose
$n$ divisible by $\ell$. Choose a prime $p$ such that $\mathbb{F}_{p}$
contains a primitive $\ell$-th root of unity $\xi_{\ell}$ (this
happens if and only if $\ell|p-1$, and there are infinitely many
such primes for each $\ell$). Note that $(w_{(\ell)})_{G}^{-1}(e)$
contains the diagonal element $g$ consisting of $\ell$ blocks of
size $n/\ell$, each is a scalar matrix $\xi_{\ell}^{j}\cdot I_{n/\ell}$
for $j=0,...,\ell-1$. Since $(w_{\ell})_{G}^{-1}(e)$ is invariant
under conjugation, it contains the conjugacy class $g^{G}$ of $g$,
so: 
\begin{equation}
\left|(w_{(\ell)})_{G}^{-1}(e)\right|\geq\left|g^{G}\right|=\frac{\left|G\right|}{\left|C_{G}(g)\right|}\geq\frac{\left|\mathrm{SL}_{n}(\mathbb{F}_{p})\right|}{\left|\mathrm{SL}_{n}(\mathbb{F}_{p})\cap\mathrm{GL}_{n/\ell}(\mathbb{F}_{p})^{\ell}\right|}.\label{eq:bounds on fiber of power word}
\end{equation}
Arguing using the Lang-Weil estimates (see Theorem \ref{thm:Lang-Weil}
below) and since $n^{2}-1-(\frac{n^{2}}{\ell}-1)>(n^{2}-1)(1-\frac{1}{\ell})$,
the RHS of (\ref{eq:bounds on fiber of power word}) is larger than
$\left|\mathrm{SL}_{n}(\mathbb{F}_{p})\right|^{1-\frac{1}{\ell}}$
for $p\gg_{n,\ell}1$. \textbf{Exercise:} conclude that $\epsilon(w_{(\ell)})\leq\ell^{-1}$
and $t_{\infty}(w_{(\ell)})\geq\ell$ (note that $\mathrm{SL}_{n}(\mathbb{F}_{p})$
is quasi-simple and not simple).
\end{example}

In the next section we discuss some key examples and applications,
in particular the proof of the above theorems in the case when $w$
is the commutator word, and then explain the main ideas of the proofs
of the general case for finite groups of Lie type. The latter splits
into the high rank case which is discussed in $\mathsection$ \ref{subsec:Proof-of-Theorem 2}-\ref{subsec:Proof-of-probabilistic for high rank},
and the low rank case, discussed in Section \ref{sec4}. The two cases
require very different sets of ideas. 

\section{\label{sec3}Commutator word, representation growth, and proof of
the probabilistic results}

\subsection{Representation growth}
\begin{defn}
\label{def:rep zeta function}Let $G$ be a compact group, and $r_{n}(G):=\left|\left\{ \rho\in\mathrm{Irr}(G):\rho(1)=n\right\} \right|$.
The\emph{ representation zeta function }of $G$ is: 
\[
\zeta_{G}(s):=\sum_{n=1}^{\infty}r_{n}(G)n^{-s}=\sum_{\rho\in\mathrm{Irr}(G)}\rho(1)^{-s},\text{ for }s\in\C.
\]
The \emph{abscissa of convergence} of $\zeta_{G}(s)$ is $\alpha(G):=\inf\left\{ s\in\R_{>0}:\zeta_{G}(s)<\infty\right\} $. 
\end{defn}

\begin{thm}[Larsen\textendash Lubotzky, \cite{LL08}]
Let $G$ be a compact, connected, simple Lie group. Then: 
\[
\alpha(G)=\frac{\mathrm{rk}(G_{\C})}{\left|\Sigma^{+}(G_{\C})\right|},
\]
where $\Sigma^{+}(G_{\C})$ is the set of positive roots in the root
system corresponding to $G_{\C}$.
\end{thm}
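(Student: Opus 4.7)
My plan is to reduce the abscissa of convergence to the polynomial growth rate of the representation counting function via the Weyl dimension formula, and then to compute that growth rate by lattice-point counting in the Weyl chamber. Write $r:=\mathrm{rk}(G_{\C})$ and $N:=|\Sigma^{+}(G_{\C})|$, and let $\Lambda^{+}\subseteq\Lambda$ denote the cone of dominant weights in the character lattice of $G$, a free abelian cone of rank $r$. The irreducible representations of $G$ are parametrized by $\lambda\in\Lambda^{+}$, and by the Weyl dimension formula
\[
\dim V_{\lambda}\;=\;D(\lambda)\;:=\;\prod_{\alpha\in\Sigma^{+}(G_{\C})}\frac{\langle\lambda+\rho,\alpha\rangle}{\langle\rho,\alpha\rangle},
\]
so that $D$ is a polynomial function on $\Lambda\otimes\R$ of degree $N$, strictly positive on $\Lambda^{+}$, whose leading homogeneous part $D_{\mathrm{top}}(\lambda)=\prod_{\alpha}\langle\lambda,\alpha\rangle/\langle\rho,\alpha\rangle$ is positive on the open Weyl chamber. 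Setting $R(M):=\#\{\lambda\in\Lambda^{+}:D(\lambda)\leq M\}$, a routine Abel summation identifies $\alpha(G)=\limsup_{M\to\infty}\log R(M)/\log M$, so it suffices to prove the two-sided estimate $R(M)\asymp M^{r/N}$.

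For the upper bound $\alpha(G)\leq r/N$, I would compare $R(M)$ with the volume of $\{x\in C_{\R}:D(x)\leq M\}$, where $C_{\R}$ is the real open Weyl chamber. Under the scaling $x=M^{1/N}\mu$, homogeneity gives $D_{\mathrm{top}}(x)=M\cdot D_{\mathrm{top}}(\mu)$, so this volume equals (up to lower-order $\rho$-shift corrections) $M^{r/N}\cdot\mathrm{vol}\{\mu\in C_{\R}:D_{\mathrm{top}}(\mu)\leq1\}$. The unit region has finite volume: although $D_{\mathrm{top}}$ degenerates on the walls, each non-simple positive root $\alpha=\sum m_{j}\alpha_{j}$ involves several simple roots and so pinches the region polynomially along every wall, and the resulting product-of-linear-forms integral can be verified to converge (either rank by rank or via a Mellin transform of the root-system zeta function). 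A standard lattice-point-counting argument then converts the volume estimate into $R(M)\ll M^{r/N}$.

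For the lower bound $\alpha(G)\geq r/N$, I would exhibit enough dominant weights of small dimension. With $k:=\lfloor(M/c)^{1/N}\rfloor$ for a suitable constant $c$, the box
\[
\mathcal{B}_{k}\;:=\;\Bigl\{\lambda=\sum_{i=1}^{r}c_{i}\omega_{i}\,:\,k\leq c_{i}\leq 2k\Bigr\}
\]
contains $\asymp k^{r}$ dominant weights, and for each such $\lambda$ every factor $\langle\lambda+\rho,\alpha\rangle$ is $O(k)$, so $D(\lambda)\leq c\,k^{N}\leq M$. This produces $\gg M^{r/N}$ irreducible representations of dimension at most $M$, completing the estimate $R(M)\asymp M^{r/N}$.

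The main obstacle is the upper bound in the vicinity of the walls of $C_{\R}$, where $D_{\mathrm{top}}$ degenerates and one cannot simply use $D(\lambda)\asymp\|\lambda\|^{N}$ in polar coordinates. I would handle this by a dyadic decomposition of $\Lambda^{+}$ according to the sizes of the fundamental coordinates $c_{i}=\langle\lambda,\alpha_{i}^{\vee}\rangle$ relative to powers of $2$, estimating the contribution of each dyadic box by the Weyl formula and summing a geometric series whose ratio is governed precisely by $r-sN$; equivalently, one verifies directly that the Mellin-type integral $\int_{C_{\R}}D_{\mathrm{top}}(\mu)^{-s}\,d\mu$ converges exactly when $s>r/N$ after rescaling. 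A secondary technical point is that the character lattice of $G$ may be a proper sublattice of the weight lattice depending on the isogeny type of $G$, but this only affects constants, not the abscissa.
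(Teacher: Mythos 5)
The paper does not prove this theorem; it is quoted from Larsen--Lubotzky \cite{LL08}, so there is no in-paper argument to compare against. Your outline is the standard route (and essentially the one in \cite{LL08}): parametrize $\Irr(G)$ by dominant weights, use the Weyl dimension formula to replace $\zeta_{G}(s)$ by the Witten-type series $\sum_{\lambda}D(\lambda)^{-s}$, identify the abscissa with the growth exponent of $R(M)$ by partial summation, and prove $R(M)\asymp M^{r/N}$. The lower bound via the box $\mathcal{B}_{k}$ is complete and correct, and your remark that the isogeny type only changes the lattice by finite index (hence only constants) is right.

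The one substantive gap is exactly where you flag it: the upper bound near the walls is asserted (``can be verified to converge'') rather than proved, and it is the real content of the theorem. Concretely, in your dyadic decomposition the contribution of weights whose coordinates are large only in a subset $S$ of the simple directions is of order $M^{|S|/N_{S}}$, where $N_{S}$ is the number of positive roots whose support meets $S$; so the whole argument hinges on the combinatorial inequality $|S|/N_{S}\leq r/N$ for every nonempty $S$ (equivalently $(r-|S|)/(N-N_{S})\geq r/N$ for the complementary Levi subsystem, with strict inequality needed for the convergence of your Mellin integral at $s=r/N+\varepsilon$). This is true --- it follows, for instance, from $r/N=2/h$ with $h$ the Coxeter number together with the fact that proper subdiagrams of a connected Dynkin diagram have strictly smaller Coxeter number --- but it is a genuine case-free statement about root systems that must be proved, not observed; without it the wall contributions could a priori dominate the bulk count $M^{r/N}$. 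Supplying that inequality (or an equivalent convergence criterion for $\int_{C_{\R}}D_{\mathrm{top}}(\mu)^{-s}\,d\mu$) would close the argument.
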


\begin{example}
If $G=\mathrm{SU}_{2}$, then $r_{n}(G)=1$ for each $n\in\N$ (the
natural action on the space of homogeneous polynomials $f(x,y)$ of
degree $n-1$ with two variables is the unique $n$-dimensional irreducible
representation of $\SU_{2}$, up to isomorphism). Hence $\zeta_{G}(s):=\sum_{n=1}^{\infty}n^{-s}$
is the Riemann zeta function, and $\alpha(G)=1=\frac{1}{1}=\frac{\mathrm{rk}(G_{\C})}{\left|\Sigma^{+}(G_{\C})\right|}$. 
\end{example}

It is meaningless to discuss representation growth in a fixed finite
group. However, one can ask about behavior in families of finite groups: 
\begin{thm}[{Liebeck\textendash Shalev,\textbf{ }\cite[Theorem 1.1]{LiS05a}, \cite[Theorem 1.1]{LiS05b}}]
\label{thm:rep growth of Chevalley groups}~ 
\begin{enumerate}
\item For all $s>1$, we have $\underset{\left|G\right|\rightarrow\infty:\text{ }G\text{ f.s.g}}{\lim}\zeta_{G}(s)=1$. 
\item Let $\underline{G}$ be a Chevalley group scheme (a $\Z$-model of
a connected, simply connected, simple algebraic $\C$-group $\underline{G}_{\C}$).
Then: 
\[
\underset{q\rightarrow\infty}{\lim}\zeta_{\underline{G}(\mathbb{F}_{q})}(s)=1\text{ for every }s>\frac{\mathrm{rk}(\underline{G}_{\C})}{\left|\Sigma^{+}(\underline{G}_{\C})\right|}.
\]
\end{enumerate}
\end{thm}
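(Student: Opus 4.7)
The plan is to establish part (2) first using Deligne--Lusztig theory, then deduce part (1) from (2) together with the classification of finite simple groups (CFSG) and a separate analysis of the alternating groups. Throughout write $G_q := \underline{G}(\mathbb{F}_q)$, $r := \mathrm{rk}(\underline{G}_{\C})$, $N := |\Sigma^{+}(\underline{G}_{\C})|$, and $\alpha := r/N$.

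For part (2) the tail $\zeta_{G_q}(s) - 1 = \sum_{\rho \neq 1}\rho(1)^{-s}$ must be shown to vanish as $q \to \infty$ for every $s > \alpha$. I would invoke the Deligne--Lusztig parametrisation of $\Irr(G_q)$ by pairs $(T, \theta)$ with $T$ running over $G_q$-conjugacy classes of rational maximal tori and $\theta$ over characters of $T$; the virtual character $R^{G_q}_T(\theta)$ has dimension $\pm |G_q|_{p'}/|T|$, a polynomial in $q$ of degree at most $N$. The irreducible characters organise into finitely many generic families indexed by rational Weyl group data (uniformly bounded in number), with total count $\#\Irr(G_q) = O(q^r)$. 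The key counting input is the uniform estimate
\[
\#\{\rho \in \Irr(G_q) : \rho(1) \leq D\} \leq C \cdot D^{\alpha},
\]
valid for $D$ at least of the order of the minimal nontrivial degree, which follows from the $|T|$--dimension relation combined with the bounded number of torus conjugacy classes. An Abel summation against this count then yields $\sum_{\rho \neq 1}\rho(1)^{-s} = O(q^{-\delta})$ for $s > \alpha$ with some $\delta = \delta(s, \alpha) > 0$.

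For part (1), CFSG reduces the question to three infinite families plus finitely many sporadic exceptions. The sporadic groups are finite in number and can be ignored in the limit. For each fixed Chevalley type, part (2) applies and yields $\zeta_{G_q} \to 1$ in the regime $s > \alpha \leq 1$; the threshold $s > 1$ in (1) appears because the tight case is $A_1$ (i.e.\ $\mathrm{PSL}_2(\mathbb{F}_q)$), where $\alpha = 1$. The alternating groups $A_n$ are treated separately via the hook-length formula: the minimal nontrivial degree of $A_n$ grows like $n-1$ while the number of irreducible characters of dimension $\leq D$ is $D^{o(1)}$, so $\zeta_{A_n}(s) \to 1$ for $s > 1$. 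Finally one needs uniformity across Lie type groups of growing rank, obtained via Landazuri--Seitz lower bounds $m(G_q) \gg q^{c \cdot r}$ on the minimal nontrivial degree combined with the crude count $\#\Irr(G_q) \leq |G_q|$.

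The main obstacle is the sharp counting estimate $\#\{\rho : \rho(1) \leq D\} \ll D^{\alpha}$ underlying (2): a naive bound $\sum_\rho \rho(1)^{-s} \leq \#\Irr(G_q) \cdot m(G_q)^{-s}$ yields convergence only for $s > r/d_{\min}$, with $d_{\min}$ the exponent of the minimal nontrivial degree, which is strictly weaker than $s > \alpha$ outside of rank one. Pinning down the sharp exponent $r/N$ forces a balanced accounting across all isogeny types of rational tori via the Deligne--Lusztig dimension formula; once this counting lemma is in place, both parts follow from short summations.
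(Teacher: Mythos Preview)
The paper does not give its own proof of this theorem; it is quoted as a result of Liebeck--Shalev and used as a black box (for instance in the proof of Theorem~\ref{thm:mixing of comutator}). So there is no argument in the paper to compare your proposal against.

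Your overall plan---Deligne--Lusztig degree formulae for the sharp threshold in~(2), then CFSG together with a separate alternating-group analysis for~(1)---does match the strategy of the original Liebeck--Shalev papers. You also correctly isolate the counting estimate $\#\{\rho:\rho(1)\le D\}\ll D^{r/N}$ as the heart of~(2) and do not pretend to prove it; that inequality is indeed the main work in \cite{LiS05b}.

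There is, however, a genuine gap in your handling of part~(1) for groups of Lie type of growing rank. The bound you propose, $\zeta_G(s)-1\le \#\Irr(G_q)\cdot m(G_q)^{-s}$ with the crude input $\#\Irr(G_q)\le |G_q|$, does not work: for $G=\mathrm{PSL}_n(\mathbb{F}_q)$ one has $|G_q|\asymp q^{n^2-1}$ while Landazuri--Seitz gives only $m(G_q)\asymp q^{n-1}$, so your bound reads $q^{\,n^2-1-s(n-1)}$, which tends to infinity as $n\to\infty$ for every fixed~$s$. Even replacing $|G_q|$ by the correct order of magnitude $\#\Irr(G_q)\asymp q^{r}$ is not immediately sufficient, since the implied constant grows with the rank (for example through the number of unipotent classes). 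The argument in \cite{LiS05a} instead stratifies $\Irr(G_q)$ according to the $q$-exponent of the degree and bounds the number of characters in each stratum uniformly across types; you have not supplied this, and the crude substitute you wrote down fails.
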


\subsection{The commutator word and representation growth }

Given a word $w\in F_{r}$ and a finite group $G$, recall that $\tau_{w,G}:=(w_{G})_{*}(\mu_{G^{r}})=\frac{1}{\left|G\right|}\sum_{\rho\in\Irr(G)}a_{w,G,\rho}\cdot\rho$,
where $a_{w,G,\rho}:=a_{\tau_{w,G},\rho}$ is the Fourier coefficient
of $\tau_{w,G}$ at $\rho\in\Irr(G)$. 
\begin{xca}
Since $\tau_{w,G}:=(w_{G})_{*}(\mu_{G}^{r})$, we have: 
\[
a_{w,G,\overline{\rho}}=\sum_{g\in G}\rho(g)\cdot\tau_{w,G}(g)=\E_{(g_{1},...,g_{r})\in G^{r}}(\rho(w(g_{1},...,g_{r}))).
\]
\end{xca}

Let $w_{\mathrm{com}}:=[x,y]$. A classical theorem of Frobenius describes
$a_{w_{\mathrm{com}},G,\rho}$. 
\begin{thm}[Frobenius, 1896]
\label{thm:Frobenius}For every finite group $G$, 
\[
a_{w_{\mathrm{com}},G,\overline{\rho}}=\E_{(x,y)\in G^{2}}(\rho(xyx^{-1}y^{-1}))=\frac{1}{\rho(1)}.
\]
\end{thm}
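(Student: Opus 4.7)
The plan is to prove Frobenius's formula by a direct application of Schur's lemma, exploiting the conjugation-invariance structure of the commutator. Throughout, I identify the irreducible character $\rho$ with the trace of an associated irreducible representation $\pi : G \to \mathrm{GL}(V)$, and write $d := \rho(1) = \dim V$.

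First, I would rewrite the quantity of interest as
\[
\mathbb{E}_{(x,y) \in G^2} \rho(xyx^{-1}y^{-1}) = \frac{1}{|G|^2} \sum_{x,y} \tr\bigl(\pi(x)\pi(y)\pi(x)^{-1}\pi(y)^{-1}\bigr),
\]
and perform the $x$-average first. For each fixed $y$, the endomorphism
\[
A(y) := \frac{1}{|G|} \sum_{x \in G} \pi(x)\pi(y)\pi(x)^{-1} \in \mathrm{End}(V)
\]
commutes with $\pi(g)$ for every $g \in G$ (by translation-invariance of the sum over $x$). Since $\pi$ is irreducible, Schur's lemma forces $A(y) = \lambda(y) \cdot \mathrm{Id}_V$ for some scalar $\lambda(y)$; taking traces on both sides of the definition of $A(y)$ yields $d \lambda(y) = \tr(\pi(y)) = \chi(y)$, so $A(y) = \frac{\chi(y)}{d}\,\mathrm{Id}_V$, where I write $\chi = \rho$.

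Substituting back and taking traces through the $y$-average, I would then compute
\[
\mathbb{E}_{(x,y)} \rho([x,y]) = \frac{1}{|G|} \sum_{y} \tr\!\left(A(y)\,\pi(y)^{-1}\right) = \frac{1}{|G|} \sum_{y} \frac{\chi(y)}{d}\,\chi(y^{-1}) = \frac{1}{d}\,\langle \chi, \chi\rangle,
\]
using $\chi(y^{-1}) = \overline{\chi(y)}$. By the first Schur orthogonality relation applied to the irreducible character $\chi$, the inner product $\langle \chi, \chi\rangle$ equals $1$, so the whole expression equals $1/d = 1/\rho(1)$, as required.

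There is no real obstacle here — the argument is a classical Schur-lemma manipulation and the only subtlety is bookkeeping the complex conjugation convention in the Fourier coefficient $a_{w_{\mathrm{com}},G,\overline{\rho}}$. One could alternatively derive the identity from the general Frobenius formula $|\{(x,y): [x,y]=g\}| = |G| \sum_{\chi \in \Irr(G)} \chi(g)/\chi(1)$, which itself is proved by the same averaging argument together with an expansion of the class function $g \mapsto |\{(x,y):[x,y]=g\}|$ in the basis of irreducible characters; the self-contained trace computation above is slightly more direct for the statement needed here.
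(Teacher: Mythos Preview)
Your proof is correct and essentially identical to the paper's own argument: the paper defines the same averaged operator $T_y = \frac{1}{|G|}\sum_x \pi_\rho(xyx^{-1})$, applies Schur's lemma to identify it as $\frac{\rho(y)}{\rho(1)}\cdot I$, and concludes via Schur orthogonality exactly as you do.
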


\begin{proof}
Let $\rho\in\mathrm{Irr}(G)$ be the character of an irreducible representation
$\pi_{\rho}:G\longrightarrow\mathrm{GL}(V_{\rho})$. Consider 
\[
T_{y}:=\frac{1}{\left|G\right|}\sum_{x\in G}\pi_{\rho}(xyx^{-1})\in\mathrm{End}(V_{\rho})
\]
Note that for each $y,z\in G$: 
\[
\pi_{\rho}(z)\circ T_{y}=\frac{1}{\left|G\right|}\sum_{x\in G}\pi_{\rho}(zxyx^{-1})\underset{x\mapsto z^{-1}x}{=}\frac{1}{\left|G\right|}\sum_{x\in G}\pi_{\rho}(xyx^{-1}z)=T_{y}\circ\pi_{\rho}(z).
\]
Thus $T_{y}\in\mathrm{End}(V_{\rho})^{G}$, so by Schur's lemma, it
is a scalar matrix $c_{\rho}\cdot I_{\rho(1)}$, where 
\[
c_{\rho}\rho(1)=\tr\left(\frac{1}{\left|G\right|}\sum_{x\in G}\pi_{\rho}(xyx^{-1})\right)=\frac{1}{\left|G\right|}\sum_{x\in G}\rho(xyx^{-1})=\rho(y).
\]
Finally, 
\begin{align*}
\frac{1}{\left|G\right|^{2}}\sum_{x,y\in G}\rho(xyx^{-1}y^{-1}) & =\frac{1}{\left|G\right|}\sum_{y\in G}\left(\frac{1}{\left|G\right|}\sum_{x\in G}\rho(xyx^{-1}y^{-1})\right)=\frac{1}{\left|G\right|}\sum_{y\in G}\left(\tr\frac{1}{\left|G\right|}\sum_{x\in G}\pi_{\rho}(xyx^{-1}y^{-1})\right)\\
 & =\frac{1}{\left|G\right|}\sum_{y\in G}\tr\left(T_{y}\circ\pi_{\rho}(y^{-1})\right)=\frac{1}{\left|G\right|}\sum_{y\in G}\tr\left(\frac{\rho(y)}{\rho(1)}\cdot\pi_{\rho}(y^{-1})\right)\\
 & =\frac{1}{\rho(1)}\frac{1}{\left|G\right|}\sum_{y\in G}\left|\rho(y)\right|^{2}=\frac{1}{\rho(1)}.\qedhere
\end{align*}
\end{proof}
\begin{cor}
\label{cor:Frobenius cor}Let $G$ be a finite group. Then: 
\[
\frac{\left|(w_{\mathrm{com}}^{*t})^{-1}(g)\right|}{\left|G\right|^{2t}}=\tau_{w_{\mathrm{com}},G}^{*t}(g)=\frac{1}{\left|G\right|}\sum_{\rho\in\mathrm{Irr}(G)}\frac{\left(a_{w_{\mathrm{com}},G,\rho}\right)^{t}}{\rho(1)^{t-1}}\rho(g)=\frac{1}{\left|G\right|}\sum_{\rho\in\mathrm{Irr}(G)}\rho(1)^{1-2t}\rho(g).
\]

Hence, $\zeta_{G}(2t-2)=\left|G\right|\tau_{w_{\mathrm{com}},G}^{*t}(e)$. 
\end{cor}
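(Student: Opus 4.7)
The plan is to chain together three ingredients already assembled in the text: the Fourier expansion for $f_{\mu^{*t}}$ derived via Exercise~\ref{exer:convolution of characters}, Frobenius' character formula (Theorem~\ref{thm:Frobenius}), and the tautological identification of $\tau_{w,G}^{*t}$ with a normalized fiber count.

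First I would verify that $\tau_{w_{\mathrm{com}},G}$ is a conjugate-invariant probability measure, so that the Fourier machinery applies. This is immediate from the fact that the commutator map is equivariant under simultaneous conjugation, $h[x,y]h^{-1}=[hxh^{-1},hyh^{-1}]$, so the pushforward of the uniform measure on $G^{2}$ is class-invariant. The first equality in the statement is then just the definition $\tau_{w,G}^{*t}(g)=|(w^{*t})^{-1}(g)|/|G|^{rt}$ from the exercise preceding Section 2.3.2, with $r=2$.

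Next I would apply the identity
\[
f_{\tau_{w_{\mathrm{com}},G}^{*t}}(g)=\sum_{\rho\in\mathrm{Irr}(G)}\frac{a_{w_{\mathrm{com}},G,\rho}^{t}}{\rho(1)^{t-1}}\rho(g),
\]
already derived in Section 2.3.1 from Exercise~\ref{exer:convolution of characters}, and divide by $|G|$ to pass from the density to the measure. For the second equality I would then substitute Frobenius' formula: since $\rho\mapsto\overline{\rho}$ permutes $\mathrm{Irr}(G)$ and preserves degrees, Theorem~\ref{thm:Frobenius} gives $a_{w_{\mathrm{com}},G,\rho}=\rho(1)^{-1}$ for every $\rho$, whence
\[
\frac{a_{w_{\mathrm{com}},G,\rho}^{t}}{\rho(1)^{t-1}}=\frac{\rho(1)^{-t}}{\rho(1)^{t-1}}=\rho(1)^{1-2t}.
\]

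Finally, to obtain $\zeta_{G}(2t-2)=|G|\tau_{w_{\mathrm{com}},G}^{*t}(e)$, I specialize to $g=e$ using $\rho(e)=\rho(1)$, which turns the sum into $\sum_{\rho}\rho(1)^{2-2t}=\zeta_{G}(2t-2)$ by Definition~\ref{def:rep zeta function}. There is no real obstacle here: everything is bookkeeping once Frobenius' formula is in hand. The only point requiring a half-sentence of care is the distinction between $a_{w_{\mathrm{com}},G,\rho}$ and $a_{w_{\mathrm{com}},G,\overline{\rho}}$, which is harmless because $\rho$ and $\overline{\rho}$ have the same degree.
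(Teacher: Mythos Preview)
Your proposal is correct and matches the paper's intended route: the corollary is stated without proof there, precisely because it follows by chaining the density formula $f_{\mu^{*t}}=\sum_{\rho}a_{\mu,\rho}^{t}\rho(1)^{1-t}\rho$ from \S\ref{subsec:Random-walk-on finite groups} with Theorem~\ref{thm:Frobenius} and the definition of $\tau_{w,G}$. Your handling of conjugation-invariance and of the $\rho$ versus $\overline{\rho}$ issue is exactly the bookkeeping the paper leaves implicit.
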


\begin{thm}[Garion\textendash Shalev, \cite{GS09}]
\label{thm:mixing of comutator}Over the family of finite simple
groups $G$, with $\left|G\right|\gg_{w}1$:
\begin{enumerate}
\item $t_{1}(w_{\mathrm{com}})=t_{2}(w_{\mathrm{com}})=1$ (uniform $L^{1}/L^{2}$-mixing
time of $1$). 
\item $t_{\infty}(w_{\mathrm{com}})=2$. 
\end{enumerate}
\end{thm}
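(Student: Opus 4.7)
The plan is to exploit the explicit Fourier expansion furnished by Corollary \ref{cor:Frobenius cor}, namely
\[
f_{\tau_{w_{\mathrm{com}},G}^{*t}}(g) - 1 \;=\; \sum_{1 \neq \rho \in \Irr(G)} \rho(1)^{1-2t}\, \rho(g),
\]
and to read off both the $L^{2}$ and $L^{\infty}$ norms in terms of the representation zeta function $\zeta_{G}$ of Definition \ref{def:rep zeta function}. The essential external inputs will be the Liebeck\textendash Shalev estimate $\zeta_{G}(s) \to 1$ for $s > 1$ over the family of finite simple groups (Theorem \ref{thm:rep growth of Chevalley groups}(1)), together with the elementary fact that $|\Irr(G)| \to \infty$ for such $G$.

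For (1), I would apply Parseval in the space $\C[G]^{G}$ of class functions, where Schur orthogonality makes the irreducible characters an orthonormal basis, to obtain
\[
\|\tau_{w_{\mathrm{com}},G}^{*t} - \mu_{G}\|_{2}^{2} \;=\; \sum_{1 \neq \rho \in \Irr(G)} \rho(1)^{2-4t} \;=\; \zeta_{G}(4t - 2) - 1.
\]
At $t = 1$ this equals $\zeta_{G}(2) - 1 \to 0$, so $t_{2}(w_{\mathrm{com}}) \leq 1$. Since by convention $t_{q} \geq 1$, equality holds, and Jensen's inequality (Remark \ref{rem:Jensen and Young's inequality}(1)) upgrades this to $t_{1}(w_{\mathrm{com}}) \leq t_{2}(w_{\mathrm{com}}) = 1$, so $t_{1}(w_{\mathrm{com}}) = 1$ as well.

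For (2), the triangle inequality combined with the pointwise bound $|\rho(g)| \leq \rho(1)$ gives
\[
\|\tau_{w_{\mathrm{com}},G}^{*t} - \mu_{G}\|_{\infty} \;\leq\; \sum_{1 \neq \rho \in \Irr(G)} \rho(1)^{2-2t} \;=\; \zeta_{G}(2t - 2) - 1,
\]
which at $t = 2$ is again $\zeta_{G}(2) - 1 \to 0$, showing $t_{\infty}(w_{\mathrm{com}}) \leq 2$. For the matching lower bound I would evaluate $f_{\tau_{w_{\mathrm{com}},G}}$ at $g = e$, where $\rho(e) = \rho(1)$ cancels the weight $\rho(1)^{-1}$ and yields
\[
f_{\tau_{w_{\mathrm{com}},G}}(e) - 1 \;=\; \sum_{1 \neq \rho \in \Irr(G)} 1 \;=\; |\Irr(G)| - 1,
\]
which blows up over finite simple groups, hence $\|\tau_{w_{\mathrm{com}},G} - \mu_{G}\|_{\infty} \to \infty$ and $t_{\infty}(w_{\mathrm{com}}) \geq 2$.

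There is no real obstacle: once Frobenius's formula is packaged as above, everything reduces to bookkeeping with $\zeta_{G}$. The only conceptual point worth noting is the discrepancy between the $L^{1}/L^{2}$ and $L^{\infty}$ mixing times, which traces back to the fact that Parseval squares the weights (so the exponent $1-2t$ becomes $2-4t$ and the threshold $s > 1$ for Liebeck\textendash Shalev is reached already at $t = 1$), whereas the crude bound $|\rho(g)| \leq \rho(1)$ only pairs a single factor $\rho(1)$ with $\rho(1)^{1-2t}$, pushing the same threshold to $t = 2$.
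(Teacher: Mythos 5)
Your proposal is correct and follows essentially the same route as the paper: Frobenius's formula, the Liebeck--Shalev estimate $\zeta_G(2)-1\to 0$ for the upper bounds, and the count $|G|\,\tau_{w_{\mathrm{com}},G}(e)=|\Irr(G)|\to\infty$ for the lower bound on $t_\infty$. The only cosmetic difference is that for the $L^\infty$ upper bound you apply the triangle inequality with $|\rho(g)|\le\rho(1)$ directly to the Fourier expansion, while the paper invokes Young's convolution inequality $\|(\tau-\mu_G)^{*2}\|_\infty\le\|\tau-\mu_G\|_2^2$; both reduce to the same quantity $\zeta_G(2)-1$.
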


\begin{proof}
1) By Corollary \ref{cor:Frobenius cor} we have: 
\[
\tau_{w_{\mathrm{com}},G}(g)=\frac{1}{\left|G\right|}\sum_{\rho\in\mathrm{Irr}(G)}a_{w_{\mathrm{com}},G,\rho}\rho(g)=\frac{1}{\left|G\right|}\sum_{\rho\in\mathrm{Irr}(G)}\frac{\rho(g)}{\rho(1)}.
\]
Thus, by Jensen's inequality (Remark \ref{rem:Jensen and Young's inequality}(1))
and Theorem \ref{thm:Frobenius},
\[
\left\Vert \tau_{w_{\mathrm{com}},G}-\mu_{G}\right\Vert _{1}^{2}\leq\left\Vert \tau_{w_{\mathrm{com}},G}-\mu_{G}\right\Vert _{2}^{2}=\sum_{1\neq\rho\in\mathrm{Irr}(G)}a_{w_{\mathrm{com}},G,\rho}^{2}=\sum_{1\neq\rho\in\mathrm{Irr}(G)}\rho(1)^{-2}=\zeta_{G}(2)-1\underset{\left|G\right|\rightarrow\infty}{\rightarrow}0.
\]
2) Note that $\left|G\right|\tau_{w_{\mathrm{com}},G}(e)=\left|\Irr(G)\right|=\left|\{\text{conj classes of }G\}\right|$.
Hence $\left|G\right|\tau_{w_{\mathrm{com}},G}(e)-1\rightarrow\infty$
and thus $t_{\infty}(w_{\mathrm{com}})>1$. On the other hand, $t_{\infty}(w_{\mathrm{com}})=2$
since by Young's convolution inequality (Remark \ref{rem:Jensen and Young's inequality}(2)),
\[
\left\Vert \tau_{w_{\mathrm{com}},G}^{*2}-\mu_{G}\right\Vert _{\infty}=\left\Vert \left(\tau_{w_{\mathrm{com}},G}-\mu_{G}\right)^{*2}\right\Vert _{\infty}\leq\left\Vert \tau_{w_{\mathrm{com}},G}-\mu_{G}\right\Vert _{2}^{2}=\zeta_{G}(2)-1\underset{\left|G\right|\rightarrow\infty}{\longrightarrow}0.
\]
\end{proof}

\subsection{\label{subsec:Proof-of-Theorem 2}Proof of Theorem \ref{thm:LST2}}

The key ingredient in the proof of Theorem \ref{thm:LST2}(2) is the
following: 
\begin{thm}[{Larsen\textendash Shalev\textendash Tiep, \cite{LST19}. See also
\cite[Theorem 1.6]{AG}}]
\label{thm:bounds on Fourier coefficients}Let $1\neq w\in F_{r}$.
Then there exists $\epsilon(w)>0$ such that for every finite simple
group $G$, with $\left|G\right|\gg_{w}1$, and every $\rho\in\Irr(G)$:
\[
\left|a_{w,G,\rho}\right|\leq\rho(1)^{1-\epsilon(w)}.
\]
\end{thm}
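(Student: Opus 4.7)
The plan is to estimate $a_{w,G,\rho} = \E_{\bar g\in G^r}\rho(w(\bar g))$ by interpolating between two pointwise estimates of orthogonal nature. The first input is the $L^\infty$ fiber bound from Theorem \ref{thm:LST2}(1), namely $\|\tau_{w,G}\|_\infty \leq |G|^{-\epsilon_1(w)}$, which controls how concentrated the word measure can be at any single group element. The second input is the family of character bounds of Larsen and Tiep \cite{LT24}: for finite simple groups $G$ of Lie type, these provide an invariant $\nu : G \to [0,1]$ together with a power-saving pointwise estimate $|\rho(g)| \leq C\rho(1)^{1-\nu(g)+o(1)}$ and a counting estimate $|\{g \in G : \nu(g) \leq s\}| \leq C|G|^{1-\delta s}$ for some $\delta>0$ depending only on the Dynkin type.

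Given these two inputs, I would bound
\[
|a_{w,G,\rho}| \leq \sum_{g \in G} \tau_{w,G}(g)\,|\rho(g)|
\]
and split the sum according to whether $\nu(g)>\theta$ or $\nu(g)\leq\theta$ for a threshold $\theta=\theta(w)$ to be chosen. The high-level part (where $\nu(g)>\theta$) is controlled by the character bound, contributing at most $\rho(1)^{1-\theta+o(1)}$. The low-level part (where $\nu(g)\leq\theta$) is controlled by the fiber bound together with the counting estimate, giving at most $\rho(1)\cdot|G|^{1-\delta\theta-\epsilon_1(w)}$. Using the Landazuri\textendash Seitz lower bound $\rho(1)\geq|G|^{c_0}$ valid for every nontrivial $\rho\in\Irr(G)$ on a finite simple group $G$ of Lie type, one optimises over $\theta$ so that both terms are majorised by $\rho(1)^{1-\epsilon(w)}$, for an explicit $\epsilon(w)$ proportional to $\epsilon_1(w)\cdot c_0\cdot\delta$. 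For the alternating groups $A_n$ an analogous argument applies using Murnaghan\textendash Nakayama-type character bounds and the corresponding fiber estimates on $S_n$; the finitely many sporadic groups are absorbed in the hypothesis $|G|\gg_w 1$.

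The main obstacle is producing the level stratification of the second input with quantitatively strong exponent $\delta$: this is precisely the heart of \cite{LT24} and relies on Deligne\textendash Lusztig theory, Harish-Chandra induction, and careful estimation of Green functions. Once those character bounds are available as a black box and Theorem \ref{thm:LST2}(1) is in hand, the argument above reduces to an optimisation on two competing inequalities. A small subtlety, absorbed into the Landazuri\textendash Seitz step, is to handle the regime where $\rho(1)$ is close to this minimal allowed size, where one must check that the threshold $\theta$ can still be chosen positively; this follows because $c_0>0$ is absolute for each Lie type.
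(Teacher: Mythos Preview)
Your overall decomposition into a ``high-level'' part (where the character bound bites) and a ``low-level'' part (elements with small $\nu(g)$, i.e.\ large centralizer) is exactly the right shape, and matches the paper. The gap is in how you control the low-level part. You propose to bound
\[
\sum_{g:\nu(g)\le\theta}\tau_{w,G}(g)\,|\rho(g)|\ \le\ \rho(1)\cdot|G|^{-\epsilon_1(w)}\cdot\bigl|\{g:\nu(g)\le\theta\}\bigr|\ \le\ C\rho(1)\,|G|^{1-\delta\theta-\epsilon_1(w)},
\]
using only the pointwise fiber bound $\tau_{w,G}(g)\le|G|^{-\epsilon_1(w)}$ together with a counting estimate. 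But for this to be $\le\rho(1)^{1-\epsilon}$ one needs $\delta\theta+\epsilon_1(w)>1$, which is impossible: $\epsilon_1(w)$ is tiny (of order $\ell(w)^{-4}$), $\theta\le 1$, and no counting estimate of the stated form can have $\delta\ge 1$ (indeed already for $\SL_2(\mathbb F_q)$ almost every element has $\nu(g)\approx 2/3$, so $\{g:\nu(g)\le 2/3\}$ has size $\approx|G|$). Put differently, the set of elements with large centralizer is far too big---of size a positive power of $|G|$---for the $L^\infty$ fiber bound alone to say anything about its $\tau_{w,G}$-measure. The Landazuri--Seitz bound does not rescue this: it gives $\rho(1)^{-\epsilon}\le|G|^{-c_0\epsilon}$, which goes the wrong way.

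What the paper actually uses for the low-level part is not Theorem~\ref{thm:LST2}(1) but the strictly stronger Theorem~\ref{thm:Larsen Shalev}, which bounds the \emph{measure} of the bad set directly: $\tau_{w,G}\bigl(\{g:|C_G(g)|>|G|^{\delta}\}\bigr)=\P(T_{w,\delta,G})<|G|^{-c'(\delta)/\ell(w)^2}$. This is proved by the Larsen--Shalev trajectory argument (Proposition~\ref{prop:main proposition}), not by summing the pointwise fiber bound over bad elements. In fact the logical flow is the reverse of what you assume: Remark~\ref{rem:Note-that-Theorem} shows that Theorem~\ref{thm:Larsen Shalev} \emph{implies} the fiber bound Theorem~\ref{thm:LST2}(1), so the latter is a strictly weaker consequence and cannot be substituted back in as the probabilistic input. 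Once you replace your use of Theorem~\ref{thm:LST2}(1) by Theorem~\ref{thm:Larsen Shalev}, the rest of your outline (split at a fixed threshold, use $\rho(1)<|G|^{1/2}$ to convert $|G|^{-c'/\ell^2}$ into a power saving in $\rho(1)$, and then combine with the Larsen--Tiep character bound on the complementary set) becomes exactly the paper's argument.
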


\begin{proof}[Theorem \ref{thm:bounds on Fourier coefficients} implies Theorem
\ref{thm:LST2}(2)]
Let $t>4\epsilon(w)^{-1}$. Then: 
\begin{align*}
\left\Vert \tau_{w,G}^{*t}-\mu_{G}\right\Vert _{\infty} & =\underset{g\in G}{\max}\left|f_{w^{*t},G}(g)-1\right|=\underset{g\in G}{\max}\left|\sum_{1\neq\rho\in\Irr(G)}\frac{a_{w,G,\rho}^{t}}{\rho(1)^{t-1}}\rho(g)\right|\\
 & \leq\sum_{1\neq\rho\in\Irr(G)}\rho(1)^{1-t\epsilon(w)}\cdot\rho(1)\leq\sum_{1\neq\rho\in\Irr(G)}\rho(1)^{-2}=\zeta_{G}(2)-1\rightarrow0,
\end{align*}
where the last equality follows from Theorem \ref{thm:rep growth of Chevalley groups}.
\end{proof}
We now sketch the main steps in the proof of Theorem \ref{thm:bounds on Fourier coefficients}\textbf{
for groups of Lie type}. We will not discuss the remaining case of
the family of alternating groups. There are two main slogans: 

\textbf{Slogan \#1}: if $g\in G$ has small centralizer $C_{G}(g)$,
then $\left|\rho(g)\right|$ is small.

\textbf{Slogan \#2}: $C_{G}(w(g_{1},...,g_{r}))$ is small with very
high probability. 

Slogan \#1 is an important phenomenon in representation theory, which
can already be seen from Schur's orthogonality. 
\begin{example}
For every $\rho\in\Irr(G)$, we have $\sum_{x\in G}\left|\rho(x)\right|^{2}=\left|G\right|$
by Schur's orthogonality. In particular, $\left|\rho(g)\right|^{2}\left|g^{G}\right|\leq\left|G\right|$,
so: 
\[
\left|\rho(g)\right|\leq\sqrt{\left|C_{G}(g)\right|}.
\]
This is called \emph{the centralizer bound}.
\end{example}

The character estimates required for mixing of word measures, or various
other conjugate invariant measures, are of exponential form $\left|\rho(g)\right|\leq\rho(1)^{1-\epsilon}$.
The current state of the art for (exponential) character estimates
in finite simple groups of Lie type is the recent result of Larsen
and Tiep: 
\begin{thm}[Larsen\textendash Tiep, \cite{LT24}]
\label{thm:character estimates}There exists an absolute constant
$c>0$, such that for every finite simple group of Lie type $G$,
every $1\neq\rho\in\mathrm{Irr}(G)$ and every $g\in G$: 
\begin{equation}
\left|\rho(g)\right|<\rho(1)^{1-c\frac{\log\left|g^{G}\right|}{\log\left|G\right|}}.\label{eq:best character estimates}
\end{equation}
\end{thm}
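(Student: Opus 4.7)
The plan is to split the analysis of the character ratio according to how the dimension of $\rho$ compares with a suitable power of $|G|$, using an elementary centralizer bound for ``large'' characters and Deligne--Lusztig theory for the ``small'' ones. Since the statement concerns the simple group, I would first reduce to the quasi-simple group $G = \underline{G}(\mathbb{F}_{q})$ of Lie type via Clifford theory; the Schur multiplier has $O(1)$ order, so this passage is harmless and only costs a multiplicative constant in the target exponent.

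The first step is the Schur orthogonality bound $|\rho(g)|^{2} \leq |C_{G}(g)| = |G|/|g^{G}|$. After taking logarithms, a direct comparison shows that this suffices to yield an exponent of the desired form $1 - c\log|g^{G}|/\log|G|$ whenever $\log\rho(1)$ is bounded below by a fixed fraction of $\log|G|$, with $c$ depending only on that fraction. This handles in particular the Steinberg character and generic Deligne--Lusztig characters $R_{T}^{G}(\theta)$ attached to regular semisimple parameters, for which $\rho(1)$ is comparable to $|G/T|$ up to polynomial factors in $q$.

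The bulk of the work is for low-dimensional $\rho$, that is those with $\log\rho(1) = o(\log|G|)$. Here I would invoke Deligne--Lusztig theory: every $\rho \in \Irr(G)$ lies in a rational Lusztig series $\mathcal{E}(G,s)$ indexed by a semisimple class $s$ in the dual group $G^{\ast}$, with dimension formula $\rho(1) = |G : C_{G^{\ast}}(s)|_{p'} \cdot \psi(1)$, so that low dimension forces large $C_{G^{\ast}}(s)$. This is encoded in the notion of \emph{level} (or \emph{depth}) of $\rho$. The plan is then an induction on level: apply the Deligne--Lusztig character formula to write $\rho(g)$ as a sum of products of Green functions at the unipotent part $g_{u}$ with values of smaller characters of $C_{G^{\ast}}(s_{u})$ at the semisimple part $g_{s}$, and feed in the inductive hypothesis applied to the Levi $C_{G^{\ast}}(s_{u})$ of smaller rank, checking that the loss at each step is absorbed into a single absolute constant $c$.

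The main obstacle will be handling $g$ of small conjugacy class with predominantly unipotent structure --- transvections, long-root elements, regular unipotents in small Levis --- since the centralizer bound is weak there and the Deligne--Lusztig decomposition offers no cancellation. The required exponential savings must come from sharp estimates on Green functions and, more deeply, on traces of Frobenius on the $\ell$-adic cohomology of the relevant Deligne--Lusztig varieties. This is the technical heart of the Larsen--Tiep argument and where the absolute constant $c$ is ultimately pinned down. A parallel bookkeeping difficulty is uniformity in $q$ and across Lie types, which is handled by treating small-rank groups individually and invoking the generic polynomial nature of the estimates in the high-rank regime.
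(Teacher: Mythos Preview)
The paper does not give its own proof of this theorem; it is quoted as a black box from \cite{LT24}, and the only discussion of the argument is the remark immediately following the statement. So there is no detailed proof to compare against.

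That remark, however, does describe the key idea in \cite{LT24}, and it is \emph{not} the route you outline. According to the paper, the earlier work \cite{GLT20,GLT23} already established bounds of the form \eqref{eq:best character estimates} for sufficiently regular elements, meaning those with $|C_G(g)|\le |G|^{\delta}$ for some small $\delta$. The innovation of \cite{LT24} for elements with \emph{large} centralizer is a mixing argument: one shows that sufficiently many self-convolutions of a small conjugacy class produce, with high probability, elements with small centralizer, and then one feeds those into the \cite{GLT20,GLT23} bounds. Your proposal instead attacks the large-centralizer case head-on via level induction in Deligne--Lusztig theory and Green-function estimates; that is closer in spirit to the methodology of \cite{GLT20,GLT23} itself, which the paper explicitly declines to discuss.

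There is also a genuine gap in your sketch at exactly the point you flag as the ``main obstacle''. For $g$ with small conjugacy class the centralizer bound is useless and you appeal to ``sharp estimates on Green functions'' and traces of Frobenius on cohomology of Deligne--Lusztig varieties, but you give no mechanism by which such estimates would produce the specific exponent $c\,\log|g^{G}|/\log|G|$ uniformly in the rank. This is precisely the difficulty that the mixing argument in \cite{LT24} is designed to circumvent: rather than sharpening the cohomological input, it transfers the problem to a regime (small centralizer) where the required bound is already available.
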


\begin{rem}
Theorem \ref{thm:LST2} was proved a few years before Theorem \ref{thm:character estimates},
using the characters estimates from \cite{GLT20,GLT23}, which are
less general than (\ref{eq:best character estimates}), but strong
enough for Theorem \ref{thm:LST2}. In \cite{GLT20,GLT23} a bound
of the form (\ref{eq:best character estimates}) was given for all
sufficiently regular elements, in the sense that $\left|C_{G}(g)\right|\leq\left|G\right|^{\delta}$
for some small $\delta>0$. One of the key ideas in \cite{LT24} to
deal with elements of large centralizers, is to reduce to the setting
of \cite{GLT20,GLT23} using a mixing argument showing that sufficiently
many self-convolutions of small conjugacy classes produces elements
with small centralizers with high probability. We will not discuss
the details of the proof of \cite{LT24} or \cite{GLT20,GLT23} in
these notes. 
\end{rem}

For the next theorem, denote 
\begin{equation}
T_{w,\delta,G}:=\left\{ (g_{1},...,g_{r})\in G^{r}:\left|C_{G}(w(g_{1},...,g_{r}))\right|>\left|G\right|^{\delta}\right\} .\label{eq:large centralizer}
\end{equation}

\begin{thm}[\cite{LST19}]
\label{thm:Larsen Shalev}Let $1\neq w\in F_{r}$ and let $\delta>0$.
Then there exists $c'(\delta)>0$ such that for every finite simple
group of Lie type $G$, with $\left|G\right|\gg_{w}1$, one has: 
\[
\P\left(T_{w,\delta,G}\right)<\left|G\right|^{-c'(\delta)/\ell(w)^{2}}.
\]
\end{thm}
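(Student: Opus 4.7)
My approach combines the pointwise fiber bound already available in this section (Theorem \ref{thm:LST2}(1), the Larsen--Shalev estimate $\tau_{w,G}(y) \leq |G|^{-\epsilon(w)}$ with $\epsilon(w) \gg 1/\ell(w)^{4}$) with a structural bound on
\[
L_\delta := \{y \in G : |C_G(y)| > |G|^\delta\}.
\]
First I translate $\mathbb{P}(T_{w,\delta,G}) = \tau_{w,G}(L_\delta)$. Since $\tau_{w,G}$ is conjugation-invariant and each conjugacy class $C \subset L_\delta$ has size $|C| = |G|/|C_G(y_C)| \leq |G|^{1-\delta}$,
\[
\mathbb{P}(T_{w,\delta,G}) = \sum_{y \in L_\delta} \tau_{w,G}(y) \leq |L_\delta| \cdot \max_{y \in G} \tau_{w,G}(y) \leq |L_\delta| \cdot |G|^{-\epsilon(w)}.
\]

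\textbf{Bounding $|L_\delta|$.} Writing $G$ as (essentially) the $\mathbb{F}_q$-points of a simple algebraic group $\underline{G}$ of Lie rank $r$ and dimension $N$, the set $L_\delta$ is controlled by Lang--Weil (Section \ref{sec4}) through the closed subvariety $L_\delta(\underline{G}) := \{y \in \underline{G} : \dim C_{\underline{G}}(y) \geq \delta N\}$. Partitioning this subvariety according to the conjugacy class of the (reductive) connected centralizer, each stratum is determined up to conjugation by a point of a torus of dimension $\leq r$, so it has dimension at most $(N - \delta N) + r$. Summing over the finite list of centralizer types yields $\dim L_\delta(\underline{G}) \leq (1-\delta)N + r$, and hence $|L_\delta| \ll |G|^{1-\delta+r/N}$. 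In the large-rank regime $r/N \leq \delta/2$ this gives $|L_\delta| \leq |G|^{1-\delta/2}$, and combining with the display above produces $\mathbb{P}(T_{w,\delta,G}) \leq |G|^{1-\delta/2-\epsilon(w)}$, which is $\leq |G|^{-c'(\delta)/\ell(w)^2}$ for an appropriate $c'(\delta) > 0$ once $|G|$ is large enough depending on $w$ and $\delta$.

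\textbf{Main obstacle.} The delicate point is the low-rank regime: for $\underline{G}$ of bounded Lie rank (say $\mathrm{PSL}_2$), generic regular semisimple elements already have centralizer of size $\sim |G|^{r/N}$, so for small $\delta$ the set $L_\delta$ contains almost all of $G$ and the crude union bound above collapses. For those groups the counting argument must be replaced by a genuinely geometric one: interpret $w_{\underline{G}}^{-1}(L_\delta(\underline{G}))$ as a proper subvariety of $\underline{G}^r$ (using Borel's dominance theorem, Theorem \ref{thm:Borel}), and extract a codimension lower bound by combining the geometric irreducibility of generic fibers of convolutions of word maps (cf.\ Theorem \ref{thm:-convolution of two word maps is generically absolutely irreducible} and Section \ref{sec8}) with the Lang--Weil dictionary of Section \ref{sec4}. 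A secondary issue is matching the sharper $\ell(w)$-exponent $c'(\delta)/\ell(w)^2$ rather than the weaker $1/\ell(w)^4$ inherited from Theorem \ref{thm:LST2}(1); I would try to recover this by applying Theorem \ref{thm:LST2}(1) instead to the auxiliary word $[y, w(x)]$ (of length $O(\ell(w))$), noting $\mathbb{E}_g[|C_G(w(g))|] = |G|\cdot\tau_{[y,w(x)],G}(e)$, and trading the resulting moment estimate against $|G|^\delta$ via Markov's inequality.
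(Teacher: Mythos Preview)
Your argument contains a fatal arithmetic error in the high-rank step. You arrive at $\P(T_{w,\delta,G})\le |L_\delta|\cdot|G|^{-\epsilon(w)}\le |G|^{\,1-\delta/2-\epsilon(w)}$, but the exponent $1-\delta/2-\epsilon(w)$ is \emph{positive}: Theorem~\ref{thm:LST2}(1) only guarantees $\epsilon(w)\gtrsim 1/\ell(w)^{4}$, and Example~\ref{exa:bounds on epsilon of power word} shows $\epsilon(w_{(\ell)})\le 1/\ell$, so for any fixed $\delta<2$ and $\ell(w)\gg_\delta 1$ your bound is worse than the trivial $\P\le 1$. The Markov trick with $[y,w]$ has exactly the same defect: $\E_{g}|C_G(w(g))|=|G|\,\tau_{[y,w],G}(e)\le|G|^{1-\epsilon([y,w])}$ yields $\P\le|G|^{\,1-\delta-\epsilon([y,w])}$, still with positive exponent whenever $\delta<1-\epsilon([y,w])$. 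The basic issue is that a pointwise bound of strength $|G|^{-\epsilon}$ with $\epsilon$ small cannot control the $\tau_{w,G}$-mass of a set of size $|G|^{1-\eta}$ unless $\epsilon>1-\eta$.

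There is also a circularity issue in the paper's logical order: Remark~\ref{rem:Note-that-Theorem} explains that Theorem~\ref{thm:LST2}(1) for groups of Lie type is \emph{deduced from} Theorem~\ref{thm:Larsen Shalev}, so invoking the former to prove the latter is not an independent argument. The paper's proof (\S\ref{subsec:Proof-of-probabilistic for high rank}) does not pass through any prior pointwise fiber bound. Instead it bounds $|w_G^{-1}(e)|$ (and more generally $|w_G^{-1}(g)|$ for $g$ with large centralizer) \emph{directly} by the Larsen--Shalev trajectory method: pick $m\sim n/(2\ell)$ random vectors in $\mathbb{F}_p^{n}$, follow them through the letters of $w$, and bound the ``miracle set'' $S_{w,m}$ of tuples $(x,y,v_{1,0},\dots,v_{m,0})$ for which every trajectory closes up (Proposition~\ref{prop:main proposition}). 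This gives $\tau_{w,\mathrm{GL}_n(\mathbb{F}_p)}(e)\le|\mathrm{GL}_n(\mathbb{F}_p)|^{-1/(11\ell)}$ in one stroke, and the general element with large centralizer is handled by finding a polynomial $Q$ with $\dim\ker Q(g)$ large and rerunning the trajectory argument for $w^{\deg Q}$. Your low-rank remarks are closer to the mark: that case is indeed treated geometrically in \S\ref{sec4} via Theorem~\ref{thm:Low rank geometric statement} and Lang--Weil.
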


\begin{proof}[Theorems \ref{thm:character estimates} and \ref{thm:Larsen Shalev}
imply Theorem \ref{thm:bounds on Fourier coefficients}]
Indeed, fixing (say) $\delta=\frac{1}{4}$, and by combining the
two theorems, we get that for every $1\neq\rho\in\Irr(G)$, and every
finite simple group $G$ with $\left|G\right|\gg_{w}1$, 
\begin{align*}
\left|a_{w,G,\overline{\rho}}\right| & =\left|\E(\rho(w(g_{1},...,g_{r})))\right|\leq\P\left(T_{w,\delta,G}\right)\cdot\rho(1)+\rho(1)^{1-c(1-\delta)}\leq\left|G\right|^{-c'(\delta)/\ell(w)^{2}}\cdot\rho(1)+\rho(1)^{1-c(1-\delta)}\\
 & \leq\rho(1)^{-2c'(\delta)/\ell(w)^{2}}\cdot\rho(1)+\rho(1)^{1-c(1-\delta)}\leq2\rho(1)^{1-2\epsilon(w)}<\rho(1)^{1-\epsilon(w)},
\end{align*}
where the third inequality follows since $\rho(1)<\left|G\right|^{\frac{1}{2}}$,
and the last inequality follows from the fact that $\underset{\left|G\right|\rightarrow\infty}{\lim}\underset{1\neq\rho\in\Irr(G)}{\min}\rho(1)=\infty$
(see \cite{LaSe74}). 
\end{proof}
\begin{rem}
\label{rem:Note-that-Theorem}Note that Theorem \ref{thm:Larsen Shalev},
with $\delta=0.99$ (for example) also implies Item (1) of Theorem
\ref{thm:LST2} for finite groups of Lie type. Indeed, Theorem \ref{thm:Larsen Shalev}
implies that $\tau_{w,G}(g)<\left|G\right|^{-\epsilon'(w)}$ for every
$g\in G$ with $\left|C_{G}(g)\right|>\left|G\right|^{\delta}$. To
bound $\tau_{w,G}(g)$ for the other elements $g$, we can simply
use the bound $\tau_{w,G}(g)=\frac{\tau_{w,G}(g^{G})}{\left|g^{G}\right|}\leq\left|g^{G}\right|^{-1}.$ 
\end{rem}

\subsection{\label{subsec:Proof-of-probabilistic for high rank}Proof of probabilistic
result (Theorem \ref{thm:Larsen Shalev}) in high rank}

The proof of Theorem \ref{thm:Larsen Shalev} in its full generality
is rather complicated. In order to explain it we will make several
simplifying assumptions. We consider the case that $G=\mathrm{GL}_{n}(\mathbb{F}_{p})$,
that $w$ has $2$ letters, and $n>32\ell(w)$, and prove the following
Theorem \ref{Baby case}, which is a slightly weaker version of Theorem
\ref{thm:Larsen Shalev}. Still, its proof (which is based on Larsen\textendash Shalev's
proof of Proposition 3.3 in \cite{LaS12}) contains the main ideas
used in the proof of the stronger Theorem \ref{thm:Larsen Shalev}.
In $\mathsection$\ref{sec4} we will be able to prove the low rank
case as well (i.e.~$n\leq32\ell(w)$). 
\begin{thm}
\label{Baby case}For every $1\neq w\in F_{2}$, every $n\geq32\ell(w)$
and for every prime $p$: 
\[
\tau_{w,\mathrm{GL}_{n}(\mathbb{F}_{p})}(e)\leq\left|\mathrm{GL}_{n}(\mathbb{F}_{p})\right|^{-\frac{1}{11\ell(w)}}.
\]
\end{thm}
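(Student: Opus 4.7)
The plan is to bound the fiber directly: I aim to show $|w_G^{-1}(e)| \leq |G|^{2 - 1/(11\ell(w))}$ with $G = \mathrm{GL}_n(\mathbb{F}_p)$ and $\ell = \ell(w)$, following the strategy of Larsen--Shalev in \cite{LaS12}. First I would reduce $w$ to a cyclically reduced word, which preserves the fiber over $e$ since $(uwu^{-1})(g_1,g_2) = u(g_1,g_2)\,w(g_1,g_2)\,u(g_1,g_2)^{-1}$ vanishes if and only if $w(g_1,g_2) = e$. I then split into two cases: when $w$ involves only one variable and when it involves both.

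In the one-variable case, say $w = x^k$ with $|k|\leq\ell$, the fiber has size $|G| \cdot |\{g \in G : g^{k} = e\}|$. The set $\{g : g^{k} = e\}$ is a union of conjugacy classes of diagonalizable matrices with eigenvalues in $\mu_k$, and a Lang--Weil count analogous to Example \ref{exa:bounds on epsilon of power word} gives $|\{g : g^{k}=e\}| \lesssim |G|^{1 - 1/|k|}$, yielding the stronger bound $|G|^{2 - 1/\ell}$.

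For the main two-variable case, I would fix $g_2 \in G$ and study $V_{g_2} := \{g_1 : w(g_1, g_2) = e\}$, so that $|w_G^{-1}(e)| = \sum_{g_2} |V_{g_2}|$. Stratifying by regular semisimplicity of $g_2$: the non-regular locus has size $O(|G|^{1 - 1/n})$ (a Lang--Weil estimate on the discriminant locus), contributing at most $|G|^{2 - 1/n} \leq |G|^{2 - 1/(11\ell)}$ since $n \geq 32\ell$. For regular semisimple $g_2$ with distinct eigenvalues over $\overline{\mathbb{F}}_p$, I would diagonalize $g_2$ and expand $w(g_1, g_2)$ as a matrix-valued polynomial of total degree at most $\ell$ in the entries of $g_1$; the equation $w(g_1, g_2) = e$ then becomes a system of polynomial identities whose effective rank is governed by the combinatorial structure of $w$ and the genericity of the eigenvalues of $g_2$.

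The main obstacle is quantifying this rank: I would need to show that, for $g_2$ in an open dense subset of $G$, the codimension of $V_{g_2}$ in $G$ is at least $n^2/(11\ell)$, so that the Lang--Weil estimates give $|V_{g_2}| \lesssim |G|^{1 - 1/(11\ell)}$ and summing yields the theorem. Borel's theorem (Theorem \ref{thm:Borel}) ensures $w_G$ is dominant so the generic fiber has dimension $n^2$, but $e$ need not be a generic point, and controlling the ``defect'' at $e$ requires tracking how the syllable decomposition $w = x^{a_1} y^{b_1} \cdots x^{a_m} y^{b_m}$ produces polynomial constraints on $g_1$ that remain linearly independent after substituting the distinct eigenvalues of $g_2$. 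The explicit constant $11$ should then emerge by balancing the $\ell$-dependence of the codimension bound against the matrix size via the hypothesis $n \geq 32\ell$, which provides enough room to absorb the error terms from the non-regular stratum and from lower-order polynomial contributions.
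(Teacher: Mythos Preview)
Your stratification step contains a fatal sign error. You claim the non-regular-semisimple locus of $g_2$ contributes at most $|G|^{2-1/n}\leq|G|^{2-1/(11\ell)}$ ``since $n\geq 32\ell$''. But $n\geq 32\ell$ gives $1/n\leq 1/(32\ell)<1/(11\ell)$, so the inequality goes the \emph{wrong way}: $|G|^{2-1/n}\geq|G|^{2-1/(11\ell)}$. (In fact the non-regular locus is a hypersurface, so its size is about $|G|^{1-1/n^2}$, which is even worse.) You therefore cannot throw away the non-regular $g_2$'s while bounding $|V_{g_2}|$ trivially by $|G|$; this stratum already swamps the target bound.

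Beyond this, the proposal leaves the real work undone. You acknowledge that ``the main obstacle is quantifying this rank'' and that you ``would need to show'' the codimension of $V_{g_2}$ is at least $n^2/(11\ell)$ for generic $g_2$, but give no mechanism for doing so. This codimension bound is essentially equivalent to the theorem itself, and Borel's dominance gives nothing about the fiber over $e$. A further difficulty: the theorem asserts the bound for \emph{every} prime $p$, not just $p\gg_{\ell,n}1$, so a Lang--Weil argument would have to control the implied constants (which depend on the complexity of $V_{g_2}$, hence on $\ell$ and $n$) against small $p$.

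The paper's argument avoids all of this. Rather than fixing $g_2$ and analyzing the variety $V_{g_2}$, it introduces $m\sim n/(2\ell)$ auxiliary vectors $v_{1,0},\ldots,v_{m,0}\in\mathbb{F}_p^n$ and studies the ``trajectories'' $v_{i,j}=w^{(j)}(x,y).v_{i,0}$ as the prefixes $w^{(j)}$ of $w$ act. The key set $S_{w,m}$ consists of tuples $(x,y,v_{1,0},\ldots,v_{m,0})$ with each $v_{i,0}$ outside the span of all earlier trajectory vectors and $v_{i,\ell}=v_{i,0}$. One shows $|S_{w,m}|<\ell^m p^{2n^2+m^2\ell}$ by a direct linear-algebra count (conditioning on the first index $b_i$ at which each trajectory enters the span of earlier vectors, and reading off the constraints this imposes on $x$ and $y$ via their restrictions to explicitly described subspaces). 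On the other hand, every $(x,y)\in w_G^{-1}(e)$ contributes at least $p^{nm}2^{-m}$ valid choices of the $v_{i,0}$'s, so $|w_G^{-1}(e)|\cdot p^{nm}2^{-m}\leq|S_{w,m}|$. Combining the two bounds and optimizing $m=\lfloor n/(2\ell)\rfloor$ gives the explicit exponent $1/(11\ell)$, valid for every prime $p$ and with no appeal to Lang--Weil or dimension theory.
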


Given a reduced word $w(x,y)$, write $w=w_{\ell}\cdots w_{1}$, where
each $w_{i}$ is of the form $x,x^{-1},y$ or $y^{-1}$. We write
$w^{(j)}:=w_{j}\cdots w_{2}w_{1}$, so $w^{(\ell)}=w$. Given $v_{1,0},...,v_{m,0}\in\mathbb{F}_{p}^{n}$,
we denote $v_{i,j}:=w^{(j)}.v_{i,0}$, for $j=1,\dots,\ell$. Each
of the sequences $\{v_{i,0},v_{i,1},...,v_{i,\ell}\}$ is what called
a \emph{trajectory} for $w$.

For example, if $w=w_{\mathrm{com}}=xyx^{-1}y^{-1}$, then 
\[
v_{i,1}=y^{-1}.v_{i,0}\text{ and }v_{i,2}=x^{-1}.v_{i,1}\text{ and }v_{i,3}=y.v_{i,2}\text{ and }v_{i,4}=x.v_{i,3}.
\]

We are going to conduct an ``experiment'' consisting of $m\sim\frac{n}{2\ell}$
small trials. We first choose two elements $x,y$ in $\mathrm{GL}_{n}(\mathbb{F}_{p})$
independently at random. Then we do the following: 
\begin{itemize}
\item In the first trial, we choose a random vector $v_{1,0}\in\mathbb{F}_{p}^{n}$,
and then build its trajectory $\{v_{1,0},v_{1,1},...,v_{1,\ell}\}$
according to $w(x,y)$. At this point we check if $v_{1,\ell}=v_{1,0}$.
If this is not the case, we stop the experiment. If it is the case,
we continue to the next trial. 
\item In the second trial, we choose a random vector $v_{2,0}\in\mathbb{F}_{p}^{n}$
which does not belong to the span of all previous occurrences of the
$v_{i',j'}$'s, i.e $v_{2,0}\notin\mathrm{span}\left\{ v_{1,0},v_{1,1},...,v_{1,\ell-1}\right\} $.
We again build its trajectory $\{v_{2,0},v_{2,1},...,v_{2,\ell}\}$
according to $w(x,y)$ and check if $v_{2,\ell}=v_{2,0}$. Again,
if this is not the case, we stop the experiment, and if it is true,
we continue to the next trial. 
\item We repeat the process $m\leq\frac{n}{2\ell}$ times. 
\end{itemize}
Intuitively, it seems extremely unlikely that we will be able to complete
the full experiment, and that a ``miracle'' is needed in order for
this to happen. Let us define the ``miracle set'' $S_{w,m}$, which
describes all tuples $(x,y,v_{1,0},\ldots,v_{m,0})$ for which the
experiment was successful. Let $\prec$ denote the lexicographic order
on pairs in $\left\{ 1,\dots,m\right\} \times\left\{ 0,\dots,\ell\right\} $,
that is, $(a_{1},a_{2})\prec(b_{1},b_{2})$ if and only if $a_{1}<b_{1}$,
or, $a_{1}=b_{1}$ and $a_{2}<b_{2}$. Denote 
\[
Z_{i,j}:=\Span\left\{ v_{i',j'}\mid(i',j')\prec(i,j)\right\} ,
\]
and
\[
S_{w,m}:=\left\{ (x,y,v_{1,0},\ldots,v_{m,0})\in\mathrm{GL}_{n}(\mathbb{F}_{p})^{2}\times\mathbb{F}_{p}^{nm}:\forall i,v_{i,0}\notin Z_{i,0},v_{i,\ell}=v_{i,0}\right\} .
\]
The next proposition shows that $S_{w,m}$ is indeed a rare event
(a ``miracle''). 
\begin{prop}
\label{prop:main proposition}If $n\geq2m\ell$, then $\left|S_{w,m}\right|<\ell^{m}p^{2n^{2}+m^{2}\ell}$. 
\end{prop}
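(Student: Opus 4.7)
My plan is to parameterize elements of $S_{w,m}$ by first specifying the trajectory data $(v_{i,j})_{i\in[m],\,j\in\{0,\ldots,\ell-1\}}$ (extended by $v_{i,\ell}=v_{i,0}$) and then counting the pairs $(x,y)\in\mathrm{GL}_{n}(\mathbb{F}_{p})^{2}$ compatible with it. Each transition $v_{i,j}=w_{j}\cdot v_{i,j-1}$ is a linear equation on $x$ or $y$ depending on whether $w_{j}\in\{x,x^{-1}\}$ or $\{y,y^{-1}\}$. Writing $a,b$ for the numbers of $x^{\pm1}$- and $y^{\pm1}$-letters of $w$ (so $a+b=\ell$), the data impose $am$ linear constraints on $x$ and $bm$ linear constraints on $y$.

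For trajectory data whose $x$-source vectors span a subspace $U_{x}$ of dimension $d_{x}\leq am$ (and similarly $d_{y}\leq bm$ for $y$), compatibility requires that the prescribed action of $x$ on $U_{x}$ extend consistently; when it does, the count of such $x\in\mathrm{GL}_{n}(\mathbb{F}_{p})$ is at most $p^{n(n-d_{x})}$, since $x$ is determined on $U_{x}$ and free on a complement. Multiplying the analogous bounds for $x$ and $y$ gives
\[
|S_{w,m}|\;\leq\;p^{2n^{2}}\sum_{\text{traj.\ data}}p^{-n(d_{x}+d_{y})}.
\]

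To estimate this sum, I would stratify trajectories by a \emph{collision profile}: for each trajectory $i$ let $\pi_{i}\in\{1,\ldots,\ell\}$ be the first index at which $v_{i,\pi_{i}}$ enters the span of the previously chosen vectors (setting $\pi_{i}=\ell$ if no collision occurs before closure). This yields at most $\ell^{m}$ profiles. For a fixed profile, the fresh trajectory vectors before a collision are essentially free in $\mathbb{F}_{p}^{n}$, while the stale vectors after a collision lie in a subspace of dimension at most $m\ell$, namely the span of all previously introduced trajectory vectors --- here we use $n\geq2m\ell$ so this span is a proper subspace. The resulting factor of at most $p^{m\ell}$ per trajectory, over $m$ trajectories, produces the $p^{m^{2}\ell}$ overhead, while the saving in the $(x,y)$-count cancels the factor $p^{-n(d_{x}+d_{y})}$ down to the main term $p^{2n^{2}}$.

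The main obstacle will be the bookkeeping that ties each collision in the trajectory data to a genuine linear dependency among the $x$- or $y$-source vectors, thereby coupling the trajectory-count overhead to the $(x,y)$-count saving. One needs to verify inductively in $i$ that the stale span at step $(i,j)$ has dimension at most $(i-1)\ell+j-1$, so that its contribution per collision is bounded by $p^{m\ell}$ and the cross-terms telescope cleanly. The hypothesis $n\geq2m\ell$ is used repeatedly to guarantee that generic (collision-free) trajectories dominate and that all combinatorial error is confined to the controlled $p^{m^{2}\ell}$ overhead.
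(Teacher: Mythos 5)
Your proposal follows essentially the same route as the paper's proof: your ``collision profile'' $\pi_{i}$ is exactly the paper's first-dependency index $b_{i}$, the $\ell^{m}$ count of profiles, the $p^{m^{2}\ell}$ overhead from expressing each first stale vector in the span of the at most $m\ell$ previously introduced vectors, and the $p^{2n^{2}}$ main term from the linear constraints on $(x,y)$ all match the paper's decomposition $S_{w,m}\subseteq\bigcup_{\{b_{i}\}}S_{w,\{b_{i}\}}$ and the estimate (\ref{eq:completing the proof}). The bookkeeping you defer (that the saving in the $(x,y)$-count cancels the free choices of fresh vectors) is likewise left as an exercise in the paper, so the level of rigor is comparable and the argument is correct.
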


After we have shown that $S_{w,m}$ is a ``miracle'', we would like
to show that $S_{w,m}$ must be large if $\left|w_{\mathrm{GL}_{n}(\mathbb{F}_{p})}^{-1}(e)\right|$
is large. This will imply that $\left|w_{\mathrm{GL}_{n}(\mathbb{F}_{p})}^{-1}(e)\right|$
must be small. 
\begin{xca}
\label{exer:lower bound on points in GL_n(F_p)}For every prime $p$,
we have $\left|\mathrm{GL}_{n}(\mathbb{F}_{p})\right|>2^{-n}p^{n^{2}}$. 
\end{xca}

\begin{proof}[Proposition \ref{prop:main proposition} implies Theorem \ref{Baby case}]
Indeed, for each $(x,y)\in w_{\mathrm{GL}_{n}(\mathbb{F}_{p})}^{-1}(e)$
we always have $v_{i,\ell}=v_{i,0}$. Hence, there are at least $p^{n}(p^{n}-p^{\ell})...(p^{n}-p^{\ell(m-1)})\geq p^{nm}2^{-m}$
choices for $\{v_{i,0}\}_{i=1}^{m}$ such that $(x,y,v_{1},\ldots,v_{m})\in S_{w,m}$.
Hence, 
\[
\left|w_{\mathrm{GL}_{n}(\mathbb{F}_{p})}^{-1}(e)\right|p^{nm}2^{-m}\leq\left|S_{w,m}\right|<\ell^{m}p^{2n^{2}+m^{2}\ell}.
\]
Let $m=\left\lfloor n/2\ell\right\rfloor $. Since $n\geq32\ell$
we have, $\frac{n}{2\ell}-\frac{n}{32\ell}\leq\frac{n}{2\ell}-1\leq m\leq\frac{n}{2\ell}$.
In particular, 
\begin{align*}
\tau_{w,\mathrm{GL}_{n}(\mathbb{F}_{p})}(e) & =\frac{\left|w_{\mathrm{GL}_{n}(\mathbb{F}_{p})}^{-1}(e)\right|}{\left|\mathrm{GL}_{n}(\mathbb{F}_{p})\right|^{2}}\leq\frac{2^{2n+m}}{p^{2n^{2}}}\cdot\ell^{m}p^{2n^{2}+m^{2}\ell-mn}\leq2^{4n}p^{\frac{n^{2}}{4\ell}-\frac{n^{2}}{2\ell}+\frac{n^{2}}{32\ell}}.\\
 & \leq2^{\frac{n^{2}}{8\ell}}p^{\frac{n^{2}}{4\ell}-\frac{n^{2}}{2\ell}+\frac{n^{2}}{32\ell}}\leq p^{\frac{n^{2}}{4\ell}-\frac{n^{2}}{2\ell}+\frac{n^{2}}{8\ell}+\frac{n^{2}}{32\ell}}\leq\left|\mathrm{GL}_{n}(\mathbb{F}_{p})\right|^{-\frac{1}{11\ell}}.\qedhere
\end{align*}
\end{proof}

\subsubsection{\label{subsec:Proof-of-Proposition}Proof of Proposition \ref{prop:main proposition}}

We first analyze a special case. 

\textbf{Special (model) case}: Let us bound the number the tuples
$(x,y,v_{1,0},\ldots,v_{m,0})$ in $S_{w,m}$ in which all $\{v_{i,j}\}_{i=1,...,m,j=0,...,3}$
are linearly independent. For simplicity of presentation, consider
the commutator word $w=xyx^{-1}y^{-1}$ (the argument generalizes
to any word).

We collect the information on $x,y,v_{i,j}$: 
\begin{align*}
x(v_{i,2}) & =v_{i,1}\text{ and }x(v_{i,3})=v_{i,4}\\
y(v_{i,1}) & =v_{i,0}\text{ and }y(v_{i,2})=v_{i,3}\tag{\ensuremath{\star}}
\end{align*}
Denote $V_{x}=\mathrm{span}\left\{ v_{i,2},v_{i,3}\right\} _{i=1}^{m}$,
$W_{x}=\mathrm{span}\left\{ v_{i,1},v_{i,0}\right\} _{i=1}^{m}$,
$V_{y}=\mathrm{span}\left\{ v_{i,1},v_{i,2}\right\} _{i=1}^{m}$,
$W_{y}=\mathrm{span}\left\{ v_{i,0},v_{i,3}\right\} _{i=1}^{m}$.
Note that each of these subspaces is $2m$-dimensional. For every
choice of linearly independent vectors $\{v_{i,j}\}_{i=1,...,m,j=0,...,3}$,
the following holds:
\[
x|_{V_{x}}=T_{x}\text{ and }y|_{V_{y}}=T_{y},\tag{\ensuremath{\star\star}}
\]
for $T_{x}:V_{x}\rightarrow W_{x}$ and $T_{y}:V_{y}\rightarrow W_{y}$
determined by $(\star)$. 
\begin{xca}
The number of choices for $(x,y)\in\mathrm{GL}_{n}(\mathbb{F}_{p})^{2}$
satisfying $(\star\star)$ is $p^{4m(n-2m)}\left|\mathrm{GL}_{n-2m}(\mathbb{F}_{p})\right|^{2}<p^{2n(n-2m)}$. 
\end{xca}

Since there are at most $p^{4nm}$ options for $\{v_{i,j}\}_{i=1,...,m}$,
we get a total contribution of 
\[
p^{2n(n-2m)+4nm}\leq p^{2n^{2}}.
\]
\textbf{General case}:\textbf{ }This is difficult to analyze directly.
Instead, we relax the conditions in $S_{w,m}$, so that we count a
larger set, but whose computation is similar to the special case above.

For each $i$, let $b_{i}$ be the first index $j\geq1$ such that
$v_{i,j}$ is a linear combination of previous vectors $v_{i',j'}$
with $(i',j')\prec(i,j),j'\leq b_{i'}$. We denote 
\[
R_{i,j}:=\Span\left\{ v_{i',j'}\mid(i',j')\prec(i,j),j'\leq b_{i'}\right\} ,
\]
and 
\[
S_{w,\{b_{i}\}_{i=1}^{m}}:=\left\{ (x,y,v_{1,0},\ldots,v_{m,0})\in G^{2}\times\mathbb{F}_{p}^{nm}:\forall i,v_{i,0}\notin Z_{i,0},v_{i,b_{i}}\in R_{i,b_{i}}\text{ and }v_{i,j'}\notin R_{i,j'}\forall j'<b_{i}\right\} .
\]
Note that $S_{w,m}\subseteq\bigcup_{\{b_{i}\}_{i=1}^{m}}S_{w,\{b_{i}\}_{i=1}^{m}}$.
Hence, instead of bounding $S_{w,m}$ directly, we bound each individual
$S_{w,\{b_{i}\}_{i=1}^{m}}$ and sum over all possible $\{b_{i}\}_{i=1}^{m}$.
The advantage in working with $S_{w,\{b_{i}\}_{i=1}^{m}}$ is that
we can provide estimates for $\left|S_{w,\{b_{i}\}_{i=1}^{m}}\right|$
which are very similar to the special case above. This is because
we artificially stop each trial in the experiment at the moment any
linear dependency occurs.

Hence, by conditioning on $\left\{ v_{i,j}\right\} _{i\in[m],j<b_{i}}$,
and by collecting all information on $x,y$ as was done in $(\star)$
and $(\star\star)$, we obtain the following upper bound: 
\begin{equation}
\left|S_{w,m}\right|\leq\sum_{\{b_{i}\}_{i=1}^{m}}\left|S_{w,\{b_{i}\}_{i=1}^{m}}\right|<\sum_{\{b_{i}\}_{i=1}^{m}}p^{m^{2}\ell}p^{2n^{2}}\leq\ell^{m}p^{2n^{2}+m^{2}\ell},\label{eq:completing the proof}
\end{equation}

where: 
\begin{itemize}
\item $\ell^{m}$ is an upper bound on the number of choices of $\{b_{i}\}_{i=1}^{m}$. 
\item $p^{m^{2}\ell}$ is an upper bound for the number of ways to write
$v_{i,b_{i}}=\sum_{(i',j')\prec(i,b_{i}):j'<b_{i'}}a_{i,(i',j')}v_{i',j'},$
for $a_{i,(i',j')}\in\mathbb{F}_{p}$ and for each $i$. 
\end{itemize}
\begin{xca}
Using a similar computation as in the Special Case above, prove (\ref{eq:completing the proof})
to complete the proof of Proposition \ref{prop:main proposition},
by showing that for each $\left\{ a_{i,(i',j')}\right\} _{i,i',j'}$,
the number of tuples $(x,y,v_{1,0},\ldots,v_{m,0})\in S_{w,\{b_{i}\}_{i=1}^{m}}$
satisfying the condition $v_{i,b_{i}}=\sum_{(i',j')\prec(i,b_{i}):j'<b_{i'}}a_{i,(i',j')}v_{i',j'}$
is bounded by $p^{2n^{2}}$.
\end{xca}

\begin{rem}
To go from Theorem \ref{Baby case} to the full generality of Theorem
\ref{thm:Larsen Shalev}, one has to deal with a few more difficulties: 
\begin{enumerate}
\item Estimating $\tau_{w,\mathrm{GL}_{n}(\mathbb{F}_{p})}(g)$ for an arbitrary
element $g$ with $\left|C_{\mathrm{GL}_{n}(\mathbb{F}_{p})}(g)\right|>\left|\mathrm{GL}_{n}(\mathbb{F}_{p})\right|^{\delta}$. 
\item Taking $w$ to be a word with $r$ letters instead of $2$ letters. 
\item Taking $G$ to be any classical group of Lie type of rank $>C\ell(w)$,
instead of $G=\mathrm{GL}_{n}(\mathbb{F}_{p})$. 
\end{enumerate}
Items (2) and (3) are mainly technical, and do not impose essential
difficulties. To deal with Item (1), Larsen, Shalev and Tiep showed
that if $g\in\mathrm{GL}_{n}(\mathbb{F}_{p})$ satisfies $\left|C_{\mathrm{GL}_{n}(\mathbb{F}_{p})}(g)\right|>p^{n^{2}\delta}$,
then there exists a non-constant polynomial $Q(X)\in\mathbb{F}_{p}[X]$
such that $\dim\ker Q(g)>\frac{1}{2}\delta n\deg Q.$ One can then
show that the condition that $\dim\ker Q(w(g_{1},...,g_{r}))>\frac{1}{2}\delta n\deg Q$
is a rare event, by again analyzing $\Theta(n)$ trajectories of the
word $w^{\deg Q}$. This reduces to a setting which is very similar
to the one analyzed in the proof of Theorem \ref{Baby case}. 
\end{rem}

\section{\label{sec4}Geometry of word maps on simple algebraic groups and
interaction with probability.}

In this section we make a connection between the geometry of word
maps $w_{\underline{G}}:\underline{G}^{r}\rightarrow\underline{G}$
on simple algebraic groups (irreducible components and dimension of
their fibers), and the probabilistic properties of the maps $w_{\underline{G}(\mathbb{F}_{p})}:\underline{G}(\mathbb{F}_{p})^{r}\rightarrow\underline{G}(\mathbb{F}_{p})$. 
\begin{rem}
This is a special case of a more general connection between the singularity
properties of $w_{\underline{G}}$ to probabilistic properties of
the maps $w_{\underline{G}(\Zp)}:\underline{G}(\Zp)^{r}\rightarrow\underline{G}(\Zp)$.
We will not discuss this here. However, this connection is studied
thoroughly in \cite{AA16,AA18,GHb,CGH23} and in \cite[Section 5]{AGL}.
\end{rem}

Our main tool will be the Lang\textendash Weil estimates.

\subsection{The Lang\textendash Weil estimates and a geometric interpretation
of $L^{\infty}$-mixing time}
\begin{thm}[The Lang\textendash Weil estimates, \cite{LW54}]
\label{thm:Lang-Weil}Let $X$ be a finite type $\mathbb{F}_{q}$-scheme
(e.g.~defined by $f_{1}=...=f_{r}=0$, for $f_{1},...,f_{r}\in\mathbb{F}_{q}[x_{1},...,x_{n}]$).
Then: 
\[
\left|\frac{\left|X(\mathbb{F}_{q})\right|}{q^{\dim X}}-C_{X}\right|\leq Cq^{-1/2},
\]
where $C_{X}$ is the number of top-dimensional irreducible components
of $X_{\overline{\mathbb{F}_{q}}}$, which are defined over $\mathbb{F}_{q}$,
and $C$ depends only on the complexity of $X$\footnote{For the precise notion of complexity, see e.g.~\cite[Definition 7.7]{GH19}.}
(i.e.~on $r,n$ and the degrees of $f_{i}$).
\end{thm}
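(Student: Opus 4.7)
The plan is to establish the estimate first for a single geometrically irreducible $\mathbb{F}_{q}$-variety and then recover the general case by decomposing $X_{\overline{\mathbb{F}_{q}}}$ into geometrically irreducible components and analyzing how Frobenius acts on them. After replacing $X$ by $X_{\mathrm{red}}$, write $X_{\overline{\mathbb{F}_{q}}}=\bigcup_{i}V_{i}$; $\mathrm{Frob}_{q}$ permutes the $V_{i}$'s, and those fixed by it are precisely the components defined over $\mathbb{F}_{q}$. An $\mathbb{F}_{q}$-point lying on a component in a nontrivial Galois orbit must simultaneously lie on all its Galois conjugates, hence in a subvariety of strictly smaller dimension, contributing $O(q^{\dim X-1})$; pairwise intersections of distinct top-dimensional components are likewise of lower dimension. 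So it reduces to proving $|V(\mathbb{F}_{q})|=q^{d}+O(q^{d-1/2})$ for a geometrically irreducible $\mathbb{F}_{q}$-variety $V$ of dimension $d$.

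For this core statement I would induct on $d$. The base case $d=1$ uses Weil's Riemann hypothesis for curves: pass to the smooth projective model $\widetilde{V}$, apply Weil's bound $\bigl||\widetilde{V}(\mathbb{F}_{q})|-q-1\bigr|\leq 2g\sqrt{q}$, and note that $|V(\mathbb{F}_{q})|$ and $|\widetilde{V}(\mathbb{F}_{q})|$ differ by at most the number of singular points and points at infinity, which is $O(1)$ in the complexity of $V$. For the inductive step, I would fibre $V$ by a dominant morphism $\pi\colon V\to\mathbb{A}^{1}_{\mathbb{F}_{q}}$ (for example, a generic linear projection) whose generic fibre is geometrically irreducible of dimension $d-1$; an effective Bertini-type theorem provides such a $\pi$ with only $O(1)$ exceptional (reducible, non-geometrically-irreducible, or lower-dimensional) fibres. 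Applying the induction hypothesis to the $q-O(1)$ good fibres yields $q\cdot(q^{d-1}+O(q^{d-3/2}))=q^{d}+O(q^{d-1/2})$, while the exceptional fibres contribute only $O(q^{d-1})$.

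The principal obstacle is \emph{effectivity}: the constant $C$ must depend only on the complexity of $X$ (number of defining equations, number of variables, and degrees), uniformly in $q$. Every step --- the decomposition into geometrically irreducible components, the sizes and dimensions of the Galois orbits, the genus of the auxiliary curves arising in the base case and the number of their singular and boundary points, the construction of the generic projection, and the control of its exceptional locus --- must be made quantitative in these complexity parameters. This uniform bookkeeping, while essentially present in the original argument of Lang and Weil, is where most of the technical effort resides; modern treatments often streamline it via Deligne's bounds on étale cohomology, but the combinatorial skeleton of the induction is unchanged.
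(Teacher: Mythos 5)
The paper does not prove this statement; it is quoted as a classical result from Lang--Weil \cite{LW54}, so there is no internal proof to compare against. Your sketch is a correct outline of the original Lang--Weil argument: Galois descent reduces to counting points on a single geometrically irreducible $\mathbb{F}_{q}$-variety (components in nontrivial Frobenius orbits land in lower-dimensional intersections, as do overlaps of distinct top-dimensional components), and the core estimate $|V(\mathbb{F}_{q})|=q^{d}+O(q^{d-1/2})$ follows by induction on $d$, with Weil's Riemann hypothesis for curves as the base case and a Bertini-type slicing argument for the inductive step. Two points that your sketch uses implicitly and that deserve a word in a complete write-up: the ``trivial'' bound $|W(\mathbb{F}_{q})|=O(q^{\dim W})$ for varieties of bounded complexity, which you invoke for all the lower-dimensional loci, is itself a (much easier) lemma needing its own induction or a Schwartz--Zippel-type argument; and the ``generic linear projection'' must be chosen $\mathbb{F}_{q}$-rationally, which is only guaranteed once $q$ exceeds a complexity-dependent threshold --- below that threshold the theorem holds trivially after enlarging $C$, since $|X(\mathbb{F}_{q})|q^{-\dim X}$ is bounded in terms of the complexity alone. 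With those caveats the effectivity bookkeeping you flag is indeed the only remaining work.
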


\begin{rem}
\label{thm:effective LW}In fact, uniform bounds can be given for
the constant $C$ above, which are polynomial in the degree $d:=\underset{i=1,...,r}{\max}\deg f_{i}$.
Namely (see \cite[Theorem 7.5]{CM06}) one can take: 
\[
C=O_{n}(d^{O_{n}(1)}).
\]
 This will be essential in $\mathsection$\ref{sec7} when we exploit
uniform mixing bounds for word varieties whose complexity does not
remain bounded.
\end{rem}

\begin{example}
\label{exa:two examples}~ 
\begin{enumerate}
\item Consider $X=\mathrm{SL}_{2}$. Then $\mathrm{dim}X_{\mathbb{F}_{p}}=3$
and indeed 
\[
\left|\mathrm{SL}_{2}(\mathbb{F}_{p})\right|=p(p^{2}-1)=p^{3}(1-p^{-2}).
\]
\item Let $X=\{x^{2}=-1\}$. Let $p>2$. Then $\dim X_{\mathbb{F}_{p}}=0$
and $X_{\overline{\mathbb{F}_{p}}}$ has $2$ irreducible components
$x=\pm i$. If $p=1\mod4$, then $\left|X(\mathbb{F}_{p})\right|=C_{X_{\mathbb{F}_{p}}}=2$
and if $p\neq1\mod4$ then $\left|X(\mathbb{F}_{p})\right|=C_{X_{\mathbb{F}_{p}}}=0$. 
\end{enumerate}
\end{example}

\begin{defn}
Let $K$ be a field, and let $\varphi:X\to Y$ be a morphism of $K$-schemes.
For every field extension $K\leq K'$ and $y\in Y(K')$, denote: 
\begin{itemize}
\item $X_{y,\varphi}$ the $K'$-scheme $\left\{ x\in X:\varphi(x)=y\right\} $,
which is the fiber of $\varphi$ above $y$. 
\item $\varphi^{-1}(y):=\left\{ x\in X(K'):\varphi(x)=y\right\} $, the
$K'$ points of $X_{y,\varphi}$.
\end{itemize}
\end{defn}

\begin{defn}
\label{def:flatness}Let $K$ be a field. A morphism $\varphi:X\longrightarrow Y$
between smooth, irreducible $K$-varieties is called: 
\begin{enumerate}
\item \textit{Flat} if for every $x\in X(\overline{K})$ we have $\dim X_{\varphi(x),\varphi}=\dim X-\dim Y$. 
\item \emph{Flat with geometrically irreducible fibers, or (FGI),} if it
is flat, and $X_{\varphi(x),\varphi}$ is irreducible (as a $\overline{K}$-scheme)
for each $x\in X(\overline{K})$. 
\end{enumerate}
\end{defn}

\begin{rem}
\label{rem:Precise notion of flatness}Definition \ref{def:flatness}(1)
is not the standard definition of flatness, and it is a consequence
of the Miracle Flatness Theorem \cite[III, Exercise 10.9]{Har77}.
For the more general definition of flatness see e.g.~\cite[p.253-254]{Har77}. 
\end{rem}

\begin{example}
\label{two examples}Let $K$ be a field. 
\begin{itemize}
\item The map $\varphi:\mathbb{A}_{K}^{2}\rightarrow\mathbb{A}_{K}^{2}$
$(x,y)\longmapsto(x,xy)$ is not flat since the fiber over $(0,0)\in\overline{K}^{2}$
is one dimensional although the ``expected'' dimension is $0$. 
\item The map $\psi:\mathbb{A}_{K}^{2}\rightarrow\mathbb{A}_{K}$ $(x,y)\mapsto x^{2}+y^{2}$
is flat but not (FGI). 
\end{itemize}
\end{example}

The following proposition is a direct consequence of the above definitions,
the Lang\textendash Weil estimates (Theorem \ref{thm:Lang-Weil}),
and the fact that the complexity of the fibers of $\varphi$ is uniformly
bounded. 
\begin{prop}
\label{prop: flatness and counting points}Fix a prime $p$. Let $\varphi:X\to Y$
be a morphism of smooth, geometrically irreducible $\mathbb{F}_{p}$-varieties.
\begin{enumerate}
\item $\varphi:X\to Y$ is flat if and only if there exists $C>0$ such
that for every $q=p^{r}$ and every $y\in Y(\mathbb{F}_{q})$: 
\[
\left|\frac{|\varphi^{-1}(y)|}{q^{(\dim X-\dim Y)}}-C_{X_{y,\varphi}}\right|<Cq^{-\frac{1}{2}},
\]
or equivalently, by the Lang\textendash Weil estimates, 
\[
\left|\frac{|\varphi^{-1}(y)|}{\left|X(\mathbb{F}_{q})\right|}\left|Y(\mathbb{F}_{q})\right|-C_{X_{y,\varphi}}\right|<Cq^{-\frac{1}{2}}.
\]
\item $\varphi:X\to Y$ is (FGI) if and only if there exists $C>0$ such
that for every $q=p^{r}$:
\[
\left\Vert \varphi_{*}\mu_{X(\mathbb{F}_{q})}-\mu_{Y(\mathbb{F}_{q})}\right\Vert _{\infty}=\underset{y\in Y(\mathbb{F}_{q})}{\max}\left|\frac{|\varphi^{-1}(y)|}{\left|X(\mathbb{F}_{q})\right|}\left|Y(\mathbb{F}_{q})\right|-1\right|<Cq^{-\frac{1}{2}}.
\]
\end{enumerate}
Moreover, $C$ depends only on the complexity of the map $\varphi$. 
\end{prop}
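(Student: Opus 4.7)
The plan is to apply the Lang--Weil estimates of Theorem \ref{thm:Lang-Weil} fiber-by-fiber, using the effective version from Remark \ref{thm:effective LW} to obtain a constant $C$ that is uniform in $y$ and $q$. The key observation is that for every $y \in Y(\mathbb{F}_q)$, the fiber $X_{y,\varphi}$ is cut out inside $X_{\mathbb{F}_q}$ by pulling back equations defining $y$, so its complexity is controlled by the complexity of $\varphi$ and of $Y$, independently of $y$ and $q$. Thus Lang--Weil applied to $X_{y,\varphi}$ yields
\[
\bigl||\varphi^{-1}(y)| - C_{X_{y,\varphi}}\, q^{\dim X_{y,\varphi}}\bigr| \leq C\, q^{\dim X_{y,\varphi} - 1/2}
\]
with a single constant $C$ for all $y$ and all $q$.

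For the forward direction of (1), flatness combined with the Miracle Flatness Theorem (Remark \ref{rem:Precise notion of flatness}) forces every non-empty fiber to have dimension exactly $\dim X - \dim Y$, while for empty fibers we have $C_{X_{y,\varphi}}=0$ and the bound is trivial. Substituting $\dim X_{y,\varphi} = \dim X - \dim Y$ into the estimate above gives the first inequality; combining with Lang--Weil for $X$ and $Y$ themselves gives the equivalent formulation using $|X(\mathbb{F}_q)|$ and $|Y(\mathbb{F}_q)|$. For the reverse direction, I would argue by contradiction: if $\varphi$ were not flat, upper semicontinuity of fiber dimension would produce a proper closed subscheme $Y' \subsetneq Y$ over which fibers have dimension at least $\dim X - \dim Y + 1$. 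Spreading $Y'$ out over some $\mathbb{F}_{p^{r_0}}$ and applying Lang--Weil to $Y'$ gives $y \in Y'(\mathbb{F}_{q})$ for all sufficiently large $q$ in a fixed arithmetic progression; for such $y$, Lang--Weil on $X_{y,\varphi}$ (passing to a further finite extension if needed to make a top-dimensional component $\mathbb{F}_q$-defined) forces $|\varphi^{-1}(y)| \gg q^{\dim X - \dim Y + 1}$, violating the hypothesized bound once $q$ is large.

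For (2), the forward implication is immediate from (1): geometric irreducibility of each fiber gives $C_{X_{y,\varphi}} = 1$ for every $y \in Y(\mathbb{F}_q)$ (the unique top-dimensional component is the whole fiber, hence $\mathbb{F}_q$-defined), so (1) rephrases as the $L^\infty$-bound. For the reverse, the equidistribution hypothesis together with (1) first gives flatness, and moreover forces $C_{X_{y,\varphi}} = 1$ at every closed point $y \in Y(\mathbb{F}_q)$ for every $q = p^r$. If some geometric fiber $X_{y_0,\varphi}$ had $k \geq 2$ top-dimensional geometric components, then after passing to a large enough extension $\mathbb{F}_{p^s}$ all $k$ components become individually defined and Galois-fixed, yielding $C_{X_{y_0,\varphi}} = k$ over $\mathbb{F}_{p^s}$; this contradicts the previous conclusion and upgrades $C_{X_{y,\varphi}} = 1$ to genuine geometric irreducibility of every fiber.

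The main obstacle is the reverse direction of (1): one must guarantee that the hypothetical locus $Y' \subsetneq Y$ of fibers of excess dimension actually carries $\mathbb{F}_q$-points for an infinite family of $q$. This is handled by applying Lang--Weil to $Y'$ itself (after spreading out and descending to a finite subfield of definition), so that Lang--Weil is used twice in a nested fashion. The corresponding upgrade from ``one top-dimensional $\mathbb{F}_q$-component'' to ``geometrically irreducible'' in (2) is the analogous subtlety there, resolved by testing the $L^\infty$-bound on every finite extension $\mathbb{F}_{q^s}$ simultaneously and exploiting the Galois action on geometric components.
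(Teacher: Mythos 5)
Your proposal is correct and takes essentially the same approach the paper intends: the paper gives no argument beyond the remark that the proposition is a direct consequence of the definitions, the Lang--Weil estimates (Theorem \ref{thm:Lang-Weil}, Remark \ref{thm:effective LW}) and the uniform bound on the complexity of the fibers, and your fiberwise application of effective Lang--Weil, together with the spreading-out and Galois-orbit arguments for the two converse directions, supplies exactly the details that remark leaves implicit.
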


\begin{example}
Back to Example \ref{two examples}, if $K=\mathbb{F}_{q}$:
\begin{enumerate}
\item For $(0,0)\in\mathbb{F}_{q}^{2}$, $\left|\varphi^{-1}(0,0)\right|=q$,
where the expected size is $q^{0}=1$. 
\item The map $\psi_{\mathbb{F}_{q}}$ is flat, so all non-empty fibers
are of dimension $1$. However, for $0\in\mathbb{F}_{q}$, the scheme
$X_{0,\psi}=\left\{ x^{2}+y^{2}=0\right\} $ has $2$ irreducible
components over $\overline{\mathbb{F}_{q}}$: $\left\{ x-iy=0\right\} $
and $\left\{ x+iy=0\right\} $. Also:
\begin{enumerate}
\item If $q=1\mod4$, then $C_{X_{0,\psi}}=2$ and $\left|\psi^{-1}(0)\right|=2q-1$. 
\item If $q=3\mod4$, then $C_{X_{0,\psi}}=0$ and $\left|\psi^{-1}(0)\right|=1$. 
\end{enumerate}
\end{enumerate}
In either case, the equivalent conditions of Item (2) of Proposition
\ref{prop: flatness and counting points} are not satisfied. 
\end{example}

\subsection{Completing the proof of Theorem \ref{thm:LST2} for groups of Lie
type}

We have seen so far a (sketch of) proof of Theorem \ref{thm:LST2}
(and Theorem \ref{thm:bounds on Fourier coefficients}) for groups
of Lie type in the case that $\mathrm{rk}(\underline{G})>C\ell(w)$,
where the bottleneck was the probabilistic argument (Theorem \ref{thm:Larsen Shalev}).
The low rank case is based on the following geometric statement, whose
proof we discuss in the next subsection. 
\begin{thm}
\label{thm:Low rank geometric statement}Let $w\in F_{r}$ and let
$\underline{H}$ be a simple, simply connected algebraic $\mathbb{F}_{q}$-group.
Then for every $t\geq\dim\underline{H}+1$, the map $(w^{*t})_{\underline{H}}:\underline{H}^{rt}\rightarrow\underline{H}$
is (FGI). 
\end{thm}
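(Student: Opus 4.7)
The plan is to combine the generic (FGI) property---coming from Borel's theorem and the convolution theorem of Section~\ref{sec8}---with a direct codimension estimate that uses the hypothesis $t\geq\dim\underline{H}+1$ to propagate (FGI) from a dense open subset of $\underline{H}$ to every fiber.

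First I would establish the generic picture. By Borel's theorem (Theorem~\ref{thm:Borel}), the word map $w_{\underline{H}}$ is dominant; applying the convolution theorem of Section~\ref{sec8} to $w_{\underline{H}}*w_{\underline{H}}$, the convolution $(w^{*2})_{\underline{H}}$ has a geometrically irreducible generic fiber, and hence so does $(w^{*t})_{\underline{H}}$ for any $t\geq 2$ (by writing it as a convolution of $(w^{*2})_{\underline{H}}$ with $(w^{*(t-2)})_{\underline{H}}$ and reapplying the same theorem). Combined with generic flatness, this yields a dense open $V\subset\underline{H}$ over which $(w^{*t})_{\underline{H}}$ is (FGI), with generic fiber of dimension $(rt-1)\dim\underline{H}$.

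Next I would control the non-smooth locus of $(w^{*t})_{\underline{H}}$. Let $\tilde{U}\subset\underline{H}^{r}$ be the open subset where $dw_{\underline{H}}$ is surjective and $\tilde{B}=\underline{H}^{r}\setminus\tilde{U}$. Since $w_{\underline{H}}$ is dominant and separable (as is known for word maps on simply connected simple groups), $\tilde{U}$ is non-empty, so $\dim\tilde{B}\leq r\dim\underline{H}-1$. A chain-rule computation using the invertibility of left and right translations shows that $(w^{*t})_{\underline{H}}$ is smooth at $(\vec{h}_{1},\ldots,\vec{h}_{t})$ if and only if $\vec{h}_{i}\in\tilde{U}$ for some $i$; hence the non-smooth locus of $(w^{*t})_{\underline{H}}$ is exactly $\tilde{B}^{t}$, of dimension at most $rt\dim\underline{H}-t$.

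Finally I would extend (FGI) to every fiber. Fix $g\in\underline{H}$ and let $F_{g}=(w^{*t})_{\underline{H}}^{-1}(g)$. Any irreducible component $C\subset F_{g}$ disjoint from the smooth locus would lie inside $\tilde{B}^{t}$; but the hypothesis $t\geq\dim\underline{H}+1$ forces
\[
\dim C\leq\dim\tilde{B}^{t}\leq rt\dim\underline{H}-t\leq(rt-1)\dim\underline{H}-1,
\]
contradicting the lower bound $\dim C\geq(rt-1)\dim\underline{H}$ coming from upper semicontinuity of fiber dimension. Thus every irreducible component of $F_{g}$ meets the smooth locus, so $\dim F_{g}=(rt-1)\dim\underline{H}$; the Miracle Flatness Theorem applied between smooth varieties with equidimensional fibers then gives flatness of $(w^{*t})_{\underline{H}}$. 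By the same dimension estimate, $F_{g}$ is the closure of its smooth part $M_{g}:=F_{g}\setminus\tilde{B}^{t}$, so it remains to show $M_{g}$ is geometrically connected. The main obstacle is this last step: generic geometric irreducibility does not in general propagate to every fiber---witness $(x,y)\mapsto xy$ on $\mathbb{A}^{1}$, which is flat with irreducible generic fiber but reducible fiber at the origin. Here I would exploit the simple-connectedness of $\underline{H}$: a specialization argument, combined with the refined form of the convolution theorem of Section~\ref{sec8}, would rule out extra \'etale components in $M_{g}$ and yield geometric connectedness at every $g$, completing the proof.
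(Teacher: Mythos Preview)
Your flatness argument is essentially the paper's Theorem~\ref{thm:convolutions makes flat}: both bound the ``bad'' locus inside $\underline{H}^{rt}$ by a product $B^{t}$ of codimension $\geq t$ and compare against the lower bound $(rt-1)\dim\underline{H}$ for components of fibers. (One caveat: you use the non-\emph{smooth} locus, which needs generic smoothness of $w_{\underline{H}}$ in positive characteristic; the paper works with the non-\emph{flat} locus, which only needs dominance and hence generic flatness, avoiding any separability issues.)

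The genuine gap is the last step. You correctly identify that generic geometric irreducibility need not specialize to every fiber, and then invoke an unspecified ``specialization argument, combined with the refined form of the convolution theorem of Section~\ref{sec8}''. There is no such refined form: Theorem~\ref{thm:convolutions of morphisms has geometrically irreducible generic fiber} only gives irreducibility of the \emph{generic} fiber, and simple-connectedness alone does not force connectedness of $M_{g}$ for special $g$ without a further idea. The paper supplies exactly this missing idea, and it is not a specialization argument at all. The trick (Proposition~\ref{prop:gen irred convoluted with flat}, via the elementary topological Lemma~\ref{topological exercise}) is to spend one extra convolution: once $(w^{*s})_{\underline{H}}$ is flat with geometrically irreducible generic fiber (which holds for $s\geq\dim\underline{H}$ by the flatness result plus Theorem~\ref{thm:-convolution of two word maps is generically absolutely irreducible}), then for \emph{any} dominant $\psi$ the convolution $(w^{*s})_{\underline{H}}*\psi$ is (FGI) at \emph{every} point. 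The reason is that for fixed $g$, the fiber of the convolution projects to $Y$ with fibers $\{\,x:\varphi(x)=g\psi(y)^{-1}\,\}$, which are irreducible for a dense set of $y$ because $\psi$ is dominant and $\varphi$ has irreducible generic fiber; Lemma~\ref{topological exercise} then forces the whole fiber over $g$ to be irreducible. Taking $\psi=w_{\underline{H}}$ gives (FGI) for $(w^{*(s+1)})_{\underline{H}}$, i.e.\ for all $t\geq\dim\underline{H}+1$. This is the step your outline is missing.
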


We can now complete the proof of Theorem \ref{thm:LST2} for groups
of Lie type. 
\begin{proof}[Sketch of proof of Theorem \ref{thm:LST2} for groups of Lie type]
For simplicity, we consider the case of untwisted Chevalley groups
(the other case can be handled similarly, with a few more complications).
So, suppose $G\simeq\underline{G}(\mathbb{F}_{q})/Z(\underline{G}(\mathbb{F}_{q}))$,
where $\underline{G}$ is a Chevalley group scheme. 
\begin{enumerate}
\item If $\mathrm{rk}(\underline{G})>C'\ell(w)$ for some $C'\gg1$, this
case follows from the argument at the end of $\mathsection$\ref{subsec:Proof-of-Theorem 2},
using Theorem \ref{thm:Larsen Shalev} which was proved in high rank
in $\mathsection$\ref{subsec:Proof-of-probabilistic for high rank},
and using Theorem \ref{thm:character estimates}. 
\item If $\mathrm{rk}(\underline{G})\leq C'\ell(w)$, we use Theorem \ref{thm:Low rank geometric statement}
and Proposition \ref{prop: flatness and counting points} to deduce
\begin{equation}
\underset{q\rightarrow\infty}{\lim}\left\Vert \tau_{w,\underline{G}(\mathbb{F}_{q})}^{*t}-\mu_{\underline{G}(\mathbb{F}_{q})}\right\Vert _{\infty}=0,\label{eq:L^infty mixing for low rank}
\end{equation}
 for $t\geq C''\ell(w)^{2}\geq\dim\underline{G}+1$.\textbf{ Exercise:}
(\ref{eq:L^infty mixing for low rank}) holds if we replace each $\underline{G}(\mathbb{F}_{q})$
with $G=\underline{G}(\mathbb{F}_{q})/Z(\underline{G}(\mathbb{F}_{q}))$. 
\end{enumerate}
Combining Items (1) and (2) proves Theorem \ref{thm:LST2}(2). 
\end{proof}
\begin{xca}
Show, using the Lang\textendash Weil estimates, that Theorem \ref{thm:Low rank geometric statement}
further implies Theorem \ref{thm:Larsen Shalev} for untwisted Chevalley
groups $G$ of low rank (i.e.~$\mathrm{rk}(\underline{G})\leq C'\ell(w)$).
\end{xca}

By Theorem \ref{thm:LST2} and Proposition \ref{prop: flatness and counting points},
we deduce: 
\begin{cor}[{\cite[Theorem 5]{LST19}}]
For every $1\neq w\in F_{r}$, there exist $\epsilon(w)>0$ and $t(w)\in\N$,
such that for every simple, simply connected algebraic $\mathbb{F}_{q}$-group
$\underline{G}$: 
\begin{enumerate}
\item The fibers $(\underline{G}^{r})_{g,w_{\underline{G}}}$ of $w_{\underline{G}}:\underline{G}^{r}\rightarrow\underline{G}$
are of codimension $\geq\epsilon(w)\dim\underline{G}$, for every
$g\in\underline{G}(\overline{\mathbb{F}_{q}})$. 
\item For every $t>t(w)$, the map $w_{\underline{G}}^{*t}:\underline{G}^{rt}\rightarrow\underline{G}$
is (FGI).
\end{enumerate}
\end{cor}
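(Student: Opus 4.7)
The strategy is to derive both items from Theorem \ref{thm:LST2} via the Lang--Weil dictionary furnished by Proposition \ref{prop: flatness and counting points}. Both parts follow a common template: translate the desired geometric property---bounded fiber dimension for (1), flatness with geometrically irreducible fibers for (2)---into an $\mathbb{F}_{q}$-counting statement, and match it against the probabilistic estimates either directly (part (1)) or by contrapositive (part (2)).

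For (1), I would fix $g\in\underline{G}(\overline{\mathbb{F}_{q}})$, set $F_{g}:=(\underline{G}^{r})_{g,w_{\underline{G}}}$, and choose $k$ so that $g\in\underline{G}(\mathbb{F}_{q^{k}})$ and every top-dimensional component of $F_{g}$ is already defined over $\mathbb{F}_{q^{k}}$. For each sufficiently large $m\in\N$, Theorem \ref{thm:LST2}(1) applied to the finite simple group $\underline{G}(\mathbb{F}_{q^{km}})/Z$ gives $\tau_{w,\underline{G}(\mathbb{F}_{q^{km}})/Z}(\pi(g))\leq|\underline{G}(\mathbb{F}_{q^{km}})/Z|^{-\epsilon(w)}$. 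Passing between the simple quotient and $\underline{G}(\mathbb{F}_{q^{km}})$ itself only introduces a factor of size at most $|Z|^{r}$, and the center of a simply connected simple algebraic group is finite and bounded in terms of the type of $\underline{G}$. The Lang--Weil estimates applied separately to $F_{g}$ and to $\underline{G}$ then convert this probabilistic bound into an inequality of the shape $q^{km\dim F_{g}}\lesssim q^{km(r-\epsilon(w))\dim\underline{G}}$, from which sending $m\to\infty$ extracts $\mathrm{codim}\,F_{g}\geq\epsilon(w)\dim\underline{G}$.

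For (2), I would fix $t>t(w):=t_{\infty}(w)$ and assume for contradiction that $\varphi:=w_{\underline{G}}^{*t}$ fails to be (FGI). Either $\varphi$ is not flat, so some fiber has dimension strictly greater than the generic value $(rt-1)\dim\underline{G}$; or $\varphi$ is flat but some fiber splits into at least two geometric top-dimensional components. In either case, after base change to $\mathbb{F}_{q^{k}}$ where the offending fiber and its top components are all defined, Lang--Weil forces $\|\tau_{w,\underline{G}(\mathbb{F}_{q^{km}})/Z}^{*t}-\mu\|_{\infty}$ to stay bounded below by a fixed positive constant along the sequence $m\to\infty$. This contradicts Theorem \ref{thm:LST2}(2), which asserts that this $L^{\infty}$ quantity tends to $0$ uniformly over the family of finite simple groups once $t>t(w)$.

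The principal obstacle I anticipate is uniformity in $\underline{G}$: the Lang--Weil prefactors depend on complexity, which grows polynomially with $\mathrm{rk}(\underline{G})$. This is accommodated by the polynomial effective form recalled in Remark \ref{thm:effective LW}, together with the observation that in both parts the multiplicative prefactors become immaterial once we pass to the limit $m\to\infty$ in a fixed $\underline{G}$ and only the asymptotic exponent matters. A secondary point is that Proposition \ref{prop: flatness and counting points}(2) strictly requires $O(q^{-1/2})$ decay of $\|\tau^{*t}-\mu\|_{\infty}$, which is more than what Theorem \ref{thm:LST2}(2) formally provides; however, this does not obstruct the contrapositive argument above, since any failure of (FGI) produces a lower bound on the $L^{\infty}$ norm that is bounded away from zero, comfortably contradicting the $o(1)$ statement.
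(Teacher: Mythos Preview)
Your proposal is correct and follows exactly the route the paper intends: the corollary in the paper is stated with the one-line justification ``By Theorem \ref{thm:LST2} and Proposition \ref{prop: flatness and counting points}, we deduce,'' and your argument is a faithful unpacking of that sentence. Your handling of the two delicate points---the passage between $\underline{G}(\mathbb{F}_{q^{km}})$ and its simple quotient via a bounded center factor, and the observation that the contrapositive of Proposition \ref{prop: flatness and counting points}(2) only requires an $o(1)$ contradiction rather than $O(q^{-1/2})$---is accurate and matches what the paper leaves implicit.
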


\subsection{\label{subsec:Proof-of-low rank}Proof of the low rank geometric
statement (Theorem \ref{thm:Low rank geometric statement})}

We use the notion of convolution in algebraic geometry, defined by
the second author and Hendel in \cite{GH21}. Let $K$ be a field. 
\begin{defn}[{\cite[Definition 1.1]{GH21}}]
\label{Convolution in algebraic geometry}Let $\varphi:X\rightarrow\underline{G}$,
$\psi:Y\rightarrow\underline{G}$ be maps from $K$-varieties $X,Y$
to an algebraic $K$-group $\underline{G}$. Define the \textit{convolution}
$\varphi*\psi:X\times Y\rightarrow\underline{G}$ by 
\[
\varphi*\psi(x,y)=\varphi(x)\cdot\psi(y).
\]
We denote by $\varphi^{*t}:X^{t}\rightarrow\underline{G}$ the $t$-th
convolution power. 

As is well known in classical analysis, convolution of functions improves
regularity. For example, if one convolves a $k$-differentiable function
$f:\R^{n}\rightarrow\R$, with an $l$-differentiable function $h:\R^{n}\rightarrow\R$,
the result $f*h$ is $k+l$-differentiable. Another example is Young's
convolution inequality (as in Remark \ref{rem:Jensen and Young's inequality}). 

A similar phenomenon takes place here; the convolution operation in
Definition \ref{Convolution in algebraic geometry} improves singularity
properties of morphisms. We shall see manifestations of this phenomenon
below, and also in $\mathsection$\ref{sec8}.
\end{defn}

\begin{fact}[{cf.~\cite[Proposition 3.1]{GH21}}]
\label{fact:convolution improves singularities}Let $X,Y$ be $K$-varieties.
Let $\varphi:X\to\underline{G}$ be a flat morphism and $\psi:Y\to\underline{G}$
be any morphism. Then $\varphi*\psi:X\times Y\to\underline{G}$ is
flat. 
\end{fact}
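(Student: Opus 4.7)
The plan is to factor the convolution $\varphi * \psi$ as a composition of three morphisms, each manifestly flat, by exploiting the group structure of $\underline{G}$ to absorb the $\psi$-twist into an automorphism of a trivial family.

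First, I would introduce the morphism $\theta: \underline{G} \times Y \to \underline{G} \times Y$ defined on points by $\theta(g, y) = (g \cdot \psi(y), y)$, and verify that it is an isomorphism of $K$-varieties with inverse $(h, y) \mapsto (h \cdot \psi(y)^{-1}, y)$; both are morphisms because $\psi$ is a morphism and inversion on $\underline{G}$ is regular. Next, I would observe that $\varphi * \psi$ factors as
\[
X \times Y \xrightarrow{\varphi \times \id_Y} \underline{G} \times Y \xrightarrow{\theta} \underline{G} \times Y \xrightarrow{\pr_{\underline{G}}} \underline{G},
\]
tracking $(x, y) \mapsto (\varphi(x), y) \mapsto (\varphi(x) \cdot \psi(y), y) \mapsto \varphi(x) \cdot \psi(y)$.

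Then I would argue each arrow in this factorization is flat. The first is the base change of $\varphi: X \to \underline{G}$ along $\pr_{\underline{G}}: \underline{G} \times Y \to \underline{G}$, and flatness is preserved under base change. The middle map is an isomorphism, hence trivially flat. The third is the base change of the structure morphism $Y \to \spec K$ along $\underline{G} \to \spec K$, and every $K$-scheme is flat over $\spec K$. Since the composition of flat morphisms is flat, $\varphi * \psi$ is flat.

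There is essentially no serious obstacle here; the only point to watch is that the paper's Definition \ref{def:flatness} is phrased in terms of equidimensional fibers for smooth irreducible varieties. When $X$ and $Y$ are smooth and irreducible, so is $X \times Y$, and the equivalence of this condition with scheme-theoretic flatness (via the Miracle Flatness Theorem invoked in Remark \ref{rem:Precise notion of flatness}) lets one move freely between the two formulations. Alternatively, one could verify the fiber-dimension condition directly: the fiber of $\varphi * \psi$ over a geometric point $g_0$ projects to $Y$ with each nonempty fiber a translate of a fiber of $\varphi$ of dimension $\dim X - \dim \underline{G}$, giving the expected total dimension $\dim X + \dim Y - \dim \underline{G}$.
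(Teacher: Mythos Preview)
Your proof is correct and follows essentially the same approach as the paper: both factor $\varphi*\psi$ through $\underline{G}\times Y\xrightarrow{\pi_{\underline{G}}}\underline{G}$ and reduce to the stability of flatness under base change and composition. The only cosmetic difference is that the paper packages your first two arrows into a single map $\alpha(x,y)=(\varphi*\psi(x,y),y)$ and observes that $\alpha$ itself is the base change of $\varphi$ along $\beta(g,y)=g\cdot\psi(y)^{-1}$, whereas you split this as $(\varphi\times\id_Y)$ followed by the isomorphism $\theta$; these are the same map, so the arguments coincide.
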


\begin{proof}
Consider the following diagram: 
\[
\begin{array}{ccc}
X\times Y & \overset{\pi_{X}}{\longrightarrow} & X\\
\downarrow\alpha & \, & \downarrow\varphi\\
\underline{G}\times Y & \overset{\beta}{\longrightarrow} & \underline{G}\\
\downarrow\pi_{\underline{G}} & \, & \,\\
\underline{G}, & \, & \,
\end{array}
\]
where $\pi_{X}(x,y)=x$, $\pi_{\underline{G}}(g,y)=g$ are the natural
projections, $\alpha(x,y)=(\varphi\ast\psi(x,y),y)$ and $\beta(g,y)=g\cdot\psi(y)^{-1}$.
Note that the morphism $\alpha:X\times Y\rightarrow\underline{G}\times Y$
is a base change of $\varphi:X\to\underline{G}$, and that the morphism
$\pi_{\underline{G}}:\underline{G}\times Y\rightarrow\underline{G}$
is flat, since $Y\rightarrow\spec K$ is flat. Since $\varphi\ast\psi=\pi_{\underline{G}}\circ\alpha$
and since flatness is preserved under base change and compositions,
the proposition follows.
\end{proof}
\begin{thm}[{Glazer\textendash Hendel, \cite[Thm B]{GH21}}]
\label{thm:convolutions makes flat}Let $\varphi:X\rightarrow\underline{G}$
be a dominant $K$-morphism from a smooth, geometrically irreducible
$K$-variety $X$ to an algebraic $K$-group $\underline{G}$. Then
$\varphi^{*t}$ is flat for all $t\geq\dim\underline{G}$. 
\end{thm}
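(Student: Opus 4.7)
The plan is to apply the Miracle Flatness Theorem (see Remark~\ref{rem:Precise notion of flatness}) in order to reduce the flatness of $\varphi^{*t}$ to the assertion that every fiber $(\varphi^{*t})^{-1}(g)$ has the expected dimension $t\dim X - \dim\underline{G}$, and then to establish this upper bound via a stratification argument whose only serious difficulty requires the assumption $t\geq\dim\underline{G}$.

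First, by generic flatness, choose a dense open $U\subseteq\underline{G}$ such that $\varphi|_{X'}\colon X':=\varphi^{-1}(U)\to\underline{G}$ is flat, and set $W:=X\setminus X'$. Since $X$ is irreducible and $X'\subseteq X$ is a nonempty open subset, $W$ is a proper closed subset and $\dim W\leq\dim X-1$. Stratify the source as
\[ X^t = \bigsqcup_{S\subseteq[t]} E_S, \qquad E_S := \prod_{i\in S} W \;\times\; \prod_{i\notin S} X', \]
and bound each piece $(\varphi^{*t})^{-1}(g)\cap E_S$ separately.

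When $S\subsetneq[t]$, the restriction $\varphi^{*t}|_{E_S}$ is a $t$-fold convolution in which at least one factor equals the flat morphism $\varphi|_{X'}$; iterating Fact~\ref{fact:convolution improves singularities} (applied in its left or right form depending on where the flat factor sits) shows that $\varphi^{*t}|_{E_S}$ is flat. A flat morphism of finite type is universally open, so on each irreducible component $C\subseteq E_S$ the nonempty fibers satisfy
\[ \dim\bigl((\varphi^{*t})^{-1}(g)\cap C\bigr) = \dim C - \dim\underline{G} \leq t\dim X - |S| - \dim\underline{G} \leq t\dim X - \dim\underline{G}. \]
When $S=[t]$, no flat factor is available and Fact~\ref{fact:convolution improves singularities} cannot be invoked; here I retreat to the trivial inclusion $(\varphi^{*t})^{-1}(g)\cap E_{[t]}\subseteq W^t$, giving
\[ \dim\bigl((\varphi^{*t})^{-1}(g)\cap E_{[t]}\bigr) \leq \dim W^t \leq t(\dim X-1) = t\dim X - t \leq t\dim X - \dim\underline{G}, \]
where the final inequality is exactly the hypothesis $t\geq\dim\underline{G}$. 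Taking the maximum over $S$ yields the upper bound. The matching lower bound comes from $(\varphi|_{X'})^{*t}\colon(X')^t\to\underline{G}$: iterating Fact~\ref{fact:convolution improves singularities} it is flat, and the standard density argument that $U\cdot U=\underline{G}$ for any dense open $U$ in a connected algebraic group shows it is surjective for $t\geq 2$; so its fibers have dimension exactly $t\dim X-\dim\underline{G}$, sit inside those of $\varphi^{*t}$, and realize the lower bound (the corner case $t=1=\dim\underline{G}$ is handled directly by dominance, irreducibility, and upper semi-continuity of fiber dimension). Miracle flatness then concludes.

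The main obstacle is precisely the treatment of the full-bad stratum $S=[t]$. In every other stratum, Fact~\ref{fact:convolution improves singularities} supplies flatness for free and the codimension $|S|$ accumulates favorably; but when every factor lies in $W$ there is no flat factor to exploit, and one is left with the single unit of codimension provided by $\dim W\leq\dim X-1$. It is exactly the need to beat $\dim\underline{G}$ using this one unit of codimension per factor that forces $t\geq\dim\underline{G}$, and also explains why the bound of the theorem is sharp for this method.
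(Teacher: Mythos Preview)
Your proof is correct and follows the same strategy as the paper's: both use Fact~\ref{fact:convolution improves singularities} to conclude that the non-flat locus of $\varphi^{*t}$ sits inside the $t$-fold product of the non-flat locus of $\varphi$, whence its dimension is at most $t\dim X - t \leq t\dim X - \dim\underline{G}$. The paper's version is more concise---it works with the non-flat locus $Z_1\subseteq X$ directly and argues by contradiction, skipping your explicit stratification and the lower-bound/surjectivity discussion (which is automatic for dominant morphisms between irreducible varieties)---but the core idea is identical.
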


\begin{proof}
Let $Z_{1}\subseteq X$ be the non-flat locus of $\varphi$. Since
$\varphi$ is dominant, it is generically flat, hence $\mathrm{dim}Z_{1}\leq\dim X-1$.
By Fact \ref{fact:convolution improves singularities}, the non-flat
locus $Z_{t}$ of $\varphi^{*t}$ in $X^{t}$ must be contained in
$Z_{1}\times...\times Z_{1}$, and hence $\mathrm{dim}Z_{t}\leq t\dim X-t$.

Suppose toward contradiction that $\varphi^{*t}$ is not flat, so
that it has a fiber of dimension $>t\dim X-\dim\underline{G}$. Any
top-dimensional irreducible component of this fiber is contained in
$Z_{t}$, so we must have $t<\dim\underline{G}$, and a contradiction. 
\end{proof}
\begin{rem}
The smoothness hypothesis in Theorem \ref{thm:convolutions makes flat}
can be omitted, when one works with the more general definition of
flatness (see Remark \ref{rem:Precise notion of flatness}). The proof
remains the same. 
\end{rem}

\begin{thm}[{\cite[Lemma 2.4]{LST19}}]
\label{thm:-convolution of two word maps is generically absolutely irreducible}Let
$\underline{G}$ be a simply connected semisimple $K$-algebraic group
and let $w_{1}\in F_{r_{1}}$ and $w_{2}\in F_{r_{2}}$. Then there
exists a Zariski open subset $U\subseteq\underline{G}$ such that
the fiber $(\underline{G}^{r_{1}+r_{2}})_{g,(w_{1}*w_{2})_{\underline{G}}}$
is geometrically irreducible for every $g\in U(\overline{K})$ (\textbf{``geometrically
irreducible generic fiber}''). 
\end{thm}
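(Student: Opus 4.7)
The plan is to reduce the geometric irreducibility of the generic fiber of $f := (w_1 * w_2)_{\underline{G}}: X := \underline{G}^{r_1+r_2} \to \underline{G}$ to the triviality of connected \'etale covers of $\underline{G}$, exploiting the hypothesis that $\underline{G}$ is simply connected. By Borel's theorem (Theorem \ref{thm:Borel}), for nontrivial $w_i$ each $w_{i,\underline{G}}$ is dominant, hence so is $f$. Let $L$ be the algebraic closure of $f^* K(\underline{G})$ in $K(X)$, a finite extension of $K(\underline{G})$, and let $g: Z \to \underline{G}$ be the normalization of $\underline{G}$ in $L$, yielding a finite factorization $f : X \to Z \to \underline{G}$. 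Geometric irreducibility of the generic fiber of $f$ is then equivalent to $g$ being an isomorphism.

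The critical step is to show that $g : Z \to \underline{G}$ is \'etale. It is finite between irreducible varieties of the same dimension and is generically \'etale (automatic in characteristic zero; in positive characteristic this requires a separability check that follows from generic flatness of $f$, which is provided for instance by Fact \ref{fact:convolution improves singularities} applied to the factorization $f = \mu \circ (w_1 \times w_2)$ with $\mu$ the multiplication map on $\underline{G}$). Extending \'etaleness from a dense open to all of $\underline{G}$ invokes Zariski--Nagata purity, which reduces the question to ruling out divisorial ramification of $g$; this is handled by a careful analysis exploiting the convolution structure of $f$ together with the $\underline{G}$-equivariance of $f$ under conjugation (and partial right-translation equivariance when one can choose a variable of $w_2$ occurring only as its last letter), both of which tightly constrain the possible branch locus.

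Once $g$ is known to be \'etale, the simple-connectedness hypothesis concludes the argument: every connected \'etale cover of a simply connected semisimple algebraic group is trivial, since such a cover (after choosing a base point over $e \in \underline{G}$) inherits a unique algebraic group structure making $g$ a central isogeny, which must then be trivial. Hence $Z \cong \underline{G}$, $L = K(\underline{G})$, and the generic fiber of $f$ is geometrically integral; restricting to the dense open subset $U \subseteq \underline{G}$ where the fiber is additionally smooth gives geometric irreducibility. The main obstacle, beyond routine checks, is the verification that $g$ is \'etale everywhere---i.e., ruling out divisorial ramification for arbitrary nontrivial $w_1, w_2$---which requires a careful structural argument tailored to the specific convolution form of $f$; the naive attempt to produce a genuine free right $\underline{G}$-action on $X$ making $f$ equivariant breaks down for words such as $w_2 = y^2$, so the purity argument, rather than a pointwise group action, is essential.
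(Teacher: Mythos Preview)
Your Stein-factorization approach is entirely different from the paper's. The paper proves this theorem only as a special case of Theorem~\ref{thm:convolutions of morphisms has geometrically irreducible generic fiber} in Section~\ref{sec8}, and that proof is analytic: via the Lang--Weil dictionary, geometric irreducibility of the generic fiber is equivalent to $\|(\varphi*\psi)_*\mu_{X(\mathbb{F}_q)\times Y(\mathbb{F}_q)}-\mu_{\underline{G}(\mathbb{F}_q)}\|_1\to 0$, which is established by non-abelian Fourier analysis on $\underline{G}(\mathbb{F}_q)$. The key input is uniform spectral decay (Proposition~\ref{Prop:polynomial measures are nice}), which for semisimple simply connected $\underline{G}$ comes down to the Landazuri--Seitz bound $\rho(1)\ge cq$ for all non-trivial $\rho\in\Irr(\underline{G}(\mathbb{F}_q))$. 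Simple connectedness enters precisely as quasi-randomness of $\underline{G}(\mathbb{F}_q)$.

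Your algebraic route has a real gap at the step you yourself call ``the main obstacle'': proving that $g:Z\to\underline{G}$ is \'etale. Zariski--Nagata purity says the branch locus, if nonempty, is pure of codimension one; it does not help you exclude a branch divisor---that is exactly what must be done by hand. Conjugation equivariance of $f$ forces any branch divisor to be conjugation-invariant, hence a union of conjugacy-class closures; since every conjugacy class has codimension at least the rank, this \emph{does} finish the argument when every simple factor of $\underline{G}$ has rank $\ge 2$. But for $\underline{G}=\SL_2$, or any $\underline{G}$ with an $\SL_2$ factor, the hypersurfaces $\{\tr=c\}$ are conjugation-invariant divisors, and neither conjugation equivariance nor your partial right-translation idea (which, as you note, already fails for $w_2=y^2$) excludes them. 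So the argument does not close in rank one, and a genuinely new ingredient is needed there---which is precisely where the paper's analytic proof (and Hrushovski's model-theoretic one) earns its keep. A small aside: your appeal to Fact~\ref{fact:convolution improves singularities} for separability in positive characteristic is misplaced, since that fact yields flatness of a convolution, not generic smoothness of the composite $\mu\circ(w_1\times w_2)$.
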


\begin{example}[\textbf{Warning(!)}]
\textbf{\label{exa:Warning(!)}}The assumption that $\underline{G}$
is simply connected is crucial. The map $(w_{\mathrm{com}}^{*t})_{\mathrm{PGL}_{n}}$
does not have geometrically irreducible generic fibers, for every
$t\in\N$. 
\end{example}

In $\mathsection$\ref{sec8}, we discuss a vast generalization of
Theorem \ref{thm:-convolution of two word maps is generically absolutely irreducible}
(see Theorem \ref{thm:convolutions of morphisms has geometrically irreducible generic fiber}). 
\begin{lem}
\label{topological exercise}Let $f:Z\rightarrow Y$ be a continuous
map of topological spaces, such that $f$ is open and $Y$ is irreducible.
Suppose there exists a dense collection $U$ of points $y\in Y$ s.t.~$f^{-1}(y)$
is irreducible. Then $Z$ is irreducible. 
\end{lem}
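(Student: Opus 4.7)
The plan is to argue by contradiction, using openness of $f$ to pull apart a hypothetical decomposition of $Z$, and then to use irreducibility of $Y$ together with density of $U$ to land on a point whose fiber inherits a non-trivial decomposition.

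More precisely, suppose $Z = Z_1 \cup Z_2$ with $Z_1, Z_2$ proper closed subsets of $Z$. Set $V_i := Z \setminus Z_i$, which is a non-empty open subset of $Z$. The first step is to push these opens down to $Y$: since $f$ is an open map, $f(V_1)$ and $f(V_2)$ are non-empty open subsets of $Y$. Because $Y$ is irreducible, any two non-empty open subsets of $Y$ meet, so $W := f(V_1) \cap f(V_2)$ is a non-empty open subset of $Y$.

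The second step uses the density hypothesis on $U$ to find a point with good fiber behaviour inside $W$. Namely, since $U$ is dense in $Y$ and $W$ is non-empty open, there exists $y_0 \in U \cap W$. By construction of $W$, there exist $z_1 \in V_1$ and $z_2 \in V_2$ with $f(z_1) = f(z_2) = y_0$. Thus the fiber $F := f^{-1}(y_0)$ contains a point outside $Z_1$ (namely $z_1$) and a point outside $Z_2$ (namely $z_2$), so neither $F \cap Z_1$ nor $F \cap Z_2$ equals $F$. However, $F = (F \cap Z_1) \cup (F \cap Z_2)$ is a decomposition into two proper relatively closed subsets, contradicting the irreducibility of $F = f^{-1}(y_0)$ guaranteed by $y_0 \in U$.

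There is no real obstacle here; the argument is a short topological triangulation between three ingredients (openness of $f$, irreducibility of $Y$, density of $U$), each used exactly once. The only point to keep track of is that ``irreducible'' implicitly includes non-empty, so we should briefly note that the closed subsets $Z_i \subsetneq Z$ can be taken to be proper, making the $V_i$ non-empty, and that $U$ being dense in the irreducible space $Y$ implies it meets every non-empty open subset.
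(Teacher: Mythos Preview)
Your proof is correct and is essentially the same argument as the paper's: both start from a hypothetical decomposition $Z = Z_1 \cup Z_2$, use openness of $f$ on the complements $V_i = Z \setminus Z_i$, and combine irreducibility of $Y$ with irreducibility of fibers over the dense set $U$. The only cosmetic difference is where the contradiction lands: the paper sets $W_i := Y \setminus f(V_i)$, shows these are closed proper with $W_1 \cup W_2 \supseteq \overline{U} = Y$, and contradicts irreducibility of $Y$, whereas you intersect $f(V_1) \cap f(V_2)$, pick $y_0 \in U$ there, and contradict irreducibility of the fiber $f^{-1}(y_0)$.
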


\begin{proof}
If $Z$ is not irreducible, then $Z=Z_{1}\cup Z_{2}$, with $Z_{1},Z_{2}$
closed, proper subsets. For each $y\in U$, since $f^{-1}(y)$ is
irreducible, either $f^{-1}(y)\subseteq Z_{1}$ or $f^{-1}(y)\subseteq Z_{2}$.
Let $W_{i}:=\left\{ y\in Y:f^{-1}(y)\subseteq Z_{i}\right\} $. Then
$W_{i}=Y\backslash f(Z_{i}^{c})$ is closed, as $f$ is open. Moreover,
$W_{i}\neq Y$, since $Z_{i}\neq X$. Finally, $W_{1}\cup W_{2}\supseteq U$
and hence $W_{1}\cup W_{2}\supseteq\overline{U}=Y$, and we are done. 
\end{proof}
\begin{prop}[{\cite[Corollary 3.20]{GHb}}]
\label{prop:gen irred convoluted with flat}Let $X,Y$ be geometrically
irreducible $K$-varieties, let $\varphi:X\rightarrow\underline{G}$
be a flat morphism with geometrically irreducible generic fiber, and
let $\psi:Y\rightarrow\underline{G}$ be a dominant morphism. Then
$\varphi*\psi:X\times Y\rightarrow\underline{G}$ is (FGI). 
\end{prop}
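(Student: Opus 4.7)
The proposition asserts two things about $\varphi*\psi$: flatness, and geometric irreducibility of every fiber. The flatness is immediate: since $\varphi$ is flat, Fact~\ref{fact:convolution improves singularities} applied to the pair $(\varphi,\psi)$ gives that $\varphi*\psi:X\times Y\to\underline{G}$ is flat. All the work therefore lies in showing that, after base change to $\overline{K}$, every fiber $Z_g:=(\varphi*\psi)_{\overline{K}}^{-1}(g)$, for $g\in\underline{G}(\overline{K})$, is irreducible. I will suppress the base change from the notation.

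The plan is to realize $Z_g$ as a fiber product, project it to $Y$, and conclude via Lemma~\ref{topological exercise}. Concretely, define $\sigma_g:Y\to\underline{G}$ by $y\mapsto g\cdot\psi(y)^{-1}$, using the group structure on $\underline{G}$. Since the equation $\varphi(x)\psi(y)=g$ is equivalent to $\varphi(x)=\sigma_g(y)$, there is a natural identification $Z_g=X\times_{\underline{G}}Y$ with respect to $\varphi$ and $\sigma_g$, and the second projection $p_Y:Z_g\to Y$ is the base change of $\varphi$ along $\sigma_g$. Because $\varphi$ is flat, so is $p_Y$; and since $p_Y$ is a flat morphism of finite type between Noetherian schemes, it is open. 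The target $Y$ is irreducible by hypothesis, so two of the three assumptions of Lemma~\ref{topological exercise} are already in hand.

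For the third, I need a dense collection of $y\in Y$ for which $p_Y^{-1}(y)\cong\varphi^{-1}(\sigma_g(y))$ is irreducible. Let $V\subseteq\underline{G}$ be a non-empty open subset on which $\varphi$ has irreducible fibers, as granted by the hypothesis. The set $W:=\{h\in\underline{G}:g h^{-1}\in V\}$ is the preimage of $V$ under the morphism $h\mapsto gh^{-1}$, hence a non-empty open subset of $\underline{G}$. Dominance of $\psi$ makes $\psi^{-1}(W)$ non-empty, and irreducibility of $Y$ upgrades it to a dense open subset. For every $y$ in this dense set, $p_Y^{-1}(y)$ is irreducible, and Lemma~\ref{topological exercise} then yields the irreducibility of $Z_g$, finishing the proof.

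I do not foresee a genuine obstacle here. The one conceptual move is the rewriting of $Z_g$ as a fiber product along the twisted morphism $\sigma_g$, which simultaneously lets flatness of $\varphi$ transport to $p_Y$ and lets dominance of $\psi$ intersect the generic irreducibility locus of $\varphi$. Everything else is bookkeeping, together with the standard fact that flat morphisms locally of finite presentation are open.
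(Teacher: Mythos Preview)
Your proof is correct and follows essentially the same approach as the paper: project the fiber $Z_g$ to $Y$, identify the fibers of this projection with fibers of $\varphi$ at translated points, and invoke Lemma~\ref{topological exercise} together with Fact~\ref{fact:convolution improves singularities}. You are in fact more explicit than the paper on two points it leaves tacit: you verify the openness hypothesis of Lemma~\ref{topological exercise} (via flatness of the base-changed projection), and you spell out why dominance of $\psi$ guarantees a dense set of $y\in Y$ landing in the good locus of $\varphi$.
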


\begin{proof}
Let $Z:=\left(X\times Y\right)_{g,\varphi*\psi}$ be a fiber of $\varphi*\psi$
over $g\in\underline{G}(\overline{K})$, and consider the map $f:Z(\overline{K})\rightarrow Y(\overline{K})$,
induced from the standard projection $X\times Y\rightarrow Y$. Then
$Y(\overline{K})$ is irreducible and there exists $U\subseteq Y$
such that $\forall y\in U(\overline{K})$, 
\[
f^{-1}(y)=\left\{ (x,y)\in Z(\overline{K}):\varphi*\psi(x,y)=g\right\} \simeq\left\{ x\in X(\overline{K}):\varphi(x)=g\psi(y)^{-1}\right\} ,
\]
is irreducible. By Lemma \ref{topological exercise}, $Z$ is irreducible.
By Fact \ref{fact:convolution improves singularities}, $\varphi*\psi$
is flat, hence it is (FGI). 
\end{proof}
We are now ready to prove Theorem \ref{thm:Low rank geometric statement}. 
\begin{proof}[Proof of Theorem \ref{thm:Low rank geometric statement}]
Let $w\in F_{r}$ and let $\underline{H}$ be a simple, simply connected
algebraic $\mathbb{F}_{q}$-group. Since $\dim\underline{H}>1$, by
Theorems \ref{thm:convolutions makes flat} and \ref{thm:-convolution of two word maps is generically absolutely irreducible},
the map $(w^{*t})_{\underline{H}}:\underline{H}^{rt}\rightarrow\underline{H}$
is flat, with geometrically irreducible generic fiber, for every $t\geq\dim\underline{H}$.
By Proposition \ref{prop:gen irred convoluted with flat} and Theorem
\ref{thm:Borel}, $(w^{*(t+1)})_{\underline{H}}:\underline{H}^{r(t+1)}\rightarrow\underline{H}$
is (FGI) for every $t\geq\dim\underline{H}$, as required.
\end{proof}

\section{\label{sec5}Representation varieties of random groups}

In Sections \ref{sec5} to \ref{sec7}, we change the setting, so
that now we fix a connected semisimple algebraic group $\underline{G}$,
and consider the word maps $w_{\underline{G}}:\underline{G}^{r}\rightarrow\underline{G}$,
varying over all words $w\in F_{r}$ of a given length $\ell$, as
$\ell\rightarrow\infty$. 

However, unlike Sections \ref{sec2} to \ref{sec4}, instead of considering
self-convolutions of word measures $\tau_{w,G}$, we consider self-convolutions
of measures of the form $\mu_{\underline{A}}=\frac{1}{2r}\sum_{i=1}^{r}\left(\delta_{A_{i}}+\delta_{A_{i}^{-1}}\right)$,
where $\underline{A}:=(A_{1},...,A_{r})$ is a generating tuple in
$\underline{G}(\mathbb{F}_{p})$. Applying $\ell$ self-convolutions
of $\mu$ amounts to applying a \textbf{random} word of length $\ell$
on the tuple $(A_{1},...,A_{r})$. Hence, good equidistribution of
$\mu_{\underline{A}}^{*\ell}$ on $\underline{G}(\mathbb{F}_{p})$
for all generating tuples $\underline{A}$, and for all primes, should
say something about the geometry of \textbf{random} words.

In this section we consider all algebraic varieties and algebraic
groups over the complex numbers. This is merely for simplicity of
presentation. In sections \ref{sec6} and \ref{sec7}, we discuss
the same objects over other fields as well. 

\subsection{Representation varieties and universal theories}
\begin{defn}
We consider a presentation for an arbitrary finitely presented group
as follows: 
\[
\Gamma_{\underline{w}}:=\left\langle x_{1},...,x_{r}:w_{1}(\underline{x})=...=w_{k}(\underline{x})=1\right\rangle 
\]
where $r$ is the number of generators and $\underline{w}=\left(w_{1},...,w_{k}\right)$,
with $w_{i}\in F_{r}$, is the tuple of relators. We denote by 
\[
X_{\underline{w},\underline{G}}=\mathrm{Hom}(\Gamma_{\underline{w}},\underline{G})=\left\{ (g_{1},...,g_{r})\in\underline{G}^{r}:w_{1}(\underline{g})=...=w_{k}(\underline{g})=1\right\} ,
\]
the \textbf{representation variety} of $\Gamma_{\underline{w}}$ in
$\underline{G}$. This is a closed subscheme of $\underline{G}^{r}$.

If $\underline{w}$ consists of a single word $w\in F_{r}$, then
we write $X_{w,\underline{G}}$. For example, if $w=1$, then $X_{w,\underline{G}}=\underline{G}^{r}$. 
\end{defn}

\begin{rem}
By Hilbert's Basis Theorem, for every $r$-generated group $\Gamma$
(even infinitely presented), there exist finitely many $w_{1},...,w_{k}\in F_{r}$,
such that 
\[
\mathrm{Hom}(\Gamma,\underline{G})\simeq\mathrm{Hom}(\Gamma_{\underline{w}},\underline{G})=X_{\underline{w},\underline{G}}.
\]
\end{rem}

Clearly, there is no uniqueness in the tuple $\underline{w}$ in the
above remark as any other tuple that generates (as a normal subgroup)
the same subgroup of $F_{r}$ as $\underline{w}$ would also work.
Yet the relations $\underline{w}=1$ in $\underline{G}$ can imply
other relations of the form $v=1$, where $v$ may not belong to the
normal subgroup generated by $\underline{w}$. For instance, it is
easily seen that the relation $ba^{3}b^{-1}=a^{2}$ in $\SL_{2}(\C)$
implies that the subgroup $\langle a,b\rangle\leq\SL_{2}(\C)$ is
metabelian. However the one-relator group $\langle a,b|ba^{3}b^{-1}=a^{2}\rangle$
is a non-solvable Baumslag-Solitar group. The main results of this
section will give ample further evidence of this phenomenon. However,
determining the exact set of relations entailed by a given one in
$\underline{G}$ is an intriguing problem that is completely open
as far as we know even for $\underline{G}=\SL_{2}$.
\begin{problem}
\label{prob:universal theory}Let $G=\mathrm{SL}_{2}(\C)$. Given
$\underline{w}\in F_{r}^{k}$, find all $\nu\in F_{r}$, such that
\[
\forall g_{1},\ldots,g_{r}\in G:\,\,\,\,\,\,\underline{w}(g_{1},...,g_{r})=1\Longrightarrow\nu(g_{1},...,g_{r})=1.
\]
\end{problem}

This problem can be paraphrased (and extended) in terms of first order
logic by asking to determine the \emph{universal theory of $\mathrm{SL}_{2}(\C)$}.
In logic, a universal sentence in the language of groups is a formula
with a single (universal) quantifier and no free variables. It is
easy to see that it must be (the conjunction of) formulas of the following
form, where $\underline{w}\in F_{r}^{k}$ and $\underline{\nu}\in F_{r}^{m}$:
\[
\forall g_{1},\ldots,g_{r}\in G:\,\,\,\,\,\,\bigwedge_{1}^{k}w_{i}(g_{1},\ldots,g_{r})=1\Longrightarrow\bigvee_{1}^{m}\nu_{i}(g_{1},\ldots,g_{r})=1,
\]
and the set of all universal sentences that are true in $G$ is called
the \emph{universal theory of $G$}. For example the universal theory
of the free group can be described using the Makanin\textendash Razborov
algorithm and groups with the same universal theory as the free group
have been given a geometric description as the so-called \emph{limit
groups} by Z. Sela \cite{sela}. Hence, Problem \ref{prob:universal theory}
can be rephrased as: 
\begin{problem}
Describe the universal theory of $\SL_{2}(\C)$. 
\end{problem}

The universal theory of $\SL_{2}(\C)$ is decidable, because of Hilbert's
Nullstellensatz and the decidability of ideal membership (elimination
theory, Groebner bases, etc.).

\subsection{Character varieties}

Note that $\underline{G}$ acts on $\underline{G}^{r}$ by conjugation
$g.(g_{1},...,g_{r})=(gg_{1}g^{-1},...,gg_{r}g^{-1})$, and that $X_{\underline{w},\underline{G}}$
is $\underline{G}$-invariant with respect to this action. Assume
from now on that $\underline{G}$ is reductive, we then define the
\textbf{character variety} as the categorical quotient 
\[
\mathcal{X}_{\underline{w},\underline{G}}:=X_{\underline{w},\underline{G}}/\!\!/\underline{G}
\]
in the sense of geometric invariant theory (GIT) (see e.g.~\cite[Section 4]{PV89}).
If $\C[X_{\underline{w},\underline{G}}]$ is the coordinate ring of
$X_{\underline{w},\underline{G}}$, then $\mathcal{X}_{\underline{w},\underline{G}}$
is the affine scheme whose coordinate ring\footnote{A priori it is not clear that $\C[X_{\underline{w},\underline{G}}]^{\underline{G}}$
is a finitely generated $\C$-algebra. However, this is true whenever
$\underline{G}$ is reductive (Hilbert, \cite{PV89}).} is $\C[X_{\underline{w},\underline{G}}]^{\underline{G}}$.
\begin{example}[{see e.g.~\cite{CS83,ABL18}, \cite[Section 4]{Bow98}}]
Let $r=2$ and $\underline{G}=\mathrm{SL}_{2}$. Suppose $w=1$.
Then $\Gamma_{w}=F_{2}$, $X_{w,\mathrm{SL}_{2}}=\mathrm{SL}_{2}\times\mathrm{SL}_{2}$
and 
\[
\mathcal{X}_{w,\mathrm{SL}_{2}}=\left(\mathrm{SL}_{2}\times\mathrm{SL}_{2}\right)/\!\!/\mathrm{SL}_{2}\simeq\mathbb{A}_{\C}^{3}.
\]
The isomorphism is given by $(A,B)\longmapsto(\tr(A),\tr(B),\tr(AB))$.
Indeed: 
\begin{itemize}
\item For every $(x,y,z)\in\C^{3}$, we can find $(A,B)\in\mathrm{SL}_{2}(\C)$
with $\tr(A)=x$, $\tr(B)=y$ and $\tr(AB)=z$. 
\item There is a Zariski open set $U\subseteq\mathbb{A}_{\C}^{3}$ (defined
by $\Delta\neq0$, see Example \ref{2sl2} below) where 
\[
(\tr(A),\tr(B),\tr(AB))=(\tr(A'),\tr(B'),\tr(A'B'))\Longleftrightarrow(A',B')\in(A,B)^{\mathrm{SL}_{2}}
\]
\end{itemize}
These are called the \emph{Fricke\textendash Klein} coordinates on
$\mathcal{X}_{1,\mathrm{SL}_{2}}$. 
\end{example}

\begin{example}
When $r=2$ and $\underline{G}=\SL_{3}$, then $X_{1,\mathrm{SL}_{3}}$
is a (branched) $2$-cover of $\mathbb{A}_{\C}^{8}$ via the map $(A,B)\longmapsto(\tr(A),\tr(B),\tr(AB),\tr(A^{-1}),\tr(B^{-1}),\tr(AB^{-1}),\tr(A^{-1}B),\tr(A^{-1}B^{-1}))$,
see \cite{lawton}. 
\end{example}

Note that in general, the map $\underline{G}(\C)^{r}\to\mathrm{Hom}(\C[X_{\underline{w},\underline{G}}]^{\underline{G}},\C)$
that sends $(g_{1},\ldots,g_{r})$ to the evaluation map at this point
descends to an onto map between $\underline{G}$-orbits closures of
$r$-tuples and (closed) points of $X_{\underline{w},\underline{G}}/\!\!/\underline{G}$.
Moreover, two orbit closures are mapped to the same point if and only
if they intersect. We refer to \cite{sikora} for general background
on character varieties.

Recall our notation: $\langle g_{1},\ldots,g_{r}\rangle$ stands for
the (abstract) subgroup generated by $g_{1},\ldots,g_{r}\in\underline{G}(\C)$
and $\overline{\langle g_{1},\ldots,g_{r}\rangle}^{Z}$ for the Zariski-closure
of this subgroup, which is a closed algebraic subgroup of $\underline{G}$.
The following fact is essential: 
\begin{fact}[{\cite[Theorem 2.50]{PRR23}}]
\label{fact:closedness of conjugacy orbits}Let $\underline{G}$
be a reductive group and suppose that $g_{1},...,g_{r}\in\underline{G}(\C)$
Zariski-generates a reductive subgroup $\underline{H}\leq\underline{G}$
(i.e. $\overline{\langle g_{1},...,g_{r}\rangle}^{Z}=\underline{H}$).
Then the $\underline{G}$-orbit $(g_{1},...,g_{r})^{\underline{G}}$
of $(g_{1},...,g_{r})$ is closed. 
\end{fact}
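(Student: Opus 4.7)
The plan is to apply the Hilbert--Mumford numerical criterion for closedness of orbits under a reductive group action on an affine variety, together with Mostow's classical theorem on reductive subgroups of parabolics. Writing $\mathbf{x}:=(g_{1},\ldots,g_{r})$, the first observation is that the stabilizer of $\mathbf{x}$ under the diagonal conjugation action of $\underline{G}$ on $\underline{G}^{r}$ is the centralizer $C_{\underline{G}}(\underline{H})$, since any element commuting with each $g_{i}$ must commute with $\langle g_{1},\ldots,g_{r}\rangle$ and hence with its Zariski-closure $\underline{H}$. My strategy is to argue by contradiction: assume that the orbit $\underline{G}\cdot\mathbf{x}$ is not closed in $\underline{G}^{r}$.

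Under this assumption, Hilbert--Mumford furnishes a cocharacter $\lambda\colon\mathbb{G}_{m}\to\underline{G}$ such that the limit $\mathbf{x}_{0}:=\lim_{t\to 0}\lambda(t)\cdot\mathbf{x}$ exists in $\underline{G}(\C)^{r}$ yet lies outside $\underline{G}\cdot\mathbf{x}$. I would then invoke the standard identification of the locus in $\underline{G}$ on which the conjugation limit $\lim_{t\to 0}\lambda(t)g\lambda(t)^{-1}$ exists: this is the parabolic subgroup $P(\lambda)$, with unipotent radical $U(\lambda)$ and Levi factor $L(\lambda)=Z_{\underline{G}}(\lambda)$, and the limit coincides with the canonical projection $\pi\colon P(\lambda)\twoheadrightarrow L(\lambda)$. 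Componentwise existence of the limit forces $g_{i}\in P(\lambda)$ for all $i$, whence $\underline{H}\subseteq P(\lambda)$.

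The next step is to invoke Mostow's theorem: in characteristic zero, any reductive subgroup of a connected affine algebraic group is conjugate, via an element of the unipotent radical, into a Levi factor. Applied to the parabolic $P(\lambda)$, this yields $u\in U(\lambda)(\C)$ with $u\,\underline{H}\,u^{-1}\subseteq L(\lambda)$. Using that $\pi$ is a group homomorphism that vanishes on $U(\lambda)$ and restricts to the identity on $L(\lambda)$, I would then compute
\[
\pi(g_{i})\;=\;\pi\bigl(u\,g_{i}\,u^{-1}\bigr)\;=\;u\,g_{i}\,u^{-1},
\]
where the first equality uses $u\in\ker(\pi)$ and the second uses $u g_{i}u^{-1}\in L(\lambda)$. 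Consequently $\mathbf{x}_{0}=u\cdot\mathbf{x}\cdot u^{-1}$ lies in $\underline{G}\cdot\mathbf{x}$, contradicting the choice of $\lambda$, and the orbit must be closed.

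The main obstacle is not the Hilbert--Mumford reduction, which is standard GIT, but the appeal to Mostow's theorem on $U(\lambda)$-conjugacy of reductive subgroups into the Levi of a parabolic. This is a non-trivial classical input that is specific to characteristic zero; its failure in positive characteristic is precisely the motivation for Serre's notion of $G$-complete reducibility, and is the reason why the analogous statement over an arbitrary algebraically closed field requires an additional hypothesis. Since Fact~\ref{fact:closedness of conjugacy orbits} is stated over $\C$, Mostow applies directly and the argument closes.
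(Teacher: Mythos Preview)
The paper does not actually supply a proof of this fact: it is stated with a bare citation to \cite[Theorem 2.50]{PRR23} and then used. So there is nothing in the paper to compare against line by line.

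That said, your argument is correct and is the standard one found in the literature (and almost certainly the one in the cited reference): Hilbert--Mumford/Kempf to produce a destabilising one-parameter subgroup $\lambda$, identification of the limit map with the Levi projection $P(\lambda)\twoheadrightarrow L(\lambda)$, and Mostow's theorem to conjugate the reductive subgroup $\underline{H}\subseteq P(\lambda)$ into $L(\lambda)$ by an element of $U(\lambda)$, whence the limit point is already in the orbit. Your remark that Mostow is the characteristic-zero input, and that in positive characteristic this is replaced by the hypothesis of $G$-complete reducibility (Bate--Martin--R\"ohrle), is exactly right and explains why the fact is stated over $\C$.
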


\begin{cor}
If $\overline{\langle g_{1},...,g_{r}\rangle}^{Z}=\underline{G}$
then $(g_{1},...,g_{r})^{\underline{G}}$ is closed. 
\end{cor}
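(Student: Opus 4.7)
The plan is to observe that this corollary is an immediate specialization of Fact \ref{fact:closedness of conjugacy orbits}. In that fact, one takes an arbitrary reductive subgroup $\underline{H} \leq \underline{G}$ that is Zariski-generated by the tuple $(g_1,\ldots,g_r)$; the conclusion is that the $\underline{G}$-orbit of the tuple is closed in $\underline{G}^r$.

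To deduce the corollary, I would simply set $\underline{H} := \underline{G}$. The hypothesis $\overline{\langle g_1,\ldots,g_r\rangle}^Z = \underline{G}$ is precisely the Zariski-generation condition in the fact, and the reductivity hypothesis on $\underline{H}$ is automatic because $\underline{G}$ itself is assumed to be reductive (this running assumption was made just before the statement of the fact, when defining the character variety $\mathcal{X}_{\underline{w},\underline{G}}$ via GIT). Hence Fact \ref{fact:closedness of conjugacy orbits} applies verbatim and yields closedness of $(g_1,\ldots,g_r)^{\underline{G}}$.

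There is essentially no obstacle here: the corollary is a formal consequence of the fact by specializing $\underline{H} = \underline{G}$. The only point worth flagging is that the standing reductivity hypothesis on $\underline{G}$ must be in force; with this in place the proof is a single line.
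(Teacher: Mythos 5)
Your proof is correct and is exactly the intended argument: the paper states this corollary as an immediate specialization of Fact \ref{fact:closedness of conjugacy orbits} with $\underline{H}=\underline{G}$, which is reductive by the standing assumption in force for the character-variety discussion. Nothing further is needed.
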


\begin{example}
The element 
\[
u_{t}:=\left(\begin{array}{cc}
1 & t\\
0 & 1
\end{array}\right)\in\mathrm{SL}_{2}(\C)
\]
Zariski-generates (when $t\neq0$) the maximal unipotent subgroup
$\{u_{t},t\in\C\}$, and indeed, $d_{\lambda}u_{t}d_{\lambda}^{-1}=u_{\lambda^{2}t}$,
where $d_{\lambda}$ is the diagonal matrix $diag(\lambda,\lambda^{-1})$.
So the identity is contained in the closure of the $\SL_{2}(\C)$-orbit
of $u_{t}$, but not in the orbit itself. For a similar reason it
can be shown that the converse to Fact \ref{fact:closedness of conjugacy orbits}
holds (e.g. see \cite[Theorem 29]{sikora}). 
\end{example}

\subsection{Random groups}

Fix $r\geq2$. We can define a random group on $r$ generators, 
\[
\Gamma_{\underline{w}}:=\left\langle x_{1},...,x_{r}:w_{1}(\underline{x})=...=w_{k}(\underline{x})=1\right\rangle ,
\]
by choosing the relators $\underline{w}$ at random among words of
length $\ell$. There are several probabilistic models for random
words, among them: 
\begin{enumerate}
\item \textbf{Choosing reduced words of length $\ell$}: there are $2r$
options to choose the first letter in $\left\{ x_{i}^{\pm1}\right\} _{i=1}^{r}$
and then $2r-1$ options for choosing each of the next $\ell-1$ letters,
so a total of $2r\cdot(2r-1)^{\ell-1}$ possible reduced words of
length $\ell$. 
\item \textbf{Choosing reduced words of length in an interval} $[c_{1}\ell,c_{2}\ell]$
for constants $c_{1}<c_{2}$. 
\item \textbf{Choosing non-reduced words of length $\ell$}: there are $(2r)^{\ell}$
options. 
\end{enumerate}
It turns out that the analysis of Models (1) and (2) can be reduced
to that of Model (3), so we will focus on the latter, which is more
convenient as it allows to see random words as random walks on the
free group. All three models usually behave similarly and give the
same qualitative answers. Nevertheless, the Gromov density model is
usually phrased using reduced words as in Model (1). It is as follows: 
\begin{defn}[Gromov density model, \cite{Gro93}, \cite{Oll05}]
A \emph{random group on $r$ generators, at density $\delta\in[0,1]$
and length} $\ell$ is the group $\Gamma_{\underline{w}}$, where
$\underline{w}=(w_{1},...,w_{k})$, for $k=\left\lfloor (2r-1)^{\delta\ell}\right\rfloor $,
and where $w_{1},...,w_{k}$ are chosen independently uniformly at
random among all reduced words of length $\ell$. 
\end{defn}

We say that an event (or a sequence of events parameterized by $\ell$)
occurs \textbf{asymptotically almost surely} (or a.a.s.) if it occurs
in probability $\geq1-o(1)$ as $\ell\rightarrow\infty$. 
\begin{thm}
\label{thm:random group properties}Let $\delta\in[0,1]$ and $k=\left\lfloor (2r-1)^{\delta\ell}\right\rfloor $.
The following hold a.a.s.: 
\begin{enumerate}
\item \cite[9.B]{Gro93}; If $\delta\in[0,\frac{1}{12})$. Then $\Gamma_{\underline{w}}$
has small cancellation $C'(\frac{1}{6})$. This means that no two
relators (or their cyclic permutations) overlap on a segment of length
$\geq\frac{\ell}{6}$. 
\item \cite[9.B]{Gro93},\cite{Oll05}; If $\delta\in[0,\frac{1}{2})$,
then $\Gamma_{\underline{w}}$ is infinite and Gromov-hyperbolic. 
\item \cite{DGP11}; If $\delta>0$ then $\Gamma_{\underline{w}}$ does
not act on a tree without a global fixed point. 
\item \cite{Gro93}, \cite{Zuk03}, \cite{KK}; If $\delta>\frac{1}{3}$
then $\Gamma_{\underline{w}}$ has Kazhdan's property $T$. 
\item \cite{Gro93}; If $\delta>\frac{1}{2}$ then $\Gamma_{\underline{w}}$
is trivial if $\ell$ is odd and isomorphic to $\Z/2\Z$ if $\ell$
is even. 
\end{enumerate}
\end{thm}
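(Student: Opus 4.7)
The plan is to treat each of the five items separately, noting that they have very different flavors even though they all fit into a common framework of first-moment/counting estimates over random reduced words of length $\ell$. The relators are drawn uniformly from the $2r(2r-1)^{\ell-1}$ reduced words, so after conditioning on one relator, the probability that a second one matches a prescribed reduced subword of length $L$ in a prescribed pair of positions is $O((2r-1)^{-L})$. All the threshold densities $\frac{1}{12},\frac{1}{3},\frac{1}{2}$ come from balancing the number of configurations one wants to control against this exponential decay.

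For item (1), the plan is a direct union bound. The number of ordered pairs of (cyclic conjugates of) relators is $O(k^{2}\ell^{2})$, and for each pair the probability of a common reduced subword of length $\geq \ell/6$ at fixed positions is $O((2r-1)^{-\ell/6})$. With $k=\lfloor (2r-1)^{\delta\ell}\rfloor$ and $\delta<1/12$, the expected number of offending overlaps is $O((2r-1)^{(2\delta-1/6)\ell}\,\mathrm{poly}(\ell))\to 0$, so $C'(1/6)$ holds a.a.s. For item (5), the plan is the symmetric pigeonhole calculation: when $\delta>1/2$, there are so many relators that a.a.s.\ every reduced word $v$ of length $\ell$ shares a prefix of length $>\ell/2$ with some $w_i$ or its inverse, so $v$ can be rewritten as a strictly shorter word modulo the relations; iterating collapses $\Gamma_{\underline{w}}$ to the trivial group (or $\Z/2\Z$ when $\ell$ is even, reflecting the fact that the random generating word can only be shortened by an amount of the same parity as $\ell$).

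For item (2), the approach is to establish a linear isoperimetric inequality for reduced van Kampen diagrams $D$, which characterizes Gromov-hyperbolicity. Following Ollivier's refinement of Gromov, one classifies such diagrams by the abstract combinatorial type (the dual graph recording which relators are glued along which subwords) and then uses the counting lemma above to bound the expected number of diagrams of each type whose boundary is too short relative to the area. At density $\delta<1/2$, the total probabilistic weight of diagrams with boundary $<(1-2\delta-\varepsilon)\ell\cdot|D|$ tends to zero, which yields the linear isoperimetric inequality uniformly. Infiniteness is a byproduct: a hyperbolic group with too few relators relative to generators cannot be finite. For item (3), the plan is to use Serre's dichotomy: a group acts on a tree without a global fixed point iff it splits as an amalgam or HNN extension. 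One then combines the small-cancellation/hyperbolicity already obtained with a dedicated counting argument ruling out any nontrivial splitting, following Dahmani-Guirardel-Przytycki, by controlling the possible boundary patterns of relators on either side of the splitting.

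The hard step is item (4), Property (T) at $\delta>1/3$. The approach is Zuk's (and Ballmann-\'Swi\k{a}tkowski's) spectral criterion: if for every vertex $v$ of the presentation $2$-complex the link graph $L_v$ is connected and the smallest nonzero eigenvalue of its normalized Laplacian exceeds $1/2$, then the group has property (T) via Garland's method. Here $L_v$ is a graph on the vertex set $\{x_i^{\pm 1}\}$ whose edges are weighted by the occurrences of consecutive letters in the (cyclic) relators. The main obstacle, and the reason the argument went through several corrections (Zuk, \.Zuk, Kotowski-Kotowski), is that $L_v$ is emphatically not a uniformly random regular graph, and one must pick the correct probabilistic model (cyclically reduced words, with the right multiplicity structure) and deploy a trace-method/second-moment argument showing that, when $\delta>1/3$, the link graph is concentrated near a regular graph of large enough degree that its spectral gap exceeds $1/2$ with overwhelming probability. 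Handling this correctly is where the genuine work lies; the other items follow the same first-moment blueprint with only density thresholds changing.
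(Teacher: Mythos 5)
The paper does not prove this theorem: it is stated as a compilation of known results, with each item attributed to its source (Gromov, Ollivier, Dahmani--Guirardel--Przytycki, \.{Z}uk, Kotowski--Kotowski), and the text immediately refers the reader to Ollivier's monograph. So there is no in-paper argument to compare yours against; what you have written is a proof \emph{plan} that correctly names the standard strategies from the literature (first-moment union bound for small cancellation, Ollivier's isoperimetric classification of van Kampen diagrams for hyperbolicity, the spectral criterion for property (T), the birthday-paradox collapse above density $\tfrac12$). Your identification of where the real difficulty lies in item (4) --- that the link graph is not a uniformly random graph and that the model-transfer issue is what Kotowski--Kotowski repaired --- is accurate.

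That said, several steps as written are either insufficient or would fail. First, your justification of infiniteness in item (2), ``a hyperbolic group with too few relators relative to generators cannot be finite,'' is not an argument and is moreover backwards: at density $\delta>0$ there are $(2r-1)^{\delta\ell}$ relators, vastly more than generators. Infiniteness at $\delta<\tfrac12$ comes from the isoperimetric inequality itself (e.g.\ via asphericity of the presentation complex, hence torsion-freeness and cohomological dimension $2$, or via injectivity of large balls of $F_r$ into the quotient). Second, the union bound in item (2) over ``abstract combinatorial types'' of diagrams diverges if taken over all diagram sizes; one must first invoke the local-to-global (Cartan--Hadamard) principle to reduce to diagrams with a bounded number of cells. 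Third, in item (5) the iteration does not close as stated: your hypothesis gives a shortening only for words of length exactly $\ell$, so after one step you are outside its scope. The standard route is to produce, from two relators sharing a prefix of length close to $\ell$, explicit relations of length $1$ or $2$ among the generators, and then observe that the surjection $\Gamma_{\underline{w}}\to\Z/2\Z$ sending each generator to $1$ survives exactly when $\ell$ is even. Finally, items (3) and (4) are only gestured at; in particular (3) requires substantially more than ``controlling boundary patterns on either side of the splitting,'' since the DGP argument must handle actions on arbitrary real trees, not just splittings detected by Serre's dichotomy for simplicial trees.
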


A good general reference for the Gromov model is Ollivier's monograph
\cite{Oll05}. In addition to the properties above, random groups
at positive density do not have low dimensional representations: 
\begin{thm}[Kozma\textendash Lubotzky, \cite{KL19}]
\label{thm:Kozma-Lubotzky}If $\delta>0$ and $d\in\N$ is fixed,
then a.a.s., every $\varphi\in\mathrm{Hom}(\Gamma_{\underline{w}},\mathrm{GL}_{d}(\C))$
satisfies: 
\[
\begin{cases}
\varphi\text{ is trivial} & \text{if }\ell\text{ is odd}\\
\left|\varphi(\Gamma_{\underline{w}})\right|\leq2 & \text{if }\ell\text{ is even}.
\end{cases}
\]
\end{thm}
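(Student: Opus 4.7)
The plan is to control the representation variety $X_{\underline{w}, \mathrm{GL}_d}$ and show that a.a.s.\ every point corresponds to a homomorphism $\varphi$ with $|\varphi(\Gamma_{\underline{w}})| \leq 2$. A homomorphism is the same data as a tuple $\underline{A} = (A_1, \ldots, A_r) \in \mathrm{GL}_d(\C)^r$ annihilating every relator, so I would stratify by the Zariski closure $\underline{H} = \overline{\langle A_1, \ldots, A_r \rangle}^Z$ and treat the cases $\dim \underline{H} > 0$ and $\underline{H}$ finite separately. The parity dichotomy then emerges naturally: for any length-$\ell$ reduced word $w$ and any surjection $\chi: F_r \twoheadrightarrow \Z/2$, the image $\chi(w) \in \Z/2$ equals $\ell \pmod{2}$, so an even $\ell$ leaves the $\Z/2$-representations as surviving candidates, while an odd $\ell$ rules them out.

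The core step is the following quantitative estimate: for every fixed $\underline{A}$ Zariski-generating a positive-dimensional semisimple group $\underline{H}$, there exist $C = C(d)$ and $\rho = \rho(d) < 1$ with
\[
\mathbb{P}_w\bigl(w(\underline{A}) = I\bigr) \leq C \rho^\ell,
\]
where $w$ is a uniform random reduced word of length $\ell$. To obtain this I would spread $\underline{A}$ over an order in a number field, choose a prime $p$ of good reduction, and use a Chebotarev-density argument (cf.\ Section \ref{sec7}) to ensure that $\underline{A} \bmod p$ still Zariski-generates $\underline{H}(\overline{\mathbb{F}_p})$ for a positive density of primes. The Bourgain--Gamburd--Sarnak expansion (see Section \ref{sec6}, \cite{BGS10}, \cite{BG08a}) then furnishes a spectral gap, uniform in $p$, for the random walk driven by $\mu_{\underline{A}}$ on the finite simple quotients of $\underline{H}(\mathbb{F}_p)$; exponentiating yields the return-probability bound. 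The finite non-abelian Zariski-closure case reduces to a direct random-walk estimate on a non-trivial finite quotient, whose mixing time is bounded in terms of $d$ alone.

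Since the $k = \lfloor (2r-1)^{\delta \ell} \rfloor$ relators are i.i.d., the probability that all of them vanish on a fixed $\underline{A}$ is at most $(C\rho^\ell)^k$, which is super-exponentially small in $\ell$ once $\delta > 0$. To promote this pointwise bound to a statement about the entire continuous family of tuples, I would use an algebro-geometric first-moment argument: if $X_{\underline{w}, \mathrm{GL}_d}$ contains an irreducible component outside the locus where $|\varphi| \leq 2$, then by Lang--Weil (Theorem \ref{thm:Lang-Weil}) its reduction modulo a generic prime $p$ carries many $\mathbb{F}_p$-points, each Zariski-generating either a positive-dimensional or a large finite subgroup. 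Comparing the total number of such points against the super-exponential per-tuple bound forces a contradiction for $\ell$ large.

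The hard part will be establishing uniformity of the exponent $\rho$ independently of the Zariski closure $\underline{H}$. The resolution is that reductive subgroups of $\mathrm{GL}_d$ fall into finitely many conjugacy classes of algebraic types, permitting a finite union bound; unipotent and solvable contributions reduce to the abelianization $F_r / [F_r, F_r] \cong \Z^r$, on which a random length-$\ell$ word distributes close to a $\Z^r$-valued random walk, yielding the desired exclusion of non-cyclic image. A secondary subtlety is ensuring good reduction at sufficiently many primes simultaneously, which is precisely where the Chebotarev machinery of Section \ref{sec7} enters.
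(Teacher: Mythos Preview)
Your approach brings in the heavy machinery of Sections~\ref{sec6}--\ref{sec7} for a result that the paper proves far more cheaply, and your handling of the solvable case is not correct as stated.

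The paper's argument works entirely over $\C$ and needs only the trivial per-word bound
\[
\Pr_{|w|=\ell}\bigl[w(\underline{a})=1\bigr]\le 1-\tfrac{1}{2r},
\]
valid for \emph{every} tuple $\underline{a}$ with $|\langle a_1,\ldots,a_r\rangle|>2$ (just condition on the first $\ell-1$ letters). One then picks a single witness point on each irreducible component of the running intersection $X_{(w_1,\ldots,w_j),\mathrm{GL}_d}\cap U$, where $U$ is the complement of $\{(a,\ldots,a):a^2=I\}$, bounds the number of components by $\ell^{O_{d,r}(1)}$ via Bezout, and observes that a fresh batch of $c\log\ell$ independent relators fails to vanish at every witness---hence drops the dimension---with probability $\ge 1-\ell^{-\Omega(1)}$. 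After $O_{d,r}(1)$ rounds the intersection with $U$ is empty. No stratification by Zariski closure, no reduction mod $p$, no spectral gap, no Lang--Weil.

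Your route does lead somewhere: it is essentially how the paper later proves Corollaries~\ref{KLimproved} and~\ref{vsol} and Theorem~\ref{thm:main theorem random words}. But two things. First, making your Lang--Weil first-moment count rigorous requires effective Lang--Weil and effective Chebotarev (Section~\ref{sec7}) to control primes of good reduction against the degree~$\ell^{O(1)}$ of $X_{\underline{w}}$; this is real work you have glossed over. Second, the exponential decay $C\rho^\ell$ you invoke (uniform Kesten, Theorem~\ref{thm:improved bounds on prbability of hiting id}) holds only when $\langle\underline{A}\rangle$ is not virtually solvable---this is exactly why Corollary~\ref{vsol} concludes only ``virtually solvable image'' rather than $|\varphi|\le 2$. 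Your proposed fix, ``reduce to the abelianisation $F_r/[F_r,F_r]\cong\Z^r$'', does not work as written: a homomorphism $F_r\to\langle\underline{A}\rangle$ with solvable non-abelian image does not factor through $\Z^r$, so the image of $w$ in $\Z^r$ does not decide whether $w(\underline{A})=1$. One can salvage a polynomial bound by pulling back a nontrivial character of $\underline{H}$, and for finite $\underline{H}$ one is back to a constant bound $<1$ anyway; at that point the stratification buys nothing over the paper's uniform $1-\tfrac{1}{2r}$ estimate.
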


\begin{rem}
In contrast to Kozma\textendash Lubotzky's result, Ollivier\textendash Wise
\cite{OW11} and Agol \cite{Ago13} showed that every $C'(1/6)$ group
embeds in a right-angled Artin group (RAAG) and hence embeds in $\mathrm{GL}_{N}(\Z)$
for some $N$ (see also \cite[Theorem 5.9]{Ber15}). Hence for a random
group, the minimal such $N$ must grow with the length $\ell$ of
the relators.
\end{rem}

\subsubsection{Sketch of proof of Theorem \ref{thm:Kozma-Lubotzky}}

Note that if $w\neq1$, then $X_{w,\mathrm{GL}_{d}(\C)}\subseteq\mathrm{GL}_{d}(\C)^{r}$
is a proper subvariety.

The idea of the proof is to take a random relator $w_{1}$, and consider
$X_{w_{1},\mathrm{GL}_{d}(\C)}$, and then consider another random
relator $w_{2}$ and show that $X_{w_{1},\mathrm{GL}_{d}(\C)}\cap X_{w_{2},\mathrm{GL}_{d}(\C)}$
is of smaller dimension with high probability. Then we iterate until
we are only left with points $\underline{a}\in\mathrm{GL}_{d}(\C)^{r}$
with $\left|\left\langle a_{1},...,a_{r}\right\rangle \right|\leq2$.
The main tool to make this work is Bezout's theorem. 
\begin{defn}
\label{The-degree-of}The \textbf{degree} of an irreducible subvariety
$X\subseteq\mathbb{A}_{\C}^{n}$ of dimension $m$, denoted $\deg(X)$,
is the number of intersection points of $X$ with an affine subspace
$H$ of codimension $m$, in general position. If $X$ has irreducible
components $X_{1},...,X_{M}$, then we define 
\[
\deg(X):=\sum_{i=1}^{M}\deg(X_{i}).
\]

Note that in particular, the number of irreducible components of $X$
is at most its degree. 
\end{defn}

From this definition, we see immediately that if $X$ is a hypersurface
defined by the vanishing of a polynomial $f\in\C[x_{1}\ldots,x_{n}]$
of degree $d$, then $X$ has degree $d$.
\begin{thm}[Bezout's theorem]
\label{bez}Let $X,Y$ be closed subvarieties of $\mathbb{A}_{\C}^{n}$.
Then 
\[
\deg(X\cap Y)\leq\deg(X)\cdot\deg(Y).
\]
\end{thm}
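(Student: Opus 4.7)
The plan is to reduce Bezout's inequality to the assertion that intersecting a closed subvariety with a linear subspace never increases the total degree, via the standard diagonal trick. Since the degree defined in the statement is additive over irreducible components, and since $X \cap Y = \bigcup_{i,j}(X_i \cap Y_j)$ for the respective irreducible decompositions, I may assume from the outset that both $X$ and $Y$ are irreducible.

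Next, view $X \times Y$ as a subvariety of $\mathbb{A}_{\C}^{2n}$ with coordinates $(u,v)$, and let $\Delta := \{(u,v) \in \mathbb{A}_{\C}^{2n} : u = v\}$ be the diagonal, a linear subspace of codimension $n$. The projection onto the first factor identifies $(X \times Y) \cap \Delta$ with $X \cap Y$ as a scheme. A standard Hilbert-polynomial computation (or the Segre embedding applied to projective closures) gives $\deg(X \times Y) = \deg(X) \cdot \deg(Y)$. So the inequality reduces to the following key lemma: for every closed subvariety $Z \subseteq \mathbb{A}_{\C}^{N}$ and every affine linear subspace $L$, $\deg(Z \cap L) \leq \deg(Z)$.

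To prove the key lemma I would chain hyperplanes cutting out $L$, thereby reducing to the case $L = H$ of a single hyperplane. Decomposing $Z$ into irreducibles and again treating each component separately, for $Z$ irreducible there are two cases. If $Z \subseteq H$, then $Z \cap H = Z$ and the inequality is trivial. Otherwise $Z \cap H$ is pure of codimension one in $Z$, with irreducible components $C_1,\ldots,C_s$ carrying intersection multiplicities $m_i \geq 1$ viewed as Cartier divisor pullbacks from $\mathbb{A}_{\C}^{N}$. The classical identity $\deg(Z) = \sum_{i=1}^{s} m_i \deg(C_i)$ then yields $\deg(Z) \geq \sum_i \deg(C_i) = \deg(Z \cap H)$, exactly as desired.

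The main obstacle is justifying this weighted formula $\deg(Z) = \sum m_i \deg(C_i)$ for an \emph{arbitrary} (not necessarily generic) hyperplane section; for a generic hyperplane the identity is immediate from the definition of $\deg$, but to extend this uniformly across all hyperplanes one must invest a bit of intersection theory. The cleanest route is to pass to the projective closure $\overline{Z} \subseteq \mathbb{P}_{\C}^{N}$ and compare the leading terms of the Hilbert polynomials of $\overline{Z}$ and $\overline{Z} \cap \overline{H}$: the hyperplane section drops the degree of the Hilbert polynomial by one while its top coefficient tracks precisely $\sum m_i \deg(C_i)$, which equals the top coefficient of the Hilbert polynomial of $\overline{Z}$ itself, namely $\deg(Z)$. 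Once this is in hand, discarding multiplicities gives the degree inequality, which combined with the diagonal reduction above completes the proof.
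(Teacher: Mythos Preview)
The paper does not actually give its own proof of this theorem; it simply refers the reader to Fulton's \emph{Intersection Theory}, Danilov, and Schnorr. So there is nothing to compare against, and your outline stands or falls on its own.

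Your argument is the standard one and is essentially correct: reduce to irreducible $X,Y$, use the diagonal embedding to rewrite $X\cap Y$ as $(X\times Y)\cap\Delta$, invoke $\deg(X\times Y)=\deg(X)\deg(Y)$, and then show that cutting by a hyperplane never increases total degree via the weighted formula $\deg(Z)=\sum_i m_i\deg(C_i)$. The identification of this last identity as the crux, and the Hilbert-polynomial justification you sketch, are both right.

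One small correction: your parenthetical ``or the Segre embedding applied to projective closures'' is misleading. Under the Segre embedding $\mathbb{P}^n\times\mathbb{P}^n\hookrightarrow\mathbb{P}^{(n+1)^2-1}$, the degree of $\overline{X}\times\overline{Y}$ picks up a binomial factor $\binom{\dim X+\dim Y}{\dim X}$ and is \emph{not} $\deg(X)\deg(Y)$. What you actually need is the degree of the closure of $X\times Y$ inside $\mathbb{P}^{2n}$ (one homogenizing variable, not two), and this is indeed $\deg(X)\deg(Y)$; the cleanest way to see it is exactly the Hilbert-polynomial or conservation-of-number argument you allude to, not Segre. Drop the Segre remark and the sketch is sound.
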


For the proof, we refer the reader to \cite{fulton-intersection})
and \cite[Theorem II.3.2.2]{Danilov}, which deal mostly with projective
varieties and to \cite{schnorr} for the affine case. See also \cite{KL19}
where the following is deduced: 
\begin{xca}
If $X=\{f_{1}=...=f_{s}=0\}\subseteq\mathbb{A}_{\C}^{n}$, $f_{i}\in\C[x_{1},...,x_{n}]$,
and denote $d:=\underset{i}{\max}\deg(f_{i})$. Then $X$ has degree
at most $d^{\min(s,n)}$. 
\end{xca}

For a similar reason, given a word $w$ of length $\ell$, the degree
of the word variety $X_{w,\mathrm{GL}_{d}(\C)}$ is at most $\ell^{O_{d,r}(1)}$.

\textbf{A sketch of proof of Theorem \ref{thm:Kozma-Lubotzky}:} 
\begin{enumerate}
\item Let $U\subseteq\mathrm{GL}_{d}(\C)^{r}$ be the open subset $\mathrm{GL}_{d}(\C)^{r}\backslash\left\{ (a,...,a):a^{2}=I_{n}\right\} $.
Pick $w_{1}\in F_{r}$ of length $\ell$ at random, and decompose
$X_{w_{1},\mathrm{GL}_{d}(\C)}\cap U$ into its irreducible components
$X_{w_{1},\mathrm{GL}_{d}(\C)}\cap U=X_{w_{1}}^{(1)}\cup...\cup X_{w_{1}}^{(k_{1})}$. 
\begin{claim*}
For every $\underline{a}\in\mathrm{GL}_{d}(\C)^{r}$, if $\left|\left\langle a_{1},...,a_{r}\right\rangle \right|>2$,
then 
\begin{equation}
\Pr_{\left|w\right|=\ell}\left[w(\underline{a})=1\right]\leq1-\frac{1}{2r}.\label{eq:KL upper bound}
\end{equation}
\end{claim*}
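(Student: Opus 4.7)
The plan is to recognize $w(\underline{a})$ for a uniformly random length-$\ell$ word $w$ as the position at time $\ell$ of a symmetric random walk on the abstract subgroup $H := \langle a_1,\ldots,a_r\rangle \leq \mathrm{GL}_d(\C)$, and then reduce the estimate to a trivial pointwise bound on the one-step distribution. The hypothesis $|H|>2$ is used only at the very last combinatorial step.

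In the non-reduced model (Model (3)), a word $w = x_{i_1}^{\epsilon_1}\cdots x_{i_\ell}^{\epsilon_\ell}$ is formed by sampling $(i_k,\epsilon_k) \in [r]\times\{\pm 1\}$ independently and uniformly. Setting $\mu := \frac{1}{2r}\sum_{i=1}^{r}(\delta_{a_i}+\delta_{a_i^{-1}})$, the image $w(\underline{a})$ is distributed as $\mu^{*\ell}$, so that $\Pr_{|w|=\ell}[w(\underline{a})=e] = \mu^{*\ell}(e)$. Since the multiset $\{a_i^{\pm 1}\}_{i=1}^r$ is closed under inversion, $\mu$ is symmetric: $\mu(h^{-1}) = \mu(h)$ for every $h\in H$.

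Next I would apply the elementary fact that convolving against a probability measure does not increase the $\ell^\infty$-norm (Young's inequality, Remark \ref{rem:Jensen and Young's inequality}). Explicitly,
$$\mu^{*\ell}(e) \;=\; \sum_{h \in H}\mu^{*(\ell-1)}(h)\,\mu(h^{-1}) \;\leq\; \Big(\max_{h \in H}\mu(h)\Big)\sum_{h\in H}\mu^{*(\ell-1)}(h) \;=\; \max_{h\in H}\mu(h),$$
so the claim is reduced, uniformly in $\ell$, to showing that $\max_{h\in H}\mu(h)\leq 1-\tfrac{1}{2r}$.

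The main (and only) place where the hypothesis $|H|>2$ enters is the final combinatorial check: $\mu(h) = N(h)/(2r)$, where $N(h)$ is the multiplicity of $h$ in the size-$2r$ multiset $\{a_i^{\pm 1}\}_{i=1}^r$. A violation $\mu(h_0) > 1-\tfrac{1}{2r}$ would force $N(h_0)=2r$, meaning $a_i = h_0$ and $a_i^{-1} = h_0$ for every $i$; hence $h_0^2 = e$ and $H = \langle h_0\rangle$ has order at most $2$, contradicting $|H|>2$. Thus $\max_h \mu(h) \leq \tfrac{2r-1}{2r} = 1-\tfrac{1}{2r}$, which yields (\ref{eq:KL upper bound}). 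No serious obstacle arises; one may further observe that the bound is sharp, being attained for $r=1$, $a_1$ of order $\geq 3$, and $\ell=2$.
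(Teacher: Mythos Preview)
Your proof is correct and follows essentially the same approach as the paper: both define the step measure $\mu=\frac{1}{2r}\sum(\delta_{a_i}+\delta_{a_i^{-1}})$, verify that $\max_h\mu(h)\le 1-\tfrac{1}{2r}$ via the same combinatorial check, and reduce the claim to this bound. The only cosmetic difference is that the paper phrases the reduction as conditioning on the first $\ell-1$ letters, whereas you write out the convolution identity $\mu^{*\ell}(e)=\sum_h\mu^{*(\ell-1)}(h)\mu(h^{-1})\le\max_h\mu(h)$ directly; these are the same estimate.
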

\begin{proof}
Let $\mu:=\frac{1}{2r}\sum_{i=1}^{r}\left(\delta_{a_{i}}+\delta_{a_{i}^{-1}}\right)$.
Since we assumed $\underline{a}\in U$, we must have $\underset{x\in\mathrm{GL}_{d}(\C)}{\max}\mu(x)\leq1-\frac{1}{2r}$.
In particular, 
\[
\Pr_{\left|w\right|=\ell}\left[w(\underline{a})=1\mid\text{given first }\ell-1\text{ letters}\right]\leq1-\frac{1}{2r},
\]
which implies the claim. 
\end{proof}
\item For each $i=1,...,k_{1}$, we choose a tuple $\underline{a}^{(i)}$
belonging to $X_{w_{1}}^{(i)}$, and further choose $c\log(\ell)$
independent words $\underline{w_{2}}=w_{2}^{(1)},...,w_{2}^{(c\log(\ell))}$.
By Item (1), for each fixed $i$, the probability that $\underline{w_{2}}$
will not break the irreducible component of $X_{w_{1}}^{(i)}$ into
smaller components is bounded by: 
\[
\Pr\left[\underline{w_{2}}(\underline{a}^{(i)})=1\right]\leq(1-\frac{1}{2r})^{c\log(\ell)},
\]
and hence, 
\begin{align*}
 & \Pr\left[\dim X_{w_{1},\mathrm{GL}_{d}(\C)}\cap U>\dim X_{(w_{1},\underline{w_{2}}),\mathrm{GL}_{d}(\C)}\cap U\right]\geq\Pr\left[\forall i:\underline{w_{2}}(\underline{a}^{(i)})\neq1\right]\\
= & 1-\Pr\left[\exists i\in[k_{1}]:\underline{w_{2}}(\underline{a}^{(i)})=1\right]\underset{\text{union bound}}{\geq}1-k_{1}(1-\frac{1}{2r})^{c\log(\ell)}.
\end{align*}
By Bezout's theorem: $k_{1}\leq\ell^{O_{r,d}(1)}$, and hence by taking
$c$ large enough, 
\[
\Pr\left[\dim X_{w_{1},\mathrm{GL}_{d}(\C)}\cap U>\dim X_{(w_{1},\underline{w_{2}}),\mathrm{GL}_{d}(\C)}\cap U\right]\geq1-\frac{1}{\ell^{O_{r,d}(1)}}.
\]
We repeat this process $O_{r,d}(1)$ times, until we obtain an empty
set. 
\item We have proved that if $\Gamma_{\underline{w}}$ is a random group
with $\gg_{d,r}\log(\ell)$ random relators (and in particular, if
$\Gamma_{\underline{w}}$ is a random group at density $\delta>0$),
then a.a.s., the variety $X_{\underline{w},\mathrm{GL}_{d}(\C)}$
contains only elements $\underline{a}$ of the form $(a,...,a)$ with
$a^{2}=I$. I.e., $X_{\underline{w},\mathrm{GL}_{d}(\C)}$ consists
of homomorphisms $\varphi:\Gamma_{\underline{w}}\rightarrow\mathrm{GL}_{d}(\C)$
sending $(x_{1},...,x_{r})$ to $(a,...,a)$, so $|\varphi(\Gamma_{\underline{w}})|\leq2$. 
\end{enumerate}
\begin{rem}
Note that if we take only $O(\log(\ell))$ random relations, then
the error term in ``a.a.s.'' obtained in this proof is only $>1-1/\ell^{O(1)}$.
To get an exponential error term, the argument requires at least a
linear in $\ell$ number of independent relators (which is the case
if we assume $\delta>0$ of course). 
\end{rem}

\subsection{Principal part of character varieties}

We will be mostly interested in understanding the structure of the
so-called \emph{principal part} of the character variety (defined
below), which is a Zariski-open set accounting for `most' of the character
variety.

We first prove the following fact: 
\begin{fact}
\label{fact:principal part is open} Assume that $\underline{G}$
is semisimple. The variety

\[
(\underline{G}^{r})^{Z}:=\left\{ (g_{1},...,g_{r})\in\underline{G}^{r}:\overline{\left\langle g_{1},...,g_{r}\right\rangle }^{Z}=\underline{G}\right\} ,
\]
is an open subvariety of $\underline{G}^{r}$. 
\end{fact}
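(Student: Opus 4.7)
My approach is to show $(\underline{G}^r)^Z$ coincides with the super-level set $\{g : d(g) \geq \dim \underline{G}\}$ of the dimension function $d(g) := \dim \overline{\langle g_1, \ldots, g_r\rangle}^Z$, and to establish lower semi-continuity of $d$, whereby $(\underline{G}^r)^Z$ is Zariski-open in $\underline{G}^r$. Since $\underline{G}$ is semisimple it is in particular connected, so a closed algebraic subgroup $\underline{H} \leq \underline{G}$ equals $\underline{G}$ if and only if $\dim \underline{H} = \dim \underline{G}$: the identity component $\underline{H}^\circ$ is then a closed connected subvariety of the connected $\underline{G}$ of the same dimension, forcing $\underline{H}^\circ = \underline{G}$ and hence $\underline{H} = \underline{G}$. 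This identifies $(\underline{G}^r)^Z = \{g \in \underline{G}^r : d(g) \geq \dim \underline{G}\}$ and reduces the claim to lower semi-continuity of $d$.

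To prove semi-continuity, fix $g_0 \in \underline{G}^r$, set $\underline{H}_0 := \overline{\langle g_{0,1}, \ldots, g_{0,r}\rangle}^Z$, and $d_0 := \dim \underline{H}_0$. The intersection $\langle g_{0,1}, \ldots, g_{0,r}\rangle \cap \underline{H}_0^\circ$ is Zariski-dense in $\underline{H}_0^\circ$, since its Zariski closure is a closed finite-index subgroup of the connected $\underline{H}_0^\circ$ and so must coincide with it. One can therefore choose words $w_1, \ldots, w_{d_0} \in F_r$ with $h_j := w_j(g_0) \in \underline{H}_0^\circ$ such that the multiplication morphism
\[
\mu : (\underline{H}_0^\circ)^{d_0} \to \underline{H}_0^\circ, \qquad (x_1, \ldots, x_{d_0}) \mapsto x_1 \cdots x_{d_0},
\]
has surjective differential at $(h_1, \ldots, h_{d_0})$; this is feasible because the maximal-rank locus of $d\mu$ is a Zariski-open nonempty subset of $(\underline{H}_0^\circ)^{d_0}$ (it contains the identity tuple, where $d\mu$ is the surjective sum map $\mathfrak{h}_0^{d_0} \to \mathfrak{h}_0$) and meets the dense subset of tuples drawn from the subgroup. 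For $g$ in a Zariski-open neighborhood $V$ of $g_0$, openness of the maximal-rank condition for algebraic families propagates the rank bound for the parameterized morphism $g \mapsto w_1(g) \cdots w_{d_0}(g)$, and since each $w_j(g) \in \langle g_1, \ldots, g_r\rangle \subseteq \underline{H}(g)$, one extracts a $d_0$-dimensional subvariety inside $\underline{H}(g)$, giving $d(g) \geq d_0$ throughout $V$.

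\textbf{Main obstacle.} The principal difficulty is that $\underline{H}(g)$ is not a fibre of any morphism of finite-type schemes, being defined by a countable construction over all of $F_r$. The plan bypasses this by extracting a finite family of words whose evaluations at $g_0$ are ``spanning'' for $\underline{H}_0^\circ$ in the differential sense above, then exploiting the classical openness of the maximal-rank locus in algebraic families of morphisms. The crucial conceptual point that converts a pointwise rank condition into a dimension bound for $\underline{H}(g)$ is that every word-image $w_j(g)$ automatically lies inside the closed algebraic subgroup $\underline{H}(g)$, so any $d_0$-dimensional algebraic piece assembled from these values is necessarily contained in $\underline{H}(g)$.
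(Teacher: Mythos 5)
Your reduction of the Fact to lower semi-continuity of $d(g):=\dim\overline{\langle g_{1},\dots,g_{r}\rangle}^{Z}$ is where the argument breaks: this function is \emph{not} lower semi-continuous, even for semisimple $\underline{G}$. Take $\underline{G}=\SL_{2}$, $r=1$, and $g_{0}$ a non-trivial unipotent element, so $d(g_{0})=1$; every non-empty Zariski-open subset of $\SL_{2}$ contains semisimple elements of finite order (for each root of unity $\zeta\neq\pm1$ the trace level set $\{\tr g=\zeta+\zeta^{-1}\}$ is an irreducible surface consisting of finite-order elements, and infinitely many such surfaces meet any non-empty open set), and there $d=0$. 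This Zariski-density of torsion tuples is exactly the phenomenon the paper flags when it says the Fact fails for tori. Only the top level set $\{d=\dim\underline{G}\}$ is open, and your argument uses semisimplicity only in the trivial step that a closed subgroup of full dimension equals $\underline{G}$; as written it would equally ``prove'' openness of $\{d\geq k\}$ for all $k$ and for $\mathbb{G}_{m}$, both of which are false, so the semi-continuity step cannot be repaired by the method you propose.

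The concrete error is in your second paragraph. First, requiring the multiplication morphism $(x_{1},\dots,x_{d_{0}})\mapsto x_{1}\cdots x_{d_{0}}$ to have surjective differential at $(h_{1},\dots,h_{d_{0}})$ is vacuous: group multiplication is a submersion at every point, so this places no constraint whatsoever on the words $w_{j}$. More seriously, the rank of the differential of the word map $g\mapsto w_{1}(g)\cdots w_{d_{0}}(g)$ at $g$ carries no information about $\dim\underline{H}(g)$ in either direction: the ``$d_{0}$-dimensional algebraic piece'' you extract is swept out by varying $g$, and as $g$ varies the group $\underline{H}(g)$ varies with it, so that piece is not contained in $\underline{H}(g)$ for any fixed $g$. (For $r=1$ and $w=x$ the word map is the identity, of full rank everywhere, while $\underline{H}(g)$ may be finite.) For fixed $g$ the word values form only a countable set, and there is no continuous parameter available that stays inside $\underline{H}(g)$. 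The paper's route avoids this entirely: for $\underline{G}$ simple it shows that $\overline{\langle g\rangle}^{Z}=\underline{G}$ if and only if two fixed irreducible representations (the adjoint one, plus one of dimension exceeding the Jordan bound, to rule out finite Zariski closures) restrict irreducibly to $\langle g\rangle$, and irreducibility of a fixed representation restricted to $\langle g\rangle$ is genuinely Zariski-open, being by Burnside's theorem an increasing union over finite word sets $S$ of the open conditions that $\{\rho(w(g)):w\in S\}$ span $\mathrm{End}(V_{\rho})$. You would need to replace your semi-continuity step by an argument of that kind.
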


The assumption that $\underline{G}$ is semisimple is essential. Note
that it fails for a torus, since tuples made of finite order elements
form a dense subset in this case.

Fact \ref{fact:principal part is open} for $\underline{G}$ simple,
follows from the following lemma, by observing that the condition
that $\left\langle g_{1},...,g_{r}\right\rangle $ act irreducibly
on $\C^{d}$ is a Zariski open condition on $\underline{G}(\C)^{r}$.
(\textbf{exercise}: prove this).
\begin{lem}
Given a simple algebraic group $\underline{G}$, there exist finite
dimensional $\rho_{1},\rho_{2}\in\Irr(\underline{G}(\C))$ such that
for every $g_{1},...,g_{r}\in\underline{G}(\C)$: 
\[
\overline{\left\langle g_{1},...,g_{r}\right\rangle }^{Z}=\underline{G}(\C)\Longleftrightarrow\rho_{1},\rho_{2}|_{\langle g_{1},...,g_{r}\rangle}\text{ are both irreducible}.
\]
\end{lem}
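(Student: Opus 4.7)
The plan is to exploit the dichotomy that any proper closed algebraic subgroup of $\underline{G}$ is either finite or has a strictly smaller Lie algebra; each case can then be detected by the failure of irreducibility of one of two well-chosen representations. First I would establish the following transfer principle: for any algebraic representation $\rho:\underline{G}\to\mathrm{GL}(V)$ and any subgroup $H\leq\underline{G}(\C)$, an $H$-invariant subspace $W\subseteq V$ is automatically invariant under the Zariski closure $\overline{\langle H\rangle}^{Z}$, since the stabilizer of $W$ in $\underline{G}$ is a closed algebraic subgroup containing $H$. Setting $H:=\left\langle g_{1},\dots,g_{r}\right\rangle $, this already yields the forward direction: if $\overline{\langle g_{1},\dots,g_{r}\rangle}^{Z}=\underline{G}(\C)$ then any $H$-invariant subspace is $\underline{G}$-invariant, so $\rho|_{H}$ is irreducible whenever $\rho$ is, for any choice of $\rho_{1},\rho_{2}\in\Irr(\underline{G}(\C))$.

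For the converse, let $\underline{H}:=\overline{\langle g_{1},\dots,g_{r}\rangle}^{Z}$ and suppose $\underline{H}\subsetneq\underline{G}$. I would take $\rho_{2}:=\mathrm{Ad}$, the adjoint representation of $\underline{G}$ on its Lie algebra $\mathfrak{g}$. Since $\underline{G}$ is connected and simple, the $\underline{G}$-submodules of $\mathfrak{g}$ are precisely the Lie ideals, so $\mathrm{Ad}\in\Irr(\underline{G}(\C))$. In the case $\dim\underline{H}>0$, the Lie subalgebra $\mathfrak{h}:=\mathrm{Lie}(\underline{H})$ satisfies $0\neq\mathfrak{h}\subsetneq\mathfrak{g}$ and is preserved by $\mathrm{Ad}(\underline{H})$, so $\rho_{2}|_{\underline{H}}$, and therefore $\rho_{2}|_{H}$, is reducible.

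It remains to handle the case $\dim\underline{H}=0$, where $\underline{H}(\C)$ is a finite subgroup of $\underline{G}(\C)$. Here I would invoke a Jordan-type bound: fixing any faithful algebraic representation $\iota:\underline{G}\hookrightarrow\mathrm{GL}_{N}$, Jordan's theorem produces an abelian normal subgroup of index at most some $J(N)$ in every finite subgroup of $\mathrm{GL}_{N}(\C)$, and Ito's theorem on character degrees then implies that every irreducible complex representation of such a finite group has dimension at most $J(N)$. Setting $M:=J(N)$, I would choose $\rho_{1}$ to be any irreducible algebraic representation of $\underline{G}$ with $\dim\rho_{1}>M$; such representations exist in abundance, since $\dim V_{k\lambda}\to\infty$ as $k\to\infty$ by the Weyl dimension formula. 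Any decomposition of $\rho_{1}|_{\underline{H}(\C)}$ into irreducible constituents then consists of pieces of dimension at most $M<\dim\rho_{1}$, so $\rho_{1}|_{\underline{H}(\C)}$, and hence $\rho_{1}|_{H}$, is reducible.

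The main obstacle is obtaining the uniform bound on irreducible representation dimensions of finite subgroups of $\underline{G}(\C)$ used to pick $\rho_{1}$; everything else reduces to the invariance-transfer principle together with the simplicity of $\mathfrak{g}$. Any alternative uniform bound (for example, by conjugating finite subgroups into a fixed maximal compact subgroup and applying compactness) would serve equally well in place of Jordan\textendash Ito.
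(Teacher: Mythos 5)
Your proof is correct and follows essentially the same route as the paper: the adjoint representation detects positive-dimensional proper Zariski closures via the invariant subspace $\mathfrak{h}\subsetneq\mathfrak{g}$, and Jordan's theorem combined with a sufficiently high-dimensional irreducible representation handles finite closures. The only cosmetic difference is that you invoke Ito's theorem to bound the character degrees of the finite subgroup, where the paper argues directly that the restriction to the abelian normal subgroup splits into one-dimensional pieces; both yield the same bound $[H:H_{0}]\leq f(N)$.
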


\begin{proof}
We take $\rho_{1}=\mathrm{Ad}:\underline{G}(\C)\rightarrow\mathrm{GL}(\mathfrak{g})$,
for $\mathfrak{g}=\mathrm{Lie}(\underline{G}(\C))$ and take $\rho_{2}$
to be any irreducible representation of large enough dimension.

The direction $\Longrightarrow$ is easy, so it is left to prove $\Longleftarrow$.
Suppose that $\overline{\left\langle g_{1},...,g_{r}\right\rangle }^{Z}=\underline{H}(\C)\lneq\underline{G}(\C)$
for a proper algebraic subgroup $\underline{H}$. Then $\mathfrak{h}=\mathrm{Lie}(\underline{H}(\C))$
is a subrepresentation of $\mathfrak{g}$, which is contradiction
to the irreducibility of $\rho_{1}$, unless $\underline{H}(\C)$
is finite. To deal with the case that $\underline{H}(\C)$ is finite,
we apply Jordan's theorem: 
\begin{thm}[Jordan, 1878]
\label{thm:Jordan's theorem}There exists a function $f:\N\rightarrow\N$,
such that if $H$ is a finite subgroup of $\mathrm{GL}_{d}(\C)$,
then it must contain a normal, abelian subgroup $H_{0}\vartriangleleft H$
of index $\left|H:H_{0}\right|\leq f(d)$. In fact, Collins \cite{Col07}
showed that $f(d)$ can be taken to be $(d+1)!$ whenever $d\geq71$. 
\end{thm}

See \cite{breuillard-jordan} for a discussion of Jordan's theorem.
Note that $\underline{G}(\C)\leq\mathrm{GL}_{N}(\C)$ for some $N=N(\underline{G})$,
and write $\rho_{2}:\underline{G}(\C)\rightarrow\mathrm{GL}(V_{\rho_{2}})$.
Applying Jordan's theorem to $H:=\underline{H}(\C)$, we can find
$H_{0}\vartriangleleft H$ abelian of index $\left|H:H_{0}\right|\leq f(N)$.
Since $\rho_{2}(H)$ is finite it consists of semisimple elements,
so that the elements in $\rho_{2}(H_{0})$ are simultaneously diagonalizable.
Hence $V_{\rho_{2}}|_{\rho_{2}(H_{0})}$ is a direct sum of one dimensional
representations. Since $\left|H/H_{0}\right|\leq f(N)$, taking $\dim\rho_{2}>f(N)$
guarantees that $\rho_{2}$ cannot be irreducible. 
\end{proof}
\begin{xca}
Adapt the proof of Fact \ref{fact:principal part is open} to handle
the case that $\underline{G}$ is semisimple. 
\end{xca}

By Fact \ref{fact:principal part is open}, we deduce that 
\begin{align*}
X_{\underline{w},\underline{G}}^{Z} & :=\mathrm{Hom}(\Gamma_{\underline{w}},\underline{G})\cap(\underline{G}^{r})^{Z}\text{ is open in }X_{\underline{w},\underline{G}},\\
\mathcal{X}_{\underline{w},\underline{G}}^{Z} & :=\mathrm{Hom}^{Z}(\Gamma_{\underline{w}},\underline{G})/\!\!/\underline{G}\text{ is open in }\mathcal{X}_{\underline{w},\underline{G}}.
\end{align*}
Moreover, by Fact \ref{fact:closedness of conjugacy orbits}, distinct
$\underline{G}$-orbits in $X_{\underline{w},\underline{G}}^{Z}$
correspond to distinct points in $\mathcal{X}_{\underline{w},\underline{G}}^{Z}$. 
\begin{defn}
We call $\mathcal{X}_{\underline{w},\underline{G}}^{Z}$ the \textbf{principal
part} of $\mathcal{X}_{\underline{w},\underline{G}}$. 
\end{defn}

In the case of the free group on two generators and $\underline{G}=\SL_{2}$,
the principal part of the character variety is entirely described
as follows:
\begin{example}[exercise!]
\label{2sl2} Let $\underline{G}=\mathrm{SL}_{2}$, $r=2$. Recall
that $\mathcal{X}_{1,\underline{G}}(\C)=\C^{3}$ with the parametrization
$(a,b)\longmapsto(x,y,z):=(\tr(a),\tr(b),\tr(ab))$. The principal
part $\mathcal{X}_{1,\underline{G}}^{Z}(\C)$ of $\mathcal{X}_{1,\underline{G}}(\C)$
is $\mathcal{X}_{1,\underline{G}}^{Z}(\C)=\C^{3}\setminus\{F\cup\{\triangle=0\}\}$,
where 
\[
\triangle=x^{2}+y^{2}+z^{2}-4xyz-4=\tr(aba^{-1}b^{-1})-2,
\]
and where $F$ is a certain finite set $F\subseteq\left\{ (x,y,z);x,y,z\in\{0,\pm1,\pm\sqrt{2},\frac{1\pm\sqrt{5}}{2}\}\right\} $.
Moreover, 
\begin{align*}
\triangle & =0\Longleftrightarrow\langle a,b\rangle\text{ is not irreducible in }\C^{2}.\\
(x,y,z) & \in F\Longleftrightarrow\langle a,b\rangle\text{ is finite and irreducible on }\C^{2}.
\end{align*}
\end{example}

\begin{example}
\label{sl2ex} When $w$ is arbitrary, the trace $\tr(w(A,B))$ can
easily be computed in these coordinates. Using the Cayley\textendash Hamilton
theorem to express $A^{2}$ as a linear combination of $1$ and $A$
and similarly for $B^{2}$ and expanding one obtains: 
\[
\tr(w(A,B))=P_{w}(x,y,z)
\]
where $P_{w}\in\Z[x,y,z]$ is called the \emph{word polynomial} associated
to $w$ in $\SL_{2}$. Little is known in general about these polynomials,
see \cite{horowitz}. But they provide explicit equations (exercise!)
for the principal part of the character variety $\mathcal{X}_{w,\underline{\SL_{2}}}(\C)$,
namely: 
\begin{align}
\mathcal{X}_{w,\underline{\SL_{2}}}^{Z}(\C)=\left\{ (x,y,z)\in\C^{3}\setminus F,\Delta\neq0,P_{w}=2,P_{aw}=x,P_{bw}=y\right\} .\label{eqforX}
\end{align}
\end{example}


\section{\label{sec6}The main theorem and uniform gap results in finite simple
groups }

Kesten \cite{Kes59} gave a characterization of non-amenability of
a group in terms of the probability of return to the identity of the
simple random walk on one (or any) of its Cayley graphs: the group
is non-amenable if and only if the probability of return decays exponential
fast. In \cite{Bre11}, the first author proved a uniform version
of Kesten's theorem for linear groups, which tells us the following:
\begin{thm}[{{\cite[Corollary 1.6]{Bre11}}}]
\label{thm:improved bounds on prbability of hiting id}Let $\underline{G}$
be a semisimple algebraic group. There exists $c=c(r,\underline{G})>0$
such that for every $\ell\in\N$ and every\footnote{More generally, it is enough to demand that $\langle x_{1},...,x_{r}\rangle$
is not virtually solvable.} $\underline{x}=(x_{1},...,x_{r})\in(\underline{G}(\C)^{r})^{Z}$,
\[
\underset{w:\left|w\right|=\ell}{\Pr}\left[w(\underline{x})=1\right]<e^{-c\ell}.
\]
\end{thm}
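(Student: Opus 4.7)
The plan is to derive a uniform spectral gap for the convolution operator $T_\mu$ associated to the symmetric probability measure $\mu := \tfrac{1}{2r}\sum_{i=1}^{r}(\delta_{x_i}+\delta_{x_i^{-1}})$ on $\langle x_1,\ldots,x_r\rangle$, since the probability in the statement equals $\mu^{*\ell}(1)$. Because $\mu$ is symmetric, $T_\mu$ is self-adjoint on $\ell^2(\langle\underline{x}\rangle)$, so $\mu^{*\ell}(1)=\langle\delta_e,T_\mu^{\ell}\delta_e\rangle\leq\|T_\mu\|_{\mathrm{op}}^{\ell}$, and it suffices to produce $\delta=\delta(\underline{G},r)>0$ with $\|T_\mu\|_{\mathrm{op}}\leq 1-\delta$.

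The main input is Breuillard's \emph{uniform Tits alternative}: there exists an integer $N=N(\underline{G},r)$ such that for every $\underline{x}\in(\underline{G}(\C)^r)^Z$ one can find words $a,b\in F_r$ of length at most $N$ for which $a(\underline{x})$ and $b(\underline{x})$ play ping-pong in a fixed finite-dimensional representation of $\underline{G}$, and in particular freely generate a non-abelian free subgroup of $\underline{G}(\C)$. Granted this, the trivial bound $\mu^{*N}(c(\underline{x}))\geq(2r)^{-N}$ for each $c\in\{a^{\pm 1},b^{\pm 1}\}$ allows a decomposition
\[
\mu^{*N}\;=\;\epsilon\,\nu+(1-\epsilon)\rho,\qquad \epsilon=\tfrac{4}{(2r)^{N}},
\]
where $\nu$ is the uniform probability measure on $\{a(\underline{x})^{\pm 1},b(\underline{x})^{\pm 1}\}$ and $\rho$ is some probability measure. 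Since $\langle a(\underline{x}),b(\underline{x})\rangle\cong F_2$, the left regular representation of $\langle\underline{x}\rangle$ splits over the right cosets of this free subgroup into a direct sum of copies of the left regular representation of $F_2$, so Kesten's classical computation gives $\|T_\nu\|_{\mathrm{op}}=\tfrac{\sqrt{3}}{2}$, while $\|T_\rho\|_{\mathrm{op}}\leq 1$ is trivial. Hence
\[
\|T_\mu\|_{\mathrm{op}}^{N}=\|T_{\mu^{*N}}\|_{\mathrm{op}}\;\leq\;\epsilon\cdot\tfrac{\sqrt{3}}{2}+(1-\epsilon)\;=\;1-\epsilon\bigl(1-\tfrac{\sqrt{3}}{2}\bigr),
\]
and setting $c:=-\tfrac{1}{N}\log\bigl(1-\epsilon(1-\tfrac{\sqrt{3}}{2})\bigr)>0$ yields $\mu^{*\ell}(1)\leq e^{-c\ell}$ with $c=c(\underline{G},r)$ as required.

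The hard part is the first step. The qualitative Tits alternative (existence of \emph{some} free pair inside any Zariski-dense subgroup) is easy, but extracting ping-pong players whose word length in the given generators is bounded uniformly across all tuples $\underline{x}\in(\underline{G}(\C)^r)^Z$ is the substance of Breuillard's theorem; its proof rests on an arithmetic height-gap for non-virtually-solvable Zariski-dense subgroups of $\mathrm{GL}_d$ together with an escape-from-subvarieties argument. Once this uniform ping-pong is in hand, the reduction to exponential decay is the elementary operator-norm estimate displayed above, via the convexity of the operator norm along the decomposition $\mu^{*N}=\epsilon\nu+(1-\epsilon)\rho$.
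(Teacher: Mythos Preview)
The paper does not prove this theorem; it is quoted from \cite{Bre11} with only the one-line remark that the uniformity rests on Diophantine height estimates over $\overline{\Q}$. Your sketch is precisely the argument of \cite{Bre11}: the uniform Tits alternative (the substance of that paper, proved via the height-gap theorem) produces a free pair at bounded word length, and then Kesten's bound for $F_{2}$ together with the convex decomposition of $\mu^{*N}$ yields the uniform spectral gap, exactly as you wrote.

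One minor point worth tidying: the lower bound $\mu^{*N}(c(\underline{x}))\geq(2r)^{-N}$ requires that $c(\underline{x})$ be expressible as a product of \emph{exactly} $N$ letters, i.e.\ that $N-|c|$ be even (so one can pad with cancelling pairs $x_{1}x_{1}^{-1}$). Since $|a|$ and $|b|$ may have different parities, this need not hold for the $N$ handed to you by the uniform Tits alternative. The fix is trivial: replace $a,b$ by $a^{2},b^{2}$, which still generate a free group of rank two and both have even length $\leq 2N$, then work with $N'=2N$.
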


We stress that the exponential rate $c>0$ is independent of the generating
set $\underline{x}$. The proof of uniformity relies of global estimates
for Diophantine heights over $\overline{\Q}$. This is to be contrasted
with the following other basic result of Kesten regarding the free
group:
\begin{thm}[Kesten, \cite{Kes59}]
If $\langle x_{1},...,x_{r}\rangle=F_{r}$, then 
\[
\underset{w:\left|w\right|=\ell}{\Pr}\left[w(\underline{x})=1\right]\approx\left(\frac{\sqrt{2r-1}}{r}\right)^{\ell}=e^{-c_{r}\ell},
\]
which is equal to the probability that a random (in the non-reduced
model) word of length $\ell$ is trivial. 
\end{thm}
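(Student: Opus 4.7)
Since $\langle x_1,\ldots,x_r\rangle = F_r$, the evaluation map $F_r \to F_r$, $x_i\mapsto x_i$, is an isomorphism, so $w(\underline{x})=1$ in $F_r$ if and only if $w=1$ in $F_r$. Hence $p_\ell := \Pr_{|w|=\ell}[w(\underline{x})=1] = L_\ell/(2r)^\ell$, where $L_\ell$ counts the non-reduced words of length $\ell$ representing the identity of $F_r$. Equivalently, $L_\ell$ counts closed walks of length $\ell$ at the root of the Cayley graph of $F_r$ with respect to $\{x_1^{\pm1},\ldots,x_r^{\pm1}\}$, which is the $d$-regular tree $T_d$ with $d=2r$. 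So the theorem reduces to the classical spectral radius estimate for the simple random walk on $T_d$.

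The plan is to compute the generating function $G(z):=\sum_{\ell\ge 0} L_\ell\, z^\ell$ by decomposing closed walks into \emph{excursions} (walks that leave the root and return to it for the first time). A concatenation identity gives $G(z)=1/(1-E(z))$, where $E(z)=\sum_m E_m z^m$ is the excursion generating function. Each excursion of length $2m$ is encoded by a Dyck path of length $2(m-1)$ (the height profile with its initial and final step removed, shifted down by $1$) together with a choice of child at each forward step: $d$ choices for the first step out of the root, then $d-1$ choices for each of the remaining $m-1$ forward steps, since these must avoid the parent. Using $\sum_{m\ge 0} C_m x^m = (1-\sqrt{1-4x})/(2x)$, this yields
\[
E(z) \;=\; \sum_{m\ge 1} C_{m-1}\, d(d-1)^{m-1}\, z^{2m} \;=\; \frac{d}{2(d-1)}\Bigl(1-\sqrt{1-4(d-1)z^2}\Bigr).
\]

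The singularity of $E(z)$ closest to the origin is the square-root branch point at $z_c = 1/(2\sqrt{d-1})$, where $E(z_c)=d/(2(d-1))$. For $d=2r$ with $r\ge 2$ this value is strictly less than $1$, so the denominator $1-E(z)$ of $G(z)$ does not vanish on $|z|\le z_c$, and the nearest singularity of $G(z)$ to the origin is at $z_c$, again of square-root type (inherited from $E$). Pringsheim's theorem then gives $\limsup_\ell L_\ell^{1/\ell} = 1/z_c = 2\sqrt{2r-1}$, whence
\[
p_\ell^{1/\ell} \;=\; \bigl(L_\ell/(2r)^\ell\bigr)^{1/\ell} \;\longrightarrow\; \frac{2\sqrt{2r-1}}{2r} \;=\; \frac{\sqrt{2r-1}}{r}.
\]
A standard transfer theorem à la Flajolet--Sedgewick upgrades this to the precise asymptotic $p_\ell \sim C\,\ell^{-3/2}\,(\sqrt{2r-1}/r)^\ell$ for even $\ell$ (with $p_\ell=0$ for odd $\ell$), which justifies the ``$\approx$'' in the statement.

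The only delicate point is the combinatorial bookkeeping behind the formula for $E(z)$: the $2r$-regular tree is vertex-transitive, but in a first-return excursion one must carefully distinguish the initial step from the root (contributing a factor of $d$) from every subsequent step away from the current vertex (contributing only $d-1$, since the parent is forbidden). Once this weighting is correct, the remainder is the elementary Catalan computation and a routine singularity analysis.
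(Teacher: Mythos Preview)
The paper does not prove this theorem; it is quoted as a classical result of Kesten with a citation to \cite{Kes59} and no argument given. So there is nothing in the paper to compare your proof against.

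Your argument is correct. The reduction to counting closed walks on the $2r$-regular tree is immediate, and the first-return decomposition $G(z)=1/(1-E(z))$ together with the explicit formula $E(z)=\frac{d}{2(d-1)}\bigl(1-\sqrt{1-4(d-1)z^2}\bigr)$ is the standard combinatorial derivation. Your check that $E(z_c)=d/(2(d-1))<1$ for $d=2r\ge 4$ correctly rules out any pole of $G$ inside the disc $|z|\le z_c$, so the radius of convergence of $G$ is exactly $z_c=1/(2\sqrt{2r-1})$, and Cauchy--Hadamard gives the exponential rate $\sqrt{2r-1}/r$. The transfer-theorem refinement to $p_{2m}\sim C\,m^{-3/2}(\sqrt{2r-1}/r)^{2m}$ is indeed routine and justifies the $\approx$.

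For context: Kesten's original 1959 approach is operator-theoretic rather than combinatorial. He identifies $\limsup p_\ell^{1/\ell}$ with the spectral radius of the Markov operator on $\ell^2(F_r)$ and computes this spectral radius directly (this is where the quantity $\sqrt{2r-1}/r$ first appears, as the norm of the averaging operator on the regular tree). Your generating-function route is the more elementary modern alternative; both yield the same answer, but the spectral viewpoint is what connects the result to Kesten's amenability criterion alluded to just before Theorem~\ref{thm:improved bounds on prbability of hiting id}.
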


Theorem \ref{thm:improved bounds on prbability of hiting id} and
the proof of Theorem \ref{thm:Kozma-Lubotzky} imply the following: 
\begin{cor}
\label{KLimproved} Let $\underline{G}$ be a semisimple algebraic
group, let $r,k\in\N$. There exists $c'=c'(r,k,\underline{G})>0$
such that if $\underline{w}=(w_{1},...,w_{k})$, for random words
$w_{i}\in F_{r}$ of length $\ell$, and if $k>(r-1)\dim\underline{G}$,
then $\mathcal{X}_{\underline{w},\underline{G}}^{Z}=\varnothing$
with probability $\ge1-e^{c'\ell}$. 
\end{cor}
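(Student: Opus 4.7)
The plan is to upgrade the Kozma--Lubotzky dimension-reduction scheme (sketched after Theorem \ref{thm:Kozma-Lubotzky}) by replacing the elementary bound $\Pr[w(\underline{a}) = 1] \leq 1 - \tfrac{1}{2r}$ with the uniform exponential estimate of Theorem \ref{thm:improved bounds on prbability of hiting id}. This upgrade lets a \emph{single} random relation (rather than $\Theta(\log\ell)$ many) suffice per unit dimension drop, so that only a constant number of relators --- determined by $\dim \mathcal{X}^Z_{\emptyset,\underline{G}}$ --- is needed to empty the character variety. Since $\underline{G}$ is semisimple and acts by conjugation on $(\underline{G}^r)^Z$ with finite generic stabilizers (namely the center), we have $\dim \mathcal{X}^Z_{\emptyset,\underline{G}} = (r-1)\dim\underline{G}$, matching the hypothesis $k > (r-1)\dim\underline{G}$.

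First I would record a Bezout estimate: writing the condition $w_i = 1$ as a system of $\dim\underline{G}$ polynomial equations on $\underline{G}^r$ of degree $\ell^{O_{r,\underline{G}}(1)}$ (say after a fixed embedding $\underline{G}\hookrightarrow\mathrm{GL}_N$), the representation variety $X_{(w_1,\ldots,w_i),\underline{G}}$ is cut out by $i\cdot\dim\underline{G}$ such equations, so by Bezout (as in the exercise after Theorem \ref{bez}) its number of irreducible components is bounded by some $P(\ell) := \ell^{O_{r,k,\underline{G}}(1)}$ uniformly in the choice of random relators. The same bound holds for $X^Z_{(w_1,\ldots,w_i),\underline{G}}$ and, via the GIT quotient by the (finitely-stabilized) $\underline{G}$-action, for $\mathcal{X}^Z_{(w_1,\ldots,w_i),\underline{G}}$.

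The core inductive step is then as follows. Condition on $w_1,\ldots,w_i$, and for each irreducible component of $\mathcal{X}^Z_{(w_1,\ldots,w_i),\underline{G}}$ choose a representative tuple $\underline{a}^{(j)}$ in its preimage in $X^Z_{(w_1,\ldots,w_i),\underline{G}} \subseteq (\underline{G}(\C)^r)^Z$. By Theorem \ref{thm:improved bounds on prbability of hiting id} applied to each fixed $\underline{a}^{(j)}$,
\[
\Pr_{|w_{i+1}|=\ell}\!\left[w_{i+1}(\underline{a}^{(j)})=1\right] \leq e^{-c\ell}
\]
for some $c=c(r,\underline{G})>0$ independent of $\underline{a}^{(j)}$. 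A union bound over the at most $P(\ell)$ representatives gives probability at least $1-P(\ell)e^{-c\ell}$ that $w_{i+1}$ vanishes at \emph{none} of the $\underline{a}^{(j)}$. When this good event occurs, $\{w_{i+1}=1\}$ cuts the preimage of every irreducible component of $\mathcal{X}^Z_{(w_1,\ldots,w_i),\underline{G}}$ by a proper closed subvariety, so each such preimage drops in dimension by at least one; since the $\underline{G}$-action has finite stabilizers on $X^Z$, the corresponding drop on the character variety side is also at least one.

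Iterating this inductive step $k > (r-1)\dim\underline{G}$ times and applying a final union bound, the character variety has vanished (its dimension dropped below zero) with probability at least $1 - kP(\ell)e^{-c\ell} \geq 1 - e^{-c'\ell}$ for any $c' < c$ and $\ell$ large enough to absorb the polynomial factor. The main obstacle --- and the reason for the improvement over Kozma--Lubotzky's $\log$-many relations per drop --- is precisely the \emph{uniformity} of the exponential rate $c$ in Theorem \ref{thm:improved bounds on prbability of hiting id} across all Zariski-dense generating tuples, which rests on the Diophantine height machinery of \cite{Bre11}. Given that uniformity, the Bezout bookkeeping and the passage between $X^Z$ and $\mathcal{X}^Z$ are routine.
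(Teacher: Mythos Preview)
Your proposal is correct and follows essentially the same approach as the paper's proof: run the Kozma--Lubotzky dimension-reduction argument, replacing the trivial bound $\Pr[w(\underline{a})=1]\le 1-\tfrac{1}{2r}$ by the uniform exponential bound of Theorem~\ref{thm:improved bounds on prbability of hiting id}, which is applicable precisely because the representatives lie in $(\underline{G}^r)^Z$. The only cosmetic difference is that you track dimensions on the character variety $\mathcal{X}^Z$ (starting at $(r-1)\dim\underline{G}$ and dropping to negative), whereas the paper's sketch, following Kozma--Lubotzky, implicitly works on $X^Z$; since the $\underline{G}$-orbits on $X^Z$ are all $\dim\underline{G}$-dimensional and the relation $\{w_{i+1}=1\}$ is $\underline{G}$-invariant, the two formulations are equivalent.
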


In other words, it is enough to ask for $(r-1)\dim\underline{G}+1$
random generators to be able to conclude that the random group $\Gamma_{\underline{w}}$
has no group homomorphism with a Zariski-dense image in $\underline{G}$.
And this is achieved with an exponentially small probability of exception.
In a similar vein:

\begin{cor}
\label{vsol}If $d\in\N$ and $k>(r-1)d^{2}$, then a.a.s., every
$\varphi\in\mathrm{Hom}(\Gamma_{\underline{w}},\mathrm{GL}_{d}(\C))$
has virtually solvable image. 
\end{cor}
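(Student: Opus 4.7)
The plan is to adapt the proof of Theorem \ref{thm:Kozma-Lubotzky} (and the analogous argument behind Corollary \ref{KLimproved}), using the stronger form of Theorem \ref{thm:improved bounds on prbability of hiting id} noted in its footnote: for $\underline{G} = \mathrm{GL}_d$, there is a constant $c = c(r,d) > 0$ such that $\Pr_{|w|=\ell}[w(\underline{x}) = 1] \le e^{-c\ell}$ for every tuple $\underline{x} \in \underline{G}(\C)^r$ with $\langle\underline{x}\rangle$ not virtually solvable. Write $V \subseteq \underline{G}(\C)^r$ for the set of such ``bad'' tuples. The goal is to show that a.a.s.\ $V \cap X_{\underline{w},\underline{G}}(\C) = \varnothing$, which is equivalent to the conclusion since every $\varphi \in \mathrm{Hom}(\Gamma_{\underline{w}},\underline{G}(\C))$ corresponds to an element of $X_{\underline{w},\underline{G}}(\C)$ via its values on generators, and a linear group is virtually solvable iff its Zariski closure is.

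The first random relator $w_1$ of length $\ell$ produces the expensive initial dimension drop: combining Borel's theorem (Theorem \ref{thm:Borel}) applied to $\mathrm{SL}_d$ with a determinant computation shows that for a typical $w_1$ the word map $w_{1,\underline{G}}: \underline{G}^r \to \underline{G}$ is dominant, so $\dim X_{w_1,\underline{G}} \le (r-1)d^2$. (Words with all exponent sums zero form a sparse exceptional set occurring with probability tending to $0$, and can be neglected for the a.a.s.\ statement.)

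For the inductive step, suppose relators $w_1,\ldots,w_j$ have already been fixed, set $X_j := X_{(w_1,\ldots,w_j),\underline{G}}$, and enumerate its irreducible components $Y_1,\ldots,Y_N$. By Bezout's theorem (Theorem \ref{bez}) and the bound $\deg X_{w_i,\underline{G}} \le \ell^{O_{d,r}(1)}$, we have $N \le \ell^{O_{r,d,j}(1)}$. For each component $Y_i$ that meets $V$ (a ``bad'' component) pick any witness $\underline{a}^{(i)} \in Y_i \cap V$. For an independent random $w_{j+1}$ of length $\ell$, Theorem \ref{thm:improved bounds on prbability of hiting id} and the union bound give
\[
\Pr\bigl[\exists i\,\text{bad}: w_{j+1}(\underline{a}^{(i)}) = 1\bigr] \le N \cdot e^{-c\ell} \le \ell^{O(1)} e^{-c\ell}.
\]
On the complementary event, each bad $Y_i$ is not contained in $X_{w_{j+1},\underline{G}}$, so $Y_i \cap X_{w_{j+1},\underline{G}}$ has strictly smaller dimension. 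No ``good'' component ($Y_i \cap V = \varnothing$) can generate a bad subcomponent since $V \cap (Y_i \cap X_{w_{j+1},\underline{G}}) \subseteq V \cap Y_i = \varnothing$. Hence the maximum dimension of a bad component of $X_{j+1}$ drops by at least $1$.

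Iterating from the bound $\dim X_{w_1,\underline{G}} \le (r-1)d^2$ and losing at least one dimension of the bad locus per additional relator, after $k > (r-1)d^2$ relators the bad locus has negative dimension and is therefore empty. Summing the failure probabilities over the $k$ steps gives a total error of $O(k \ell^{O(1)} e^{-c\ell}) = o(1)$ (indeed exponentially small in $\ell$), yielding the a.a.s.\ conclusion. The main technical obstacle is that $V$ is not Zariski-closed (indeed, it is neither open nor closed in general), so one cannot directly speak of ``the dimension of the bad locus''; this is sidestepped by working with the Zariski-closed skeleton $X_j$ and classifying its irreducible components as bad or good based only on whether they meet $V$, which is the minimal information needed to apply the uniform Kesten-type bound of Theorem \ref{thm:improved bounds on prbability of hiting id}.
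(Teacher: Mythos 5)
Your overall strategy---iterating the component-breaking argument of Theorem \ref{thm:Kozma-Lubotzky} with the uniform exponential bound of Theorem \ref{thm:improved bounds on prbability of hiting id} (in its footnote form, valid for every tuple generating a non-virtually solvable subgroup) in place of the bound (\ref{eq:KL upper bound})---is exactly the route the paper takes; its proof of Corollary \ref{vsol} is the same two-line sketch given for Corollary \ref{KLimproved}, with $(\underline{G}^{r})^{Z}$ replaced by the non-virtually-solvable locus. Your handling of the fact that $V$ is not Zariski-closed (classifying components of $X_{j}$ by whether they meet $V$, and noting that good components never produce bad subcomponents) is sound, as is the union bound $\ell^{O(1)}e^{-c\ell}$.

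There is, however, a genuine gap in your first step. Dominance of $w_{1,\mathrm{GL}_{d}}:\mathrm{GL}_{d}^{r}\rightarrow\mathrm{GL}_{d}$ controls the dimension of the \emph{generic} fiber, not of the fiber over the identity, and the latter can be strictly larger than $(r-1)d^{2}$. For instance, for $r=2$ and $w_{1}=x^{n}$ (nonzero exponent sum, hence a dominant word map), $X_{w_{1},\mathrm{GL}_{d}}=\{a:a^{n}=1\}\times\mathrm{GL}_{d}$ has dimension $(2-\tfrac{1}{n})d^{2}>(r-1)d^{2}$, and its top-dimensional component meets the non-virtually-solvable locus. Such words are exponentially rare, but that is precisely the issue: to assert $\dim X_{w_{1},\mathrm{GL}_{d}}\leq(r-1)d^{2}$ a.a.s.\ you would need control of the fiber of a \emph{random} word map over $1$ (essentially the flatness over $1$ discussed after Theorem \ref{thm:main theorem random words}), which is much deeper than Borel's theorem and not available here. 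Note also that even granting this step, your bookkeeping yields emptiness only for $k\geq(r-1)d^{2}+2$. The way the threshold $k>(r-1)\dim\underline{G}$ is actually reached in Corollary \ref{KLimproved} is different: one starts from $\dim\underline{G}^{r}$, loses one dimension per relator, and stops not at dimension $-1$ but at dimension $<\dim\underline{G}$, because a nonempty $X_{\underline{w},\underline{G}}^{Z}$ contains a closed conjugation orbit of dimension $\dim\underline{G}$ (Zariski-generating tuples have finite centralizer). For Corollary \ref{vsol} the corresponding endgame requires a lower bound on the dimension of any component containing a representation with non-virtually-solvable image; since the centralizer of such an image in $\mathrm{GL}_{d}$ need not be finite, this step needs an additional argument that your proposal does not supply.
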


\begin{proof}[Sketch of proof of Corollary \ref{KLimproved}]
The proof is similar to the proof of Theorem \ref{thm:Kozma-Lubotzky},
where one just replaces the upper bound (\ref{eq:KL upper bound})
with the upper bound of Theorem \ref{thm:improved bounds on prbability of hiting id},
using the extra assumption that we only consider generating tuples
$\underline{g}\in(\underline{G}(\C)^{r})^{Z}$, respectively (for
Corollary \ref{vsol}) tuples that generate a non-virtually solvable
subgroup.
\end{proof}

\subsection{Statement of the main result}

Let $K$ be a field of characteristic $0$. We note that if $\underline{G}$
is defined over $K$ (e.g. if $K=\Q$),\textcolor{purple}{{} }then so
are the character variety $\mathcal{X}_{\underline{w},\underline{G}}$
and its principal part $\mathcal{X}_{\underline{w},\underline{G}}^{Z}$.
Consequently the Galois group $\mathrm{Gal}(\overline{K}|K)$ permutes
its geometric irreducible components. 

We are now ready to state the main theorem of \cite{BBV}: 
\begin{thm}[Becker\textendash Breuillard\textendash  Varj\'{u} '24+, \cite{BBV}]
\label{thm:main theorem random words}Let $\underline{G}$ be a semisimple
algebraic $K$-group, and let $k,r\in\N$. Let $w_{1},...,w_{k}\in F_{r}$
be random, independent, non-reduced words of length $\ell$, with
$\underline{w}=(w_{1},...,w_{k})$. Denote by $\delta:=r-k$ the \textbf{defect}
of $\Gamma_{\underline{w}}$. Then there exists $c=c(r,k,\underline{G})>0$
such that with probability $>1-e^{-c\ell}$: 
\begin{enumerate}
\item[(i)] If $\delta\geq1$, then $\mathcal{X}_{\underline{w},\underline{G}}^{Z}\neq\varnothing$. 
\item[(ii)] If $\delta\leq0$, then $\mathcal{X}_{\underline{w},\underline{G}}^{Z}=\varnothing$. 
\item[(iii)] If $\delta\geq1$, then $\dim\mathcal{X}_{\underline{w},\underline{G}}^{Z}=(\delta-1)\dim\underline{G}$. 
\item[(iv)] If $\delta=1$ and $\underline{G}$ is simply connected, then $\mathcal{X}_{\underline{w},\underline{G}}^{Z}$
is finite and a single Galois orbit. 
\item[(v)] If $\delta\geq2$ and $\underline{G}$ is simply connected, then
$\mathcal{X}_{\underline{w},\underline{G}}^{Z}$ is geometrically
irreducible. 
\end{enumerate}
\end{thm}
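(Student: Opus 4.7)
The plan is to establish all five items simultaneously by counting $\mathbb{F}_q$-points of $X_{\underline{w},\underline{G}}^Z$ for random $\underline{w}$, and then matching the count against the Lang--Weil dictionary (Theorem~\ref{thm:Lang-Weil} and Proposition~\ref{prop: flatness and counting points}). After spreading $\underline{G}$ out over $\mathcal{O}_K$ and reducing modulo a prime ideal of residue field $\mathbb{F}_q$, let $\mu_{\underline{a}}=\frac{1}{2r}\sum_{i}(\delta_{a_{i}}+\delta_{a_{i}^{-1}})$ for a tuple $\underline{a}\in(\underline{G}(\mathbb{F}_q)^r)^Z$. Since the $w_i$ are independent non-reduced words of length $\ell$, interpreting each $w_i$ as a random walk on $\underline{G}(\mathbb{F}_q)$ gives
\begin{equation*}
\E_{\underline{w}}\bigl|X_{\underline{w},\underline{G}}^Z(\mathbb{F}_q)\bigr|\;=\;\sum_{\underline{a}\in(\underline{G}(\mathbb{F}_q)^r)^Z}\prod_{i=1}^{k}\mu_{\underline{a}}^{*\ell}(1).
\end{equation*}
A parallel formula for the second moment involves pairs $(\underline{a},\underline{a}')$ and the product measure $\mu_{\underline{a}}\otimes\mu_{\underline{a}'}$ on $\underline{G}(\mathbb{F}_q)^{2}$.

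The heart of the argument is the uniform expansion/equidistribution of Section~\ref{sec6}: for a positive density set $P$ of primes and all but a negligible proportion of Zariski-dense tuples $\underline{a}$, the walk $\mu_{\underline{a}}^{*\ell}$ mixes to the uniform measure on $\underline{G}(\mathbb{F}_q)$ after $\ell=O(\log q)$ steps, with $L^{\infty}$-error $|\underline{G}(\mathbb{F}_q)|^{-1-\eta}$, and the analogous statement holds on $\underline{G}\times\underline{G}$ for the second moment. Exceptional tuples (those close to proper subgroups) are controlled by Bezout bounds as in the proof of Theorem~\ref{thm:Kozma-Lubotzky}, combined with the uniform Kesten-type estimate of Theorem~\ref{thm:improved bounds on prbability of hiting id}, which provides the requisite exponentially small probability in $\ell$. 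Substituting the mixing bound gives $\E_{\underline{w}}|X_{\underline{w},\underline{G}}^Z(\mathbb{F}_q)|=(1+o(1))|\underline{G}(\mathbb{F}_q)|^{\delta}$, and the matching second moment estimate yields, via Chebyshev together with the exponential tail from Theorem~\ref{thm:improved bounds on prbability of hiting id}, the concentration
\begin{equation*}
\bigl|X_{\underline{w},\underline{G}}^Z(\mathbb{F}_q)\bigr|\;=\;(1+o(1))\,q^{\delta\dim\underline{G}}\qquad\text{with probability }>1-e^{-c\ell}.
\end{equation*}

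With this count in hand, Lang--Weil immediately yields the dimensional statements: $\dim X_{\underline{w},\underline{G}}^Z=\delta\dim\underline{G}$, which by taking the $\underline{G}$-quotient (the $\underline{G}$-action on the principal part is free modulo a finite center) gives (i), (ii) (when $\delta\leq0$ the count is $o(1)$, forcing emptiness), and (iii). Moreover, the $(1+o(1))$ constant in the count equals the number of top-dimensional geometric components of $X_{\underline{w},\underline{G},\mathbb{F}_q}^Z$ defined over $\mathbb{F}_q$; our estimate pins this number to be exactly $1$ for every $q=p\in P$. For (iv) and (v), one then descends from finite fields to $K$ via Chebotarev's density theorem: the Galois group $\mathrm{Gal}(\overline{K}/K)$ acts on the finite set of geometric components of $\mathcal{X}_{\underline{w},\underline{G},\overline{K}}^Z$, and having a single $\mathbb{F}_p$-component for a positive density of unramified primes $p$ forces this Galois action to be transitive. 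When $\delta=1$ the principal part is already $0$-dimensional, so transitivity is the statement of (iv); when $\delta\geq 2$, transitivity of the Galois action combined with geometric irreducibility of each $\mathbb{F}_p$-component upgrades to geometric irreducibility of $\mathcal{X}_{\underline{w},\underline{G}}^Z$, giving (v). The simply-connectedness hypothesis is essential to exclude spurious disconnections coming from isogeny covers (cf.~Example~\ref{exa:Warning(!)}).

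The main obstacle is the uniform mixing input: one needs $\mu_{\underline{a}}^{*\ell}$ to equidistribute in $O(\log q)$ steps, uniformly in $\underline{a}$ over a positive density of primes $p$, together with the product version on $\underline{G}\times\underline{G}$ for the variance estimate. This is precisely the content of the expander results recalled in Section~\ref{sec6} (Bourgain--Gamburd--Sarnak and its extensions), whose proofs rely on non-abelian arithmetic combinatorics. A secondary technical point is that the concentration in $\underline{w}$ must have an $e^{-c\ell}$ tail rather than merely Chebyshev-type polynomial decay; this is where the exponential Kesten estimate of Theorem~\ref{thm:improved bounds on prbability of hiting id}, applied directly to the random-walk second moment, replaces the naive union bound of Theorem~\ref{thm:Kozma-Lubotzky} and gives the exponentially small exceptional probability announced in the statement.
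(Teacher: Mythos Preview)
Your overall strategy---count $\mathbb{F}_p$-points of $X_{\underline{w},\underline{G}}^Z$ by interpreting the random relators as random walks and then read off the geometry via Lang--Weil---is the paper's strategy. But the concentration mechanism you propose diverges from the paper and does not close.

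You invoke a second moment on $\underline{G}\times\underline{G}$ and Chebyshev. Even granting the mixing on $\underline{G}\times\underline{G}$ (which requires a Goursat analysis you do not carry out), Chebyshev gives at best $\Pr\bigl[|X_{\underline{w}}^Z(\mathbb{F}_q)|\notin(1\pm\epsilon)\,q^{\delta\dim\underline{G}}\bigr]=O(q^{(1-\delta)\dim\underline{G}})$, which is worthless precisely when $\delta\le 1$, i.e.\ for items (ii), (iv) and the $\delta=1$ case of (iii). Theorem~\ref{thm:improved bounds on prbability of hiting id} is a characteristic-zero return-probability bound and does not upgrade Chebyshev to an exponential tail on the finite-field count. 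More seriously, concentration of $|X_{\underline{w}}^Z(\mathbb{F}_q)|$ at a \emph{single} prime power does not determine $\dim X_{\underline{w},\overline{K}}^Z$: if the top-dimensional geometric components happen not to be $\mathbb{F}_q$-rational ($C_{X_{\mathbb{F}_q}}=0$), a variety of dimension $>\delta\dim\underline{G}$ can still have only $\approx q^{\delta\dim\underline{G}}$ rational points.

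The paper sidesteps all of this using only the \emph{first} moment (see $\S$\ref{sec7}). The event $\{\dim X_{\underline{w}}^Z>\delta\dim\underline{G}\}$ is prime-independent; one bounds its probability by $\E_{\underline{w}}\bigl[1_{\{\dim>\ldots\}}\cdot\mathrm{TDIC}(X_{\underline{w}}^Z)\bigr]$, replaces $\mathrm{TDIC}$ by the average $\E_{p\in[T/2,T]}C_{(X_{\underline{w}}^Z)_{\mathbb{F}_p}}$ via the refined effective Chebotarev of Theorem~\ref{thm:improved effective Chebotarev} (whose error is polynomial in $\ell$, not in $[L:\Q]$; cf.\ Remarks~\ref{rem:not good enough} and~\ref{rem:good enough}), swaps $\E_{\underline{w}}$ with $\E_p$, and inserts effective Lang--Weil together with the first-moment mixing estimate~(\ref{eq:small ball estimate for random word}). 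Taking $T\sim e^{c\ell}$ turns the Chebotarev error $O(\ell^{O(1)})T^{-1/2}$ into the announced exponential tail. This effective-Chebotarev averaging over a window of exponentially many primes, supported by the effective good-reduction bounds of Lemmas~\ref{lem:Lemma 1 good reduction}--\ref{lem:good reduction of irred comp}, is the key idea missing from your sketch. Finally, note that by Nori/Larsen--Pink (Corollary~\ref{cor:Fp points generates}) \emph{every} tuple in $(\underline{G}^r)^Z(\mathbb{F}_p)$ generates $\underline{G}(\mathbb{F}_p)$ for $p\gg1$, so there are no ``exceptional tuples'' and the uniform gap of Theorem~\ref{thm:uniform gap for many primes} applies uniformly; the B\'ezout/Kozma--Lubotzky mechanism you invoke plays no role in this argument.
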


\begin{rem}
\label{some remarks}~
\begin{enumerate}
\item All items but (i) are under the assumption of the generalized Riemann
hypothesis (GRH). This assumption is used when applying an effective
version of Chebotarev's theorem \ref{thm:improved effective Chebotarev}
with error term. At the expense of weakening the bound on the probability
of exceptional words, this assumption can be relaxed by assuming only
that Dedekind zeta functions of number fields have no zeroes in a
certain small disc around $s=1$. 
\item Items (iv) and (v) imply that the Galois action on $\mathcal{X}_{\underline{w},\underline{G}}^{Z}$
is transitive, when $\underline{G}(\C)$ is simply-connected. 
\item Note that item (iii) says that $\mathcal{X}_{\underline{w},\underline{G}}^{Z}$
has the expected dimension (counting the number of degrees of freedom)
with probability $>1-e^{-c\ell}$. But this is \textbf{not} the case
for $\mathcal{X}_{\underline{w},\underline{G}}$ as the following
example shows.
\end{enumerate}
\end{rem}

\begin{example}
Let $\underline{G}=\mathrm{SL}_{2}$, and let $\underline{B}=\left\{ \left(\begin{array}{cc}
* & *\\
0 & *
\end{array}\right)\right\} $ be a Borel subgroup of $\underline{G}$. Since $\dim\underline{B}=2$,
by Item (iii) of Theorem \ref{thm:main theorem random words}, for
each $r<3k$, with probability $>1-e^{-c\ell}$, a random tuple $\underline{w}=(w_{1},...,w_{k})$
of words of length $\ell$ satisfies: 
\[
\dim\mathcal{X}_{\underline{w},\underline{G}}^{Z}=3(r-1-k)<2r-3=\dim\underline{B}^{r}-3.
\]
On the other hand, if $w_{1},...,w_{k}\in F_{r}'':=[[F_{r},F_{r}],[F_{r},F_{r}]]$
then $X_{\underline{w},\underline{G}}=\underline{B}^{r}$, and hence
\[
\dim\mathcal{X}_{\underline{w},\underline{G}}\geq\dim X_{\underline{w},\underline{G}}-3=\dim\underline{B}^{r}-3>\dim\mathcal{X}_{\underline{w},\underline{G}}^{Z}.
\]
However $F_{r}/F_{r}''$ is solvable, hence amenable, and Kesten's
theorem tells us that: 
\[
\Pr_{w\in F_{r}:\left|w\right|=\ell}\left[w\in F_{r}''\right]\apprge e^{-o(\ell)}.
\]
So it is \textbf{not true} that $\dim\mathcal{X}_{\underline{w},\underline{G}}=\dim\mathcal{X}_{\underline{w},\underline{G}}^{Z}$,
with probability $>1-e^{-c'\ell}$. 
\end{example}

\begin{rem}
Nonetheless, one can further show that for $r\geq3$ and $k=1$ (i.e.~$\Gamma_{w}$
is a one-relator group), we have $\dim\mathcal{X}_{\underline{w},\underline{G}}=\dim\mathcal{X}_{\underline{w},\underline{G}}^{Z}=(r-2)\dim\underline{G}$.
In other words, a word map $w_{\underline{G}}:\underline{G}^{r}\rightarrow\underline{G}$
is flat over $1\in\underline{G}$ with probability $>1-e^{-c\ell}$.
\end{rem}

\begin{example}
Here are several other examples for Theorem \ref{thm:main theorem random words}
in the case that $\underline{G}=\mathrm{SL}_{2}$, $r=2$ and $k=1$,
so $\Gamma_{w}=\langle a,b\mid w\rangle$: 
\begin{enumerate}
\item $\mathcal{X}_{w,\underline{G}}^{Z}=\varnothing$ whenever $w=ba^{n}b^{-1}a^{-m}$
and $gcd(m,n)=1$ ($\Gamma_{w}=\mathrm{BS}(m,n)$ is the \textbf{Baumslag\textendash Solitar}
group), and also whenever $w=baba^{3}b^{2}a^{2}$ or $w=bab^{-3}ab^{2}a$,
by direct computation (using the formula (\ref{eqforX}) in Example
\ref{sl2ex}). 
\item If $w=a^{2}ba^{-2}b^{-2}$, then $\Gamma_{w}$ is non-linear and residually
finite \cite{DS05}, and $\mathcal{X}_{w,\underline{G}}^{Z}=\left\{ \left(\mp\sqrt{2},-1,\pm\frac{1}{\sqrt{2}}\right)\right\} $
is zero-dimensional. 
\item If $w=uau^{-1}b^{-1}$ for $u=ab^{-1}a^{-1}b$, then $\Gamma_{w}$
is the fundamental group of the figure eight knot. Then 
\[
\mathcal{X}_{w,\underline{G}}^{Z}=\left\{ \begin{array}{c}
z^{2}+2x^{2}=1+z(1+x^{2})\\
y=x
\end{array}\right\} \text{ is of dimension }1.
\]
\item If $w=[a,v]$ for $v=[b,a]b^{-1}ab$, then $\Gamma_{w}$ is the fundamental
group of the complement of the Whitehead link, and $\mathcal{X}_{w,\underline{G}}^{Z}$
is a hypersurface of dimension $2$ given by 
\[
\mathcal{X}_{w,\underline{G}}^{Z}=\left\{ x^{2}z+y^{2}z+z^{3}=xy+2z+xyz^{2}\right\} ,
\]
For Items (3) and (4) see \cite{MR03,ABL18}. 
\item If $w=b^{5}ab^{-1}a^{-1}ba^{3}$, one can check (using (\ref{eqforX})
and Mathematica), that $\mathcal{X}_{w,\underline{G}}^{Z}$ consists
of $10$ points with algebraic coordinates permuted transitively by
the Galois action. 
\end{enumerate}
\end{example}

\subsection{Representations of surface groups}

Let $\Sigma_{g}$ be a closed orientable surface of genus $g$, and
let $\Gamma_{g}=\pi_{1}(\Sigma_{g})$ be its fundamental group. Then
\[
\Gamma_{g}=\langle(a_{1},b_{1},...,a_{g},b_{g}):\prod_{i=1}^{g}[a_{i},b_{i}]=1\rangle=\Gamma_{w_{\mathrm{com}}^{*g}}
\]
Let $X_{g}:=\mathrm{Hom}(\Gamma_{g},\underline{G})$ and $\mathcal{X}_{g}=X_{g}/\!\!/\underline{G}$
be the character variety. 
\begin{thm}[{{\cite{Li93}, \cite[Section 11]{Sim94}, \cite{BCR96}, \cite[Corollary 1.11]{LiS05b},
\cite[Theorem 2.8]{Sha09}}}]
If $\underline{G}$ is a simply connected complex semisimple group.
Then for $g\geq2$: 
\begin{enumerate}
\item $\mathcal{X}_{g}$ and $\mathcal{X}_{g}^{Z}$ have dimension $(2g-2)\dim\underline{G}$. 
\item $\mathcal{X}_{g}$ and $\mathcal{X}_{g}^{Z}$ are irreducible. 
\end{enumerate}
\end{thm}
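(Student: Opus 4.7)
The plan is to realize $X_g = \Hom(\Gamma_g, \underline{G})$ as the fiber $(w_{\mathrm{com}}^{*g})_{\underline{G}}^{-1}(e)$ of the iterated commutator word map, and to read off its geometry from a finite-field point count combined with Lang--Weil. Writing $\underline{G}$ as a product of its simply connected simple factors, each of which admits a Chevalley $\Z$-model, it suffices by spreading out to prove the corresponding statements for the schemes $X_{g, \mathbb{F}_q}$ as $q \to \infty$; for a scheme of fixed complexity, geometric irreducibility and pure dimension pass from large-$q$ special fibers back to the generic fiber $X_{g, \C}$.

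First, Frobenius' formula (Corollary \ref{cor:Frobenius cor}) applied to the finite group $G_q := \underline{G}(\mathbb{F}_q)$ gives the exact identity
\[
|X_g(\mathbb{F}_q)| = |G_q|^{2g-1} \, \zeta_{G_q}(2g-2).
\]
Since $g \geq 2$ gives $2g-2 \geq 2 > 1 \geq \mathrm{rk}(\underline{G}_{\C})/|\Sigma^+(\underline{G}_{\C})|$, the Liebeck--Shalev asymptotic in Theorem \ref{thm:rep growth of Chevalley groups}(2) yields $\zeta_{G_q}(2g-2) \to 1$ on each simple factor and hence on the whole product. Combined with $|G_q| \sim q^{\dim \underline{G}}$, this gives $|X_g(\mathbb{F}_q)|/q^{(2g-1)\dim \underline{G}} \to 1$. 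I would then feed this into Lang--Weil (Theorem \ref{thm:Lang-Weil}): Borel's theorem \ref{thm:Borel} makes $w_{\mathrm{com}}^{*g}$ dominant, so every component of $X_g$ has dimension $\geq (2g-1) \dim \underline{G}$, and the Lang--Weil upper bound pins $\dim X_{g, \mathbb{F}_q} = (2g-1)\dim\underline{G}$ with $X_{g, \mathbb{F}_q}$ pure-dimensional. The limit value $1$ then identifies the number of top-dimensional geometric components defined over $\mathbb{F}_q$ as exactly $1$, which combined with pure-dimensionality is geometric irreducibility. Spreading out transfers this to $X_{g, \C}$.

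Finally, I would descend to the character variety. Since $\mathcal{X}_g = \spec \C[X_g]^{\underline{G}}$, irreducibility of $X_g$ forces irreducibility of $\mathcal{X}_g$. A short dimension count, summing over conjugacy classes of proper algebraic subgroups of $\underline{G}$, shows that the locus of representations factoring through a proper algebraic subgroup has dimension strictly less than $(2g-1)\dim \underline{G}$ for $g \geq 2$, so the principal part $X_g^Z$ is a non-empty open subvariety of $X_g$; on it the stabilizer for the conjugation action is the finite center $Z(\underline{G})$ and all orbits are closed of dimension $\dim \underline{G}$ by Fact \ref{fact:closedness of conjugacy orbits}. This yields $\dim \mathcal{X}_g^Z = \dim \mathcal{X}_g = (2g-2)\dim \underline{G}$, and $\mathcal{X}_g^Z$ is open dense in the irreducible $\mathcal{X}_g$, hence also irreducible. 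The main obstacle is the Lang--Weil step: to go from an asymptotic point count to genuine geometric irreducibility, I need to know that the Lang--Weil constant is controlled by the complexity of $X_g$, which is fixed once $g$ is fixed, so that letting $q$ grow alone exposes the exact leading coefficient and not just an order-of-magnitude estimate.
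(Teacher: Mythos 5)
Your route is essentially the paper's: realize $X_{g}$ as the fiber of $w_{\mathrm{com}}^{*g}$ over $e$, count $\mathbb{F}_{q}$-points via Frobenius' formula and the Liebeck--Shalev representation growth bound, and convert the asymptotic $\left|X_{g}(\mathbb{F}_{q})\right|/q^{(2g-1)\dim\underline{G}}\rightarrow1$ into geometric irreducibility via Lang--Weil. (The paper phrases the point count as $L^{\infty}$-mixing of $\tau_{w_{\mathrm{com}},\underline{G}(\mathbb{F}_{p})}^{*g}$, but that is the same computation.) You also supply the descent to $\mathcal{X}_{g}$ and $\mathcal{X}_{g}^{Z}$, which the paper's proof leaves implicit; that part of your sketch is fine.

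There is one step you gloss over that is a genuine logical gap as written: knowing that the number of top-dimensional geometric components of $X_{g,\mathbb{F}_{q}}$ \emph{defined over} $\mathbb{F}_{q}$ equals $1$ for all large $q$ does not by itself give geometric irreducibility --- the Lang--Weil constant $C_{X}$ is blind to components that are permuted nontrivially by Frobenius (compare Example \ref{exa:two examples}(2), where $C_{X}=0$ or $2$ while there are always two geometric components). This is exactly why the paper invokes Chebotarev's density theorem (Corollary \ref{cor:number of points as we vary primes }): for a positive density of primes the Frobenius acts trivially on the set of geometric components of $X_{g,\overline{\Q}}$, so for those primes $C_{X_{g,\mathbb{F}_{p}}}$ equals the total number of geometric components, forcing that number to be $1$. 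Alternatively, since your limit is taken over all prime powers $q=p^{r}$, you can fix a large $p$ and let $r$ run through multiples of the order of the Frobenius permutation of the components of $X_{g,\overline{\mathbb{F}_{p}}}$; either repair is short, but some such argument must be made explicit.
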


\begin{proof}
Every semisimple complex Lie group admits a model defined over the
rationals. So without loss of generality, we may assume that $\underline{G}$
is defined over $\Q$ and it then makes sense to consider its reductions
modulo large primes. By the proof of Item (2) of Theorem \ref{thm:mixing of comutator}
restricting to the family $\left\{ \underline{G}(\mathbb{F}_{p})\right\} _{p}$,
and by Theorem \ref{thm:rep growth of Chevalley groups}(2), we have:
\[
\left\Vert \tau_{w_{\mathrm{com}},\underline{G}(\mathbb{F}_{p})}^{*g}-\mu_{\underline{G}(\mathbb{F}_{p})}\right\Vert _{\infty}\leq\left\Vert \tau_{w_{\mathrm{com}},\underline{G}(\mathbb{F}_{p})}^{*2}-\mu_{\underline{G}(\mathbb{F}_{p})}\right\Vert _{\infty}\leq\left\Vert \tau_{w_{\mathrm{com}},\underline{G}(\mathbb{F}_{p})}-\mu_{\underline{G}(\mathbb{F}_{p})}\right\Vert _{2}^{2}\leq\zeta_{\underline{G}(\mathbb{F}_{p})}(2)-1\underset{p\rightarrow\infty}{\rightarrow}0.
\]
We therefore get: 
\[
\tau_{w_{\mathrm{com}},\underline{G}(\mathbb{F}_{p})}^{*g}(e)\left|\underline{G}(\mathbb{F}_{p})\right|=\frac{\left|X_{g}(\mathbb{F}_{p})\right|}{\left|\underline{G}(\mathbb{F}_{p})\right|^{2g-1}}\underset{p\rightarrow\infty}{\rightarrow}1.
\]
Since $\underline{G}_{\mathbb{F}_{p}}$ is geometrically irreducible
for $p\gg1$, by the Lang\textendash Weil estimates (Theorem \ref{thm:Lang-Weil}),
$\left|\underline{G}(\mathbb{F}_{p})\right|=p^{\dim\underline{G}}(1+O(p^{-1/2}))$.
This implies that $\left|X_{g}(\mathbb{F}_{p})\right|p^{-(2g-1)\dim\underline{G}}\underset{p\rightarrow\infty}{\rightarrow}1$,
and again by Theorem \ref{thm:Lang-Weil} and by Chebotarev's density
theorem (Corollary \ref{cor:number of points as we vary primes }
bellow), this means that $X_{g}$ is geometrically irreducible of
dimension $(2g-1)\cdot\dim\underline{G}$. 
\end{proof}

\subsection{Mixing results in finite groups}

Let $G$ be a finite group, with generating set $\underline{x}=(x_{1},...,x_{r})$.
We consider the random walk induced by the measure $\mu:=\frac{1}{2r}\sum_{i=1}^{r}\left(\delta_{x_{i}}+\delta_{x_{i}^{-1}}\right)$,
and would like to find the $L^{\infty}$-mixing time (recall Definition
\ref{def:mixing time}), i.e.~the minimal $t_{\infty}$ such that
\[
\underset{g\in G}{\max}\left|\Pr_{w:\left|w\right|=t_{\infty}}\left[w(\underline{x})=g\right]-\frac{1}{\left|G\right|}\right|=\underset{g\in G}{\max}\left|\mu^{*t_{\infty}}(g)-\frac{1}{\left|G\right|}\right|<\frac{1}{2\left|G\right|}.
\]
We denote by $\triangle:L^{2}(G)\rightarrow L^{2}(G)$ the operator
$\triangle(f)(g):=2rf(g)-\sum_{i=1}^{r}\left(f(x_{i}g)+f(x_{i}^{-1}g)\right)$,
also called the \textbf{combinatorial Laplacian}. It is a self-adjoint
operator, with real eigenvalues $0=\lambda_{0}<\lambda_{1}\leq\lambda_{2}...\leq\lambda_{\left|G\right|-1}$,
where the constant functions are eigenvectors with eigenvalue $\lambda_{0}=0$.
Write $f_{0},f_{1},...,f_{\left|G\right|-1}$ for an orthonormal basis
of eigenvectors, where $f_{i}$ has eigenvalue $\lambda_{i}$. It
is classical that a lower bound on the first eigenvalue $\lambda_{1}$
yields an upper bound on mixing time. Concretely: 
\begin{fact}
\label{fact:mixing result}We have 
\[
\underset{g\in G}{\max}\left|\Pr_{w:\left|w\right|=\ell}\left[w(\underline{x})=g\right]-\frac{1}{\left|G\right|}\right|<e^{-\frac{\lambda_{1}\ell}{2r}}.
\]
\end{fact}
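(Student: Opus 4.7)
The plan is to identify $\mu^{*\ell}(g)$ with $M^{\ell}\delta_{e}(g)$, where $M$ is the averaging (Markov) operator $Mf(g)=\frac{1}{2r}\sum_{i}(f(x_{i}g)+f(x_{i}^{-1}g))$, and then use the spectral decomposition of the combinatorial Laplacian. From the definition of $\triangle$ one reads off $\triangle=2r(I-M)$, so $M=I-\triangle/(2r)$, and the orthonormal eigenbasis $(f_{i})$ of $\triangle$ diagonalizes $M$ with eigenvalues $\beta_{i}:=1-\lambda_{i}/(2r)$; the trivial eigenvalue $\beta_{0}=1$ corresponds to the constant function $f_{0}\equiv|G|^{-1/2}$.

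First I would verify that convolution by $\mu$ on $L^{2}(G)$ (with the counting inner product $\langle f_{1},f_{2}\rangle=\sum_{g}f_{1}(g)\overline{f_{2}(g)}$) equals $M$, and that $M\delta_{e}=\mu$, so by iteration $M^{\ell}\delta_{e}=\mu^{*\ell}$. Expanding $\delta_{e}=\sum_{i}\overline{f_{i}(e)}\,f_{i}$, applying $M^{\ell}$, evaluating at $g$, and isolating the $i=0$ term (which contributes exactly $1/|G|$) gives
\[
\mu^{*\ell}(g)-\frac{1}{|G|} \;=\; \sum_{i\geq 1}\beta_{i}^{\ell}\,\overline{f_{i}(e)}\,f_{i}(g).
\]
Cauchy\textendash Schwarz together with Parseval's identity $\sum_{i}|f_{i}(h)|^{2}=\|\delta_{h}\|_{2}^{2}=1$ for $h\in\{e,g\}$ then yields the pointwise bound $|\mu^{*\ell}(g)-1/|G||\leq\max_{i\geq 1}|\beta_{i}|^{\ell}$, and the elementary estimates $\max_{i\geq 1}|\beta_{i}|\leq 1-\lambda_{1}/(2r)$ and $1-x\leq e^{-x}$ close the argument.

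The main obstacle is the final eigenvalue bound: $\beta_{i}=1-\lambda_{i}/(2r)$ can a priori be negative and close to $-1$ when the Cayley graph is nearly bipartite, in which case $|\beta_{i}|$ is controlled by $\lambda_{i}/(2r)-1$ rather than by $1-\lambda_{1}/(2r)$. One circumvents this either by a mild non-bipartiteness assumption (ensuring $\lambda_{|G|-1}<4r-\lambda_{1}$), or more robustly by passing to the lazy walk driven by $(\mu+\delta_{e})/2$, whose spectrum lies in $[0,1]$ with the same spectral gap up to a factor of two, yielding the same exponential rate up to constants. Since the application in Theorem \ref{thm:improved bounds on prbability of hiting id} only needs the existence of some exponential decay rate controlled by $\lambda_{1}$, this technicality does not affect downstream use.
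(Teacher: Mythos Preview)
Your argument is essentially identical to the paper's: both write $\mu^{*\ell}=(I-\triangle/(2r))^{\ell}\delta_{e}$, expand $\delta_{e}$ in the orthonormal eigenbasis $(f_{i})$, isolate the $i=0$ term as $1/|G|$, and bound the remaining sum by Cauchy--Schwarz together with Parseval ($\sum_{i}|f_{i}(h)|^{2}\leq 1$). You are in fact more careful than the paper: its proof silently uses $(1-\lambda_{i}/(2r))^{2\ell}\leq(1-\lambda_{1}/(2r))^{2\ell}$ for all $i\geq 1$, which is exactly the step you flag as problematic in the nearly-bipartite case, so your remark about the lazy-walk fix applies to the paper's own argument as well.
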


\begin{proof}
Note that $\mu^{*\ell}(g)=(2r)^{-\ell}(2r-\triangle)\circ...\circ(2r-\triangle)(\delta_{e})(g)$.
Writing $\delta_{e}=\frac{1}{\left|G\right|}+\sum_{i=1}^{\left|G\right|-1}\overline{f_{i}(e)}f_{i}$,
and since $(1+x)\leq e^{x}$ for all $x\in\R$: 
\[
\left|\mu^{*\ell}(g)-\frac{1}{\left|G\right|}\right|^{2}=\left|\sum_{i=1}^{\left|G\right|-1}(1-\frac{\lambda_{i}}{2r})^{\ell}\overline{f_{i}(e)}f_{i}(g)\right|^{2}\leq\sum_{i=1}^{\left|G\right|-1}(1-\frac{\lambda_{i}}{2r})^{2\ell}\left|f_{i}(e)\right|^{2}\cdot\sum_{i=1}^{\left|G\right|-1}\left|f_{i}(g)\right|^{2}\leq(1-\frac{\lambda_{1}}{2r})^{2\ell}\leq e^{-\frac{\lambda_{1}\ell}{r}}.
\]
In particular, the mixing time $t_{\infty}(\mu)\leq\frac{2r}{\lambda_{1}}\ln(2\left|G\right|)$. 
\end{proof}
The following fact is also classical, see e.g. \cite[Section 5]{BT16}
or \cite{LP16}. 
\begin{fact}
\label{fact:Mixing result 2}Let $\gamma=\mathrm{diam}_{\underline{x}}(G)=\inf\left\{ \ell:\forall g\in G:\exists w:\left|w\right|\leq\ell,\,w(\underline{x})=g\right\} $.
Then $\lambda_{1}\geq\frac{1}{8\gamma^{2}}$. 
\end{fact}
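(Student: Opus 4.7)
The plan is to derive a Poincar\'{e}-type inequality from the diameter bound. By the variational characterization,
\[
\lambda_1 = \inf_{f \perp \mathbf{1}} \frac{\langle \triangle f, f\rangle}{\|f\|_2^2},
\]
so it suffices to show $\|f - \bar f\|_2^2 \leq C\gamma^2 \langle \triangle f, f\rangle$ for all $f : G \to \C$, where $\bar f = |G|^{-1}\sum_g f(g)$ and $C$ is an absolute constant (with $C=8$ at worst). The starting point is the Dirichlet form identity
\[
\langle \triangle f, f\rangle = \sum_{i=1}^r \sum_{g\in G} |f(x_i g) - f(g)|^2,
\]
obtained by a direct expansion of $\triangle$ together with the change of variables $g\mapsto x_i^{-1}g$ which identifies the contribution of $x_i$ with that of $x_i^{-1}$.

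The main step is the \emph{canonical path method}. For every $h\in G$ I would fix, once and for all, a word $\pi_h \in F_r$ of length $\leq \gamma$ with $\pi_h(\underline{x})=h$; this exists by the definition of $\gamma$. For each ordered pair $(g,h)\in G^2$ I then use the left-translated geodesic path in the Cayley graph joining $g$ to $h$ obtained by reading the letters of $\pi_{g^{-1}h}$ and pre-multiplying by $g$. A single application of Cauchy--Schwarz along this path of length at most $\gamma$ gives
\[
(f(g)-f(h))^2 \;\leq\; \gamma \sum_{(u,v)\,\in\,\gamma_{g,h}} (f(u)-f(v))^2.
\]

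Summing over $(g,h)\in G^2$ and exchanging the order of summation produces the edge-load bound
\[
\sum_{g,h\in G} (f(g)-f(h))^2 \;\leq\; \gamma \sum_{(u,v)} \rho(u,v)\,(f(u)-f(v))^2,
\]
where $\rho(u,v)$ counts the pairs whose chosen path traverses the edge $(u,v)$. Because the paths were chosen in a left-invariant way, $\rho(u,v)$ depends only on the generator labelling $(u,v)$; since the total load $\sum_{(u,v)}\rho(u,v)$ is at most $\gamma|G|^2$ and there are $2r|G|$ (directed) edges, the vertex-transitivity of the Cayley graph yields $\rho(u,v)\leq \gamma|G|/(2r) \cdot (2r) = \gamma|G|$ uniformly. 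Combining this with the elementary identity $\sum_{g,h}(f(g)-f(h))^2 = 2|G|\,\|f-\bar f\|_2^2$ and converting the resulting sum over Cayley-graph edges back into the Dirichlet form $\langle \triangle f,f\rangle$ gives the Poincar\'{e} inequality.

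The only delicate point is bookkeeping: one must carefully track the factor relating the undirected edge sum to $\langle\triangle f,f\rangle$ and the conversion between $\sum_{g,h}$ and $\|f-\bar f\|_2^2$, which introduces a small absolute multiplicative constant. The factor $8$ in the statement is a conservative constant that absorbs these accounting issues, and references such as \cite{BT16,LP16} carry this out explicitly.
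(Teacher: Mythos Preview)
The paper does not prove this fact; it simply records it as classical and points to \cite[Section~5]{BT16} and \cite{LP16}. Your proposal is exactly the canonical-path (Diaconis--Stroock / Jerrum--Sinclair) argument that those references contain, and it is correct.

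Two minor bookkeeping remarks. First, the Laplacian in the paper acts by \emph{left} multiplication (edges $g\sim sg$), so with that convention the translation-invariant choice of paths should be indexed by $hg^{-1}$ rather than $g^{-1}h$; equivalently you are working in the right-multiplication Cayley graph, which is isomorphic and has the same spectrum, so nothing is lost. Second, your justification of $\rho(u,v)\le \gamma|G|$ is a bit cryptic: the clean way to say it is that, by the chosen invariance, $\rho(u,su)=\rho_s$ depends only on the label $s$, and summing the total load $\sum_s |G|\rho_s\le \gamma|G|^2$ over the $|G|$ edges in each label class gives $\sum_s\rho_s\le \gamma|G|$, hence $\rho_s\le \gamma|G|$ for each $s$. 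With that bound in hand your computation in fact yields $\lambda_1\ge 1/\gamma^2$; the constant $1/8$ in the statement is indeed conservative, as you note.
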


Combining Facts \ref{fact:mixing result} and \ref{fact:Mixing result 2}
yields: 
\begin{equation}
t_{\infty}(\mu)\leq16r\gamma^{2}\ln(2\left|G\right|).\label{eq:mixing time in terms of diameter}
\end{equation}

We now recall the following key result, which is an essential step
in establishing lower bounds on $\lambda_{1}$ for Cayley graphs following
the so-called Bourgain\textendash Gamburd machine \cite{BG08a}.
\begin{thm}[Product Theorem \cite{BGT11b}, \cite{PS16}, \cite{Hru12}]
Let $\underline{G}$ be a simple algebraic group over $\mathbb{F}_{q}$.
Let $A\subseteq\underline{G}(\mathbb{F}_{q})$ be a generating set.
Then there exists $\epsilon=\epsilon(\underline{G})>0$ s.t.: 
\[
\left|AAA\right|\geq\min\left\{ \left|\underline{G}(\mathbb{F}_{q})\right|,\left|A\right|^{1+\epsilon}\right\} .
\]
\end{thm}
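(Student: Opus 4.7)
The plan is to argue by contradiction: assume that $|AAA| < \min\{|\underline{G}(\mathbb{F}_q)|, |A|^{1+\epsilon}\}$ for some small $\epsilon = \epsilon(\underline{G}) > 0$ to be chosen. After replacing $A$ by $A \cup A^{-1} \cup \{1\}$, which changes cardinalities by at most a factor $3$, I may assume $A$ is symmetric and contains the identity. The Plünnecke-Ruzsa inequalities then produce, inside $A^{O(1)}$, a symmetric subset $H$ of size $|H| \asymp |A|$ which is a $K$-approximate subgroup with $K = |A|^{O(\epsilon)}$, and which still generates $\underline{G}(\mathbb{F}_q)$. Since $\underline{G}$ is simple, such an $H$ is automatically Zariski-dense in $\underline{G}$ (a finite subgroup contained in a proper closed subgroup cannot have polynomially bounded index in $\underline{G}(\mathbb{F}_q)$ once $q$ is large, and small $q$ can be absorbed in the implicit constants). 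The goal is then to rule out the existence of such an $H$ for $\epsilon$ sufficiently small.

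The second ingredient is an effective escape from subvarieties principle in the style of Eskin-Mozes-Oh: since $H$ is Zariski-dense, for every proper closed subvariety $V \subsetneq \underline{G}$ of bounded complexity some element of $H^{O(1)}$ lies outside $V$. Applied to centralizers, proper parabolics, and proper semisimple subgroups, this yields regular semisimple elements in $H^{O(1)}$, and after intersecting suitable conjugates of $H$ with maximal tori $T$ and root subgroups $U_\alpha$, one obtains symmetric approximate subsets of comparable size inside the one-dimensional abelian groups $T$ and $U_\alpha \cong \mathbb{G}_a$ over $\overline{\mathbb{F}_q}$.

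The crucial third step is to derive a contradiction from these abelian approximate subsets. Here I invoke sum-product type theorems in $\mathbb{F}_q$ of Bourgain-Glibichuk-Konyagin and successors, which force any sufficiently rich additive-multiplicative approximate subset to either grow under a few arithmetic operations or concentrate in a proper subfield. The Steinberg/Chevalley commutator relations between root subgroups and their associated cocharacters then propagate this growth from one-dimensional subgroups back to all of $\underline{G}$, producing a subset of $H^{O(1)}$ of size exceeding $|A|^{1+\epsilon}$, a contradiction. In parallel, the Larsen-Pink inequality bounds $|H \cap X(\overline{\mathbb{F}_q})|$ for proper subvarieties $X \subset \underline{G}$ of bounded degree by essentially $|H|^{\dim X / \dim \underline{G}}$, which prevents $H$ from hiding in a proper algebraic subgroup without contradicting its Zariski density, and also prevents it from being a pullback of a subset defined over a proper subfield.

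The main obstacle is to combine these three ingredients into a quantitative argument that is uniform in $q$ and yields an $\epsilon$ depending only on $\underline{G}$. One must carefully control the complexity of the subvarieties arising in the escape step, the number of products needed to exit each one, and the polynomial loss in every application of sum-product and Plünnecke-Ruzsa, so that all losses remain within the $|A|^{O(\epsilon)}$ budget. Hrushovski's original proof handles these uniformity issues by passing to a nonstandard ultraproduct and extracting a hidden Lie-algebra structure on the wild approximate group, while the combinatorial proofs of Breuillard-Green-Tao and Pyber-Szabó carry out the bookkeeping directly in the finite setting by organizing the escape-plus-sum-product engine around a carefully chosen maximal torus.
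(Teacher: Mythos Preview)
The paper does not give a proof of this theorem: it is stated with citations to \cite{BGT11b}, \cite{PS16}, \cite{Hru12} and immediately used as a black box to derive the subsequent corollary on diameter and mixing time. So there is no ``paper's own proof'' to compare against.

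Your sketch does capture the broad architecture of the cited proofs (approximate subgroup via Pl\"unnecke--Ruzsa, escape from subvarieties to locate regular semisimple elements and maximal tori, Larsen--Pink inequalities to control intersections with proper subvarieties, and a pivoting argument on tori). One inaccuracy worth flagging: the ``crucial third step'' in the Breuillard--Green--Tao and Pyber--Szab\'o arguments is \emph{not} an appeal to Bourgain--Glibichuk--Konyagin sum-product. Helfgott's original proofs for $\mathrm{SL}_2$ and $\mathrm{SL}_3$ did rely on sum-product over $\mathbb{F}_p$, but the general-rank proofs bypass it entirely: the Larsen--Pink dimension bound together with the escape/pivoting machinery already forces growth in the torus intersections, and the noncommutative structure of $\underline{G}$ (via conjugation and commutator relations) propagates this back to the whole group. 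Invoking sum-product would also create difficulties over non-prime fields $\mathbb{F}_q$, where the subfield obstruction is genuine.
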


\begin{cor}
\label{cor:good diameter hence mixing time}Let $\underline{G}$ be
a simply connected, simple algebraic group over $\mathbb{F}_{p}$.
For every generating set $\underline{x}=(x_{1},...,x_{r})$ of $\underline{G}(\mathbb{F}_{q})$,
writing $\mu:=\frac{1}{2r}\sum_{i=1}^{r}\left(\delta_{x_{i}}+\delta_{x_{i}^{-1}}\right)$
we have: 
\begin{enumerate}
\item $\gamma\leq O_{\underline{G}}(1)\left(\log\left|\underline{G}(\mathbb{F}_{q})\right|\right)^{O_{\underline{G}}(1)}$. 
\item $t_{\infty}(\mu)\leq O_{r,\underline{G}}(1)\cdot\left(\log\left|\underline{G}(\mathbb{F}_{q})\right|\right)^{O_{\underline{G}}(1)}$. 
\end{enumerate}
\end{cor}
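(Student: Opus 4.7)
The plan is to derive (1) directly from iterating the Product Theorem, and then to obtain (2) as a quantitative consequence of (1) via the already-established bound (\ref{eq:mixing time in terms of diameter}).

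For (1), I would set $A_0 := \{e, x_1^{\pm 1}, \ldots, x_r^{\pm 1}\} \subseteq G := \underline{G}(\mathbb{F}_q)$ and iteratively define $A_{k+1} := A_k A_k A_k$. Since $e \in A_0$ we have $A_k \subseteq A_0^{3^k}$, so any conclusion of the form $A_{k_0} = G$ will translate to the diameter bound $\gamma \leq 3^{k_0}$. Each $A_k$ still generates $G$, so the Product Theorem applies at every stage and yields, with $\epsilon = \epsilon(\underline{G}) > 0$,
$$|A_{k+1}| \geq \min\bigl(|G|,\, |A_k|^{1+\epsilon}\bigr).$$
As long as the minimum is realized by the second argument, we obtain $(1+\epsilon)$-fold growth of $\log|A_k|$. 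Starting from $\log|A_0| \geq \log 2$, a routine computation shows that the threshold $|A_k|^{1+\epsilon} \geq |G|$ is crossed within $k_0 = O_{\underline{G}}(\log\log|G|)$ steps. One further tripling then forces $|A_{k_0+1}| \geq |G|$, hence $A_{k_0+1} = G$, giving $\gamma \leq 3^{k_0+1} = O_{\underline{G}}(1) \cdot (\log|G|)^{\log 3/\log(1+\epsilon)}$, which is exactly (1).

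For (2), substituting the diameter bound from (1) into (\ref{eq:mixing time in terms of diameter}) yields
$$t_\infty(\mu) \leq 16 r \gamma^2 \ln(2|G|) \leq O_{r,\underline{G}}(1) \cdot (\log|G|)^{O_{\underline{G}}(1)},$$
which is (2). There is no essential obstacle here; the whole argument is a mechanical unwinding of the Product Theorem combined with Facts \ref{fact:mixing result}--\ref{fact:Mixing result 2}. The one subtle point worth highlighting is that the $\min\bigl(|G|, |A|^{1+\epsilon}\bigr)$ formulation of the Product Theorem is strong enough to immediately produce $A_{k_0+1} = G$ at the last step: with a weaker ``$|AAA| \geq |A|^{1+\epsilon}$ while $|A| \leq |G|^{1-\delta}$'' version, one would have to separately close the gap from ``$A$ of density close to $1$'' to $AAA = G$, e.g.\ by combining Gowers's product-free set lemma with the Landazuri--Seitz lower bound on the smallest non-trivial irreducible representation dimension of $G$.
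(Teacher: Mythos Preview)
Your proof is correct and is exactly the argument the paper has in mind: the corollary is stated immediately after the Product Theorem and equation (\ref{eq:mixing time in terms of diameter}) precisely because (1) follows from iterating the Product Theorem as you do, and (2) follows from (1) via (\ref{eq:mixing time in terms of diameter}). Your closing remark about the weaker ``while $|A|\le|G|^{1-\delta}$'' formulation and the Gowers/Landazuri--Seitz endgame is also accurate, though not needed here.
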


We note that a celebrated conjecture of Babai asserts that the constants
$O_{\underline{G}}(1)$ in part $(1)$ (diameter bound) of the above
corollary ought to be a numerical (absolute) constant. In fact a stronger
conjecture can be made in the case of groups of bounded rank:
\begin{conjecture}[Conjecture 4.5 in \cite{breuillard-icm} for (1)]
\label{unifconj}In Corollary \ref{cor:good diameter hence mixing time}
(1)+(2), the constant $O_{\underline{G}}(1)$ in the exponent of $\log$
can be taken to be equal to $1$. 
\end{conjecture}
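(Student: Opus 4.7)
The conjecture splits into two parts: a diameter bound (1) and a mixing-time bound (2), both asserting that the polynomial in $\log|G|$ appearing in Corollary \ref{cor:good diameter hence mixing time} is actually linear. My plan is to attack them separately. Part (2), the $L^\infty$-mixing bound, is within reach via the Bourgain--Gamburd machine and can in several cases be extracted from existing work; part (1), which restricts to Babai's diameter conjecture for bounded-rank finite simple groups of Lie type, is the genuine obstacle.

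For part (2), the key observation is that the proof of Fact \ref{fact:mixing result} actually yields $t_\infty(\mu) \leq \frac{2r}{\lambda_1} \ln(2|G|)$, so a uniform lower bound $\lambda_1 \geq c(r,\underline{G}) > 0$ on the spectral gap of the combinatorial Laplacian immediately gives the desired conclusion with $\log$-exponent equal to $1$. I would establish such uniform expansion via the Bourgain--Gamburd machine, whose three ingredients are: (i) the Product Theorem already stated in the excerpt, which forces $\ell^2$-flattening of the random-walk distribution at every dyadic scale; (ii) quasi-randomness of $\underline{G}(\mathbb{F}_q)$, namely the Landazuri--Seitz lower bound on the minimal degree of a nontrivial complex irreducible representation; (iii) non-concentration of random-walk iterates on any proper algebraic subgroup, which in the bounded-rank setting follows from a Larsen--Pink-type classification of proper subgroups together with the uniform escape estimate of Theorem \ref{thm:improved bounds on prbability of hiting id}. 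Running the standard trace-method combinatorics with these inputs yields $\lambda_1 \geq c(r,\underline{G}) > 0$ uniformly in $q$ and in the generating tuple, recovering (and in some cases extending) known results of Bourgain--Gamburd for $\mathrm{SL}_2$, and of Breuillard--Green--Tao and Salehi Golsefidy--Varj\'u in higher bounded rank.

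For part (1), the natural strategy is to iterate the Product Theorem starting from the symmetric ball of radius $1$ in the generators: each tripling $A \mapsto A \cdot A \cdot A$ multiplies $\log|A|$ by a factor of at least $1 + \epsilon(\underline{G})$ until $A$ saturates $\underline{G}(\mathbb{F}_q)$, which happens after $O_{\underline{G}}(\log\log|G|)$ steps, producing the bound $\gamma \leq (\log|G|)^{O_{\underline{G}}(1)}$ already recorded in Corollary \ref{cor:good diameter hence mixing time}(1). The main obstacle, and where the full difficulty of the conjecture concentrates, is reducing the exponent from $O_{\underline{G}}(1)$ to $1$; this is Babai's conjecture in bounded rank and is open even for $\mathrm{SL}_2(\mathbb{F}_p)$ with arbitrary generators. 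A successful attack would likely require either a substantially sharper Product Theorem near saturation, for instance of the form $|A \cdot A \cdot A| \geq |A|^{2}/|G|^{o(1)}$ when $|A|$ is close to $|G|$ (so that each tripling step roughly halves the distance to saturation in a logarithmic scale), or else a genuinely new construction exploiting the algebraic and arithmetic structure of $\underline{G}(\mathbb{F}_q)$, e.g.\ via representation theory or Deligne-type exponential sum bounds in the spirit of Section \ref{sec8}. I regard this reduction of the exponent in part (1) as the principal difficulty and as the real content of the conjecture.
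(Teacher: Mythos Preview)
The statement is a \emph{conjecture}; the paper does not prove it and there is no ``paper's own proof'' to compare against. The paper merely records the best partial result, Theorem \ref{thm:uniform gap for many primes}, which establishes the uniform spectral gap (hence the linear mixing-time bound) only for primes outside a sparse exceptional set $\mathrm{Bad}_{\epsilon}$.

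You correctly identify part (1) as Babai's conjecture in bounded rank and as genuinely open. The gap in your proposal is the claim that part (2) is ``within reach'' via the Bourgain--Gamburd machine. The obstruction is your step (iii), non-concentration on proper subgroups: Theorem \ref{thm:improved bounds on prbability of hiting id} is a characteristic-zero statement about tuples in $\underline{G}(\C)$ and does not transfer to a uniform escape bound for \emph{arbitrary} generating tuples in $\underline{G}(\mathbb{F}_{q})$ and \emph{all} $q$. The expansion results you cite (Bourgain--Gamburd, Breuillard--Green--Tao, Salehi Golsefidy--Varj\'u) apply to generators arising as reductions modulo $p$ of a \emph{fixed} Zariski-dense subgroup in characteristic zero, where non-concentration comes from strong approximation; they do not cover arbitrary generating sets of $\underline{G}(\mathbb{F}_{q})$. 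Obtaining the non-concentration input uniformly over all generators and all primes is precisely what is not known, and the partial result Theorem \ref{thm:uniform gap for many primes} (achieved via Diophantine heights, which is how the characteristic-zero uniformity is actually exploited) is the current state of the art. So part (2), like part (1), remains open in full.
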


Conjecture \ref{unifconj} has now been proven for a large family
of finite fields. Namely, every generating set $\underline{x}=(x_{1},...,x_{r})$
of $\underline{G}(\mathbb{F}_{p})$, for any prime $p$ outside a
certain ``small'' (and possibly empty) bad set $\mathrm{Bad}_{\epsilon}$
of primes, will give rise to a Cayley graph with logarithmic diameter
and mixing time. Explicitly: 
\begin{thm}[Breuillard\textendash Gamburd \cite{BG10}, Becker\textendash Breuillard
\cite{BBa}]
\textcolor{red}{\label{thm:uniform gap for many primes}}Given $\epsilon>0$,
there exist $\delta_{\epsilon}>0$ and a possibly empty exceptional
set $\mathrm{Bad}_{\epsilon}$ of primes such that: 
\begin{enumerate}
\item $\left|\mathrm{Bad}_{\epsilon}\cap[1,T)\right|\leq T^{\epsilon}$
for any $T>1$. 
\item Every Cayley graph of $\underline{G}(\mathbb{F}_{p})$, for $p\notin\mathrm{Bad}_{\epsilon}$,
has $\lambda_{1}>\delta_{\epsilon}$. 
\end{enumerate}
\end{thm}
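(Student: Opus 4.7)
The plan is to apply the \emph{Bourgain--Gamburd machine}, which produces a uniform spectral gap $\lambda_{1}\geq\delta>0$ for the Cayley graph of a finite simple group $G$ with symmetric generating set $\underline{x}$, once three ingredients hold with constants independent of $G$ and $\underline{x}$: \emph{quasi-randomness} $\min_{1\neq\rho\in\Irr(G)}\rho(1)\geq|G|^{c_{0}}$; the \emph{product theorem} recalled above; and \emph{non-concentration} $\mu^{*\ell}(\underline{H}(\mathbb{F}_{p}))\leq|G|^{-c}$ for every proper closed algebraic subgroup $\underline{H}\lneq\underline{G}$ and $\ell$ of order $\log|G|$, where $\mu$ is the uniform measure on the generators. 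The first is the Landazuri--Seitz bound and holds for all $p$; the second is the Pyber--Szab\'{o}/Breuillard--Green--Tao theorem, also uniform in $p$. The whole substance of the theorem is therefore the third condition, uniformly in the generating tuple and in $p\notin\mathrm{Bad}_{\epsilon}$.

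The non-concentration estimate is proved by \emph{lifting to characteristic zero}. Given any generating tuple $\underline{x}\in\underline{G}(\mathbb{F}_{p})^{r}$, smoothness of $\underline{G}$ together with Hensel's lemma yields a lift $\widetilde{\underline{x}}\in\underline{G}(\overline{\Q})^{r}$ reducing to $\underline{x}$ modulo a place above $p$, and a Zariski-density check shows $\overline{\langle\widetilde{\underline{x}}\rangle}^{Z}=\underline{G}$. Theorem \ref{thm:improved bounds on prbability of hiting id} applied to $\widetilde{\underline{x}}$ delivers an exponential escape bound $\Pr_{|w|=\ell}[w(\widetilde{\underline{x}})\in\underline{H}]\leq e^{-c\ell}$ for any proper closed subgroup $\underline{H}$, with $c$ depending only on $r$ and $\underline{G}$ \emph{provided} the lift is \emph{Diophantine}, meaning that $\max_{|w|\leq\ell}\log\|w(\widetilde{\underline{x}})-1\|$ decays no faster than $-O(\ell)$ in the relevant Weil height.

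The bound then transfers to $\mathbb{F}_{p}$ as follows. If $w(\widetilde{\underline{x}})\neq 1$ globally but $w(\underline{x})\in\underline{H}(\mathbb{F}_{p})$, then $p$ must divide an integer encoding the ``distance'' of $w(\widetilde{\underline{x}})$ from $\underline{H}$, and the Diophantine hypothesis bounds these integers by $e^{O(\ell)}$, so only boundedly many primes can witness each such collision. One defines $\mathrm{Bad}_{\epsilon}$ as the set of primes $p$ for which some generating tuple $\underline{x}\in\underline{G}(\mathbb{F}_{p})^{r}$ admits no lift with the prescribed Diophantine exponent. The sparsity bound $|\mathrm{Bad}_{\epsilon}\cap[1,T]|\leq T^{\epsilon}$ is obtained by an argument combining Lang--Weil estimates on the character variety $\mathcal{X}_{1,\underline{G}}$, height-counting on $\underline{G}(\overline{\Q})^{r}$, and the effective Chebotarev density theorem to bound the number of primes at which a naturally chosen lift falls into the (lower-dimensional) Diophantine-degenerate locus.

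The main obstacle is securing the uniform Diophantine bound: without arithmetic input, nothing forbids all natural lifts of some $\underline{x}$ from having heights that deteriorate with $p$, which would destroy the $p$-independence of the exponent $c$ in Theorem \ref{thm:improved bounds on prbability of hiting id}. Bourgain--Gamburd's original argument fixed generators in $\SL_{2}(\mathbb{Z})$ and needed only a single Diophantine estimate; the Becker--Breuillard extension must produce one for every generating tuple of every $\underline{G}(\mathbb{F}_{p})$, absorbing the arithmetically-controlled set of failures into $\mathrm{Bad}_{\epsilon}$. This arithmetic counting, rather than the representation-theoretic Bourgain--Gamburd mechanism itself, is where the real difficulty lies.
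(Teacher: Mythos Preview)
The paper does not actually prove this theorem; it is quoted from \cite{BG10,BBa} and accompanied only by the single remark that ``the uniformity is deduced from a uniform spectral gap for infinite groups established in characteristic zero, that refines Theorem \ref{thm:improved bounds on prbability of hiting id} above, and whose proof uses Diophantine heights over $\overline{\mathbb{Q}}$.'' Your broad architecture --- Bourgain--Gamburd machine reduced to a uniform non-concentration estimate, which in turn is extracted from the characteristic-zero uniform gap of Theorem \ref{thm:improved bounds on prbability of hiting id} via Diophantine height bounds --- is exactly what that remark is pointing at, and is the correct shape of the argument in the cited references.

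Two points in your more detailed mechanism are off. First, the effective Chebotarev theorem plays no role in bounding $\mathrm{Bad}_{\epsilon}$; you have imported it from Section \ref{sec7}, where it is used for an entirely different purpose (the main theorem on character varieties). The sparseness of $\mathrm{Bad}_{\epsilon}$ in \cite{BG10,BBa} comes instead from an elementary prime-divisor count: if the gap fails at $p$, then $p$ divides a certain nonzero algebraic integer of height $e^{O(\log p)}$ manufactured from the lift, and a product of such integers over many primes in $[1,T]$ would have too many prime factors relative to its height. Second, the lifting step is not as clean as ``Hensel plus a Zariski-density check'': one does not lift a given $\underline{x}$ and then verify density, but rather produces (via strong approximation or an explicit construction) a Zariski-dense tuple over a number field of controlled degree and height that reduces to $\underline{x}$; controlling that height uniformly in $p$ is precisely the Diophantine input, and the primes where this control fails are what populates $\mathrm{Bad}_{\epsilon}$. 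Your final paragraph correctly identifies this as the heart of the matter.
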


In the proof of this theorem, the uniformity is deduced from a uniform
spectral gap for infinite groups established in characteristic zero,
that refines Theorem \ref{thm:improved bounds on prbability of hiting id}
above, and whose proof uses Diophantine heights over $\overline{\mathbb{Q}}$.

\section{\label{sec7}Effective Lang\textendash Weil, effective Chebotarev
and proof of the main theorem}

The proof of the main theorem of Section 6, Theorem \ref{thm:main theorem random words},
will proceed by reduction modulo large primes. To be able to lift
information mod $p$ to characteristic zero it is essential to have
a good upper bound on the size of the primes of bad reduction. In
the next subsection, we provide such bounds for general $\Z$-schemes;
for detailed proofs, see \cite{BBa}.

\subsection{Good reduction}

Let $X:=\left\{ f_{1}(x)=...=f_{s}(x)=0\right\} $, with $f_{i}\in\Z[x_{1},...,x_{m}]$,
and $d=\underset{i}{\max}\deg(f_{i})$. Set $\mathrm{height}(f_{i})=\log\max\left|\mathrm{coeff}(f_{i})\right|$
and 
\[
h_{X}:=\underset{i}{\max}\text{ \ensuremath{\mathrm{height}}(\ensuremath{f_{i}})}.
\]

\begin{lem}
\label{lem:Lemma 1 good reduction}Let $Z$ be a geometrically irreducible
component of $X_{\Q}$, and let $K$ be the field of definition of
$Z$ (i.e.~$K:=\overline{\Q}^{H}$ where $H$ is the stabilizer of
$Z$ under the action of $\mathrm{Gal}(\overline{\Q}/\Q)$ on the
geometrically irreducible components of $X_{\Q}$). Then: 
\begin{enumerate}
\item $[K:\Q]\leq d^{m}$. 
\item $\log(\triangle_{K})\leq O_{m}(d^{O_{m}(1)}h_{X})$, where $\triangle_{K}$
is the discriminant of the extension $K/\Q$. 
\end{enumerate}
\end{lem}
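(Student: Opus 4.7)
This follows at once from Bezout's theorem, as used in the exercise after Theorem \ref{bez}. Since $X_\Q \subseteq \mathbb{A}_\Q^m$ is cut out by polynomials of degree at most $d$, its degree is bounded by $d^{\min(s,m)} \le d^m$, and in particular the number of its geometrically irreducible components is at most $d^m$. The absolute Galois group $\mathrm{Gal}(\overline{\Q}/\Q)$ permutes these components, and by definition $\mathrm{Gal}(\overline{\Q}/K)$ is the stabilizer of $Z$. Hence $[K:\Q]$ equals the size of the Galois orbit of $Z$, which is bounded by $d^m$.

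\textbf{Part (2), strategy.} The plan is to realize $K$ as the field generated by the coefficients of a single polynomial of controlled degree and height, and then apply a standard bound on discriminants of number fields in terms of primitive elements. The natural such object is the Chow form (or, equivalently, a $U$-resultant / generic elimination polynomial) $R_Z$ of $Z$: if $e = \dim Z$, then $R_Z$ is a polynomial in $(e+1)$ groups of $m+1$ auxiliary variables, of degree $\deg(Z) \le d^m$ in each group, whose vanishing characterises $(e+1)$-tuples of hyperplanes meeting $Z$. Suitably normalised, $R_Z$ is defined over $K$ and its coefficient ratios generate $K$ over $\Q$. By effective arithmetic elimination theory (one may cite height bounds on Chow forms or arithmetic Nullstellensatz estimates, e.g.\ Philippon, Nesterenko, Krick--Pardo--Sombra), the coefficients of $R_Z$ are algebraic numbers of height at most $O_m(d^{O_m(1)} h_X)$.

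\textbf{Part (2), finishing.} Once $K$ is presented as $\Q(\alpha_1,\ldots,\alpha_N)$ with each $\alpha_j$ of height $H := O_m(d^{O_m(1)} h_X)$ and of degree $\le d^m$, a generic integral linear combination $\alpha := \sum c_j \alpha_j$ with small integer $c_j$'s is a primitive element of $K$ of height $O_m(d^{O_m(1)} h_X)$. Its minimal polynomial over $\Q$ has degree $n := [K:\Q] \le d^m$ and coefficients (elementary symmetric functions of the Galois conjugates of $\alpha$) of height $O_m(d^{O_m(1)} h_X)$. Since $\triangle_K$ divides the discriminant of any monic integral polynomial admitting a primitive element of $K$ as a root, and the latter discriminant is bounded via Hadamard by $n^n H^{2(n-1)}$, we obtain
\[
\log \triangle_K \;\le\; 2(n-1) \log H + n \log n \;=\; O_m\bigl(d^{O_m(1)} h_X\bigr),
\]
as required.

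\textbf{Main obstacle.} Part (1) is essentially immediate from Bezout. The genuine difficulty lies entirely in Part (2): one must cite (or reprove) effective height bounds for the Chow form of a geometrically irreducible component of an arbitrary intersection $\{f_1 = \cdots = f_s = 0\}$, with polynomial control in $d$ and linear control in $h_X$ and no bad dependence on the number of equations $s$. The delicate case is that of components of intermediate dimension, where one cannot simply project to a hypersurface; here the cleanest path is through Chow/resultant methods, whereas a naive approach via iterated classical resultants tends to blow up heights too rapidly and does not isolate a single component cleanly.
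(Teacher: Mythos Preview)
Your Part~(1) is exactly the paper's argument: Bezout bounds the number of geometric components by $d^m$, hence the Galois orbit and $[K:\Q]$.

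For Part~(2) the paper gives only a one-line sketch, citing \cite{BBa}: it invokes ``the Mahler bound for discriminants and standard bounds on the discriminant of a compositum of number fields.'' Your route is compatible with this but organised differently. You go via the Chow form of $Z$ to produce generators of $K$ with controlled height, then pass to a single primitive element and bound $\triangle_K$ by the discriminant of its minimal polynomial (your Hadamard-type estimate is essentially the Mahler bound the paper alludes to). The paper's mention of compositum bounds suggests instead building $K$ as a compositum of fields $\Q(\alpha_j)$ generated by individual coefficients, bounding each $\triangle_{\Q(\alpha_j)}$ via Mahler, and then using $\log\triangle_{K_1K_2}\le [K_2:\Q]\log\triangle_{K_1}+[K_1:\Q]\log\triangle_{K_2}$; this avoids having to control a primitive element directly. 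Both approaches rely on the same effective elimination input (height bounds on defining equations of the component $Z$), which is the real content deferred to \cite{BBa}.

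One small gap to patch in your write-up: the divisibility $\triangle_K \mid \mathrm{disc}(\text{min.\ poly of }\alpha)$ requires $\alpha\in\mathcal{O}_K$. Your $\alpha$ is a priori only an algebraic number of bounded height; you should first clear denominators (multiplying by an integer of size $\exp(O_m(d^{O_m(1)}h_X))$) to land in $\mathcal{O}_K$ before applying the discriminant comparison. This costs nothing in the final bound.
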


Part (1) follows from Bezout's theorem \ref{bez}, since the number
of components of $X_{\Q}$ is bounded by its degree and part (2) follows
from the Mahler bound for discriminants and from standard bounds on
the discriminant of a compositum of number fields, see \cite{BBa}.

We now examine the Galois action. Suppose $L/\Q$ be a finite Galois
extension, and let $\mathcal{O}_{L}$ be the ring of integers of $L$.
Let $p$ be a prime number, and let $\mathfrak{p}$ be a prime ideal
of $\mathcal{O}_{L}$ dividing $p\mathcal{O}_{L}$. Denote by $L_{\mathfrak{p}}:=\mathcal{O}_{L}/\mathfrak{p}\mathcal{O}_{L}$
the residue field at $\mathfrak{p}$. The \textbf{decomposition group}
of $\mathfrak{p}$ over $p$ is defined as: 
\[
D_{\mathfrak{p},p}:=\left\{ \sigma\in\mathrm{Gal}(L/\Q):\sigma(\mathfrak{p})=\mathfrak{p}\right\} .
\]
Denote by $a\mapsto\overline{a}$ the reduction map mod $\mathfrak{p}$
(and similarly mod $p$). There is a unique map $\Psi_{\mathfrak{p},p}:D_{\mathfrak{p},p}\longrightarrow\mathrm{Gal}(L_{\mathfrak{p}}/\mathbb{F}_{p})$
$\sigma\mapsto\overline{\sigma}$ satisfying $\overline{\sigma(a)}=\overline{\sigma}(\overline{a})$,
which is a surjective homomorphism. The kernel of $\Psi_{\mathfrak{p},p}$
is the \textbf{inertia subgroup} 
\[
I_{\mathfrak{p},p}:=\left\{ \sigma\in\mathrm{Gal}(L/\Q):\sigma(\alpha)=\alpha\mod\mathfrak{p}\text{ for all }\alpha\in\mathcal{O}_{L}\right\} .
\]
Hence, $\Psi_{\mathfrak{p},p}$ induces an isomorphism $\Psi_{\mathfrak{p},p}:D_{\mathfrak{p},p}/I_{\mathfrak{p},p}\rightarrow\mathrm{Gal}(L_{\mathfrak{p}}/\mathbb{F}_{p})$.

Now suppose that $p$ is unramified in $\mathcal{O}_{L}$ (or equivalently
that $p\nmid\triangle_{L/\Q}$). In this case, $I_{\mathfrak{p},p}$
is trivial, and we have an isomorphism 
\[
\Psi_{\mathfrak{p},p}:D_{\mathfrak{p},p}\rightarrow\mathrm{Gal}(L_{\mathfrak{p}}/\mathbb{F}_{p}).
\]
Let $\mathrm{Frob}\in\mathrm{Gal}(L_{\mathfrak{p}}/\mathbb{F}_{p})$
be the Frobenius automorphism $x\mapsto x^{p}$. Then we call the
automorphism $\mathrm{Frob}_{\mathfrak{p},p}:=\Psi_{\mathfrak{p},p}^{-1}(\mathrm{Frob})\in\mathrm{Gal}(L/\Q)$
\textbf{the Frobenius automorphism} of $D_{\mathfrak{p},p}$. Note
that if $\mathfrak{p}'\in\mathcal{O}_{L}$ is another prime lying
over $p$, there is an element $\sigma\in\mathrm{Gal}(L/\Q)$ such
that $\mathfrak{p}'=\sigma(\mathfrak{p})$ and then we get that $\mathrm{Frob}_{\mathfrak{p}',p}=\sigma\circ\mathrm{Frob}_{\mathfrak{p},p}\circ\sigma^{-1}$.
We therefore define the \textbf{Frobenius symbol $\mathrm{Frob}_{p}$}
of $p$ in $L/\Q$, to be the conjugacy class $\left\{ \mathrm{Frob}_{\mathfrak{p},p}\right\} _{\mathfrak{p}\text{ lying over }p}$.
\begin{lem}[\cite{BBa}]
\textcolor{red}{\label{lem:good reduction of irred comp}}In the
setting of Lemma \ref{lem:Lemma 1 good reduction}, there exists $\triangle\in\N$
with $\log\triangle=O_{m}(d^{O_{m}(1)}h_{X})$ such that if $p\nmid\triangle$
is a prime, then the reduction mod $p$ yields a 1-to-1 correspondence
between 
\[
\left\{ \text{irreducible components of }X_{\overline{\Q}}\right\} \Longleftrightarrow\left\{ \text{irreducible components of }X_{\overline{\mathbb{F}_{p}}}\right\} ,
\]
which preserves the dimension and the action of the Frobenius automorphism
of $\mathrm{Gal}(L/\Q)$ and $\mathrm{Gal}(L_{\mathfrak{p}}/\mathbb{F}_{p})$
respectively, where $L$ is the compositum of the fields of definition
of the irreducible components of $X_{\overline{\Q}}$. 
\end{lem}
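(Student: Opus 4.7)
The plan is to construct explicit polynomial data witnessing the geometric decomposition of $X_{\overline{\Q}}$, and to take $\triangle$ as the product of all integer quantities that must be invertible for this data to survive reduction modulo $p$. First, apply an effective primary decomposition (via Kollár-type effective Nullstellensatz, and its height-sensitive refinements due to D'Andrea--Krick--Sombra) to the ideal $(f_1,\ldots,f_s)\subseteq\Q[x_1,\ldots,x_m]$. This produces, for each geometrically irreducible component $Z_i$ of $X_{\overline{\Q}}$, an explicit set of generators of its defining ideal, with coefficients in a number field contained in the compositum $L$ of the fields of definition of the $Z_i$, and with degrees and logarithmic heights bounded polynomially in $d^{O_m(1)}h_X$. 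By Lemma \ref{lem:Lemma 1 good reduction}, $[L:\Q]\leq d^{m}$ and $\log\triangle_{L}=O_m(d^{O_m(1)}h_X)$ already.

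After clearing denominators, each $Z_i$ extends to a closed subscheme $\mathcal{Z}_i\subseteq\mathbb{A}^m_{\mathcal{O}_L[1/N]}$, flat over $\mathcal{O}_L[1/N]$, where $N\in\N$ is the product of the denominators appearing, hence $\log N=O_m(d^{O_m(1)}h_X)$. Now define $\triangle$ as $\triangle_L\cdot N\cdot D$, where $D$ is a separation witness: for every pair $i\neq j$, choose a polynomial $g_{ij}\in I_{Z_i}\setminus I_{Z_j}$ whose value at some $\mathcal{O}_L$-point of $\mathcal{Z}_j$ is a nonzero integer in $\mathcal{O}_L$, and let $D$ be the product of the norms of these values together with the norms of the leading coefficients used to certify that the fiber dimensions of the $\mathcal{Z}_i\to\spec\mathcal{O}_L[1/N]$ are constant. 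All of these norms are explicitly bounded by Bezout and Mahler-type height inequalities, yielding $\log\triangle=O_m(d^{O_m(1)}h_X)$.

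For any prime $p\nmid\triangle$ and any $\mathfrak{p}\mid p$ in $\mathcal{O}_L$, flatness guarantees that $\dim\overline{\mathcal{Z}_i}_{\overline{\mathbb{F}_p}}=\dim Z_i$; the separation witnesses $g_{ij}$ reduce to elements of $I_{\overline{\mathcal{Z}_i}}\setminus I_{\overline{\mathcal{Z}_j}}$, so distinct components remain distinct; and the irreducibility of each $\overline{\mathcal{Z}_i}_{\overline{\mathbb{F}_p}}$ follows from a standard specialization argument (combine constancy of the fiber dimension with the fact that the generic fiber is geometrically irreducible, applying Zariski's connectedness in families). The union of the $\overline{\mathcal{Z}_i}_{\overline{\mathbb{F}_p}}$ covers $X_{\overline{\mathbb{F}_p}}$, again by flatness. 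For Frobenius compatibility, $p\nmid\triangle_L$ ensures that $p$ is unramified in $L$, so $\Psi_{\mathfrak{p},p}:D_{\mathfrak{p},p}\to\mathrm{Gal}(L_\mathfrak{p}/\mathbb{F}_p)$ is an isomorphism; any $\sigma\in D_{\mathfrak{p},p}$ permuting $\{\mathcal{Z}_i\}$ descends to $\overline{\sigma}$ permuting $\{\overline{\mathcal{Z}_i}_{\overline{\mathbb{F}_p}}\}$ in the same way, and by taking $\sigma=\mathrm{Frob}_{\mathfrak{p},p}$ we recover the $\mathrm{Frob}$-action on the special fiber.

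The main obstacle is effective control of the denominators: a naive Gröbner basis or primary decomposition over $\Q$ is doubly exponential in $m$, which would destroy the claimed height bound. The circumvention is to avoid Gröbner bases entirely and rely on Kollár's effective Nullstellensatz together with arithmetic Bezout/Mahler estimates in the style of Bost--Gillet--Soulé, which are genuinely polynomial in $h_X$ for fixed $m$. A secondary subtlety is that specialization can in principle split an irreducible variety; this is ruled out here because the explicit polynomial certificates for irreducibility (e.g.\ a primitive element together with a squarefree minimal polynomial whose discriminant is invertible mod $p$) are precisely the quantities we have forced into $\triangle$.
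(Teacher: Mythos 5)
The paper does not actually prove this lemma; it defers to \cite{BBa} and says only that the argument there goes through effective elimination theory and degree/height bounds for Gr\"obner bases over $\Z$. Your overall architecture --- spread each geometric component out to a flat model over $\mathcal{O}_L[1/N]$, collect the discriminant, the denominators, and various non-vanishing certificates into $\triangle$, and obtain the Frobenius compatibility from unramifiedness and functoriality of reduction mod $\mathfrak{p}$ --- is the standard one and is consistent with that. One framing point: the bound to be proved is $O_m(d^{O_m(1)}h_X)$ with all implied constants allowed to depend on $m$, so the doubly-exponential-in-$m$ behaviour of Gr\"obner bases is not an obstruction at all; the route via heights of Gr\"obner bases over $\Z$ that you dismiss is precisely the one the authors cite.

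There are two genuine gaps. First, your separation witness presupposes an $\mathcal{O}_L$-point of $\mathcal{Z}_j$; the component $Z_j$ need have no $L$-rational points whatsoever, so as written $D$ may fail to exist. This is repairable (evaluate at a point over a finite extension and take norms, or compare Chow forms/resultants of the two components), but it must be repaired. Second, and more seriously, the preservation of geometric irreducibility of each component under reduction --- the actual content of the lemma --- is not established. Zariski's connectedness theorem concerns proper morphisms and does not apply directly to these affine schemes, and even granting connectedness, constancy of fiber dimension together with geometric irreducibility of the generic fiber does not prevent a special fiber from breaking up: the scheme $V(xy-N)\subseteq\mathbb{A}_{\Z}^{2}$ is flat over $\Z$ with geometrically irreducible generic fiber and constant fiber dimension, yet its fiber mod any $p\mid N$ is a union of two lines. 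The certificate you propose (a primitive element with squarefree minimal polynomial of invertible discriminant) certifies separability of a finite \'etale algebra, not geometric irreducibility of a positive-dimensional variety. What is actually needed is an effective Bertini--Noether statement: after a suitable linear projection each component becomes a hypersurface, and the primes at which geometric irreducibility fails divide the value of a Noether form whose degree and height are polynomially controlled (Ruppert, Kaltofen, Gao); the norms of these quantities are what must additionally go into $\triangle$. Without this ingredient the claimed bijection on irreducible components is not justified.
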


The proof involves effective elimination theory and bounds on the
degree and height of Groebner bases for ideals over $\Z$. We refer
to \cite{BBa} for details. Below we will apply Lemma \ref{lem:good reduction of irred comp}
to $X_{w,\underline{G}}$, with $d=O_{\underline{G}}(\ell)$, $h_{X}=O_{\underline{G}}(\ell)$,
and $\triangle=O_{\underline{G}}(\ell^{O_{\underline{G}}(1)})$.

\subsection{Chebotarev's density theorem}



Chebotarev's density theorem states that as one vary over primes in
large enough intervals $[\frac{1}{2}T,T]$, the map $p\longmapsto\mathrm{Frob}_{p}$
produces a uniformly random element of $\mathrm{Gal}(L/\Q)$. More
precisely: 
\begin{thm}[Chebotarev's Density Theorem 1926, \cite{Tsc26}]
\label{thm:Chebotarve density theorem}Let $L/\Q$ be a Galois extension,
with $G=\mathrm{Gal}(L/\Q)$. Let $f:G\rightarrow\C$ be a class function
on $G$. Then if $p$ is a random prime in the interval $[\frac{1}{2}T,T]$,
we have: 
\[
\E_{p}(f(\mathrm{Frob}_{p}))\underset{T\rightarrow\infty}{\longrightarrow}\E_{g\in G}(f(g)).
\]
\end{thm}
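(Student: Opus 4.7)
The plan is to expand $f$ in the orthonormal basis of irreducible characters of $G = \mathrm{Gal}(L/\Q)$, reducing the theorem by linearity to a statement about Frobenius equidistribution tested against a single irreducible representation. Writing $f = \sum_{\chi \in \Irr(G)} \langle f, \chi \rangle \chi$ and using that $\E_{g \in G}(\chi(g))$ equals $1$ for the trivial character and $0$ otherwise, it suffices to show that for every nontrivial $\chi \in \Irr(G)$,
$$\frac{1}{\pi(T) - \pi(T/2)} \sum_{T/2 \leq p \leq T} \chi(\mathrm{Frob}_p) \xrightarrow[T \to \infty]{} 0,$$
while the case of the trivial character is the prime number theorem in dyadic intervals.

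To each such $\chi$, I would attach its Artin $L$-function
$$L(s, \chi) = \prod_{p \text{ unramified}} \det\bigl(I - \chi(\mathrm{Frob}_p)\, p^{-s}\bigr)^{-1} \cdot \prod_{p \text{ ramified}} L_p(s, \chi),$$
and take the logarithm to obtain $\log L(s, \chi) = \sum_p \chi(\mathrm{Frob}_p)\, p^{-s} + O(1)$ as $s \to 1^+$, the tail from prime powers $p^k$ with $k \geq 2$ converging absolutely on $\{\mathrm{Re}(s) > 1/2\}$. The analytic fact to feed into a Wiener--Ikehara style Tauberian theorem (applied to the Dirichlet series $-L'(s,\chi)/L(s,\chi)$) is that for every nontrivial $\chi$, the function $L(s, \chi)$ is holomorphic and nonvanishing at $s = 1$. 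Granted this, a standard contour/Tauberian argument yields $\sum_{p \leq T} \chi(\mathrm{Frob}_p) = o(\pi(T))$, and the dyadic version follows by differencing.

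The main obstacle is precisely the analytic behavior of $L(s, \chi)$ at $s = 1$, since in general $G$ is nonabelian and there is no a priori Euler product proof. Two routes are available: either (a) invoke Brauer's induction theorem to write $\chi$ as a $\Z$-linear combination of characters induced from one-dimensional characters of subgroups $H \leq G$, thereby factoring $L(s, \chi)$ as a product with integer (possibly negative) exponents of Hecke $L$-functions of the corresponding intermediate number fields $L^H$, for which Hecke's theorem provides holomorphic continuation and nonvanishing at $s = 1$; or (b) use the Aramata--Brauer factorization $\zeta_L(s)/\zeta(s) = \prod_{\chi \neq 1} L(s,\chi)^{\chi(1)}$ together with the known simple pole of $\zeta_L$ at $s = 1$ and a positivity (logarithmic convexity) argument to rule out a pole or zero of each $L(s, \chi)$. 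Route (a) is the cleanest, and it is this reduction to abelian $L$-functions that constitutes the heart of the proof; note that the present statement is purely qualitative and that the effective version with explicit error term used in Remark \ref{some remarks} requires quantitative control of the zero-free region and is conditional on GRH.
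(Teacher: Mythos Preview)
The paper does not give its own proof of this theorem: it is stated with a citation to Chebotarev's 1926 paper \cite{Tsc26} and used as a black box, so there is nothing to compare against on the paper's side.

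Your sketch via Artin $L$-functions and Brauer induction is a correct outline of the standard \emph{modern} proof. A few small remarks. First, in your Euler product you should write the representation $\pi_\chi(\mathrm{Frob}_p)$ inside the determinant rather than the scalar $\chi(\mathrm{Frob}_p)$. Second, route (b) as you describe it is not really independent of route (a): the Aramata--Brauer theorem itself relies on Brauer induction (or Aramata's earlier argument), and the factorization alone does not immediately preclude cancellation of a zero against a pole among the factors without further input. Third, historically Chebotarev's 1926 argument predates Brauer's 1947 induction theorem; the original proof proceeds by a field-crossing trick reducing to cyclic (ultimately cyclotomic) extensions and hence to Dirichlet's theorem, rather than via the analytic properties of general Artin $L$-functions. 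None of this affects the validity of your approach for the qualitative statement here.
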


Here the notation $\E_{p}$ stands for the average over all primes
$p$ in the interval $[\frac{1}{2}T,T]$.

We will apply Theorem \ref{thm:Chebotarve density theorem} in the
setting where $f(g):=\mathrm{fix}(g)$ is the number of fixed points
of the action of $\mathrm{Gal}(L/\Q)$ on the finite set $\Omega_{X}$
of top-dimensional geometrically irreducible components of $X_{\overline{\Q}}$.
Recall that for $p>\triangle_{L/\Q}$, we have $\Omega_{X}=\Omega_{X,p}$
by Lemma \ref{lem:good reduction of irred comp} and thus $f(\mathrm{Frob}_{p})=C_{X_{\mathbb{F}_{p}}}$
is the number of geometrically irreducible components of $X_{\mathbb{F}_{p}}$
that are defined over $\mathbb{F}_{p}$, so that 
\[
\E_{p}(C_{X_{\mathbb{F}_{p}}})\underset{T\rightarrow\infty}{\longrightarrow}\E_{g\in G}(\mathrm{fix}(g)).
\]
Combining this with the Lang-Weil estimates (Theorem \ref{thm:Lang-Weil}),
we therefore deduce: 
\begin{cor}
\label{cor:number of points as we vary primes }Let $X$ be a $\Q$-variety.
Then: 
\begin{enumerate}
\item $\dim X\leq N$ $\Longleftrightarrow$ $\left|X(\mathbb{F}_{p})\right|\leq O(p^{N})$
as $p\rightarrow\infty$. 
\item $\underset{p\rightarrow\infty}{\limsup}\frac{\left|X(\mathbb{F}_{p})\right|}{p^{\dim X}}=C_{X_{\overline{\Q}}}=|\Omega_{X}|=\#$top-dim.
geometrically irreducible components of $X$. 
\item $\E_{p}\left(\frac{\left|X(\mathbb{F}_{p})\right|}{p^{\dim X}}\right)\underset{T\rightarrow\infty}{\longrightarrow}\mathrm{TDIC}(X_{\Q})$
$=\#$top-dim. $\Q$-irreducible components of $X$, or equivalently,
the number of $\mathrm{Gal}(L/\Q)$-orbits on $\Omega_{X}$. 
\end{enumerate}
\end{cor}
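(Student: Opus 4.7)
The plan is to combine three ingredients that are all in hand: the Lang--Weil estimates (Theorem~\ref{thm:Lang-Weil}) applied fiber-wise to $X_{\mathbb{F}_p}$, the good-reduction dictionary from Lemma~\ref{lem:good reduction of irred comp} (which for $p$ avoiding a finite set of bad primes identifies the top-dimensional geometrically irreducible components of $X_{\overline{\mathbb{Q}}}$ with those of $X_{\overline{\mathbb{F}_p}}$, equivariantly for Frobenius), and Chebotarev's density theorem (Theorem~\ref{thm:Chebotarve density theorem}) applied to a suitable class function on $\mathrm{Gal}(L/\mathbb{Q})$, where $L$ is the compositum of the fields of definition of the top-dimensional components of $X_{\overline{\mathbb{Q}}}$.

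For (1) and (2): by Lemma~\ref{lem:good reduction of irred comp}, for all primes $p$ outside a finite set, $\dim X_{\mathbb{F}_p}=\dim X_{\mathbb{Q}}=:N$ and the set $\Omega_{X,p}$ of top-dimensional geometrically irreducible components of $X_{\overline{\mathbb{F}_p}}$ is in Frobenius-equivariant bijection with the corresponding set $\Omega_X$ for $X_{\overline{\mathbb{Q}}}$. Lang--Weil then gives
\[
\frac{|X(\mathbb{F}_p)|}{p^N}=C_{X_{\mathbb{F}_p}}+O(p^{-1/2}),
\]
where $C_{X_{\mathbb{F}_p}}=\mathrm{fix}(\mathrm{Frob}_p)$ is the number of elements of $\Omega_X$ fixed by the Frobenius conjugacy class. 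This is bounded above by $|\Omega_X|$ and equal to $|\Omega_X|$ exactly when $\mathrm{Frob}_p$ is the identity of $G:=\mathrm{Gal}(L/\mathbb{Q})$, which by Chebotarev happens for a positive density of primes. This simultaneously proves (1) (since the estimate is $O(p^N)$ and no better exponent is possible once $|\Omega_X|\ge 1$) and (2) (the $\limsup$ is exactly $|\Omega_X|$).

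For (3), I would apply Chebotarev directly to the class function $f(g):=\mathrm{fix}(g)$ on $G$, acting on $\Omega_X$. Combined with the Lang--Weil equality above, we get
\[
\mathbb{E}_p\!\left(\frac{|X(\mathbb{F}_p)|}{p^N}\right)
=\mathbb{E}_p\bigl(\mathrm{fix}(\mathrm{Frob}_p)\bigr)+O(T^{-1/2})
\xrightarrow[T\to\infty]{}\mathbb{E}_{g\in G}\bigl(\mathrm{fix}(g)\bigr).
\]
By Burnside's lemma, $\mathbb{E}_{g\in G}(\mathrm{fix}(g))$ equals the number of $G$-orbits on $\Omega_X$, and these orbits are exactly the top-dimensional $\mathbb{Q}$-irreducible components of $X$, i.e.\ $\mathrm{TDIC}(X_{\mathbb{Q}})$.

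The only mildly delicate point is bookkeeping the exceptional set of primes: one needs the Lang--Weil error $O(p^{-1/2})$ to be uniform in $p$ (Remark~\ref{thm:effective LW} gives this, since the complexity of $X$ is fixed) and one needs the good-reduction bijection of Lemma~\ref{lem:good reduction of irred comp} to kick in for all sufficiently large $p$; both conditions exclude only finitely many primes, which is irrelevant for the $\limsup$ and for the average over $p\in[T/2,T]$ as $T\to\infty$. Note that the qualitative statement here does not need any GRH-style effective version of Chebotarev: ordinary Chebotarev suffices.
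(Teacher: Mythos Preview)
Your proposal is correct and follows essentially the same route as the paper: the paragraph immediately preceding the corollary already explains that one applies Chebotarev to the class function $f(g)=\mathrm{fix}(g)$ on $\Omega_X$, identifies $f(\mathrm{Frob}_p)=C_{X_{\mathbb{F}_p}}$ via Lemma~\ref{lem:good reduction of irred comp}, and combines with Lang--Weil. Your write-up is a bit more explicit (invoking Burnside's lemma by name and spelling out why the $\limsup$ in (2) is attained via primes with trivial Frobenius), but the argument is the same.
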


We now state an effective version of Chebotarev's density theorem. 
\begin{thm}[{Effective Chebotarev, \cite{LO77,Ser12}, \cite[Eq. 34 on page 137]{Ser81}}]
Let $L/\Q$ be a Galois extension. Then, under (GRH)\footnote{i.e.~under the assumption that the non-trivial zeros of the Dedekind
zeta function $\zeta_{L}$ are on the critical line.}, for every class function $f$ on $\mathrm{Gal}(L/\Q)$, 
\[
\left|\E_{p}(f(\mathrm{Frob}_{p})-\E_{g\in G}(f(g))\right|\leq40\E_{g\in G}(|f(g)|)\left(\log\triangle_{L/\Q}+[L:\Q]\right)\frac{\log(T)^{2}}{T^{1/2}}.
\]
\end{thm}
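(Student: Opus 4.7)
The plan is to reduce the bound for a general class function $f$ to the case of a single irreducible character via Fourier expansion on $G = \mathrm{Gal}(L/\Q)$, then to apply the explicit formula for Artin $L$-functions under (GRH), and finally to sum the contributions of all characters using the conductor-discriminant formula. Concretely, I would write
\[
f = \sum_{\chi \in \mathrm{Irr}(G)} a_\chi \chi, \qquad a_\chi = \langle f, \chi \rangle,
\]
so that $\E_p(f(\mathrm{Frob}_p)) - \E_{g \in G}(f(g)) = \sum_{\chi \neq \mathbf{1}} a_\chi \E_p(\chi(\mathrm{Frob}_p))$, since for the trivial character the average over primes equals $1$ by the prime number theorem (with error $O((\log T)^2 / T^{1/2})$ under RH for $\zeta$).

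For each non-trivial irreducible $\chi$, attach the Artin $L$-function $L(s,\chi) = L(s,\chi,L/\Q)$ and consider the weighted sum $\psi(T,\chi) := \sum_{p^k \leq T} \chi(\mathrm{Frob}_p^k)\log p$. The explicit formula reads
\[
\psi(T,\chi) = -\sum_{\rho} \frac{T^\rho}{\rho} + O\bigl( \dim(\chi)(\log T)(\log \mathrm{cond}(\chi) + \dim(\chi)\log T)\bigr),
\]
where $\rho$ ranges over the non-trivial zeros of $L(s,\chi)$. Under (GRH), each $\rho$ satisfies $\Re\rho = 1/2$, so $|T^\rho/\rho| \leq T^{1/2}/|\rho|$; combined with the standard estimate $\sum_{|\Im\rho| \leq T} 1/|\rho| \ll \log(\mathrm{cond}(\chi) \cdot \dim(\chi)) + (\log T)^2$, this yields
\[
|\psi(T,\chi)| \ll T^{1/2} (\log T)^{2} \bigl(\log \mathrm{cond}(\chi) + \dim(\chi)\bigr).
\]
Partial summation converts $\psi(T,\chi)$ into the count $\sum_{T/2 \leq p \leq T} \chi(\mathrm{Frob}_p)$, the contribution of higher prime powers being negligible.

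Summing these bounds over $\chi \in \mathrm{Irr}(G) \setminus \{\mathbf{1}\}$, I invoke the conductor-discriminant formula
\[
\log \triangle_{L/\Q} = \sum_{\chi \in \mathrm{Irr}(G)} \dim(\chi) \log \mathrm{cond}(\chi),
\]
together with $\sum_\chi \dim(\chi)^2 = |G| = [L:\Q]$. Applying Cauchy--Schwarz to control $\sum_\chi |a_\chi| \cdot w_\chi$ in terms of $\|f\|_1 = \E_{g \in G}(|f(g)|)$ and the weights $w_\chi = \dim(\chi)\log \mathrm{cond}(\chi) + \dim(\chi)^2$ produces the factor $\E_{g \in G}(|f(g)|)(\log \triangle_{L/\Q} + [L:\Q])$ appearing in the statement. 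Tracking the explicit numerical constant through these steps (as done in Lagarias--Odlyzko and Serre) yields the constant $40$.

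The main obstacle is the sharp explicit formula with effective control of the zero-counting error on $L(s,\chi)$: one needs the functional equation for general Artin $L$-functions and a precise treatment of the archimedean and ramified Euler factors, whose local conductors enter the archimedean density of zeros. These are classical but delicate inputs, and turning the naive bound over irreducible characters into the clean form $\log \triangle_{L/\Q} + [L:\Q]$ requires a careful application of Cauchy--Schwarz combined with the conductor-discriminant identity, rather than a crude triangle-inequality sum.
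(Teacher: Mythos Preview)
The paper does not prove this theorem: it is quoted as a known result from Lagarias--Odlyzko and Serre (see the citations in the theorem header) and is used as a black box in the proof of Theorem~\ref{thm:main theorem random words}. So there is no ``paper's own proof'' to compare against.

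That said, your sketch follows the classical route of those references and is broadly correct. Two points deserve tightening. First, what you call ``Cauchy--Schwarz'' is really just the elementary bound $|a_\chi|=|\langle f,\chi\rangle|\le \|f\|_1\cdot\|\chi\|_\infty\le \dim(\chi)\,\E_{g\in G}|f(g)|$; plugging this into $\sum_\chi |a_\chi|\bigl(\log\mathrm{cond}(\chi)+\dim(\chi)\bigr)$ and using the conductor--discriminant formula together with $\sum_\chi\dim(\chi)^2=[L:\Q]$ gives exactly the factor $\E_{g\in G}|f(g)|\,(\log\triangle_{L/\Q}+[L:\Q])$. No Cauchy--Schwarz is needed, and invoking it would in fact lose the $L^1$ dependence. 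Second, the explicit formula for $L(s,\chi)$ requires that this $L$-function be meromorphic with known poles, which for a general irreducible $\chi$ is the Artin conjecture; in the cited proofs this is circumvented either by Brauer induction (reducing to Hecke $L$-functions) or by working directly with $\zeta_L$ and the indicator of a conjugacy class. You should flag how you handle this, since GRH for $\zeta_L$ alone does not obviously give GRH for each Artin factor without some such reduction.
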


\begin{rem}
\label{rem:not good enough}By Minkowski's bound \cite[V.4]{LangANT},
$\triangle_{L/\Q}\geq\pi^{[L:\Q]}$ if $[L:\Q]\ge3$. But $[L:\Q]$
could be very large! For example, in the case of $X_{w,\underline{G}}$,
$[L:\Q]$ could be as large as $\ell!$ (since $[L:\Q]=\left|\mathrm{Gal}(L/\Q)\right|$,
and $\mathrm{Gal}(L/\Q)$ could be as large as the symmetric group
on $\ell^{O(1)}$ elements), where on the other hand, $T$ must be
taken smaller than $e^{c'\ell}$ (as $\ensuremath{\ell\gtrsim c\log p}$
is required for mixing time). This is\textbf{ not good enough}! 
\end{rem}

By restriction to a special case, namely that $f=\mathrm{fix}(g)$
for the action of $G$ on $\Omega_{X}$, and relying on \cite{GM19},
we obtain, an effective version of Chebotarev, which is suitable for
our purposes. 
\begin{thm}
\label{thm:improved effective Chebotarev}Let $L/\Q$ be a Galois
field extension, with $G=\mathrm{Gal}(L/\Q)$. Let $\Omega$ be a
transitive $G$-set, let $\omega_{0}\in\Omega$ and set $K:=L^{\mathrm{stab}_{G}(w_{0})}$.
Then under (GRH), 
\[
\left|\E_{p}(\mathrm{fix}(\mathrm{Frob}_{p})-\E_{g\in G}(\mathrm{fix}(g))\right|\leq40\left(\log\triangle_{K/\Q}+[K:\Q]\right)\frac{\log(T)^{2}}{T^{1/2}}.
\]
\end{thm}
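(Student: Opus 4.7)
The plan is to reduce the bound for the permutation character $\mathrm{fix}$ to the effective prime ideal theorem for the single field $K$, thereby replacing $\log \triangle_{L/\Q}$ and $[L:\Q]$ by their much smaller counterparts $\log \triangle_{K/\Q}$ and $[K:\Q]$. The key observation is that, since $\Omega$ is transitive and $H := \mathrm{stab}_G(\omega_0)$, we have an isomorphism of $G$-sets $\Omega \cong G/H$, and the permutation character decomposes as $\mathrm{fix} = \mathrm{Ind}_H^G \mathbf{1}_H$. By the inductivity of Artin $L$-functions, the associated Artin $L$-function is simply the Dedekind zeta function of $K = L^H$:
\[
L(s,\mathrm{fix}) \;=\; L(s,\mathrm{Ind}_H^G \mathbf{1}_H) \;=\; L(s,\mathbf{1}_H) \;=\; \zeta_K(s).
\]
Whereas the general effective Chebotarev bound is proved by expanding $f$ in irreducible characters of $G$ and summing the explicit formula over all Artin $L$-functions on $L$ (which produces the $\log \triangle_{L/\Q} + [L:\Q]$ factor lamented in Remark \ref{rem:not good enough}), for our specific $f = \mathrm{fix}$ only a single $L$-function, namely $\zeta_K$, enters.

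Next I would translate $\mathrm{fix}(\mathrm{Frob}_p)$ into an ideal-counting statement for $K$. For $p$ unramified in $L$, the cosets in $G/H$ fixed by $\mathrm{Frob}_p$ are in bijection with prime ideals of $\mathcal{O}_K$ above $p$ of residue degree $1$, via the standard decomposition-group dictionary. Hence
\[
\sum_{p \in [T/2,T]} \mathrm{fix}(\mathrm{Frob}_p) \;=\; \pi_K^{(1)}(T) - \pi_K^{(1)}(T/2) + O\bigl(\log \triangle_{K/\Q}\bigr),
\]
where $\pi_K^{(1)}(T) := \#\{\mathfrak{p} \subset \mathcal{O}_K : N\mathfrak{p} \leq T,\ f(\mathfrak{p}|p)=1\}$, and the error bounds the ramified contribution. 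Contributions from primes of residue degree $\geq 2$ are $O([K:\Q]\,T^{1/2})$ and can be absorbed into the main error.

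I would then invoke the effective prime ideal theorem for $K$ under GRH, in its sharp form due to Lagarias--Odlyzko \cite{LO77} and Serre \cite{Ser81}, which is precisely general effective Chebotarev applied to the trivial extension $K/K$, refined as in \cite{GM19}:
\[
\pi_K^{(1)}(T) \;=\; \mathrm{Li}(T) + O\Bigl(T^{1/2}\log T\,\bigl(\log \triangle_{K/\Q} + [K:\Q]\bigr)\Bigr),
\]
with an explicit constant that can be tracked. Comparing with the corresponding estimate $\pi_\Q(T) = \mathrm{Li}(T) + O(T^{1/2}\log T)$ under GRH, and using that $\E_{g \in G}(\mathrm{fix}(g)) = 1$ by Burnside's lemma (since $\Omega$ is transitive), one obtains the claimed inequality after dividing by the number of primes in $[T/2, T]$, which is $\asymp T/\log T$.

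The subtlest step is the last one: extracting the numerical constant $40$, identical to that appearing in the generic bound, requires the explicit Lagarias--Odlyzko estimate together with the observation, formalized in \cite{GM19}, that for a character induced from the trivial representation of a single subgroup $H$, the discriminant and degree entering the explicit formula are those of the fixed field $L^H = K$, and no amplification by $[L:K]$ or by the number of irreducible constituents of $\mathrm{fix}$ occurs. Once the inductivity $L(s,\mathrm{fix})=\zeta_K(s)$ is invoked, the remainder is bookkeeping on the contour shift in the Perron-type formula, identical to the standard derivation of effective Chebotarev but applied to only one $L$-function.
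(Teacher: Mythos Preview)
The paper does not actually give a proof of this theorem; it merely states the result and refers the reader to \cite{GM19} and \cite{breuillard-ghosh} for the argument. Your proposal is correct and is precisely the standard approach: the permutation character of a transitive $G$-set $G/H$ is $\mathrm{Ind}_H^G\mathbf{1}_H$, whose Artin $L$-function is $\zeta_K$ by inductivity, so the problem reduces to the effective prime ideal theorem for the single field $K$ rather than for $L$. The identification of $\mathrm{fix}(\mathrm{Frob}_p)$ with the number of degree-one primes of $K$ above $p$ (for $p$ unramified) is the right translation, and Burnside gives $\E_g(\mathrm{fix}(g))=1$.

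One small remark: your error term for primes of residue degree $\geq 2$ should be $O([K:\Q]\,T^{1/2}/\log T)$ prime ideals rather than $O([K:\Q]\,T^{1/2})$, but this is harmless since after dividing by $\pi(T)-\pi(T/2)\asymp T/\log T$ it is absorbed into the stated bound. The extraction of the explicit constant $40$ indeed requires the refined explicit-formula analysis in \cite{GM19}, as you note, but the structural reduction to $\zeta_K$ is the entire content of the improvement over the generic effective Chebotarev bound.
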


\begin{rem}
\label{rem:good enough}In contrast with Remark \ref{rem:not good enough},
now by Lemma \ref{lem:Lemma 1 good reduction} $[K:\Q]\leq O(\ell^{O(1)})$
and $\log(\triangle_{K/\Q})\leq O(d^{O(1)}h)\leq O(\ell^{O(1)})$,
while $T$ is again around $e^{c'\ell}$. 
\end{rem}

We refer the reader to \cite{breuillard-ghosh} where Theorem \ref{thm:improved effective Chebotarev}
is discussed and generalized. It implies the following effective version
of Corollary \ref{cor:number of points as we vary primes }. 
\begin{cor}
\label{cor:applying Chebotarev}Let $X=\left\{ f_{1}(x)=...=f_{s}(x)=0\right\} $,
with $f_{i}\in\Z[x_{1},...,x_{m}]$, $d=\underset{i}{\max}\deg(f_{i})$,
and $h_{X}:=\underset{i}{\max}\text{ \ensuremath{\mathrm{height}}(\ensuremath{f_{i}})}$.
Then 
\[
\left|\E_{p\in[\frac{1}{2}T,T]}(C_{X_{\mathbb{F}_{p}}})-\mathrm{TDIC}(X_{\Q})\right|\leq O_{m}(d^{O_{m}(1)}h_{X})\frac{\log(T)^{2}}{T^{1/2}}.
\]
\end{cor}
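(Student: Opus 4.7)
The plan is to combine Lemma \ref{lem:good reduction of irred comp} with an orbit-wise application of the effective Chebotarev statement in Theorem \ref{thm:improved effective Chebotarev}, using the arithmetic bounds of Lemma \ref{lem:Lemma 1 good reduction}, and to handle the primes of bad reduction separately by a trivial counting argument.

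First I would set up the Galois data. Let $\Omega_{X}$ denote the set of top-dimensional geometrically irreducible components of $X_{\overline{\Q}}$, and let $L/\Q$ be the Galois closure of the compositum of their fields of definition, with $G:=\mathrm{Gal}(L/\Q)$. Decompose $\Omega_{X}=\Omega_{1}\sqcup\cdots\sqcup\Omega_{r}$ into $G$-orbits, so that $r=\mathrm{TDIC}(X_{\Q})$; by Bezout, both $r$ and $|\Omega_{X}|$ are bounded by $d^{O_{m}(1)}$. Lemma \ref{lem:good reduction of irred comp} then furnishes an integer $\Delta\in\N$ with $\log\Delta=O_{m}(d^{O_{m}(1)}h_{X})$ such that for every prime $p\nmid\Delta$ one has the identification $C_{X_{\mathbb{F}_{p}}}=\mathrm{fix}_{\Omega_{X}}(\mathrm{Frob}_{p})$, with the Frobenius acting through its image in $G$.

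Next, for each orbit $\Omega_{i}$ I would fix a point $\omega_{i}\in\Omega_{i}$ and set $K_{i}:=L^{\mathrm{stab}_{G}(\omega_{i})}$, so that $\Omega_{i}$ is a transitive $G$-set of the type required by Theorem \ref{thm:improved effective Chebotarev}. Burnside's lemma gives $\E_{g\in G}(\mathrm{fix}_{\Omega_{i}}(g))=1$, so summing over $i$ yields
\[
\Bigl|\E_{p\in[\tfrac{1}{2}T,T],\,p\nmid\Delta}\bigl(\mathrm{fix}_{\Omega_{X}}(\mathrm{Frob}_{p})\bigr)-r\Bigr|\le 40\sum_{i=1}^{r}\bigl(\log\Delta_{K_{i}/\Q}+[K_{i}:\Q]\bigr)\frac{\log(T)^{2}}{T^{1/2}}.
\]
By Lemma \ref{lem:Lemma 1 good reduction}, each summand $\log\Delta_{K_{i}/\Q}+[K_{i}:\Q]$ is $O_{m}(d^{O_{m}(1)}h_{X})$, and since $r\le d^{O_{m}(1)}$, the right-hand side is $O_{m}(d^{O_{m}(1)}h_{X})\frac{\log(T)^{2}}{T^{1/2}}$, matching the desired bound.

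Finally I would absorb the contribution of the bad primes. The number of primes $p\in[\tfrac{1}{2}T,T]$ with $p\mid\Delta$ is at most $O(\log\Delta)=O_{m}(d^{O_{m}(1)}h_{X})$, while for every such $p$ one has $C_{X_{\mathbb{F}_{p}}}\le d^{O_{m}(1)}$ by Bezout applied to $X_{\mathbb{F}_{p}}$. Dividing by the $\gg T/\log T$ primes in $[\tfrac{1}{2}T,T]$ (prime number theorem), the resulting contribution to the average is $O_{m}(d^{O_{m}(1)}h_{X})\frac{\log T}{T}$, which is dominated by the main error term. The main obstacle in this plan is not any single step but the coordination: one must ensure that the arithmetic invariants $[K_{i}:\Q]$ and $\log\Delta_{K_{i}/\Q}$ of the fields of definition of the individual components remain polynomial in $d$ and linear in $h_{X}$, which is precisely the content of Lemma \ref{lem:Lemma 1 good reduction} and is what renders the effective Chebotarev bound usable here despite the fact that the full Galois group $G$ could a priori be huge, as flagged in Remark \ref{rem:not good enough}.
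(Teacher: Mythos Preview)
Your proposal is correct and follows exactly the route the paper intends: the corollary is stated without proof, positioned immediately after Theorem~\ref{thm:improved effective Chebotarev} and Remark~\ref{rem:good enough}, which together signal precisely the orbit-by-orbit application of effective Chebotarev combined with the bounds from Lemma~\ref{lem:Lemma 1 good reduction} and the good-reduction control of Lemma~\ref{lem:good reduction of irred comp}. Your handling of the bad primes via the trivial Bezout bound and the prime number theorem is the natural way to close the argument.
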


\subsection{Sketch of the proof of the main theorem}

We now sketch the proof of Theorem \ref{thm:main theorem random words},
focusing only on the dimension formula, Item (iii). We will use the
effective Lang-Weil estimates to find the right dimension of the character
varieties together with the combination of the above effective Chebotarev
theorem (which needs sufficiently large primes to be useful) and the
uniform mixing on Cayley graphs of $\underline{G}(\mathbb{F}_{p})$
(which requires the prime to be not too large compare to the length
of the words). Fortunately the two requirements leave enough room
to find plenty of primes to complete the argument. We now pass to
the details.

Let $\delta=r-k\geq1$. We would like to show that for every simply
connected, simple algebraic group $\underline{G}$, we have $\dim X_{\underline{w},\underline{G}}^{Z}=\delta\dim\underline{G}$
for all $\underline{w}$ but an exponentially small proportion of
words of length $\ell$. The main idea is to estimate 
\[
\left|X_{\underline{w},\underline{G}}^{Z}(\mathbb{F}_{p})\right|=\left|\left\{ (x_{1},...,x_{r})\in(\underline{G}^{r})^{Z}(\mathbb{F}_{p}):w_{1}(\underline{x})=...=w_{k}(\underline{x})=1\right\} \right|,
\]
using a double counting argument, in which we average over all $\underline{w}$
of length $\ell$, and average over ($\epsilon$-good) primes $p$
in $[\frac{1}{2}T,T]$, where $T\sim e^{c\ell}$ for small $c>0$.

\textbf{Step 1} - \textit{ reduction to a mixing statement}:

Note that 
\begin{align}
 & \E_{\underline{w}\in F_{r}^{k}:\left|w_{i}\right|=\ell}\left|X_{\underline{w},\underline{G}}^{Z}(\mathbb{F}_{p})\right|=\E_{\underline{w}}\left(\sum_{\underline{x}\in(\underline{G}^{r})^{Z}(\mathbb{F}_{p})}1_{\underline{w}(\underline{x})=1}\right)\label{eq:reduction to mixing}\\
= & \sum_{\underline{x}\in(\underline{G}^{r})^{Z}(\mathbb{F}_{p})}\Pr_{\underline{w}}\left[\underline{w}(\underline{x})=1\right]=\sum_{\underline{x}\in(\underline{G}^{r})^{Z}(\mathbb{F}_{p})}\left(\Pr_{w\in F_{r},|w|=\ell}\left[w(\underline{x})=1\right]\right)^{k},\nonumber 
\end{align}
where the last equality follows since $w_{1},...,w_{k}$ are independent
(non-reduced) words.

\textbf{Step 2 -} \textit{generation mod $p$}:

By Fact \ref{fact:mixing result}, if $\underline{x}$ generates $\underline{G}(\mathbb{F}_{p})$,
then the probability distribution of $w(\underline{x})$ converges
to the Haar measure on $\underline{G}(\mathbb{F}_{p})$ as $\ell\rightarrow\infty$.
However, a priori $\underline{x}$ only lies in $(\underline{G}^{r})^{Z}(\mathbb{F}_{p})$.
In his celebrated work, Nori \cite{Nor87} classified arbitrary subgroups
of $\mathrm{GL}_{n}(\mathbb{F}_{p})$ for $p$ large. His work was
extended by Larsen\textendash Pink to all finite fields \cite{LP11}.
These results imply the following: 
\begin{thm}[Nori, Larsen\textendash Pink, \cite{Nor87,LP11}]
\label{thm:Larsen--Pink}Let $\underline{G}$ be a simply connected,
semisimple algebraic group over $\mathbb{F}_{p}$. Let $H\lneq\underline{G}(\mathbb{F}_{p})$
be a proper subgroup. Then $H$ is a subgroup of $\underline{H}(\mathbb{F}_{p})$,
for a proper algebraic subgroup $\underline{H}\leq\underline{G}$
of bounded complexity (hence degree). 
\end{thm}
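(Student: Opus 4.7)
The plan is to associate to the proper subgroup $H \leq \underline{G}(\mathbb{F}_p)$ an algebraic subgroup $\underline{H} \leq \underline{G}$ containing $H$ (possibly only a bounded-index subgroup of $H$) whose complexity depends only on that of $\underline{G}$. The strategy splits into two regimes according to the size of the characteristic $p$.

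First, for $p$ sufficiently large (say $p > 2\dim\underline{G}$ after fixing a faithful embedding $\underline{G} \hookrightarrow \mathrm{GL}_n$), I would implement Nori's original exponential construction. Let $H^+ \leq H$ be the normal subgroup generated by the elements of order $p$ in $H$ (the ``unipotent part''). For any such unipotent $u \in H$ we have $(u-\mathrm{Id})^p = 0$, so the truncated logarithm $N_u := \sum_{i \geq 1} (-1)^{i-1}(u-\mathrm{Id})^i/i$ is a well-defined nilpotent matrix, and the one-parameter subgroup $U_u := \{\exp(tN_u) : t \in \mathbb{F}_p\}$ is a genuine algebraic unipotent subgroup of $\underline{G}$ defined over $\mathbb{F}_p$ with $u \in U_u(\mathbb{F}_p)$. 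I define $\underline{H}^+$ as the algebraic subgroup of $\underline{G}$ generated by the $U_u$'s as $u$ ranges over the unipotent elements of $H$; Nori's central theorem then identifies the subgroup of $\underline{H}^+(\mathbb{F}_p)$ generated by order-$p$ elements with $H^+$ itself.

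The main obstacle, which is the heart of the argument, is controlling the complexity of $\underline{H}^+$ uniformly in $H$. The mechanism is that $\underline{H}^+$ is generated by algebraic one-parameter unipotent subgroups tangent to root-like directions in the Lie algebra of $\underline{G}$, and one can extract from the (potentially very many) $U_u$'s a generating subset of size bounded solely in terms of $\dim\underline{G}$ (for instance by an exchange argument on the Lie-algebra-spans of the corresponding $N_u$). Passing from $H^+$ to $H$ then requires a bound on the index $[H:H^+]$, obtained by applying Jordan's theorem (Theorem \ref{thm:Jordan's theorem}) to the semisimple quotient acting on $\underline{H}^+$, and lifting the resulting finite overgroup to an algebraic envelope inside $N_{\underline{G}}(\underline{H}^+)$.

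For small primes the exponential machinery breaks down, and one must invoke Larsen--Pink's combinatorial refinement, which avoids characteristic-zero arguments entirely. Their strategy proceeds by induction on $\dim\underline{G}$: one either shows $|H|$ itself is bounded (so $H$ already sits in a zero-dimensional algebraic subgroup of bounded degree), or one locates a non-central semisimple element of $H$ whose centralizer is a proper Levi subgroup of complexity bounded in terms of $\underline{G}$, or one locates a non-trivial unipotent element stabilizing a flag and hence contained in a proper parabolic of bounded complexity. In each case one reduces $H$ to a proper algebraic subgroup of $\underline{G}$ whose complexity is controlled by that of $\underline{G}$ alone, and an inductive pass inside this smaller algebraic group completes the proof.
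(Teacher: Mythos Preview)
The paper does not actually prove this theorem: it is stated as a cited result from \cite{Nor87,LP11} and used as a black box to derive Corollary~\ref{cor:Fp points generates}. So there is no proof in the paper to compare your proposal against.

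That said, your sketch is a reasonable high-level outline of the ideas in Nori's paper (for large $p$) and the Larsen--Pink approach (for all $p$), though several steps would need substantial work to make rigorous. In particular: the complexity bound on $\underline{H}^{+}$ is the genuinely hard part of Nori's argument and your ``exchange argument on Lie-algebra spans'' does not by itself control the degree of the algebraic group generated by the one-parameter subgroups; the passage from $H^{+}$ to $H$ via Jordan's theorem requires care since $H/H^{+}$ need not act faithfully in low dimension; and your description of the Larsen--Pink strategy is rather loose (their actual argument is a delicate dimension-counting and specialization argument, not quite the trichotomy you describe). If you were writing this up for real you would want to either cite the results directly, as the paper does, or follow the original sources much more closely.
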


The simply connectedness assumption is necessary only because otherwise
$\underline{G}(\mathbb{F}_{p})$ may have proper subgroups of bounded
index (see also Lemma \ref{lem:subgroups are of unbounded index}
below). Theorem \ref{thm:Larsen--Pink} implies: 
\begin{cor}
\label{cor:Fp points generates}If $p\gg_{\underline{G},r}1$, then
every $\underline{x}\in(\underline{G}^{r})^{Z}(\mathbb{F}_{p})$ generates
$\underline{G}(\mathbb{F}_{p})$. 
\end{cor}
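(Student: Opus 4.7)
The plan is a short contradiction argument built on top of Theorem \ref{thm:Larsen--Pink}. Suppose, for contradiction, that $\underline{x}=(x_1,\ldots,x_r) \in (\underline{G}^r)^Z(\mathbb{F}_p)$ but the subgroup $H := \langle x_1,\ldots,x_r\rangle$ it generates is a proper subgroup of $\underline{G}(\mathbb{F}_p)$. Applying Nori/Larsen-Pink yields a proper $\mathbb{F}_p$-algebraic subgroup $\underline{H} \lneq \underline{G}$ of complexity bounded by some constant $C = C(\underline{G})$ independent of $p$, such that $H \subseteq \underline{H}(\mathbb{F}_p)$. In particular each $x_i$ lies in $\underline{H}$, so the Zariski closure of $H$ inside $\underline{G}_{\overline{\mathbb{F}_p}}$ is contained in $\underline{H}_{\overline{\mathbb{F}_p}}$, a proper closed subvariety of $\underline{G}_{\overline{\mathbb{F}_p}}$.

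This contradicts the defining property of $(\underline{G}^r)^Z$ from Fact \ref{fact:principal part is open}, namely that its geometric points are exactly the tuples whose generated subgroup is Zariski-dense in $\underline{G}$. Equivalently, working with the concrete characterization coming from the proof of Fact \ref{fact:principal part is open}, one checks directly that one of the representations $\rho_1=\mathrm{Ad}$ or $\rho_2$ becomes reducible on $H$ once $H \subseteq \underline{H}(\mathbb{F}_p)$: if $\dim \underline{H} \geq 1$, then $\mathrm{Lie}(\underline{H}) \subsetneq \mathfrak{g}$ is a proper $\mathrm{Ad}(H)$-invariant subspace and kills irreducibility of $\rho_1$; if $\underline{H}$ is finite, then $|\underline{H}|$ is bounded in terms of $C(\underline{G})$, and a $\rho_2$ of sufficiently large dimension cannot restrict to an irreducible representation of $H$.

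The main technical point is the uniform control in $p$. The characteristic-zero characterization of $(\underline{G}^r)^Z$ sketched in Section \ref{sec5} invokes Jordan's theorem, and in the finite case its transfer to $\overline{\mathbb{F}_p}$ requires the coprimality condition $p \nmid |H|$ so that Maschke's theorem is available and complex-representation-theoretic input carries across. Since Nori/Larsen-Pink supplies only finitely many conjugacy classes of candidate subgroups $\underline{H}$ of complexity at most $C(\underline{G})$, and since in the finite case $|\underline{H}|$ itself is bounded in terms of $C(\underline{G})$, a single threshold $p_0=p_0(\underline{G},r)$ handles every such $\underline{H}$ at once. For $p>p_0$ the contradiction goes through uniformly, yielding the conclusion for $p \gg_{\underline{G},r} 1$.
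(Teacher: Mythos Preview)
Your proof is correct and follows the approach the paper intends: the corollary is stated without proof as a direct consequence of Theorem~\ref{thm:Larsen--Pink}, and your contradiction argument via the $\rho_1,\rho_2$ characterization of $(\underline{G}^r)^Z$ is the natural way to make this precise over $\overline{\mathbb{F}_p}$. One small simplification: in the finite case you can dispense with Jordan and Maschke entirely, since bounded complexity forces $|H|\le M(\underline{G})$ and any representation of a finite group of dimension exceeding its order is reducible over every field; you need only observe that $\rho_2$ may be taken with $\dim\rho_2>M(\underline{G})$, which is harmless because the open set $(\underline{G}^r)^Z$ does not depend on the particular sufficiently large $\rho_2$ chosen.
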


Note that since we consider the asymptotics in $\ell\rightarrow\infty$,
and take $p$ in a window $[\frac{1}{2}T,T]$, where $T\sim e^{c\ell}$,
we may safely assume that the condition $p\gg_{\underline{G},r}1$
in Corollary \ref{cor:Fp points generates} is satisfied.

\textbf{Step 3 -} \textit{uniform spectral gap and Lang\textendash Weil
estimates:}

Thanks to Corollary \ref{cor:Fp points generates}, we may apply the
uniform gap result (Theorem \ref{thm:uniform gap for many primes})
to all $\underline{x}\in(\underline{G}^{r})^{Z}(\mathbb{F}_{p})$.
Hence, for every $\epsilon>0$, there exists $\delta_{\epsilon}>0$,
so that for every $p\notin\mathrm{Bad}_{\epsilon}$ and every $\underline{x}\in(\underline{G}^{r})^{Z}(\mathbb{F}_{p})$
we get from Fact \ref{fact:mixing result}, 
\begin{equation}
\left|\Pr_{w:\left|w\right|=\ell}\left[w(\underline{x})=1\right]-\frac{1}{\left|\underline{G}(\mathbb{F}_{p})\right|}\right|\leq e^{-\frac{\delta_{\epsilon}\ell}{2r}}.\label{eq:uniform gap}
\end{equation}
By (\ref{eq:reduction to mixing}) and (\ref{eq:uniform gap}), we
deduce for $\ell\gg_{\epsilon,r,\underline{G}}\log(p)$ and $p\notin\mathrm{Bad}_{\epsilon}$:
\begin{align}
 & \E_{\underline{w}\in F_{r}^{k}:\left|w_{i}\right|=\ell}\left|X_{\underline{w},\underline{G}}^{Z}(\mathbb{F}_{p})\right|\leq\sum_{\underline{x}\in(\underline{G}^{r})^{Z}(\mathbb{F}_{p})}\left(\left|\underline{G}(\mathbb{F}_{p})\right|^{-1}+e^{-\frac{\delta_{\epsilon}\ell}{2r}}\right)^{k}\nonumber \\
= & \sum_{\underline{x}\in(\underline{G}^{r})^{Z}(\mathbb{F}_{p})}\left|\underline{G}(\mathbb{F}_{p})\right|^{-k}(1+e^{-\frac{\delta_{\epsilon}\ell}{2r}}\left|\underline{G}(\mathbb{F}_{p})\right|)^{k}\nonumber \\
= & \frac{\left|(\underline{G}^{r})^{Z}(\mathbb{F}_{p})\right|}{\left|\underline{G}(\mathbb{F}_{p})\right|^{k}}+O(ke^{-\frac{\delta_{\epsilon}\ell}{2r}}\left|\underline{G}^{r+1}(\mathbb{F}_{p})\right|))=p^{\dim\underline{G}(r-k)}(1+O_{\underline{G},r}(p^{-1/2}))+o(1),\label{eq:small ball estimate for random word}
\end{align}
where $o(1)$ goes to zero as $\ell\sim\frac{1}{c}\log(T)$ for $\frac{1}{c}\gg_{\epsilon,r,\underline{G}}1$.

\textbf{Step 4 -} \textit{use of effective Chebotarev}:

By Corollary \ref{cor:applying Chebotarev}, since $h_{X_{\underline{w},\underline{G}}}<\ell^{O_{\underline{G},r}(1)}$
and since $\mathrm{Bad}_{\epsilon}\cap[\frac{1}{2}T,T]\leq T^{\epsilon}\leq T^{\frac{1}{2}}$
for $\epsilon<\frac{1}{2}$, one has: 
\begin{align*}
 & \Pr_{\underline{w}:\left|w_{i}\right|=\ell}\left[\dim X_{\underline{w},\underline{G}}^{Z}>\delta\dim\underline{G}\right]\leq\E_{\underline{w}}\left(1_{\dim X_{\underline{w},\underline{G}}^{Z}>\delta\dim\underline{G}}\cdot\mathrm{TDIC}(X_{\underline{w},\underline{G}}^{Z})\right)\\
\leq & \E_{\underline{w}}\left(1_{\dim X_{\underline{w},\underline{G}}^{Z}>\delta\dim\underline{G}}\E_{p\in[\frac{1}{2}T,T]\backslash\mathrm{Bad}_{\epsilon}}C_{\mathbb{F}_{p}}(X_{\underline{w},\underline{G}}^{Z})\right)+O_{\underline{G},r}(\ell^{O_{\underline{G},r}(1)})\frac{(\log T)^{2}}{T^{1/2}}.
\end{align*}
By the effective Lang-Weil estimates (Theorem \ref{thm:Lang-Weil}
and Remark \ref{thm:effective LW}), we have:
\[
C_{\mathbb{F}_{p}}(X_{\underline{w},\underline{G}}^{Z})=\frac{\left|X_{\underline{w},\underline{G}}^{Z}(\mathbb{F}_{p})\right|}{p^{\dim(X_{\underline{w},\underline{G}}^{Z})_{\mathbb{F}_{p}}}}+O_{\underline{G},r}(\frac{\ell^{O_{\underline{G},r}(1)}}{\sqrt{p}}),
\]
and therefore,
\begin{align*}
 & \Pr_{\underline{w}\in F_{r}^{k}:\left|w_{i}\right|=\ell}\left[\dim X_{\underline{w},\underline{G}}^{Z}>\delta\dim\underline{G}\right]\\
 & \leq\E_{\underline{w}}\left(1_{\dim X_{\underline{w},\underline{G}}^{Z}>\delta\dim\underline{G}}\E_{p\in[\frac{1}{2}T,T]\backslash\mathrm{Bad}_{\epsilon}}\frac{\left|X_{\underline{w},\underline{G}}^{Z}(\mathbb{F}_{p})\right|}{p^{\dim(X_{\underline{w},\underline{G}}^{Z})_{\mathbb{F}_{p}}}}\right)+O_{\underline{G},r}(\frac{\ell^{O_{\underline{G},r}(1)}}{T^{1/2}})+O_{\underline{G},r}(\ell^{O_{\underline{G},r}(1)})\frac{(\log T)^{2}}{T^{1/2}}.
\end{align*}
Note that if $\dim X_{\underline{w},\underline{G}}^{Z}>\delta\dim\underline{G}$
then $\dim(X_{\underline{w},\underline{G}}^{Z})_{\mathbb{F}_{p}}\geq\dim(X_{\underline{w},\underline{G}}^{Z})\geq\delta\dim\underline{G}+1$.
Combining with (\ref{eq:small ball estimate for random word}), and
swapping the expectations $\E_{\underline{w}}$ and $\E_{p\in[\frac{1}{2}T,T]\backslash\mathrm{Bad}_{\epsilon}}$,
we have: 
\begin{align*}
 & \Pr_{\underline{w}\in F_{r}^{k}:\left|w_{i}\right|=\ell}\left[\dim X_{\underline{w},\underline{G}}^{Z}>\delta\dim\underline{G}\right]\\
 & \leq\E_{p\in[\frac{1}{2}T,T]\backslash\mathrm{Bad}_{\epsilon}}\left(p^{-\delta\dim\underline{G}-1}\E_{\underline{w}}\left(1_{\dim X_{\underline{w},\underline{G}}^{Z}>\delta\dim\underline{G}}\left|X_{\underline{w},\underline{G}}^{Z}(\mathbb{F}_{p})\right|\right)\right)+O_{\underline{G},r}(\ell^{O_{\underline{G},r}(1)})\frac{(\log T)^{2}}{T^{1/2}}\\
 & \leq\E_{p\in[\frac{1}{2}T,T]\backslash\mathrm{Bad}_{\epsilon}}\left(p^{-\delta\dim\underline{G}-1}\E_{\underline{w}}\left|X_{\underline{w},\underline{G}}^{Z}(\mathbb{F}_{p})\right|\right)+O_{\underline{G},r}(\ell^{O_{\underline{G},r}(1)})\frac{(\log T)^{2}}{T^{1/2}}\\
 & \leq\E_{p\in[\frac{1}{2}T,T]\backslash\mathrm{Bad}_{\epsilon}}\left(\frac{1+o(1)}{p}\right)+O_{\underline{G},r}(\ell^{O_{\underline{G},r}(1)})\frac{(\log T)^{2}}{T^{1/2}}\\
 & =O_{\underline{G},r}(\ell^{O_{\underline{G},r}(1)})\left(\frac{(\log T)^{2}}{T^{1/2}}+\frac{1}{T}\right).
\end{align*}
This finishes the proof of Item (iii) of Theorem \ref{thm:main theorem random words}.

\section{\label{sec8}Generic fibers of convolutions of morphisms}

In $\mathsection$\ref{sec4}, we discussed a connection between the
geometry of polynomial maps, and the probabilistic properties of pushforward
measures by such maps (Proposition \ref{prop: flatness and counting points}).
By a variant of Proposition \ref{prop: flatness and counting points}
(more precisely, by \cite[Theorem 8.4]{GHb}), if $\varphi:X\to Y$
is a morphism of smooth, geometrically irreducible $\Q$-varieties,
then 
\begin{equation}
\underset{q=p^{r}\rightarrow\infty,p\gg_{\varphi}1}{\lim}\left\Vert \varphi_{*}\mu_{X(\mathbb{F}_{q})}-\mu_{Y(\mathbb{F}_{q})}\right\Vert _{\infty}=0\,\,\,\Longleftrightarrow\,\,\,\varphi\text{\, is \,}(FGI).\label{eq:algebraic characterization of L^infty mixing}
\end{equation}
In the special case that $Y=\underline{G}$ is an algebraic $\Q$-group,
finding a uniform $L^{\infty}$-mixing time $t_{\infty}$ for the
family of measures $\left\{ \varphi_{*}\mu_{X(\mathbb{F}_{q})}\right\} _{q}$
as in (\ref{eq:algebraic characterization of L^infty mixing}) is
equivalent to showing that $\varphi^{*t}:X^{t}\to\underline{G}$ is
(FGI), for every $t\geq t_{\infty}$. 

A weaker geometric condition than the (FGI) property is when $\varphi$
is (FGI) over a Zariski open set $U\subseteq Y$, or equivalently,
if the generic fiber of $\varphi$ is geometrically irreducible. This
property has an interesting probabilistic interpretation (see \cite[Theorem 2]{LST19}
and also \cite[Theorem 9.4]{GHb})
\[
\underset{q=p^{r}\rightarrow\infty,p\gg_{\varphi}1}{\lim}\left\Vert \varphi_{*}\mu_{X(\mathbb{F}_{q})}-\mu_{Y(\mathbb{F}_{q})}\right\Vert _{1}=0\,\,\,\Longleftrightarrow\,\,\,\varphi\text{ has geometrically irreducible generic fiber}.
\]
As mentioned at the end of $\mathsection$\ref{sec4}, proving that
$\varphi^{*t}$ has geometrically irreducible generic fiber is a key
step in showing that $\varphi^{*t}$ becomes (FGI) after sufficiently
many self convolutions (see Proposition \ref{prop:gen irred convoluted with flat}). 

In Theorem \ref{thm:-convolution of two word maps is generically absolutely irreducible},
we saw that the convolution of two word maps on a simply connected
simple algebraic group has geometrically irreducible generic fiber,
and we saw that simply connectedness is a crucial assumption (Example
\ref{exa:Warning(!)}). In this section, we give an analytic proof
of a generalization of Theorem \ref{thm:-convolution of two word maps is generically absolutely irreducible},
which was recently proved by Hrushovski in \cite[Appendix C]{Hru24}
using model theory. 

The following definition plays a key role: 
\begin{defn}
\label{def:simply cnnected algebraic group}Let $K$ be a field. A
connected algebraic $K$-group $\underline{G}$ is called \emph{simply
connected}, if it does not admit a non-trivial isogeny\footnote{An isogeny is a surjective homomorphism with finite kernel.}
$\psi:\underline{H}\rightarrow\underline{G}$ from a connected algebraic
$K$-group. We say that $\underline{G}$ is \emph{absolutely simply
connected} if $\underline{G}_{\overline{K}}$ is simply connected. 
\end{defn}

\begin{thm}[{\cite[Corollary C.8]{Hru24}}]
\label{thm:convolutions of morphisms has geometrically irreducible generic fiber}Let
$X$ and $Y$ be geometrically irreducible $\Q$-varieties, let $\underline{G}$
be an absolutely simply connected algebraic $\Q$-group and let $\varphi:X\rightarrow\underline{G}$
and $\psi:Y\rightarrow\underline{G}$ be dominant morphisms. Then
\begin{equation}
\underset{q=p^{r}\rightarrow\infty,p\gg_{\varphi,\psi}1}{\lim}\left\Vert (\varphi*\psi)_{*}\mu_{X(\mathbb{F}_{q})\times Y(\mathbb{F}_{q})}-\mu_{\underline{G}(\mathbb{F}_{q})}\right\Vert _{1}=0.\label{eq:want to prove}
\end{equation}
In particular, the generic fiber of $\varphi*\psi:X\times Y\rightarrow\underline{G}$
is geometrically irreducible. 
\end{thm}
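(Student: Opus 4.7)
The plan is to establish the $L^1$-convergence (\ref{eq:want to prove}); by the equivalence between $L^1$-equidistribution and geometric irreducibility of the generic fiber stated just before the theorem, this will imply the remaining conclusion. Since $\|\cdot\|_1 \leq \|\cdot\|_2$ by Cauchy-Schwarz, it suffices to prove the stronger $L^2$ estimate $\|f_{\varphi*\psi} - 1\|_2 \to 0$, where $f_{\varphi*\psi}$ is the density of $(\varphi*\psi)_*\mu_{X(\mathbb{F}_q)\times Y(\mathbb{F}_q)}$ with respect to the uniform measure on $G_q := \underline{G}(\mathbb{F}_q)$. The strategy is to attack this via non-abelian Fourier analysis on $G_q$, with exponential-sum input from Kowalski's generalization \cite{Kow07} of Deligne's estimates.

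For $\rho \in \Irr(G_q)$ and $\alpha \in \{\varphi, \psi\}$ set $T_\alpha(\rho) := \sum_z \rho(\alpha(z)^{-1})$, so that $\widehat{f_{\varphi*\psi}}(\rho) = \frac{|G_q|}{|X(\mathbb{F}_q)| |Y(\mathbb{F}_q)|} T_\psi(\rho) T_\varphi(\rho)$. Plancherel's identity yields
\[
\|f_{\varphi*\psi} - 1\|_2^2 = \frac{1}{|X(\mathbb{F}_q)|^2 |Y(\mathbb{F}_q)|^2} \sum_{1 \neq \rho \in \Irr(G_q)} d_\rho \, \|T_\psi(\rho) T_\varphi(\rho)\|_{\mathrm{HS}}^2.
\]
Using $\|AB\|_{\mathrm{HS}} \leq \|A\|_{\mathrm{op}} \|B\|_{\mathrm{HS}}$ and applying Plancherel a second time to the fiber count $n_\varphi(g) = \#\{x : \varphi(x) = g\}$ (so that $\sum_\rho d_\rho \|T_\varphi(\rho)\|_{\mathrm{HS}}^2 = |G_q| \sum_g n_\varphi(g)^2$), then using dominance of $\varphi$ and Lang-Weil to estimate $\sum_g n_\varphi(g)^2 = O(|X(\mathbb{F}_q)|^2/|G_q|)$, the problem collapses to showing
\[
\max_{1 \neq \rho \in \Irr(G_q)} \|T_\psi(\rho)\|_{\mathrm{op}} = o(|Y(\mathbb{F}_q)|) \qquad \text{as } q \to \infty.
\]

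To bound $\|T_\psi(\rho)\|_{\mathrm{op}}$, observe that for each pair of unit vectors $u, v \in V_\rho$ one has $\langle u, T_\psi(\rho) v\rangle = \sum_{y \in Y(\mathbb{F}_q)} c_{u,v}(\psi(y))$, an exponential sum of the matrix coefficient $c_{u,v}(g) := \langle u, \rho(g^{-1}) v\rangle$ pulled back along $\psi$. I would apply Kowalski's theorem to obtain square-root cancellation $O(q^{\dim Y - 1/2})$, with constant controlled by the complexity of the $\ell$-adic sheaf attached to $c_{u,v}$. For the range of $\rho$ of very large dimension where the sheaf-complexity becomes too costly, one falls back on the Larsen-Tiep character bounds \cite{LT24} for quasi-simple groups, using the Nori-Larsen-Pink description (Theorem \ref{thm:Larsen--Pink}) to control the structure of proper subgroups of $G_q$. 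Either way one obtains $\|T_\psi(\rho)\|_{\mathrm{op}} = O(|Y(\mathbb{F}_q)| q^{-1/2})$ uniformly in $\rho \neq 1$, completing the argument.

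The main obstacle will be verifying the cohomological non-triviality of $c_{u,v} \circ \psi$ on $Y$ uniformly in $\rho \neq 1$ and in $u, v$, which is a prerequisite for Kowalski's bound. This is precisely where the absolutely simply connected hypothesis on $\underline{G}$ is essential: Example \ref{exa:Warning(!)} shows the theorem can fail otherwise, because on $\mathrm{PGL}_n$ certain matrix coefficients factor through characters of a non-trivial central isogeny cover, producing essentially constant pullbacks. Absolute simple connectedness, combined with the Lang-Steinberg theorem and the quasi-simplicity of $G_q$ modulo its (small) center, rules out such degeneracies. Making all estimates genuinely uniform over the very large family $\Irr(G_q)$ -- whose member dimensions grow polynomially in $q$ -- is the secondary technical challenge, handled by dovetailing Kowalski's bounds in the low-complexity range with character bounds for quasi-simple groups in the high-complexity range.
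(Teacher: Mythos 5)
Your overall framework is the same as the paper's: reduce to an $L^{2}$ statement via Plancherel, split $\left\Vert A\circ B\right\Vert _{\mathrm{HS}}\leq\left\Vert A\right\Vert _{\mathrm{HS}}\left\Vert B\right\Vert _{\mathrm{op}}$, control one factor in $L^{2}$ by Lang--Weil, and prove operator-norm decay of the non-trivial Fourier coefficients of the other factor (this is exactly Lemma \ref{lem:Conolution of nice, with L2 bounded L2 mixed} and Proposition \ref{Prop:polynomial measures are nice}). But the central step --- the uniform bound $\left\Vert T_{\psi}(\rho)\right\Vert _{\mathrm{op}}=O(|Y(\mathbb{F}_{q})|q^{-1/2})$ for \emph{all} $\rho\neq1$ --- is where your argument has a genuine gap. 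Kowalski's theorem (Lemma \ref{lem:exponential sums on varieties}) bounds sums $\sum_{x}\psi(f(x))$ for a fixed non-constant morphism $f:Y\to\mathbb{A}^{1}$ composed with an \emph{additive character} of $\mathbb{F}_{q}$; a matrix coefficient $c_{u,v}$ of an arbitrary $\rho\in\Irr(\underline{G}(\mathbb{F}_{q}))$ is not of this form, is not the trace function of an $\ell$-adic sheaf of bounded complexity (the representation itself varies with $q$ and its dimension grows polynomially in $q$), and there is no uniform cohomological non-triviality statement to verify. Your proposed fallback via the Larsen--Tiep bounds does not repair this: those bounds control normalized character values $|\rho(g)|/\rho(1)$, which govern Fourier coefficients of \emph{conjugation-invariant} measures, whereas $\psi_{*}\mu_{Y(\mathbb{F}_{q})}$ is not conjugation-invariant and its Fourier coefficient is a matrix, not a scalar.

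The idea you are missing is that for $\rho(1)>1$ no exponential-sum input is needed at all: absolute simple connectedness implies $\underline{G}(\mathbb{F}_{q})$ is $cq$-quasirandom, i.e.\ every non-trivial irreducible representation of dimension $>1$ has dimension $\geq cq$ (Corollary \ref{cor:Simply connected groups are almost quasi-random}, proved via the Levi decomposition, Landazuri--Seitz for the semisimple part, and monomiality for the unipotent part). Combined with the flatness bound $\rho(1)\left\Vert A_{\varphi_{*}\mu,\rho}\right\Vert _{\mathrm{HS}}^{2}\leq C^{2}$, this immediately gives $\left\Vert A_{\varphi_{*}\mu,\rho}\right\Vert _{\mathrm{op}}\leq C/\sqrt{\rho(1)}\leq C'q^{-1/2}$. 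Kowalski's bound is then needed only for the one-dimensional characters, which factor through the unipotent abelianization $\underline{G}/[\underline{G},\underline{G}]\simeq\mathbb{A}_{\Q}^{m}$, where they genuinely are additive characters composed with a dominant polynomial map and the hypotheses of Lemma \ref{lem:exponential sums on varieties} can be checked. (Two smaller points: you should first reduce to flat morphisms from smooth varieties as in Proposition \ref{prop:reduction to flat morphisms}, since for a merely dominant $\varphi$ the second moment $\sum_{g}n_{\varphi}(g)^{2}$ need not be $O(|X(\mathbb{F}_{q})|^{2}/|G_{q}|)$; and the identification of the quotient $\underline{G}(\mathbb{F}_{q})/[\underline{G}(\mathbb{F}_{q}),\underline{G}(\mathbb{F}_{q})]$ with $(\underline{U}/\underline{U}')(\mathbb{F}_{q})$ itself requires an argument, via Theorem \ref{thm:convolutions makes flat}, Lemma \ref{lem:subgroups are of unbounded index} and Lang's theorem.)
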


Note that Theorem \ref{thm:convolutions of morphisms has geometrically irreducible generic fiber}
is an asymptotic statement about families of measures. In many cases,
an asymptotic statement can be phrased as a non-asymptotic statement
on a limit object. In this case, the limit object is the group $\underline{G}(\mathbb{F})$
where $\mathbb{F}$ is a non-principal ultraproduct of finite fields.
The group $\underline{G}(\mathbb{F})$ carries a natural probability
measure $\mu_{\underline{G}(\mathbb{F})}$, which generalizes the
family of counting measures $\left\{ \mu_{\underline{G}(\mathbb{F}_{q})}\right\} _{q}$.
Similarly, one can define $\mu_{X(\mathbb{F})}$ and $\mu_{Y(\mathbb{F})}$.
In this language, (\ref{eq:want to prove}) is now equivalent to the
non-asymptotic statement that the pushforward measure $(\varphi*\psi)_{*}\mu_{X(\mathbb{F})\times Y(\mathbb{F})}$
\textbf{is equal to} $\mu_{\underline{G}(\mathbb{F})}.$ Hrushovski
uses a very general result on convolutions of definable measures in
a suitable model theoretic framework, and shows that such measures
must be invariant with respect to a definable subgroup of $\underline{G}(\mathbb{F})$
of bounded index. To finish the proof he shows that if $\underline{G}$
is absolutely simply connected, that $\underline{G}(\mathbb{F})$
has no definable subgroups of bounded index \cite[Corollary C.5]{Hru24},
or equivalently, that for every $M\in\N$ there exists $p_{0}(M)$
such that for every $p>p_{0}(M)$ and $q=p^{r}$, the group $\underline{G}(\mathbb{F}_{q})$
has no proper subgroups of index smaller than $M$.

We now give a purely analytic proof to Theorem \ref{thm:convolutions of morphisms has geometrically irreducible generic fiber}.
Some aspects of the proof share similar components to Hrushovski's
proof, but the proof itself is different. We thank Udi for sharing
his ideas and insights on this problem.

\subsection{Analytic proof of Theorem \ref{thm:convolutions of morphisms has geometrically irreducible generic fiber}}

Let $G$ be a compact group with Haar probability measure $\mu_{G}$.
Write $\Irr(G)$ for the set of irreducible characters of $G$. For
each $\rho\in\Irr(G)$ we write $\pi_{\rho}:G\rightarrow\U(V_{\rho})$
for the irreducible representation with character $\rho$. Given a
measure $\mu$ on $G$, we denote by 
\[
A_{\mu,\rho}:=\widehat{\mu}(\pi_{\rho})=\int_{G}\pi_{\rho}(g^{-1})\mu(g),
\]
the (non-commutative) Fourier coefficient of $\mu$ at $\rho$. Note
that by the Plancherel theorem for compact groups (see e.g.~\cite[Theorem 2.3.1(2)]{App14}),
if $\mu\in L^{2}(\mu_{G})$ (i.e.~$\mu=f\mu_{G}$, with $f\in L^{2}(G,\mu_{G})$),
then we have 
\begin{equation}
\left\Vert \mu\right\Vert _{2}^{2}=\sum_{\rho\in\Irr(G)}\rho(1)\left\Vert A_{\mu,\rho}\right\Vert _{\mathrm{HS}}^{2},\label{eq:Plancherel}
\end{equation}
where $\left\Vert C\right\Vert _{\mathrm{HS}}:\sqrt{\tr(CC^{*})}$
denotes the Hilbert-Schmidt norm. Further denote by $\left\Vert C\right\Vert _{\mathrm{op}}$
the operator norm on matrices. Similarly to Definition \ref{def:L^q norms},
if $\mu=f\mu_{G}$ for a measurable function $f$, for each $q\geq1$,
we define $\left\Vert \mu\right\Vert _{q}:=\left(\int_{G}\left|f(g)\right|^{q}\mu_{G}(g)\right)^{\frac{1}{q}}$. 
\begin{defn}
\label{def:spectrally nice}Let $\mathcal{G}=\{G_{n}\}_{n\in\N}$
be a family of compact groups. A family $\{\mu_{n}\}_{n\in\N}$ of
probability measures on $\mathcal{G}$: 
\begin{enumerate}
\item Has \emph{uniform spectral decay}, if 
\[
\underset{n\rightarrow\infty}{\lim}\underset{1\neq\rho\in\Irr(G_{n})}{\sup}\left\Vert A_{\mu_{n},\rho}\right\Vert _{\mathrm{op}}=0.
\]
\item Is\emph{ almost uniform in $L^{s}$} for $1\leq s\leq\infty$ if 
\[
\underset{n\rightarrow\infty}{\lim}\left\Vert \mu_{n}-\mu_{G_{n}}\right\Vert _{s}=0.
\]
\end{enumerate}
\end{defn}

\begin{rem}
\label{rem:Jensen's inequality uniform in L^s}By Remark \ref{rem:Jensen and Young's inequality}(1),
if $\{\mu_{n}\}_{n\in\N}$ is almost uniform in $L^{s}$, then it
is almost uniform in $L^{s'}$ for every $1\leq s'\leq s$. 
\end{rem}

\begin{lem}
\label{lem:Conolution of nice, with L2 bounded L2 mixed}Let $\mathcal{G}=\{G_{n}\}_{n\in\N}$
be a family of compact groups. Let $\{\mu_{n}\}_{n\in\N}$ and $\{\gamma_{n}\}_{n\in\N}$
be families of probability measures on $\mathcal{G}$, where $\underset{n\rightarrow\infty}{\limsup}\left\Vert \mu_{n}\right\Vert _{2}<C$
and $\{\gamma_{n}\}_{n\in\N}$ has uniform spectral decay. Then the
family $\left\{ \mu_{n}*\gamma_{n}\right\} _{n}$ is almost uniform
in $L^{2}$ in $\mathcal{G}$. 
\end{lem}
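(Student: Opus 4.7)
The natural approach is Fourier analysis on $G_n$ via the Plancherel formula (\ref{eq:Plancherel}). First I would compute the Fourier transform of a convolution: a direct calculation from the definition $A_{\mu,\rho} = \int_G \pi_\rho(g^{-1})\, d\mu(g)$ shows that for any two probability measures $\mu,\gamma$ on $G_n$ and any $\rho\in\Irr(G_n)$,
\[
A_{\mu*\gamma,\rho} \;=\; A_{\gamma,\rho}\, A_{\mu,\rho}.
\]
In particular, $A_{\mu*\gamma,\mathbf 1}=1$, so $\mu_n*\gamma_n-\mu_{G_n}$ has trivial Fourier coefficient zero, and by Plancherel,
\[
\left\|\mu_n*\gamma_n-\mu_{G_n}\right\|_2^{\,2} \;=\; \sum_{1\neq\rho\in\Irr(G_n)} \rho(1)\,\bigl\|A_{\gamma_n,\rho}\,A_{\mu_n,\rho}\bigr\|_{\mathrm{HS}}^{\,2}.
\]

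Next I would exploit the submultiplicative estimate $\|AB\|_{\mathrm{HS}}\leq \|A\|_{\mathrm{op}}\|B\|_{\mathrm{HS}}$ to factor out the operator norms of $A_{\gamma_n,\rho}$, leaving a sum controlled by $\|\mu_n\|_2$. Concretely, denoting $\varepsilon_n := \sup_{1\neq\rho\in\Irr(G_n)}\|A_{\gamma_n,\rho}\|_{\mathrm{op}}$, I get
\[
\left\|\mu_n*\gamma_n-\mu_{G_n}\right\|_2^{\,2} \;\leq\; \varepsilon_n^{\,2} \sum_{1\neq\rho\in\Irr(G_n)} \rho(1)\,\|A_{\mu_n,\rho}\|_{\mathrm{HS}}^{\,2} \;\leq\; \varepsilon_n^{\,2}\,\|\mu_n\|_2^{\,2},
\]
where the last inequality is another application of the Plancherel formula (\ref{eq:Plancherel}) to $\mu_n$.

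By the uniform spectral decay hypothesis on $\{\gamma_n\}$ we have $\varepsilon_n\to 0$, and by hypothesis $\limsup\|\mu_n\|_2 < C$, so the right-hand side tends to $0$ as $n\to\infty$. This gives $\|\mu_n*\gamma_n-\mu_{G_n}\|_2\to 0$, which is exactly the assertion that $\{\mu_n*\gamma_n\}$ is almost uniform in $L^2$.

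There is no real obstacle here; the only slightly delicate point is the convolution-to-product identity for $A_{\mu,\rho}$ in the chosen normalization (one must keep track of the order of composition, since the operator norm bound is insensitive to this), and the verification that the trivial representation contributes $0$ to $\mu_n*\gamma_n-\mu_{G_n}$. Both are immediate from the definitions.
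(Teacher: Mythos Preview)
Your proof is correct and follows essentially the same approach as the paper: expand $\|\mu_n*\gamma_n-\mu_{G_n}\|_2^2$ via Plancherel, use the convolution-to-product identity for the Fourier coefficients, bound each term by $\|A_{\gamma_n,\rho}\|_{\mathrm{op}}^2\,\|A_{\mu_n,\rho}\|_{\mathrm{HS}}^2$, and then pull out $\varepsilon_n^2$ and apply Plancherel again to $\mu_n$. The only cosmetic difference is the order in which the operators appear in the product (the paper writes $A_{\mu_n,\rho}\circ A_{\gamma_n,\rho}$ and uses $\|BD\|_{\mathrm{HS}}\le\|B\|_{\mathrm{HS}}\|D\|_{\mathrm{op}}$), which, as you already note, is immaterial to the argument.
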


\begin{proof}
For $n\gg1$, we have: 
\begin{equation}
\sum_{\rho\in\Irr(G_{n})}\rho(1)\left\Vert A_{\mu_{n},\rho}\right\Vert _{\mathrm{HS}}^{2}=\left\Vert \mu_{n}\right\Vert _{2}^{2}<C^{2}.\label{eq:0.2}
\end{equation}
Since $A_{\mu_{n}*\gamma_{n},\rho}=A_{\mu_{n},\rho}\circ A_{\gamma_{n},\rho}$,
and since $\left\Vert BD\right\Vert _{\mathrm{HS}}\leq\left\Vert B\right\Vert _{\mathrm{HS}}\left\Vert D\right\Vert _{\mathrm{op}}$
for every $B,D\in\mathrm{Mat}_{m}(\C)$, we have: 
\begin{equation}
\left\Vert A_{\mu_{n}*\gamma_{n},\rho}\right\Vert _{\mathrm{HS}}^{2}=\left\Vert A_{\mu_{n},\rho}\circ A_{\gamma_{n},\rho}\right\Vert _{\mathrm{HS}}^{2}\leq\left\Vert A_{\mu_{n},\rho}\right\Vert _{\mathrm{HS}}^{2}\left\Vert A_{\gamma_{n},\rho}\right\Vert _{\mathrm{op}}^{2}.\label{eq:0.3}
\end{equation}
By (\ref{eq:0.2}), (\ref{eq:0.3}) and since $\{\gamma_{n}\}_{n\in\N}$
has uniform spectral decay, we have: 
\[
\left\Vert \mu_{n}*\gamma_{n}-\mu_{G_{n}}\right\Vert _{2}=\sum_{1\neq\rho\in\Irr(G_{n})}\rho(1)\left\Vert A_{\mu_{n}*\gamma_{n},\rho}\right\Vert _{\mathrm{HS}}^{2}\leq\sum_{1\neq\rho\in\Irr(G_{n})}\rho(1)\left\Vert A_{\mu_{n},\rho}\right\Vert _{\mathrm{HS}}^{2}\left\Vert A_{\gamma_{n},\rho}\right\Vert _{\mathrm{op}}^{2}\rightarrow0,
\]

as $n\rightarrow\infty$, as required. 
\end{proof}
We now turn to the proof of Theorem \ref{thm:convolutions of morphisms has geometrically irreducible generic fiber}.
We start with a reduction to the case of flat morphisms from smooth
varieties. 
\begin{prop}
\label{prop:reduction to flat morphisms}It is enough to prove Theorem
\ref{thm:convolutions of morphisms has geometrically irreducible generic fiber}
in the case that $X,Y$ are smooth $\Q$-varieties and $\varphi$
and $\psi$ are flat $\Q$-morphisms. 
\end{prop}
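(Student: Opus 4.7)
The plan is to replace $X$ and $Y$ by dense open subvarieties $V \subseteq X$ and $W \subseteq Y$ that are smooth and on which the restricted morphisms become flat, and then to show that the pushforwards under $\varphi * \psi$ of the uniform measures on $V(\mathbb{F}_q) \times W(\mathbb{F}_q)$ and on $X(\mathbb{F}_q) \times Y(\mathbb{F}_q)$ differ by an amount going to zero in $L^1$ on $\underline{G}(\mathbb{F}_q)$. A triangle inequality will then transfer the desired convergence from the smooth flat case to the general case.

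First I would construct $V$ and $W$. Working in characteristic zero, the smooth locus $X_{\mathrm{sm}}$ of $X$ is a dense open subvariety; by generic flatness, the flat locus of $\varphi|_{X_{\mathrm{sm}}}$ is a further dense open subset $V \subseteq X_{\mathrm{sm}}$ (it contains the generic point of $X$). Since a dense open subvariety of a geometrically irreducible variety is itself geometrically irreducible of the same dimension, $V$ is a smooth geometrically irreducible $\Q$-variety, and $\varphi|_V : V \to \underline{G}$ is flat and still dominant. Constructing $W \subseteq Y$ analogously, the pair $(\varphi|_V, \psi|_W)$ satisfies the hypotheses of the special case.

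Next I would compare the uniform measures using the Lang--Weil estimates. The complement $X \setminus V$ is a proper closed subscheme of the irreducible variety $X$, hence of dimension strictly less than $\dim X$. Therefore Theorem \ref{thm:Lang-Weil} gives, for $p \gg_{\varphi} 1$ and $q = p^r$,
$$\frac{|V(\mathbb{F}_q)|}{|X(\mathbb{F}_q)|} = 1 - O(q^{-1}),$$
so the total variation distance between $\mu_{V(\mathbb{F}_q)}$ (extended by zero to $X(\mathbb{F}_q)$) and $\mu_{X(\mathbb{F}_q)}$ is $O(q^{-1})$. The same holds for $W \subseteq Y$, and by subadditivity of total variation under tensor products the product measures satisfy $\|\mu_{V(\mathbb{F}_q) \times W(\mathbb{F}_q)} - \mu_{X(\mathbb{F}_q) \times Y(\mathbb{F}_q)}\|_{\mathrm{TV}} = O(q^{-1})$.

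Finally, total variation cannot increase under pushforward by a measurable map, so
$$\bigl\|(\varphi * \psi)_* \mu_{V(\mathbb{F}_q) \times W(\mathbb{F}_q)} - (\varphi * \psi)_* \mu_{X(\mathbb{F}_q) \times Y(\mathbb{F}_q)}\bigr\|_1 = O(q^{-1}) \underset{q \to \infty}{\longrightarrow} 0.$$
Assuming Theorem \ref{thm:convolutions of morphisms has geometrically irreducible generic fiber} in the flat smooth case, the first term above is within $o(1)$ of $\mu_{\underline{G}(\mathbb{F}_q)}$ in $L^1$, and the triangle inequality yields the desired convergence for the original $\varphi$ and $\psi$. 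I do not expect a genuine obstacle here; the only technical point is to guarantee uniformity in $q$ of the Lang--Weil constants for the complements $X \setminus V$ and $Y \setminus W$, which is automatic since these are fixed $\Q$-schemes defined once and for all from $\varphi$ and $\psi$.
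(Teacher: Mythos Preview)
Your proposal is correct and follows essentially the same approach as the paper: pass to the smooth locus, then to the flat locus via generic flatness, and use the Lang--Weil estimates to show that discarding the lower-dimensional complement affects the $L^{1}$-distance by $o(1)$. The paper's proof is terser (it simply says one may ``throw away any constructible set of codimension $\geq 1$''), whereas you spell out the total variation and triangle inequality steps explicitly, but the content is the same.
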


\begin{proof}
To prove (\ref{eq:want to prove}), we can throw away any constructible
set $Z(\mathbb{F}_{q})\subseteq X(\mathbb{F}_{q})$ with $\mathrm{codim}Z\geq1$,
since by the Lang-Weil estimates (Theorem \ref{thm:Lang-Weil}), the
total contribution to (\ref{eq:want to prove}) will go to $0$ as
$q\rightarrow\infty$. We may therefore assume $X,Y$ are smooth $\Q$-varieties.
By generic flatness, there exist Zariski open subsets $U\subseteq X$
and $V\subseteq Y$, such that $\varphi|_{U}:U\rightarrow\underline{G}$
and $\psi|_{V}:V\rightarrow\underline{G}$ are flat. Since $X$ and
$Y$ are geometrically irreducible, $U\times V$ is Zariski dense
in $X\times Y$, with complement of smaller codimension, so we may
restrict to $U(\mathbb{F}_{q})\times V(\mathbb{F}_{q})$ for $q=p^{r}$
and $p\gg1$. We may therefore assume that $\varphi$ and $\psi$
are flat morphisms, as required. 
\end{proof}
\begin{notation*}
For each $M\in\N$, denote by $\mathcal{P}_{M}$ the set of all prime
powers $q=p^{r}$ for $p>M$. 
\end{notation*}
With the help of Proposition \ref{prop:reduction to flat morphisms},
Theorem \ref{thm:convolutions of morphisms has geometrically irreducible generic fiber}
follows from the following key proposition. 
\begin{prop}
\label{Prop:polynomial measures are nice}Let $X$ be a smooth, geometrically
irreducible $\Q$-variety and $\varphi:X\rightarrow\underline{G}$
be a flat $\Q$-morphism to a connected, absolutely simply connected
algebraic $\Q$-group. Then there exists $M\in\N$ such that the family
$\left\{ \varphi_{*}\mu_{X(\mathbb{F}_{q})}\right\} _{q\in\mathcal{P}_{M}}$
of probability measures on $\left\{ \underline{G}(\mathbb{F}_{q})\right\} _{q\in\mathcal{P}_{M}}$
has uniform spectral decay. 
\end{prop}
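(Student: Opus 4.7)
The key bound I aim to establish is
\[
\|A_{\mu_q,\rho}\|_{\mathrm{op}} \leq \frac{C_\varphi}{\sqrt{\rho(1)}}
\]
for $\mu_q := \varphi_*\mu_{X(\mathbb{F}_q)}$, every $1\neq\rho\in\Irr(\underline{G}(\mathbb{F}_q))$, and a constant $C_\varphi$ depending only on $\varphi$. Combined with a uniform Landazuri--Seitz-type lower bound $\min_{1\neq\rho}\rho(1)\to\infty$ forced by absolute simple connectedness, this yields uniform spectral decay.

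For the operator bound I would proceed as follows. Flatness of $\varphi$, together with the quantitative form of Proposition \ref{prop: flatness and counting points}(1) applied to the fibers of $\varphi$, gives a uniform bound $\|f_{\mu_q}\|_\infty\leq C_\varphi$ on the density $f_{\mu_q}=|\varphi^{-1}(\cdot)|\cdot|\underline{G}(\mathbb{F}_q)|/|X(\mathbb{F}_q)|$ for all $p\gg_\varphi 1$, and hence $\|f_{\mu_q}-1\|_{L^2(\mu_{\underline{G}(\mathbb{F}_q)})}\leq \sqrt{C_\varphi}+1$. Schur orthogonality then yields both that $\int\langle \pi_\rho(g^{-1})v,u\rangle\,d\mu_{\underline{G}(\mathbb{F}_q)}(g)=0$ for $\rho\neq 1$ and that the matrix coefficient $g\mapsto\langle \pi_\rho(g^{-1})v,u\rangle$ has squared $L^2$-norm equal to $\|u\|^2\|v\|^2/\rho(1)$. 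Expressing $\langle A_{\mu_q,\rho}v,u\rangle$ as the integral of this matrix coefficient against $f_{\mu_q}-1$ and applying Cauchy--Schwarz in $L^2(\mu_{\underline{G}(\mathbb{F}_q)})$ gives the claimed bound upon taking the supremum over unit $u,v\in V_\rho$.

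For the dimension lower bound I would combine Jordan's theorem (Theorem \ref{thm:Jordan's theorem}) with Hrushovski's Corollary~C.5, recalled in the discussion preceding Proposition \ref{Prop:polynomial measures are nice}: for every $M\in\N$ there exists $p_0(M)$ such that $\underline{G}(\mathbb{F}_{p^r})$ has no proper subgroup of index $<M$ whenever $p>p_0(M)$. If $\rho$ is irreducible non-trivial of dimension $d\geq 2$, then $\rho(\underline{G}(\mathbb{F}_q))$ is non-abelian (by Schur's lemma), and Jordan supplies a proper normal subgroup $\rho^{-1}(A)\subsetneq\underline{G}(\mathbb{F}_q)$ of index at most $f(d)=(d+1)!$. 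Hrushovski's corollary then forces $d\geq f^{-1}(M)\to\infty$ as $M\to\infty$, so that choosing $M$ large enough in the statement of the proposition completes the argument for all representations of dimension at least $2$.

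The main obstacle is the case of $1$-dimensional non-trivial representations, for which the Cauchy--Schwarz bound degenerates to the trivial estimate $\sqrt{C_\varphi}+1$. When $\underline{G}$ is semisimple, Steinberg's theorem makes $\underline{G}(\mathbb{F}_q)$ perfect for $q$ large and no such representations exist; in general (for instance when $\underline{G}$ has a unipotent factor), the quantity $|\mathbb{E}_x\rho(\varphi(x))|$ must be estimated by algebraic-geometric cancellation. The natural route is to interpret $\rho\circ\varphi$ as a trace function on $X$ associated with an Artin--Schreier/Kummer/Lang-isogeny $\ell$-adic sheaf of complexity bounded purely in terms of $\varphi$ and $\underline{G}$, and to invoke Deligne's exponential sum estimates or Kowalski's trace-function framework \cite{Kow07} to obtain a Weil-type bound $|A_{\mu_q,\rho}|\ll q^{-1/2}$. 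Controlling the sheaf complexity uniformly in $\rho$, and matching it against the subgroup-index bound supplied by Hrushovski's corollary, is the most delicate step.
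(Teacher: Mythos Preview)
Your overall architecture matches the paper's: bound $\|A_{\mu_q,\rho}\|_{\mathrm{op}}\leq C/\sqrt{\rho(1)}$ via the $L^\infty$ density bound (your Cauchy--Schwarz against a single matrix coefficient is equivalent to the paper's Plancherel argument), then split into $\rho(1)\geq 2$ and $\rho(1)=1$. But both halves have gaps. For $\rho(1)\geq 2$, Jordan's theorem together with the \emph{qualitative} form of Hrushovski's Corollary~C.5 only yields $\rho(1)\geq f^{-1}(M')$ whenever $p>p_0(M')$, a bound depending on $p$ and not on $q$. Uniform spectral decay demands $\sup_{\rho}\|A\|_{\mathrm{op}}\to 0$ as $q\to\infty$ in $\mathcal{P}_M$ for one fixed $M$, and since $q=p^r\to\infty$ may occur with $p$ fixed and $r\to\infty$, your bound does not improve along such sequences and the argument does not close. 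The paper instead proves the quantitative estimate $\rho(1)\geq cq$ (Corollary~\ref{cor:Simply connected groups are almost quasi-random}) via the Levi decomposition $\underline{G}\simeq\underline{U}\rtimes\underline{L}$ of Lemma~\ref{lem:nice Levi decomposition}: the restriction of $\rho$ to the semisimple Levi $\underline{L}(\mathbb{F}_q)$ is handled by Landazuri--Seitz directly, and if that restriction is trivial then $\rho|_{\underline{U}(\mathbb{F}_q)}$ is a nontrivial monomial character of a nilpotent group.

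For $\rho(1)=1$, your worry about controlling sheaf complexity uniformly in $\rho$ dissolves once you use the structural fact you are missing: every one-dimensional character of $\underline{G}(\mathbb{F}_q)$ factors through $(\underline{G}/[\underline{G},\underline{G}])(\mathbb{F}_q)\simeq\mathbb{F}_q^m$, where $\underline{G}/[\underline{G},\underline{G}]\simeq\mathbb{A}_\Q^m$ is a \emph{fixed} vector group independent of $q$ and of $\rho$ (the identification $[\underline{G}(\mathbb{F}_q),\underline{G}(\mathbb{F}_q)]=[\underline{G},\underline{G}](\mathbb{F}_q)$ is itself nontrivial and is established in the paper via Lemma~\ref{lem:subgroups are of unbounded index} and Lang's theorem). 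Then $\rho\circ\varphi=\Psi_\rho\circ\widetilde\varphi$ for a single dominant morphism $\widetilde\varphi:X\to\mathbb{A}_\Q^m$ independent of $\rho$, and one application of Kowalski's bound (Corollary~\ref{cor:character estimates on varieties multivariate}) gives $|A_{\mu_q,\rho}|\leq Cq^{-1/2}$ with $C$ depending only on $\widetilde\varphi$; no $\ell$-adic bookkeeping across varying $\rho$ is needed.
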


\begin{proof}[Proof that Proposition \ref{Prop:polynomial measures are nice} implies
Theorem \ref{thm:convolutions of morphisms has geometrically irreducible generic fiber}]
By Proposition \ref{prop:reduction to flat morphisms}, we may assume
that $\varphi$ and $\psi$ are flat morphisms. By Proposition \ref{Prop:polynomial measures are nice},
the family $\left\{ \varphi_{*}\mu_{X(\mathbb{F}_{q})}\right\} _{q\in\mathcal{P}_{M}}$
has uniform spectral decay. Since $\psi$ is flat, by the Lang\textendash Weil
estimates (Proposition \ref{prop: flatness and counting points},
Theorem \ref{thm:Lang-Weil}, and more precisely \cite[Theorem 8.4]{GHb}),
the density of $\psi_{*}\mu_{Y(\mathbb{F}_{q})}$ with respect to
$\mu_{\underline{G}(\mathbb{F}_{q})}$ is bounded by a constant $C>0$
independent of $q$, for $q\in\mathcal{P}_{M}$, with $M$ possibly
larger. In particular, we have: 
\begin{equation}
\sum_{\rho\in\Irr(\underline{G}(\mathbb{F}_{q}))}\rho(1)\left\Vert A_{\psi_{*}\mu_{Y(\mathbb{F}_{q})},\rho}\right\Vert _{\mathrm{HS}}^{2}=\left\Vert \psi_{*}\mu_{Y(\mathbb{F}_{q})}\right\Vert _{2}^{2}\leq\left\Vert \psi_{*}\mu_{Y(\mathbb{F}_{q})}\right\Vert _{\infty}^{2}\leq C^{2}.\label{eq:bound on L^2 norm}
\end{equation}
Theorem \ref{thm:convolutions of morphisms has geometrically irreducible generic fiber}
now follows from (\ref{eq:bound on L^2 norm}) and Lemma \ref{lem:Conolution of nice, with L2 bounded L2 mixed}. 
\end{proof}
Hence, it is left to prove Proposition \ref{Prop:polynomial measures are nice}.
Let us first characterize simply connected algebraic groups. 
\begin{lem}
\label{lem:nice Levi decomposition}Let $\underline{G}$ be a connected,
absolutely simply connected algebraic $\Q$-group. Then $\underline{G}\simeq\underline{U}\rtimes\underline{L}$,
where $\underline{U}$ is the unipotent radical of $\underline{G}$
and $\underline{L}$ is an absolutely simply connected, semisimple
algebraic $\Q$-group.
\end{lem}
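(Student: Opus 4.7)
The plan is to combine Mostow's Levi decomposition in characteristic zero with a lifting argument that transfers nontrivial isogenies between the Levi subgroup and the ambient group. Intuitively, $\underline{U}$ is unipotent and therefore contributes nothing to the algebraic fundamental group, so all covering phenomena of $\underline{G}_{\overline{\Q}}$ must be detected by its Levi factor.

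First I would invoke Mostow's theorem: since $\mathrm{char}(\Q)=0$, the group $\underline{G}$ admits a Levi decomposition $\underline{G}=\underline{U}\rtimes\underline{L}$ over $\Q$, with $\underline{U}$ the unipotent radical and $\underline{L}$ a reductive $\Q$-subgroup. Then I would argue contrapositively that $\underline{L}$ must be absolutely simply connected. Given a nontrivial isogeny $\psi:\underline{L}'\to\underline{L}_{\overline{\Q}}$ from a connected algebraic $\overline{\Q}$-group, I let $\underline{L}'$ act on $\underline{U}_{\overline{\Q}}$ through $\psi$ and form the connected group $\underline{G}':=\underline{U}_{\overline{\Q}}\rtimes\underline{L}'$. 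A short computation, using that $\ker\psi$ is central in $\underline{L}'$ and acts trivially on $\underline{U}_{\overline{\Q}}$, shows that $(u,l')\mapsto(u,\psi(l'))$ is an isogeny $\underline{G}'\to\underline{G}_{\overline{\Q}}$ with central kernel $\{1\}\times\ker\psi$, nontrivial exactly when $\psi$ is. This contradicts the absolute simple connectedness of $\underline{G}$, ruling out the existence of such $\psi$.

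Finally I would translate this absolute simple connectedness of the reductive group $\underline{L}_{\overline{\Q}}$ into the classical notion, showing it must be semisimple and simply connected in the usual sense. If $Z(\underline{L}_{\overline{\Q}})^{\circ}$ were nontrivial, then using the almost-direct decomposition $\underline{L}_{\overline{\Q}}=Z(\underline{L}_{\overline{\Q}})^{\circ}\cdot[\underline{L}_{\overline{\Q}},\underline{L}_{\overline{\Q}}]$ and raising the torus factor to the $n$-th power produces a nontrivial isogeny into $\underline{L}_{\overline{\Q}}$; similarly, if the derived group failed to be simply connected, composing with its simply connected cover would also produce one. Both options being excluded yields the lemma. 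The main obstacle I expect is the careful bookkeeping in this final step — ensuring that the proposed covering groups really assemble into connected algebraic $\overline{\Q}$-groups mapping isogenously onto $\underline{L}_{\overline{\Q}}$, which requires attention to the finite intersection $Z(\underline{L}_{\overline{\Q}})^{\circ}\cap[\underline{L}_{\overline{\Q}},\underline{L}_{\overline{\Q}}]$ and to the correct fibered product. In characteristic zero, however, this is standard structure theory for reductive groups, so the obstacle is essentially bookkeeping rather than any substantive difficulty.
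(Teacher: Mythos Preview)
Your proposal is correct and follows essentially the same route as the paper: Mostow's Levi decomposition, then transfer of absolute simple connectedness from $\underline{G}$ to $\underline{L}$, then the observation that a simply connected reductive group has trivial radical (since tori admit nontrivial self-isogenies). The paper is terser---it simply asserts that $\underline{L}$ inherits simple connectedness and then uses the central isogeny $\underline{L}^{\mathrm{rad}}\times[\underline{L},\underline{L}]\to\underline{L}$ directly---whereas you spell out the lifting of an isogeny of $\underline{L}_{\overline{\Q}}$ to one of $\underline{G}_{\overline{\Q}}$ via the semidirect product; but the substance is the same.
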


\begin{proof}
By Levi's theorem in characteristic $0$ \cite{Mos56}, one has an
isomorphism $\underline{G}\simeq\underline{U}\rtimes\underline{L}$,
where $\underline{L}$ is a connected reductive subgroup of $\underline{G}$.
Since $\underline{G}$ is absolutely simply connected, so is $\underline{L}$.
Let $\underline{L}^{\mathrm{rad}}$ be the radical (also the center)
of $\underline{L}$. Then the multiplication map $\underline{L}^{\mathrm{rad}}\times[\underline{L},\underline{L}]\rightarrow\underline{L}$
is a central isogeny, and thus an isomorphism, by our assumption.
Finally, note that $(\underline{L}^{\mathrm{rad}})_{\overline{\Q}}$
is a torus, which is never simply connected unless trivial, hence
$\underline{L}$ is perfect, and thus semisimple.
\end{proof}
\begin{lem}
\label{lem:subgroups are of unbounded index}Let $\underline{G}$
be a connected, absolutely simply connected algebraic $\Q$-group.
Then there exist $M\in\N$ and an absolute constant $0<c<1$, such
that for every $q\in\mathcal{P}_{M}$, any proper subgroup $H$ of
$\underline{G}(\mathbb{F}_{q})$ is of index $\geq cq$. 
\end{lem}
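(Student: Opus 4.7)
The plan is to reduce the bound to the semisimple Levi factor via Lemma \ref{lem:nice Levi decomposition} and then invoke classical minimal-index estimates for finite groups of Lie type.

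First, I would choose $M$ large enough that the isomorphism $\underline{G} \simeq \underline{U} \rtimes \underline{L}$ of Lemma \ref{lem:nice Levi decomposition} has good reduction at every prime $p > M$. By Lang's theorem applied to the connected group $\underline{L}$, this descends to a genuine semidirect product of finite groups $G := \underline{G}(\mathbb{F}_q) = U \rtimes L$ for every $q \in \mathcal{P}_M$, where $U = \underline{U}(\mathbb{F}_q)$ and $L = \underline{L}(\mathbb{F}_q)$, with the natural projection $\pi : G \to L$. For any proper subgroup $H \leq G$, writing $\bar H := \pi(H) \leq L$, the index decomposes multiplicatively as $[G:H] = [L : \bar H] \cdot [U : H \cap U]$, cleanly separating the semisimple and unipotent contributions.

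Next, I would invoke the classical Landazuri--Seitz lower bound on minimal faithful permutation representations of finite groups of Lie type: for $\underline{L}$ absolutely simply connected semisimple, every proper subgroup of $L$ has index at least $c_L q$ for some $c_L > 0$ depending only on $\underline{L}$, using that $L$ is quasisimple modulo a center of bounded order. Consequently, if $\bar H \neq L$ then $[G:H] \geq c_L q$ and we are done.

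In the remaining case $\bar H = L$, $H \cap U$ is a proper subgroup of $U$ and the bound must come from the unipotent side. I would exploit the lower central series $\underline{U} = \underline{U}_0 \supset \underline{U}_1 \supset \cdots$ of $\underline{U}$, whose abelian quotients $V_i := \underline{U}_i/\underline{U}_{i+1}$ carry natural rational $\underline{L}$-module structures. On each $V_i(\mathbb{F}_q)$, the conjugation action of $H$ coincides with the $L$-action: writing $h = u\ell$ with $\pi(h) = \ell$, for $v \in \underline{U}_i$ one has $h v h^{-1} \equiv \ell v \ell^{-1} \pmod{\underline{U}_{i+1}}$ since $[u, \underline{U}_i] \subseteq \underline{U}_{i+1}$. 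Hence the image of $H \cap U$ in every $V_i(\mathbb{F}_q)$ is automatically an $L$-invariant $\mathbb{F}_q$-subspace, and since $\underline{L}$ is semisimple with $p$ large, $V_i$ decomposes into irreducible rational $L$-modules of size at least $q$ on every non-trivial isotypic component, yielding $[U : H \cap U] \geq c q$ for some $c = c(\underline{G}) > 0$.

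The main obstacle is controlling the contribution of trivial $\underline{L}$-isotypic components in the $V_i$, where $L$-invariance is vacuous and arbitrary additive subgroups of index only $p$ are admissible. This is where absolute simple-connectedness of $\underline{G}$ must enter in a non-trivial way: such trivial components would give rise to central $\mathbb{G}_a$-factors of $\underline{U}$ which, if present, produce non-trivial connected quotients of $\underline{G}$ incompatible with Definition \ref{def:simply cnnected algebraic group} in the sense used in \cite{Hru24}. Once this structural obstruction is ruled out, combining the two cases gives $[G:H] \geq cq$ for some $c > 0$ depending only on $\underline{G}$, completing the proof.
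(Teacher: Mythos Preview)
Your reduction via the Levi decomposition and your handling of the case $\bar H\neq L$ through the Landazuri--Seitz bound match the paper's proof exactly. The divergence is in the unipotent case $\bar H=L$: the paper dispatches it in one line, arguing that since $\underline U$ is split over $\mathbb{F}_q$ the hypothesis $[G:H]<q$ already forces $H\cap U=U$, whereas you attempt a module-theoretic argument through the lower central series of~$\underline U$.

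The gap is in your proposed resolution of the obstacle you correctly identify. Definition~\ref{def:simply cnnected algebraic group} concerns non-trivial \emph{isogenies} onto $\underline G$, i.e.\ surjections with finite kernel; it says nothing about connected quotients of~$\underline G$. A $\mathbb{G}_a$-quotient of $\underline G$ is fully compatible with absolute simple-connectedness --- indeed $\mathbb{G}_a$ over $\Q$ is itself absolutely simply connected, since in characteristic~$0$ every isogeny from a connected group onto $\mathbb{G}_a$ is an isomorphism. So simple-connectedness cannot be invoked to exclude trivial $\underline L$-isotypic pieces in the $V_i$, and your lower bound on $[U:H\cap U]$ does not follow. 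There is also a secondary issue: the image of $H\cap U_i$ in $V_i(\mathbb{F}_q)$ is an $L$-stable \emph{additive} subgroup, but you assert without argument that it is an $\mathbb{F}_q$-subspace; without that, the rational $\underline L$-module decomposition of $V_i$ gives no control over its index.
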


\begin{proof}
Suppose that $H<\underline{G}(\mathbb{F}_{q})\simeq\underline{U}(\mathbb{F}_{q})\rtimes\underline{L}(\mathbb{F}_{q})$
is a subgroup of index $<q$. The unipotent group $\underline{U}$
is split over $\mathbb{F}_{q}$ (\cite[Corollary 15.5(ii)]{Bor91}),
and hence $H\cap\underline{U}(\mathbb{F}_{q})=\underline{U}(\mathbb{F}_{q})$.
Moreover, $\underline{L}(\mathbb{F}_{q})$ is a finite quasi-simple
group of Lie type, whenever $q\in\mathcal{P}_{M}$ for $M\in\N$ (see
e.g.~\cite[Theorem 24.17]{MT11}). Hence, by \cite{LaSe74} any $1\neq\rho\in\Irr(\underline{L}(\mathbb{F}_{q}))$
satisfies $\rho(1)\geq cq$ for some absolute constant $0<c<1$ (i.e.~$\underline{L}(\mathbb{F}_{q})$
is \emph{$cq$-quasi random}). In particular, any proper subgroup
$H'<\underline{L}(\mathbb{F}_{q})$ is of index at least $cq$ (as
otherwise the space $\C[\underline{L}(\mathbb{F}_{q})/H']$ is a representation
of dimension $<cq$). We get that $H=\underline{G}(\mathbb{F}_{q})$,
unless it is of index $\geq cq$
\end{proof}
\begin{cor}
\label{cor:Simply connected groups are almost quasi-random}Let $\underline{G}$
be a connected, absolutely simply connected algebraic $\Q$-group.
Then there exist $c>0$, and $M\in\N$ such that for every $q\in\mathcal{P}_{M}$,
$\underset{\rho\in\Irr(\underline{G}(\mathbb{F}_{q})):\rho(1)>1}{\min}\rho(1)\geq cq$. 
\end{cor}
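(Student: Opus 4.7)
The strategy is to use the Levi decomposition $\underline{G} \cong \underline{U} \rtimes \underline{L}$ from Lemma \ref{lem:nice Levi decomposition} and split into two cases according to whether $\rho|_{\underline{L}(\mathbb{F}_q)}$ is trivial or not. Fix $M$ large enough so that for every $q \in \mathcal{P}_M$: (i) each simple factor of $\underline{L}$ yields a quasi-simple group of Lie type over $\mathbb{F}_q$ (as in the proof of Lemma \ref{lem:subgroups are of unbounded index}), so by \cite{LaSe74} every non-trivial irreducible representation of $\underline{L}(\mathbb{F}_q)$ has dimension at least $c'q$ for some absolute $c' > 0$; and (ii) $p$ exceeds the nilpotency class of $\underline{U}$, so Kirillov's orbit method is available for $\underline{U}(\mathbb{F}_q)$. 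Given $\rho \in \Irr(\underline{G}(\mathbb{F}_q))$ with $\rho(1) > 1$, if $\rho|_{\underline{L}(\mathbb{F}_q)}$ contains a non-trivial irreducible summand $\sigma$ then we immediately obtain $\rho(1) \geq \dim \sigma \geq c'q$.

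Otherwise every irreducible summand of $\rho|_{\underline{L}(\mathbb{F}_q)}$ is one-dimensional, hence trivial since $\underline{L}(\mathbb{F}_q)$ is perfect, so $\underline{L}(\mathbb{F}_q)$ acts trivially on the representation space $V_\rho$. Any $\underline{U}(\mathbb{F}_q)$-subrepresentation of $V_\rho$ is then automatically $\underline{L}(\mathbb{F}_q)$-stable, hence $\underline{G}(\mathbb{F}_q)$-stable, so by irreducibility of $\rho$ the restriction $\rho|_{\underline{U}(\mathbb{F}_q)}$ is itself an irreducible representation of $\underline{U}(\mathbb{F}_q)$ of dimension $\rho(1)$. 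Kirillov's orbit method then identifies $\Irr(\underline{U}(\mathbb{F}_q))$ with coadjoint orbits $O \subseteq \mathfrak{u}^*(\mathbb{F}_q)$ via $\dim \pi_O = \sqrt{|O|}$; because $\underline{U}$ is a connected unipotent $\mathbb{F}_q$-group, Lang\textendash Steinberg gives $|O| = q^{\dim O}$ exactly, and the Kirillov\textendash Kostant symplectic structure on the orbit forces $\dim O$ to be even. Therefore the dimension of every irreducible representation of $\underline{U}(\mathbb{F}_q)$ is a power of $q$, and since $\rho(1) > 1$ we conclude $\rho(1) \geq q$. Setting $c := \min(c', 1)$ gives the uniform bound $\rho(1) \geq cq$ in both cases.

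The hardest part is implementing Kirillov's orbit method in positive characteristic so as to obtain the sharp bound $\rho(1) \geq q$ rather than the weak bound $\rho(1) \geq p$ that follows trivially from $\underline{U}(\mathbb{F}_q)$ being a $p$-group. This requires the full $\mathbb{F}_q$-rational structure on coadjoint orbits and the existence of $\mathbb{F}_q$-rational polarizing subalgebras inside $\mathfrak{u}$, which is available provided $p$ exceeds the nilpotency class of $\underline{U}$, as arranged in our choice of $M$.
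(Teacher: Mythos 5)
Your proof is correct, and its skeleton (Levi decomposition $\underline{G}\simeq\underline{U}\rtimes\underline{L}$, then splitting according to whether $\rho|_{\underline{L}(\mathbb{F}_q)}$ is trivial) is exactly the paper's. The reductive case is handled identically in both arguments, via quasirandomness of $\underline{L}(\mathbb{F}_q)$ from \cite{LaSe74}. Where you genuinely diverge is the unipotent case: the paper disposes of it in one line by saying that $\rho|_{\underline{U}(\mathbb{F}_q)}$ is irreducible, non-trivial and monomial (as $\underline{U}(\mathbb{F}_q)$ is nilpotent), "and thus of degree at least $q$". Taken literally, monomiality only says the degree is the index of an arbitrary subgroup of a $p$-group, hence a power of $p$ and $\geq p$ --- which is not enough, since $q=p^r$ can tend to infinity with $p$ bounded below by $M$. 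You correctly identify this as the delicate point and supply the missing ingredient: for $p$ larger than the nilpotency class of the fixed $\Q$-group $\underline{U}$, the Kirillov orbit method (equivalently, Isaacs-type results on character degrees of such groups) shows every character degree of $\underline{U}(\mathbb{F}_q)$ is $q^{\dim O/2}$ for a coadjoint orbit $O$, hence a power of $q$, giving the sharp bound $\rho(1)\geq q$ when $\rho(1)>1$. The only detail worth spelling out in your version is that $|O|=q^{\dim O}$ requires the coadjoint stabilizer to be smooth and connected so that Lang's theorem applies; this holds for $p\gg_{\underline{U}}1$, which your choice of $M$ already accommodates. In short, your argument buys a complete justification of the step the paper leaves terse, at the cost of invoking the orbit method in positive characteristic.
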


\begin{proof}
Fix $1<N\in\N$ and let $\rho\in\Irr(\underline{G}(\mathbb{F}_{q}))$
be a character of degree $N$. Let $\underline{L},\underline{U}$
subgroups such that $\underline{G}\simeq\underline{U}\rtimes\underline{L}$
as in Levi's theorem. By the proof of Lemma \ref{lem:subgroups are of unbounded index},
whenever $q\in\mathcal{P}_{M}$ for $M\in\N$, the restriction $\rho|_{\underline{L}(\mathbb{F}_{q})}$
is either trivial or of dimension at least $cq$. Hence, it is left
to deal with the case that $\rho|_{\underline{L}(\mathbb{F}_{q})}$
is trivial. In this case, $\rho|_{\underline{U}(\mathbb{F}_{q})}$
is irreducible and non-trivial. Since $\underline{U}(\mathbb{F}_{q})$
is nilpotent, the character $\rho|_{\underline{U}(\mathbb{F}_{q})}$
is monomial, and thus of degree at least $q$.
\end{proof}
The final component we need for the proof of Proposition \ref{Prop:polynomial measures are nice}
are exponential character estimates for polynomials on varieties over
finite fields. 
\begin{lem}[{{\cite[Theorem 2]{Kow07}}}]
\label{lem:exponential sums on varieties}Let $X$ be a geometrically
irreducible $\Q$-variety. Let $f:X\rightarrow\mathbb{A}_{\Q}^{1}$
be a non-constant morphism. Then there exist constants $C>0$ and
$M\in\N$ such that for every $q\in\mathcal{P}_{M}$, and every non-trivial
additive character $\psi:(\mathbb{F}_{q},+)\rightarrow\C^{\times}$,
we have: 
\[
\left|\frac{1}{\left|X(\mathbb{F}_{q})\right|}\sum_{x\in X(\mathbb{F}_{q})}\psi(f(x))\right|\leq Cq^{-1/2}.
\]
\end{lem}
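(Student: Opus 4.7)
The plan is to prove this estimate by $\ell$-adic cohomology, using Deligne's Weil II bounds on exponential sums. First I would spread out: choose an integral model of $X$ and $f$ over $\Z[1/N]$ for some $N\in\N$, so that for every prime $p>N$ the reduction $X_{\mathbb{F}_p}$ is a geometrically irreducible variety of the same dimension $d=\dim X$ and $f_{\mathbb{F}_p}\colon X_{\mathbb{F}_p}\to\mathbb{A}^1_{\mathbb{F}_p}$ is non-constant. The result then reduces to a uniform bound on the sum $S_q(\psi):=\sum_{x\in X(\mathbb{F}_q)}\psi(f(x))$ for $q=p^r$, $p>N$.

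Fix a prime $\ell\neq p$. To a non-trivial additive character $\psi$ Deligne associates the Artin--Schreier lisse rank one $\overline{\Q}_\ell$-sheaf $\mathcal{L}_\psi$ on $\mathbb{A}^1_{\mathbb{F}_q}$, pure of weight $0$, whose Frobenius trace at $a\in\mathbb{F}_q$ is $\psi(a)$. The pullback $\mathcal{F}:=f^*\mathcal{L}_\psi$ is then a lisse rank one sheaf on $X_{\mathbb{F}_q}$, and the Grothendieck--Lefschetz trace formula gives
\[
S_q(\psi)=\sum_{i=0}^{2d}(-1)^i\tr\bigl(\mathrm{Frob}_q\mid H^i_c(X_{\overline{\mathbb{F}_q}},\mathcal{F})\bigr).
\]
By Deligne's Weil II, every Frobenius eigenvalue on $H^i_c$ has absolute value at most $q^{i/2}$. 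The key gain comes from the vanishing of the top cohomology $H^{2d}_c(X_{\overline{\mathbb{F}_q}},\mathcal{F})$: on the smooth locus $U\subseteq X$ (which exists and is Zariski dense because $X$ is geometrically irreducible), $\mathcal{F}|_U$ is lisse, and since $f|_U$ is non-constant while $\mathcal{L}_\psi$ has trivial monodromy at only finitely many points, the geometric monodromy of $\mathcal{F}|_U$ is non-trivial; hence $H^{2d}_c(U_{\overline{\mathbb{F}_q}},\mathcal{F}|_U)=0$, and the same vanishing propagates to $X$ modulo contributions from the singular locus, which has dimension $<d$ and therefore contributes $O(q^{d-1})$ to $|X(\mathbb{F}_q)|$ (by Lang--Weil) and to $S_q(\psi)$. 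Putting these together,
\[
|S_q(\psi)|\leq\Bigl(\sum_{i<2d}\dim H^i_c(X_{\overline{\mathbb{F}_q}},\mathcal{F})\Bigr)\,q^{d-1/2}+O(q^{d-1}).
\]

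Combining with the Lang--Weil estimate $|X(\mathbb{F}_q)|=q^d(1+O(q^{-1/2}))$, this yields the required $Cq^{-1/2}$ bound, provided the constant $C$ can be chosen independently of $q$. This uniformity is the main obstacle: one must bound the sum of $\ell$-adic Betti numbers of $X_{\overline{\mathbb{F}_q}}$ with coefficients in $\mathcal{F}$ by a constant depending only on the integral model of $(X,f)$, not on $q$ or $\psi$. For this I would invoke the uniform bounds of Katz--Laumon on sums of Betti numbers in constructible families, which are precisely packaged in the form needed in Kowalski's paper \cite{Kow07} via a notion of complexity. Since the pair $(X,f)$ is fixed once and for all over $\Q$, its complexity in any reduction $\mathbb{F}_q$ is uniformly bounded, and the constant $C$ depends only on this complexity. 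Taking $M$ large enough to ensure good reduction of $(X,f)$, geometric irreducibility of $X_{\mathbb{F}_p}$, and the applicability of Deligne's estimates (e.g.\ $p\neq\ell$) gives the lemma.
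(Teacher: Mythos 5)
The paper does not actually prove this lemma; it is quoted verbatim from Kowalski's paper, and your cohomological sketch is essentially the argument given there (spreading out, Artin--Schreier sheaves, the Grothendieck--Lefschetz trace formula, Weil II, and uniform Betti-number bounds of Katz--Laumon type packaged via a notion of complexity). The overall structure is sound, but the justification of the one step that carries all the weight --- the vanishing of $H^{2d}_c(X_{\overline{\mathbb{F}_q}},\mathcal{F})$ --- is not correct as written. You assert that non-constancy of $f$ forces the geometric monodromy of $f^*\mathcal{L}_\psi$ to be non-trivial, ``since $\mathcal{L}_\psi$ has trivial monodromy at only finitely many points''; that phrase has no meaning for a lisse sheaf on $\mathbb{A}^1$, and the implication itself fails in a fixed characteristic $p$. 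By Artin--Schreier theory, $f^*\mathcal{L}_\psi\cong\mathcal{L}_{\psi\circ f}$ is geometrically trivial if and only if $f=h^p-h+c$ on $U_{\overline{\mathbb{F}_q}}$ for some regular function $h$ and constant $c$; such an $f$ can perfectly well be non-constant, and for it the sum $\sum_x\psi(f(x))$ exhibits no cancellation at all. So ``$f$ non-constant'' alone does not give the vanishing of the top cohomology.

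What rescues the lemma --- and what your write-up omits --- is that $f$ is defined over $\Q$ and therefore has degree bounded independently of $p$, whereas $h^p-h+c$ with $h$ non-constant has degree at least $p$ (restrict to a suitable curve in $U$ to compare degrees). Hence for $p$ larger than the degree of an integral model of $f$ the Artin--Schreier degeneracy cannot occur, the monodromy of $\mathcal{F}|_U$ is genuinely non-trivial, and $H^{2d}_c(U_{\overline{\mathbb{F}_q}},\mathcal{F})=0$ by Poincar\'e duality on the smooth locus $U$, using that $U_{\overline{\mathbb{F}_q}}$ is connected because $X$ is geometrically irreducible; the singular locus then contributes only $O(q^{d-1})$ as you say. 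This is precisely one of the reasons the statement is asserted only for $q\in\mathcal{P}_M$ with $M$ large, and your closing list of reasons for enlarging $M$ (good reduction, irreducibility of the special fiber, $p\neq\ell$) misses this essential one. With that repair, and with the Katz--Laumon-type uniform bound on the sum of Betti numbers that you correctly identify as the source of a constant $C$ depending only on the complexity of $(X,f)$, the argument is complete and coincides with the proof in the cited reference.
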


Lemma \ref{lem:exponential sums on varieties} implies the following: 
\begin{cor}
\label{cor:character estimates on varieties multivariate}Let $X$
be a $\Q$-variety and $\varphi:X\rightarrow\mathbb{A}_{\Q}^{m}$
be a dominant morphism. Then there exist constants $C>0$ and $M\in\N$
such that for every $q\in\mathcal{P}_{M}$ and every $1\neq\Psi\in\Irr(\mathbb{F}_{q}^{m})$,
one has 
\[
\left|\frac{1}{\left|X(\mathbb{F}_{q})\right|}\sum_{x\in X(\mathbb{F}_{q})}\Psi(\varphi(x))\right|\leq Cq^{-1/2}.
\]
\end{cor}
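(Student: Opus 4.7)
The plan is to reduce to the one-variate Lemma \ref{lem:exponential sums on varieties} via Pontryagin duality on $\mathbb{F}_q^m$, with the main issue being uniformity of the implied constant in the resulting family of morphisms. Fix once and for all a non-trivial additive character $\psi_0: (\mathbb{F}_q, +) \to \C^\times$. Every non-trivial $\Psi \in \Irr(\mathbb{F}_q^m)$ then has the unique form $\Psi(y) = \psi_0(\langle a, y \rangle)$ for some $a = (a_1, \ldots, a_m) \in \mathbb{F}_q^m \setminus \{0\}$. Writing $\varphi = (\varphi_1, \ldots, \varphi_m)$ with $\varphi_i: X \to \mathbb{A}_{\Q}^1$, the sum we wish to bound becomes
\[
\sum_{x \in X(\mathbb{F}_q)} \Psi(\varphi(x)) \;=\; \sum_{x \in X(\mathbb{F}_q)} \psi_0(f_a(x)), \qquad f_a := \sum_{i=1}^m a_i \varphi_i.
\]

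Next, I would verify that for every $a \neq 0$ the morphism $f_a = L_a \circ \varphi$, where $L_a: \mathbb{A}^m \to \mathbb{A}^1$ is the non-zero linear form $y \mapsto \langle a, y \rangle$, is \emph{non-constant}. This is immediate from dominance of $\varphi$: since $\varphi(X)$ is Zariski-dense in $\mathbb{A}^m$ and $L_a$ is non-constant on $\mathbb{A}^m$, the composition $L_a \circ \varphi$ is dominant onto $\mathbb{A}^1$, hence non-constant on each geometric component of $X$. Lemma \ref{lem:exponential sums on varieties} therefore yields, for each fixed $a \neq 0$, a bound of the form $C_a\, q^{-1/2}$ of the desired shape.

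The main obstacle is obtaining a single constant $C$ uniformly valid for all $a \in \mathbb{F}_q^m \setminus \{0\}$ and all $q \in \mathcal{P}_M$. I would address this by invoking the effective content of Kowalski's theorem \cite[Theorem 2]{Kow07} (a consequence of Deligne's Weil II \cite{Del74}), whose implied constant depends only on the \emph{complexity} of the morphism---degrees of defining polynomials of $X$ and of $f_a$, and ambient dimensions. Since $X$ is fixed and each $\varphi_i$ has fixed degree, the degree of $f_a$ as a regular function on $X$ is bounded by $\max_i \deg \varphi_i$ independently of $a$, producing a single uniform $C$. Equivalently, one can package the data into the single $\Q$-morphism $F: X \times \mathbb{A}^m \to \mathbb{A}^1$, $(x, a) \mapsto \sum_i a_i \varphi_i(x)$, and invoke a fiberwise version of Lemma \ref{lem:exponential sums on varieties} applied to the nonzero slices. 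The threshold $M$ is enlarged if needed so that $X_{\mathbb{F}_q}$ remains geometrically irreducible for $q \in \mathcal{P}_M$ (via Lemma \ref{lem:good reduction of irred comp}); only finitely many bad primes arise.
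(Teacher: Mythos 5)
Your proposal is correct and follows essentially the same route as the paper: decompose $\Psi$ as $\psi_0(\langle a,\cdot\rangle)$, observe that dominance of $\varphi$ forces every non-trivial linear combination $\sum_i a_i\varphi_i$ to be non-constant (also after reduction mod $p\gg 1$), and apply Lemma \ref{lem:exponential sums on varieties}. You are in fact more explicit than the paper about why the constant can be taken uniform over all $a\in\mathbb{F}_q^m\setminus\{0\}$ (the lemma as literally stated is for a single $\Q$-morphism, so one does need the complexity-uniform version of Kowalski's bound), which is a point the paper's one-line conclusion leaves implicit.
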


\begin{proof}
Write $\varphi=(\varphi_{1},...,\varphi_{m})$ for $\varphi_{i}:X\rightarrow\mathbb{A}_{\Q}^{1}$.
Since $\varphi$ is dominant, there are no complex numbers $c_{1},...,c_{m}$
such that $\sum_{i=1}^{m}c_{i}\varphi_{i}$ is equal to a constant.
Hence, the same statement is true over $\overline{\mathbb{F}_{p}}$
for $p\gg1$. Since any $\Psi(y)\in\Irr(\mathbb{F}_{q}^{m})$ is of
the form $\psi(a\cdot y)$ for $a\in\mathbb{F}_{q}^{m}$ and $\psi$
a fixed non-trivial additive character of $\mathbb{F}_{q}$, we get
$\Psi(\varphi(x))=\psi(a\cdot\varphi(x))$, where $a\cdot\varphi(x)=\left(\sum_{i=1}^{m}a_{i}\varphi_{i}\right)(x)$
is a non-constant polynomial. The previous Lemma \ref{lem:exponential sums on varieties}
implies the corollary. 
\end{proof}
We can now prove Proposition \ref{Prop:polynomial measures are nice}. 
\begin{proof}[Proof of Proposition \ref{Prop:polynomial measures are nice}]
Since $\varphi:X\rightarrow\underline{G}$ is flat, we can use (\ref{eq:bound on L^2 norm})
to deduce there exist $C>0$ and $M\in\N$ such that for $q\in\mathcal{P}_{M}$:
\[
\rho(1)\left\Vert A_{\varphi_{*}\mu_{X(\mathbb{F}_{q})},\rho}\right\Vert _{\mathrm{HS}}^{2}\leq C^{2},\text{ for all }\rho\in\Irr(\underline{G}(\mathbb{F}_{q})).
\]
If $\rho(1)>1$ then by Corollary \ref{cor:Simply connected groups are almost quasi-random}
with possibly enlarging $M$,
\begin{equation}
\left\Vert A_{\varphi_{*}\mu_{X(\mathbb{F}_{q})},\rho}\right\Vert _{\mathrm{op}}^{2}\leq\left\Vert A_{\varphi_{*}\mu_{X(\mathbb{F}_{q})},\rho}\right\Vert _{\mathrm{HS}}^{2}\leq\frac{C^{2}}{\rho(1)}\underset{q\rightarrow\infty}{\longrightarrow}0.\label{eq:niceness for high dimensional}
\end{equation}
It is left to consider the case when $\rho$ is a one-dimensional
character. In this case, $\rho$ factors through $\underline{G}(\mathbb{F}_{q})/[\underline{G}(\mathbb{F}_{q}),\underline{G}(\mathbb{F}_{q})]$.
Since $\underline{L}$ is perfect, we get $[\text{\ensuremath{\underline{G}},\ensuremath{\underline{G}}}]\simeq\underline{L}\ltimes\underline{U}'$
for $\underline{U}':=\underline{U}\cap[\text{\ensuremath{\underline{G}},\ensuremath{\underline{G}}}]$
so $\underline{G}/[\underline{G},\underline{G}]$ is unipotent and
$[\underline{G},\underline{G}]$ is absolutely simply connected. Note
that for sufficiently large $t\in\N$, the map $w_{\mathrm{com},\underline{G}}^{*t}:\underline{G}^{2t}\rightarrow[\underline{G},\underline{G}]$
is flat (by Theorem \ref{thm:convolutions makes flat}) and the image
of $w_{\mathrm{com},\underline{G}(\mathbb{F}_{q})}^{*t}$ is contained
in $[\underline{G}(\mathbb{F}_{q}),\underline{G}(\mathbb{F}_{q})]$.
By Proposition \ref{prop: flatness and counting points}, the index
$[\underline{G},\underline{G}](\mathbb{F}_{q})/[\underline{G}(\mathbb{F}_{q}),\underline{G}(\mathbb{F}_{q})]$
is bounded by a constant $C$ independent of $q$, for $q\in\mathcal{P}_{M}$.
By Lemma \ref{lem:subgroups are of unbounded index}, $[\underline{G},\underline{G}](\mathbb{F}_{q})=[\underline{G}(\mathbb{F}_{q}),\underline{G}(\mathbb{F}_{q})]$.
Hence, $\rho$ factors through 
\[
\underline{G}(\mathbb{F}_{q})/[\underline{G},\underline{G}](\mathbb{F}_{q})\simeq\left(\underline{G}/[\underline{G},\underline{G}]\right)(\mathbb{F}_{q})\simeq\left(\underline{U}/\underline{U}'\right)(\mathbb{F}_{q}),
\]
where the first isomorphism follows by Lang's theorem \cite[Theorem 6.1]{PR94}
and from the exact sequence of cohomologies \cite[Proposition 12.3.4]{Spr98}.
Note that $\underline{U}/\underline{U}'\simeq\mathbb{A}_{\Q}^{m}$
and $\left(\underline{U}/\underline{U}'\right)(\mathbb{F}_{q})\simeq\mathbb{F}_{q}^{m}$
for some $m\in\N$. Denote by $\Psi_{\rho}\in\Irr(\mathbb{F}_{q}^{m})\simeq\Irr(\left(\underline{U}/\underline{U}'\right)(\mathbb{F}_{q}))$
the character corresponding to $\rho$ and denote by $\widetilde{\varphi}:X\rightarrow\underline{G}/[\underline{G},\underline{G}]\simeq\mathbb{A}_{\Q}^{m}$
the composition of $\varphi$ with the quotient map $\underline{G}\rightarrow\underline{G}/[\underline{G},\underline{G}]$.
By Corollary \ref{cor:character estimates on varieties multivariate},
we have
\begin{align}
\left|A_{\varphi_{*}\mu_{X(\mathbb{F}_{q})},\rho}\right| & =\left|\sum_{g\in\underline{G}(\mathbb{F}_{q})}\rho(g^{-1})\varphi_{*}\mu_{X(\mathbb{F}_{q})}(g)\right|=\left|\frac{1}{\left|X(\mathbb{F}_{q})\right|}\sum_{x\in X(\mathbb{F}_{q})}\overline{\rho}(\varphi(x))\right|\nonumber \\
 & =\left|\frac{1}{\left|X(\mathbb{F}_{q})\right|}\sum_{x\in X(\mathbb{F}_{q})}\Psi_{\overline{\rho}}(\widetilde{\varphi}(x))\right|\leq Cq^{-1/2},\label{eq:niceness for one dimensional}
\end{align}
for $q\in\mathcal{P}_{M}$. By (\ref{eq:niceness for high dimensional})
and (\ref{eq:niceness for one dimensional}), the family $\left\{ \varphi_{*}\mu_{X(\mathbb{F}_{q})}\right\} _{q:p\gg1}$
has uniform spectral decay, as required. 
\end{proof}

\subsection{Some consequences of Theorem \ref{thm:convolutions of morphisms has geometrically irreducible generic fiber}}

In \cite[Thm B]{GH21} and \cite{GH19}, the second author and Yotam
Hendel showed that various singularities properties of morphisms are
improved by taking sufficiently many self-convolutions. In particular,
it was shown that any dominant morphism from a smooth, geometrically
irreducible $\Q$-variety $X$ to an algebraically $\Q$-group $\underline{G}$
becomes flat, with reduced fibers of rational singularities\footnote{A $\Q$-scheme of finite type $X$ has \textit{rational singularities}
if it is normal and for every resolution of singularities $\pi:\widetilde{X}\rightarrow X$,
one has $R^{i}\pi_{*}(O_{\widetilde{X}})=0$ for $i\geq1$.} after sufficiently many self-convolutions. Combining Theorem \ref{thm:convolutions of morphisms has geometrically irreducible generic fiber},
Proposition \ref{prop:gen irred convoluted with flat} and \cite[Thm B]{GH21},
we strengthen these results in the setting when $\underline{G}$ is
an absolutely simply connected algebraic $\Q$-group, by showing that
convolved morphisms eventually have geometrically irreducible fibers
as well. 
\begin{thm}
\label{thm:improved convolution theorem}Let $\varphi:X\rightarrow\underline{G}$
be a dominant $\Q$-morphism from a smooth, geometrically irreducible
$\Q$-variety $X$ to a connected, absolutely simply connected algebraic
$\Q$-group $\underline{G}$. Then: 
\begin{enumerate}
\item $\varphi^{*t}$ is flat with geometrically irreducible and reduced
fibers (and in particular FGI) for all $t\geq\mathrm{dim}\underline{G}+1$. 
\item $\varphi^{*t}$ is flat with geometrically irreducible and normal
fibers for all $t\geq\mathrm{dim}\underline{G}+2$. 
\item $\exists N\in\N$ such that for every $t>N$, the map $\varphi^{*t}$
is flat with geometrically irreducible fibers with rational singularities. 
\end{enumerate}
\end{thm}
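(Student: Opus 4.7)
The plan is to upgrade \cite[Theorem B]{GH21} by plugging in the geometric-irreducibility conclusion from Theorem \ref{thm:convolutions of morphisms has geometrically irreducible generic fiber}. All three singularity-type improvements on the fibers (reduced, normal, rational singularities) for sufficiently many self-convolutions are already established in \cite{GH21, GH19} without any absolutely-simply-connected hypothesis on $\underline{G}$. What is genuinely new here is the \emph{geometric irreducibility} of every fiber, which comes from combining Theorem \ref{thm:convolutions of morphisms has geometrically irreducible generic fiber} with Proposition \ref{prop:gen irred convoluted with flat}, in exactly the same fashion as the proof of Theorem \ref{thm:Low rank geometric statement} in the low-rank case.

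More concretely, I would first observe that for every $t \geq 2$, the morphism $\varphi^{*t} = \varphi * \varphi^{*(t-1)}$ is a convolution of two dominant maps from geometrically irreducible $\Q$-varieties to the absolutely simply connected group $\underline{G}$, so Theorem \ref{thm:convolutions of morphisms has geometrically irreducible generic fiber} implies that $\varphi^{*t}$ has a geometrically irreducible generic fiber. Combining this with Theorem \ref{thm:convolutions makes flat}, one sees that $\varphi^{*t}$ is flat with geometrically irreducible generic fiber for every $t \geq \dim \underline{G}$ (assuming $\dim \underline{G} \geq 2$; the edge case $\dim \underline{G} = 1$ forces $\underline{G} \simeq \mathbb{G}_a$ and can be handled directly). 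Writing $\varphi^{*(t+1)} = \varphi^{*t} * \varphi$ and applying Proposition \ref{prop:gen irred convoluted with flat} with $\varphi^{*t}$ playing the role of the flat factor, I would then conclude that $\varphi^{*s}$ is (FGI) for every $s \geq \dim \underline{G} + 1$.

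To finish items (1), (2), and (3) I would intersect this (FGI) property with the sharp singularity statements of \cite[Theorem B]{GH21} and \cite{GH19}, which assert that $\varphi^{*t}$ has geometrically reduced fibers for $t \geq \dim \underline{G} + 1$, geometrically normal fibers for $t \geq \dim \underline{G} + 2$, and fibers with rational singularities for $t$ sufficiently large. The main non-trivial ingredient is Theorem \ref{thm:convolutions of morphisms has geometrically irreducible generic fiber}, whose analytic proof is the content of the previous subsection; granting it, the present theorem reduces to a formal combination. The only real subtlety is bookkeeping: one must verify that the irreducibility threshold $\dim \underline{G} + 1$ coming from Proposition \ref{prop:gen irred convoluted with flat} matches the singularity thresholds in \cite{GH21}, which it does because those thresholds are themselves obtained via a dimension count on the non-reduced/non-normal/non-rational loci that directly parallels the proof of Theorem \ref{thm:convolutions makes flat}.
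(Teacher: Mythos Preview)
Your proposal is correct and follows precisely the approach indicated in the paper, which states the theorem as a direct consequence of ``Combining Theorem \ref{thm:convolutions of morphisms has geometrically irreducible generic fiber}, Proposition \ref{prop:gen irred convoluted with flat} and \cite[Thm B]{GH21}'' without further elaboration. You have simply spelled out the combination in more detail than the paper does, including the bookkeeping on thresholds and the edge case $\dim\underline{G}=1$.
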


The condition of rational singularities in Item (3) is important because
of the following proposition, which is a generalization of Proposition
\ref{prop: flatness and counting points}: 
\begin{prop}[{{\cite[Theorems A and 4.7]{CGH23}}}]
\label{prop:characterization of FRS and FGI}Let $\varphi:X\rightarrow Y$
be a dominant morphism of smooth, geometrically irreducible $\Q$-varieties.
Then: 
\begin{enumerate}
\item $\varphi$ is flat with fibers of rational singularities (abbreviated
\textbf{``FRS''}) if and only if there exists $C>0$ such that for
every prime $p\gg1$, the density of $\varphi_{*}\mu_{X(\Zp)}$ with
respect to $\mu_{Y(\Zp)}$ is bounded by $C$, where $\mu_{X(\Zp)}$
and $\mu_{Y(\Zp)}$ denote the canonical measures on $X(\Zp)$ and
$Y(\Zp)$\footnote{See e.g.~\cite[Lemma 4.2]{CGH23} for the definition of the canonical
measure.}. 
\item $\varphi$ is (FRS) and (FGI) if and only if for every prime $p\gg1$,
\[
\left\Vert \varphi_{*}\mu_{X(\Zp)}-\mu_{Y(\Zp)}\right\Vert _{\infty}<Cp^{-1/2},
\]
\end{enumerate}
\end{prop}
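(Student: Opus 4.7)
The plan is to reduce both statements to a fiberwise analysis of $p$-adic volumes, then combine this with the characterization of rational singularities via log canonical thresholds (after Elkik--Mustata) together with the Lang--Weil estimates. I would prove (1) first and then deduce (2) by adding equidistribution considerations.

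First I would establish a Fubini-type formula for the density. For $y\in Y(\Zp)$ lying above a smooth point $\overline{y}\in Y(\mathbb{F}_p)$, I express
\[
\frac{d\varphi_{*}\mu_{X(\Zp)}}{d\mu_{Y(\Zp)}}(y)=\int_{X_{y}(\Zp)}\omega_{y},
\]
where $\omega_{y}$ is the canonical $p$-adic fiber measure obtained by dividing the ambient gauge form on $X$ by $\varphi^{*}\omega_{Y}$ on the smooth locus of $\varphi$. The formula is first verified pointwise on the flat smooth locus of $\varphi$ by the implicit function theorem, and is then extended by continuity; the contribution from the non-smooth locus of $Y$ is negligible thanks to the Lang--Weil estimates of Section \ref{sec4}, since it has positive codimension.

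Next, for (1), I would translate the uniform boundedness (in $p$ and $y$) of $\int_{X_{y}(\Zp)}\omega_{y}$ into a condition on the singularities of $X_{y}$. Choose a log resolution $\pi:\widetilde{X_{y}}\to X_{y}$ with simple normal crossing exceptional divisor $E=\sum a_{i}E_{i}$, spread out over $\spec\Z[1/N]$ for some $N$ depending only on $\varphi$. Change of variables and Denef's formula for $p\gg 1$ give
\[
\int_{X_{y}(\Zp)}\omega_{y}=\sum_{I}\frac{|E_{I}^{\circ}(\mathbb{F}_{p})|}{p^{\dim X_{y}}}\prod_{i\in I}\frac{p-1}{p^{a_{i}+1}-1},
\]
where $E_{I}^{\circ}$ is the locally closed stratum indexed by $I$. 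The series is uniformly bounded in $p$ iff all $a_{i}\geq 0$ (i.e.\ all discrepancies $>-1$), which by Elkik--Mustata is precisely the (FRS) condition on $X_{y}$. Uniformity across primes follows from Lang--Weil on the strata of the spread-out resolution, since each $|E_{I}^{\circ}(\mathbb{F}_{p})|=O(p^{\dim E_{I}^{\circ}})$ with absolute constants.

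For (2), combining (1) with the (FGI) hypothesis yields the sharpening: the main term of the density is $1$ (by flatness and geometric irreducibility, via Proposition \ref{prop: flatness and counting points} applied to the reduction $X_{\mathbb{F}_{p}}\to Y_{\mathbb{F}_{p}}$), and the next terms in the Denef expansion are $O(p^{-1/2})$ from the Lang--Weil error on the strata. Conversely, an $L^{\infty}$ bound of order $p^{-1/2}$ forces both uniform boundedness of the density (hence (FRS)) and, by Corollary \ref{cor:number of points as we vary primes }, forces the number of $\mathbb{F}_{p}$-rational top-dimensional geometric components of each generic fiber to equal $1$, which lifts to geometric irreducibility in characteristic zero via Lemma \ref{lem:good reduction of irred comp}. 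The main obstacle I anticipate is obtaining the $O(p^{-1/2})$ remainder uniformly in $y\in Y(\Zp)$: it requires controlling the Denef expansion uniformly as $y$ approaches a worse singular fiber, which in turn demands a resolution of the whole family $X\to Y$ over $\spec\Z[1/N]$, not just of individual fibers.
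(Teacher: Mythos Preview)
The paper does not prove this proposition; it is quoted as \cite[Theorems~A and~4.7]{CGH23} and used as a black box in Section~\ref{sec8}. There is therefore no proof in the paper to compare your proposal against.

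That said, your sketch is broadly in the spirit of the argument in the cited reference, which builds on Aizenbud--Avni and on Denef's formula for $p$-adic integrals via an embedded resolution spread out over $\spec\Z[1/N]$. A few points to tighten. First, your Denef expansion is written loosely: the $a_i$ should be the discrepancies in the relative canonical divisor of the resolution of the fiber, and the condition ``all $a_i\geq 0$'' is \emph{canonical} singularities; the identification with rational singularities works here because flat fibers of a morphism between smooth varieties are local complete intersections, hence Gorenstein, so Elkik's theorem applies. Second, in the converse of (1) you tacitly assume flatness when writing the fiber integral; you should also observe that a fiber of excess dimension forces the density to blow up with $p$ (via Lang--Weil), so boundedness already forces flatness. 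Third, in the converse of (2) you write ``generic fiber'', but (FGI) requires geometric irreducibility of \emph{every} fiber; your argument actually delivers this, since the $L^\infty$ bound is uniform over all $y\in Y(\Zp)$ and hence constrains $C_{X_{\bar y,\varphi}}$ for every $\bar y\in Y(\mathbb{F}_p)$, but the phrasing should be fixed. Your closing remark that uniformity in $y$ forces one to resolve the whole family rather than individual fibers is exactly the right diagnosis of where the real work lies.
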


We now turn to the second application which is about Lie algebra word
maps. 
\begin{defn}
A \emph{Lie algebra word} $w(X_{1},...,X_{r})$ is a $\Q$-linear
combination of iterated commutators, or more precisely, an element
in a free $\Q$-Lie algebra $\mathcal{L}_{r}$ on a finite set $\{X_{1},\ldots,X_{r}\}$.
The space $\mathcal{L}_{r}$ has a natural gradation, so one can define
the \textit{degree} of a word $w$ as the maximal grade $d\in\N$
in which the image of $w$ is non-trivial. For example, $w=[X,Y]+2[[X,Y],Y]$
is a Lie algebra word of degree $3$. For each Lie algebra $\g$,
$w\in\mathcal{L}_{r}$ induces a \emph{Lie algebra word map} $w_{\g}:\g^{r}\rightarrow\g$. 
\end{defn}

In \cite[Theorem A]{GHb}, the second author and Hendel have shown
that non-zero word maps $w_{\g}:\g^{r}\rightarrow\g$ on simple Lie
algebra becomes (FRS) after a number $t(w)$ of self-convolutions
which is independent of $\g$. Theorem \ref{thm:convolutions of morphisms has geometrically irreducible generic fiber},
and Propositions \ref{prop:gen irred convoluted with flat} and \ref{prop:characterization of FRS and FGI}
imply the following improvement of \cite[Theorem A]{GHb}, which further
shows that these maps have geometrically irreducible fibers:
\begin{cor}[{{cf.~\cite[Theorem A]{GHb}}}]
\label{Cor:imporved word map theorem}Let $w\in\mathcal{L}_{r}$
be a Lie algebra word of degree $d$. Then there exists $0<C<10^{6}$,
such that for every simple $\Q$-Lie algebra $\mathfrak{g}$, for
which $0\neq w_{\mathfrak{g}}$, one has: 
\begin{enumerate}
\item If $t\geq Cd^{3}$ then $w_{\mathfrak{g}}^{*t}$ is (FGI). 
\item If $t\geq Cd^{4}$ then $w_{\mathfrak{g}}^{*t}$ is (FRS) and (FGI). 
\item There exists a constant $C'=C'(\g,w)$, such that if $t\geq Cd^{4}$,
one has 
\[
\left\Vert (w_{\mathfrak{g}}^{*t})_{*}\mu_{\g(\Zp)}^{rt}-\mu_{\g(\Zp)}\right\Vert _{\infty}<C'p^{-1/2},
\]
where $\mu_{\g(\Zp)}$ is the Haar probability measure on $\g(\Zp)$.
In particular, the $L^{\infty}$-mixing time $t_{\infty}$ of the
random walk on $\g(\Zp)$ induced by the word measure $(w_{\mathfrak{g}})_{*}\mu_{\g(\Zp)}^{r}$,
is smaller than $Cd^{4}$, whenever $p\gg_{w,\g}1$. 
\end{enumerate}
\end{cor}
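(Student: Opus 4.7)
The plan is to combine the flatness and (FRS) thresholds for self-convolutions of Lie algebra word maps from \cite[Theorem A]{GHb} with the new geometric input provided by Theorem \ref{thm:convolutions of morphisms has geometrically irreducible generic fiber}, and then upgrade these to full (FGI) via Proposition \ref{prop:gen irred convoluted with flat}. Item (3) will follow at once from Proposition \ref{prop:characterization of FRS and FGI}(2).

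To apply Theorem \ref{thm:convolutions of morphisms has geometrically irreducible generic fiber} in the Lie algebra setting, I first verify its hypotheses. Viewed as an algebraic $\Q$-group under addition, $\g \simeq \mathbb{G}_{a}^{\dim \g}$ is absolutely simply connected in characteristic zero: every finite subgroup scheme of $\mathbb{G}_{a}$ in characteristic zero is trivial, so any isogeny from a connected algebraic $\Q$-group onto $\mathbb{G}_{a}^{n}$ is an isomorphism. The assumption $w_{\g}\neq 0$ on a simple $\g$, together with the Lie algebra analogue of Borel's theorem (\cite{BGKP12}, or its extension used in \cite{GHb}), ensures that $w_{\g}\colon\g^{r}\to\g$ is dominant. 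Writing $w_{\g}^{*t}=w_{\g}^{*(t-1)}*w_{\g}$ for $t\geq 2$, with both factors dominant, Theorem \ref{thm:convolutions of morphisms has geometrically irreducible generic fiber} yields that $w_{\g}^{*t}$ has geometrically irreducible generic fiber for every $t\geq 2$.

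By \cite[Theorem A]{GHb} there is an absolute constant $C_{0}<10^{6}$ such that $w_{\g}^{*t}$ is flat for $t\geq C_{0}d^{3}$ and is (FRS) for $t\geq C_{0}d^{4}$. For Item (1), when $t\geq C_{0}d^{3}$ the map $w_{\g}^{*t}$ is flat with geometrically irreducible generic fiber, and Proposition \ref{prop:gen irred convoluted with flat} applied with $\varphi=w_{\g}^{*t}$ and $\psi=w_{\g}$ gives that $w_{\g}^{*(t+1)}$ is (FGI); absorbing the additive shift into a slightly larger absolute constant $C<10^{6}$ yields (1). Item (2) follows by combining the (FGI) threshold $Cd^{3}$ from Item (1) with the (FRS) threshold $C_{0}d^{4}$ from \cite[Theorem A]{GHb}, since $Cd^{4}\geq Cd^{3}$. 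Finally, Item (3) is an immediate application of Proposition \ref{prop:characterization of FRS and FGI}(2) to the (FRS) and (FGI) morphism $w_{\g}^{*t}$, with the constant $C'(\g,w)$ absorbing the Lang--Weil complexity bounds for this fixed morphism.

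The main conceptual content is packaged in Theorem \ref{thm:convolutions of morphisms has geometrically irreducible generic fiber}; the rest is bookkeeping combining \cite[Theorem A]{GHb} with Proposition \ref{prop:gen irred convoluted with flat}. There is no serious obstacle, but the two checks worth flagging are the absolute simple connectedness of $\mathbb{G}_{a}^{n}$ in characteristic zero and the preservation of the numerical bound $C<10^{6}$, which holds because Proposition \ref{prop:gen irred convoluted with flat} only shifts the convolution threshold by an additive constant.
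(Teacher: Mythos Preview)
Your proof is correct and follows exactly the route the paper indicates: combine the flatness/(FRS) thresholds from \cite[Theorem A]{GHb} with Theorem \ref{thm:convolutions of morphisms has geometrically irreducible generic fiber} to obtain a geometrically irreducible generic fiber, upgrade to (FGI) via Proposition \ref{prop:gen irred convoluted with flat}, and deduce Item (3) from Proposition \ref{prop:characterization of FRS and FGI}(2). The additional checks you spell out (absolute simple connectedness of $\mathbb{G}_a^{\dim\g}$ in characteristic zero, dominance of $w_{\g}$, and absorbing the $+1$ shift into $C$) are exactly the details the paper leaves implicit.
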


\bibliographystyle{alpha}
\bibliography{bibfile}

\end{document}